\theoremstyle{plain}
\newtheorem{algorithm}{Algorithm}[section]
\newtheorem{thm}{Thm}
\newtheorem{corollary}[algorithm]{Corollary}
\newtheorem{lemma}[algorithm]{Lemma}
\newtheorem{theorem} [algorithm] {Theorem}
\newtheorem{theoremlet}[thm]{Theorem}
\newtheorem*{remarknonum}{Remark}
\newtheorem*{definitionnonum}{Definition}
\newtheorem*{question1}{Question 1}
\newtheorem*{question2}{Question 2}
\newtheorem{keylemma}[algorithm]{Key Lemma}
\newtheorem{proposition}[algorithm]{Proposition}
\newtheorem{remark}[algorithm]{Remark}
\numberwithin{equation}{algorithm}
\begin{document}
\title{How to Lift Positive Ricci Curvature}
\author{Catherine Searle}
\address{Department of Mathematics\\
Oregon State University\\
368 Kidder Hall \\
Corvallis, OR 97331}
\email{searleca@math.oregonstate.edu}
\urladdr{https://sites.google.com/site/catherinesearle1/home}
\author{Frederick Wilhelm}
\address{Department of Mathematics\\
University of California\\
Riverside, CA 92521}
\email{fred@math.ucr.edu}
\urladdr{http://mathdept.ucr.edu/faculty/wilhelm.html}
\thanks{The first author was supported in part by CONACyT Project
\#SEP--106923. She is also grateful to the Mathematics Department of the
University of California at Riverside for its hospitality during two visits
where a large portion of this research was conducted.}

\begin{abstract}
We show how to lift positive Ricci and almost non-negative curvatures from
an orbit space $M/G$ to the corresponding $G$--manifold, $M.$ We apply the
results to get new examples of Riemannian manfiolds that satisfy both
curvature conditions simultaneously.
\end{abstract}

\maketitle

\section*{Introduction}

Lawson and Yau showed that $M$ admits positive scalar curvature provided $M$
is a compact $G$--manifold, with $G$ a compact, non-abelian, connected Lie
group \cite{LawYau}. By Myers' Theorem, this result can not be generalized
to positive Ricci curvature; however, one might ask about the case when the
fundamental group of $M$ is finite. Towards this end we have the following
result.

\begin{theoremlet}
\label{main}Let $G$ be a compact, connected Lie group acting isometrically
and effectively on a compact Riemannian manifold $M.$ Suppose the
fundamental group of a principal orbit is finite and the orbital distance
metric on $M/G$ has Ricci curvature $\geq 1$. Then $M$ admits a $G$%
--invariant metric with positive Ricci curvature.
\end{theoremlet}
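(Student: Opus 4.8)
The plan is to begin with the given $G$-invariant metric on $M$, whose orbital distance metric on $B:=M/G$ satisfies $\mathrm{Ric}_B\ge 1$, and to deform it by shrinking the $G$-orbits in a way adapted to the orbit-type stratification. Write $B_0\subset B$ for the open dense principal stratum and $M_0=\pi^{-1}(B_0)$, so that $\pi\colon M_0\to B_0$ is a Riemannian submersion with fiber the principal orbit $F=G/H$; by hypothesis $\pi_1(F)$ is finite. Since $H$ is, up to conjugacy, contained in every isotropy group, each orbit $G/G_p$ is the base of a fibration from $F$ with compact fiber $G_p/H$, so by the homotopy exact sequence every orbit also has finite fundamental group and hence, being homogeneous, carries an invariant metric of positive Ricci curvature. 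The mechanism producing positive Ricci curvature on $M$ is then the classical one: shrinking such fibers drives their Ricci curvature to $+\infty$, while the base directions retain the curvature of $B$.

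I would first dispose of the special case in which every orbit is principal, so that $M\to B$ is a fiber bundle over the closed manifold $B$ with structure group inside $N(H)/H$. Choosing a connection together with a fixed $N(H)/H$-invariant positive-Ricci metric $g_F$ on $F$ gives a bundle metric with totally geodesic fibers, and its canonical variation $g_\varepsilon=\pi^*g_B\oplus\varepsilon^2 g_F$ can be analyzed by the O'Neill formulas: for horizontal $X$, $\mathrm{Ric}_{g_\varepsilon}(X,X)\ge 1-C\varepsilon^2$; for vertical $V$, $\mathrm{Ric}_{g_\varepsilon}(V,V)\ge \varepsilon^{-2}\,\mathrm{Ric}_{g_F}(V,V)-C\ge c\,\varepsilon^{-2}-C$; and the mixed Ricci terms are $O(\varepsilon)$. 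Hence $g_\varepsilon$ has positive Ricci curvature once $\varepsilon$ is small, which recovers the known bundle results.

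The essential difficulty is to carry out this shrinking $G$-invariantly and smoothly across the non-principal strata. Near a non-principal orbit $G\cdot p$ the slice theorem presents a tubular neighborhood as $G\times_{G_p}S_p$, and the nearby principal orbits collapse onto $G\cdot p$ along the directions of $G_p/H$; correspondingly the orbital metric on $B$ develops a boundary stratum precisely where the slice quotient $S_p/G_p$ has boundary. So the ``shrinking profile'' — the function on $B$ playing the role of $\varepsilon$ — cannot be constant: it must vanish as one approaches the singular stratum at exactly the rate dictated by the linear slice representation, both in order that the resulting tensor be a smooth $G$-invariant metric on all of $M$, and in order that the second fundamental form of the orbits (whose divergence and squared norm enter $\mathrm{Ric}_{g_\varepsilon}(X,X)$ for horizontal $X$ with an unfavorable sign near $\partial B$) not overwhelm the positivity coming from $\mathrm{Ric}_B\ge 1$. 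I expect this matching to be the main obstacle. The strategy is to build the profile from the smooth $G$-invariant squared distance to the singular strata together with a concave function of the orbital distance modeled on the warping $\cos\circ\,\dist$ of the round sphere; the strict lower bound $\mathrm{Ric}_B\ge 1$, rather than merely $\mathrm{Ric}_B>0$, supplies — via a Hessian comparison for the orbital metric — exactly the quantitative slack needed to absorb those mean-curvature terms. Concretely one proceeds by induction on the number of orbit-type strata (equivalently on the cohomogeneities of the slice actions): the region of principal orbits is treated by the bundle case above, each tube around a non-principal orbit $G\cdot p$ is treated by applying the inductive hypothesis to the $G_p$-action on the slice, and the two metrics are interpolated on the overlapping collar using the comparison just described.

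Finally, every estimate above involves error terms that are made uniformly small by taking the deformation parameter small; since $M$ is compact this yields a $G$-invariant metric with $\mathrm{Ric}>0$ at every point, completing the proof.
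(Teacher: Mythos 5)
Your reduction of the regular part to the bundle case (canonical variation with an $N(H)/H$--invariant Ricci-positive metric on $G/H$, using finiteness of $\pi_1$) is fine, but it is essentially the known Nash--Poor result for actions with only principal orbits; the entire difficulty of the theorem lies at the non-principal strata, and there your argument is a program rather than a proof, with gaps at exactly the points where the work has to happen. First, a fiber-shrinking profile that ``vanishes as one approaches the singular stratum at the rate dictated by the slice representation'' does not obviously produce a smooth $G$--invariant metric: you are prescribing a $G$--invariant symmetric $2$--tensor that degenerates on the directions of $G_p/H$ as one approaches $G\cdot p$, and verifying smoothness of such a tensor across a singular orbit (already delicate in cohomogeneity one, \`a la Grove--Ziller) has no established recipe in arbitrary cohomogeneity with nested strata; nothing in your sketch addresses it. Second, the induction ``apply the inductive hypothesis to the $G_p$--action on the slice'' is not well-founded: the slice is not a closed manifold, its quotient carries no given bound $\mathrm{Ric}\geq 1$, and the statement being proved has no relative or boundary version to induct with. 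Third, the horizontal Ricci estimate near a stratum is not a matter of ``absorbing small error terms'': the collapsing orbits have second fundamental form blowing up like $1/r$ (with $r$ the distance to the stratum), so the unfavorable $T$--tensor terms in $\mathrm{Ric}(X,X)$ are unbounded and must \emph{cancel} against the concavity of the profile to leading order, with only the slack $1$ from $\mathrm{Ric}_B\geq 1$ left over; moreover the orbital metric on $M/G$ is not smooth near the singular set, so the ``Hessian comparison for the orbital metric'' you invoke is not available in the form you need. Establishing that cancellation requires precise asymptotics for the Hessian of the distance to a submanifold (or to the stratum), which your proposal never develops.

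For comparison, the paper sidesteps both problems you leave open: instead of a degenerate shrinking profile it uses Cheeger deformations (submersing $(G\times M, \lambda^2 g_{\mathrm{bi}}+g)\to M$), which are automatically smooth and $G$--invariant for every parameter and shrink the orbits uniformly on compact subsets of $M^{\mathrm{reg}}$, with Berestovskii's criterion (finite $\pi_1(G/H)$ forces $C(\mathfrak m)=0$) converting the hypothesis on the principal orbit into the blow-up of vertical Ricci curvature as $\lambda\to 0$; and near the singular strata, where the Cheeger deformation loses control, it first performs a $G$--invariant conformal change $e^{2\rho\circ\mathrm{dist}(S,\cdot)}g$ whose effect is computed from universal asymptotics of $\mathrm{Hess}_{\mathrm{dist}(S,\cdot)}$, making every plane containing a direction tangent to the fibers of the metric projection to the strata have curvature $\geq K$ while perturbing everything else by at most $\varepsilon$. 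Your proposal would need an analogue of that analytic input (smooth degenerate collapse plus sharp Hessian asymptotics) to close, so as it stands it does not constitute a proof.
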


\begin{remarknonum}
Various definitions of lower Ricci curvature bounds on metric spaces are
proposed in \cite{KuSh}, \cite{LottVill}, \cite{Oht}, \cite{Stu1}, \cite%
{Stu2}, and \cite{ZhZh}. Our proof only requires that the quotient space of
the principal orbits, $M^{\text{reg}}/G,$ has Ricci curvature $\geq 1,$ and
since $M^{\text{reg}}/G$ is a Riemannian manifold, it does not matter which
definition we choose.
\end{remarknonum}

The analogous result for positive sectional curvature is false. Let $%
SO\left( 3\right) $ act transitively on the second factor of $\mathbb{R}%
P^{2}\times \mathbb{R}P^{2}.$ By Synge's Theorem, the positively curved
metric on the quotient, $\mathbb{R}P^{2},$ cannot be lifted to a positively
curved metric on $\mathbb{R}P^{2}\times \mathbb{R}P^{2}$. Similarly, the
examples of Grove-Verdiani-Wilking-Ziller in \cite{GVWZ} and He in \cite{He}%
, show that the analog of Theorem \ref{main} is also false for non-negative
curvature.

On the other hand, we can lift almost non-negative curvature, and we do not
even need the hypothesis on the fundamental group of the principal orbits.

\begin{theoremlet}
\label{alm nonneg thm}Let $G$ be a compact, connected Lie group acting
smoothly and effectively on a compact smooth $n$--manifold $M.$ Let $\left\{
g_{\alpha }\right\} _{a=1}^{\infty }$ be a sequence of Riemannian metrics on 
$M$ for which the $G$--action is isometric.

Suppose $\left\{ \left( M/G,\mathrm{dist}_{\alpha }\right) \right\}
_{a=1}^{\infty }$ has almost non-negative curvature, where each $\mathrm{dist%
}_{\alpha }$ is the induced orbital distance metric. Then $M$ admits a $G$%
--invariant family of metrics with almost non-negative sectional curvature.
\end{theoremlet}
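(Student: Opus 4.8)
The plan is to reduce the statement to a quantitative, metric--by--metric assertion and then establish it by shrinking the $G$--orbits. After rescaling we may assume each $(M/G,\dist_\alpha)$ has $\sec\geq-\varepsilon_\alpha$ and diameter $\leq1$ with $\varepsilon_\alpha\to0$, and it then suffices to produce, for each $\alpha$, a $G$--invariant metric $\tilde g_\alpha$ on $M$ with $\sec\geq-\delta_\alpha$ and diameter $\leq D$, where $\delta_\alpha\to0$ and $D$ is independent of $\alpha$; the family $\{\tilde g_\alpha\}$ then satisfies the conclusion. Because the estimates are only required up to $o(1)$ errors there is a great deal of latitude: for each fixed $\alpha$ we may modify $g_\alpha$ $G$--equivariantly provided the induced quotient metric is essentially unchanged, since the curvature and diameter of the modified metric are then controlled by fixed ($\alpha$--dependent) constants and any small parameter chosen afterwards can be taken small relative to them.

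The core construction lives over the regular part, where $\pi\colon M^{\mathrm{reg}}\to M^{\mathrm{reg}}/G$ is a Riemannian submersion. First replace the metric along the orbits by the normal homogeneous metric induced from a fixed bi--invariant metric $Q$ on $G$, keeping the horizontal distribution and horizontal part of $g_\alpha$, so that the quotient metric $\dist_\alpha$ is unchanged; then apply the canonical variation, scaling the orbit directions by a small factor $\tau_\alpha^2$. Normal homogeneous metrics have $\sec\geq0$, so in the O'Neill formulas for the canonical variation the only negative contributions are the base curvature, which is $\geq-\varepsilon_\alpha$, and the $A$--tensor and $T$--tensor terms, which after the rescaling are bounded in absolute value by $\tau_\alpha$ times quantities depending only on $g_\alpha$. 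Choosing $\tau_\alpha\to0$ fast enough relative to the supremum of those tensor norms makes every horizontal, mixed and vertical sectional curvature of the resulting metric $\geq-\varepsilon_\alpha-o(1)$, while the diameter is at most $\mathrm{diam}(M/G,\dist_\alpha)+\tau_\alpha\,\mathrm{diam}(G,Q)\leq1+o(1)$.

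The main obstacle is that this picture degenerates along the singular strata, where $\pi$ is not a submersion, the orbit type jumps, and the tensor norms above a priori blow up. Near a singular orbit $G\cdot p\cong G/H$ the slice theorem identifies a $G$--tube with $G\times_H\nu_p$, where $\nu_p$ is the linear slice representation of $H$, and identifies the corresponding part of the quotient with the metric cone $\nu_p/H$. The plan there is: (i) using the linearity of the slice representation, $G$--equivariantly deform $g_\alpha$ on each tube to a standard form for which the $A$-- and $T$--tensors of the orbit decomposition are bounded uniformly up to the stratum, with the quotient metric changed by at most $o(1)$; (ii) run the orbit--shrinking construction of the previous paragraph on the standard model and verify that the lower curvature bound survives across the stratum, which is exactly where the hypothesis that \emph{all} of $M/G$, and not merely $M^{\mathrm{reg}}/G$, has $\sec\geq-\varepsilon_\alpha$ is used, since it is what controls the cones $\nu_p/H$; and (iii) glue the constructions over tubes together $G$--equivariantly, by induction on the partially ordered set of orbit types from most singular to least, so that each gluing costs only a fixed multiplicative factor in the curvature bound. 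I expect steps (i)--(iii) — normalizing near the singular set, and then propagating the lower curvature bound through the gluings, while keeping the error $o(1)$ and without distorting the quotient metric — to be the technical heart of the argument; the rest is standard submersion geometry and bookkeeping of parameters.

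Finally I would assemble the pieces: the metric $\tilde g_\alpha$ coming from the gluing is $G$--invariant, satisfies $\sec\geq-\delta_\alpha$ with $\delta_\alpha\to0$, and has uniformly bounded diameter, so $\{\tilde g_\alpha\}$ is the asserted $G$--invariant family with almost non--negative sectional curvature. This also accounts for the absence of any hypothesis on the fundamental group of a principal orbit, in contrast with Theorem \ref{main}: a principal orbit with infinite fundamental group — such as the $T^2$--orbits in the action of $T^2$ on the first factor of $T^2\times S^2$ — can force $M$ itself to have infinite fundamental group, which by Myers' theorem obstructs positive Ricci curvature but is no obstruction to almost non--negative curvature, and shrinking the orbits is correspondingly harmless here.
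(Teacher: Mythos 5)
Your reduction and your treatment of the regular part are fine in spirit: shrinking the orbits (whether via your ``normal homogeneous vertical metric plus canonical variation'' or via the Cheeger deformation the paper uses) does push the curvature of planes orthogonal to the orbits toward the quotient curvature, and on compact subsets of $M^{\text{reg}}$ this is essentially the known Fukaya--Yamaguchi/Cheeger argument. But the entire content of Theorem \ref{alm nonneg thm} beyond that known case is what happens near the singular strata, and there your outline has a genuine gap. Step (i) is not achievable as stated: the $T$--tensor of the orbit decomposition cannot be made uniformly bounded up to a singular stratum by any $G$--equivariant deformation that leaves the quotient essentially unchanged, because for $x$ at distance $r$ from a stratum the isotropy orbit $G_{Pr(x)}(x)$ lies in a distance sphere of radius $r$ and has intrinsic size $O(r)$, forcing its second fundamental form, hence $|T|$, to blow up like $1/r$. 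And step (ii), ``verify that the lower curvature bound survives across the stratum,'' is precisely the statement to be proved, with no mechanism supplied: across the stratum $\pi$ is not a Riemannian submersion, so the Alexandrov bound on $M/G$ (or on the cones $\nu_p/H$) does not enter any O'Neill-type formula, and plain orbit-shrinking does \emph{not} make planes containing the radial direction $X$ or the slice directions $\mathcal{V}$ almost non-negatively curved.

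This is exactly where the paper's proof has its new ideas, and they are of a different nature from your (i)--(iii). First, a $G$--invariant conformal change $e^{2\rho\circ\mathrm{dist}(S,\cdot)}g$ near the singular strata (Theorems \ref{conf-submanf}, \ref{non-compact conf}, \ref{uber conf}), built on the universal Hessian asymptotics $\mathrm{Hess}_{\mathrm{dist}(S,\cdot)}\approx \frac{1}{\mathrm{dist}(S,\cdot)}g$ on $\mathcal{V}$ (Lemma \ref{Peter's lemma}), makes every plane containing a vector of $\mathrm{span}\{X\}\oplus\mathcal{V}$ have curvature $\geq K$ while disturbing all other curvature components by at most $\varepsilon$. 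Second, a final Cheeger deformation handles the remaining planes: away from the distinguished distributions every unit vector has $|\kappa_v|$ bounded below (Corollary \ref{kappa not 0} near the strata, Proposition \ref{bound on kappa} elsewhere), so by Proposition \ref{orbital estimate} the deformation multiplies their possibly negative curvatures by a factor tending to $0$, giving $\sec\geq-\varepsilon$ globally (Theorem \ref{step 3}); the near-base behavior of the first Cheeger step along the strata also requires the $A$--tensor comparison of Lemma \ref{H---A--tensor}, not a uniform bound on $A$ and $T$. Without a substitute for this conformal-change step (or some other device that controls planes meeting $\mathrm{span}\{X\}\oplus\mathcal{V}$ near the strata), your gluing scheme cannot be completed, so the proposal as written does not prove the theorem.
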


As both $M$ and $M/G$ are Alexandrov spaces, the following definition of
almost non-negative curvature is valid for both spaces.

\begin{definitionnonum}
We say that a sequence of Alexandrov spaces $\left\{ \left( X,\mathrm{dist}%
_{\alpha }\right) \right\} _{\alpha =1}^{\infty }$ is almost non-negatively
curved if and only if there is a $D>0$ so that 
\begin{eqnarray*}
\mathrm{Diam}\left( X,\mathrm{dist}_{\alpha }\right) &\leq &D, \\
\mathrm{curv}\left( X,\mathrm{dist}_{\alpha }\right) &\geq &-\frac{1}{\alpha 
}.
\end{eqnarray*}
\end{definitionnonum}

Together Theorems \ref{main} and \ref{alm nonneg thm} are more interesting
than either result is separately since their proofs yield the following.

\begin{theoremlet}
\label{supplement}If $\left\{ \left( M,G,g_{\alpha }\right) \right\}
_{\alpha =1}^{\infty }$ satisfies the hypotheses of both Theorems \ref{main}
and \ref{alm nonneg thm}, then $M$ admits a family of metrics that
simultaneously has positive Ricci curvature and is almost non-negatively
curved.
\end{theoremlet}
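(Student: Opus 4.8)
\textbf{Proof proposal for Theorem~\ref{supplement}.}

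The plan is to show that a single family of $G$--invariant metrics can be built to realize both conclusions at once, by running the construction behind Theorem~\ref{main} inside the family of deformations behind Theorem~\ref{alm nonneg thm}. Both constructions begin with a Cheeger deformation of the given metric with respect to the $G$--action, which leaves the orbital distance metric on $M/G$ unchanged, collapses the orbits, and drives the induced orbit metrics toward nonnegatively curved normal homogeneous metrics; the two proofs then differ only in a subsequent $G$--invariant adjustment. I would organize the argument around this common structure.

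First I would recall, from the proof of Theorem~\ref{alm nonneg thm}, that for each $\alpha$ the Cheeger deformation $g_{\alpha,t}$ of $g_\alpha$ can be pushed (take $t=t_\alpha$ small) so that $(M,g_{\alpha,t_\alpha})$ has $\mathrm{diam}\le D'$ and $\sec\ge -\tfrac{1}{2\alpha}$, with orbits that are tiny and, in their induced metrics, close to nonnegatively curved normal homogeneous metrics. Next I would recall, from the proof of Theorem~\ref{main}, that once the orbits are this small one may further modify the metric in the orbit directions so that the vertical Ricci curvature becomes positive --- this is exactly where the hypothesis that the principal orbit has finite fundamental group is used --- while the horizontal Ricci curvature stays bounded below by essentially $\mathrm{Ric}(M/G)\ge 1>0$ and the mixed Ricci terms are small. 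The key point I would then emphasize is that this last modification is a $G$--invariant perturbation whose $C^2$--size can be made as small as we like by first shrinking the orbits further; in particular, for a suitable choice of $t_\alpha$ its effect on the sectional curvature lower bound is less than $\tfrac{1}{2\alpha}$ and its effect on the diameter is bounded by a fixed constant. Carrying out this perturbation on $g_{\alpha,t_\alpha}$ produces a $G$--invariant metric $h_\alpha$ with $\mathrm{Ric}(h_\alpha)>0$ by the argument of Theorem~\ref{main}, and with $\sec(h_\alpha)\ge -\tfrac1\alpha$ and $\mathrm{diam}(h_\alpha)\le D''$ by the argument of Theorem~\ref{alm nonneg thm} together with the smallness of the perturbation. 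The family $\{h_\alpha\}$ is then the one we want.

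The step I expect to be the main obstacle is the bookkeeping near the singular strata of the $G$--action. There the orbit geometry is itself degenerating --- the more singular orbits collapse in their normal directions --- and the perturbation that makes the vertical Ricci curvature positive necessarily degenerates along with it; one must verify that its cost in the sectional curvature lower bound is nonetheless uniformly $o(1)$ as $\alpha\to\infty$ and its cost in diameter is uniformly bounded. This forces one to choose the Cheeger parameter $t_\alpha$ and the perturbation scale simultaneously, and to track the constants through the O'Neill formulas stratum by stratum in a tubular neighborhood of each stratum. It should go through because the perturbation supplied by the proof of Theorem~\ref{main} only has to move a curvature quantity from $\ge-\varepsilon$ to $>0$, so its $C^2$--norm is controllable; but the estimates inherited from the two separate theorems must be arranged so as not to interfere, and that is where I expect the argument to require the most care.
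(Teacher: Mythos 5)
Your overall plan --- run one construction inside the other and check that the two sets of estimates do not interfere --- is in the right spirit, and the paper's actual proof of Theorem \ref{supplement} is indeed just the observation that a \emph{single} sequence of deformations (Cheeger deform, then the conformal change of Theorem \ref{uber conf}, then a second Cheeger deformation) simultaneously verifies the conclusions of Theorems \ref{main} and \ref{alm nonneg thm}. But the mechanism you propose for merging them contains a genuine gap: you assert that the modification which produces positive Ricci curvature is a $G$--invariant perturbation ``whose $C^{2}$--size can be made as small as we like,'' costing less than $\tfrac{1}{2\alpha}$ in the sectional curvature lower bound, ``because the perturbation only has to move a curvature quantity from $\geq-\varepsilon$ to $>0$.'' That is not how the construction can work, and it is precisely at the singular strata --- the place you yourself flag as the main obstacle --- that it fails. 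After the first Cheeger deformation the vertical Ricci estimates degenerate near the singular orbits (the relevant Killing fields vanish there, so the lower bounds on $\left\vert \kappa_{V}\right\vert$ used on compact subsets of $M^{\text{reg}}$ disappear), so one does not have ``$\mathrm{Ric}\geq-\varepsilon$ uniformly, fix it with an $\varepsilon$--sized push.'' The fix used in the paper is a conformal change $e^{2\rho\circ\mathrm{dist}(S,\cdot)}$ whose second derivative satisfies $\rho''\ll-1$ near the strata: it is $C^{1}$--small but emphatically not $C^{2}$--small, and it raises the curvature of every plane containing a radial or fiber direction above a constant $K$ that must be chosen \emph{large} compared to $\left\vert R^{g}\right\vert$ and $\left\vert\min\sec_{g}\right\vert$ (this largeness is needed in the proof of Theorem \ref{Ric_omega >0} to beat the mixed terms after the final Cheeger step). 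A perturbation of small $C^{2}$--norm changes all curvatures by a small amount and cannot produce such planes, so your smallness-based bookkeeping cannot close the argument near the strata.

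What actually saves almost non-negativity in the presence of this large deformation is a one-sided estimate, not smallness: Theorem \ref{uber conf} guarantees $\widetilde{\sec}(V,W)>\sec(V,W)-\varepsilon$ for \emph{all} planes, i.e.\ the conformal change can only lower curvature by $\varepsilon$ even though it raises some curvatures above $K$. The remaining planes --- those with a substantial component along the orbits but outside the protected distributions --- are then handled by the second Cheeger deformation: Corollary \ref{kappa not 0} gives a uniform lower bound on $\left\vert\kappa_{v}\right\vert$ for such vectors, so by Proposition \ref{orbital estimate} their curvatures are $\geq-\lambda^{2}/c$ times a bounded quantity (Theorem \ref{step 3}), while the same Cheeger step produces the positive vertical and horizontal Ricci bounds of Theorem \ref{main} via Propositions \ref{vertical Ricci}--\ref{horizontal cheeger} (this is where finiteness of $\pi_{1}$ of the principal orbit enters, through the $\lambda^{-6}$ blow-up of the bi-invariant curvature term). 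Once you replace ``the perturbation is $C^{2}$--small'' by ``the conformal change only decreases curvatures by at most $\varepsilon$, and the final Cheeger deformation controls everything with a nontrivial orbit component,'' your outline becomes the paper's proof; as written, the key quantitative claim underpinning your merge is false.
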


We believe that Theorems \ref{main}, \ref{alm nonneg thm}, and \ref%
{supplement} will ultimately lead to many new examples with positive Ricci
and almost non-negative sectional curvatures. To apply these theorems, one
needs an orbit space with positive Ricci and/or almost non-negative
sectional curvature. Unfortunately, there does not seem to be an extensive
catalog of such orbit spaces, leading us to ask the following two questions.

\begin{question1}
\label{pos ricci quest}Let $\mathcal{M}$ be the class of compact smooth,
manifolds $M$ admitting a smooth, effective action by a compact, connected
Lie group, $G$, with $\pi _{1}\left( \mathrm{principal}\text{ }\mathrm{orbit}%
\right) $ finite. Which $M\in \mathcal{M}$ admit a $G$--invariant metric
with $Ric\left( M^{\text{reg}}/G\right) \geq 1?$
\end{question1}

\begin{question2}
\label{alm nonneg quest}Let $\mathcal{M}$ be the class of compact smooth,
manifolds $M$ admitting a smooth, effective action by a compact, connected
Lie group, $G$. Which $M\in \mathcal{M}$ admit a family of $G$--invariant
metrics $\left\{ g_{\alpha }\right\} _{a=1}^{\infty }$ for which $\left\{
\left( M/G,\mathrm{dist}_{\alpha }\right) \right\} _{a=1}^{\infty }$ is
almost non-negatively curved$?$
\end{question2}

Let $G$ be a Lie group with a bi-invariant metric. Let $H\subset G\times G$
act on $G$ from the left and right, and not freely. The bi-quotient $G//H$
is non-negatively curved, and it seems likely that the technique of \cite%
{SchTu1} could yield that the Ricci curvature of $G^{\text{reg}}//H$ is also
positive, if $\pi _{1}\left( G//H\right) <\infty .$ One could then search
for smooth $H_{1}$--manifolds with $M/H_{1}=G//H,$ and find a solution to
Question 2 and possibly a solution to Question 1.

We have yet to pursue this line of inquiry, but we have proven the following
theorem.

\begin{theoremlet}
\label{examples}Let $\mathcal{Y}$ be the class of compact, smooth, manifolds
consisting of\vspace*{0.1in}

\noindent $\Sigma ^{7}\equiv $ $\left\{ \text{all exotic 7--spheres}\right\} 
$,\vspace*{0.1in}

\noindent $\Sigma _{BP}^{15}\equiv \left\{ \text{all exotic 15--spheres that
bound parallelizable manifolds}\right\} $\vspace*{0.1in},

\noindent \textbf{F$\mathbb{H}$P2 }$\equiv \left\{ \text{all double mapping
cylinders on }S^{3}\text{--bundles over }S^{4}\right. $\newline
\textbf{\ } \textbf{\hspace*{3.1in}}whose total spaces are homeomorphic to $%
S^{7}\}\vspace*{0.1in},$

\noindent \textbf{F$\mathbb{O}$P2 }$\equiv \{$all double mapping cylinders
on $S^{7}$--bundles over $S^{8}$\newline
\textbf{\hspace*{3.1in}}whose total spaces are homeomorphic to $S^{15}\}%
\vspace*{0.1in}.$

Any $M\in \mathcal{Y}$ admits a family of metrics $\left\{ g_{a}\right\}
_{\alpha =1}^{\infty }$ that is simultaneously almost non-negatively curved
and has positive Ricci curvature.
\end{theoremlet}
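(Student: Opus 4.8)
The goal is to show every manifold $M$ in the class $\mathcal{Y}$—all exotic $7$-spheres, the Kervaire-type exotic $15$-spheres, and the relevant double mapping cylinders—admits a family of metrics that is simultaneously almost non-negatively curved and has positive Ricci curvature. By Theorem C, it suffices to exhibit, for each such $M$, a compact connected Lie group $G$ acting smoothly and effectively on $M$ such that (a) the principal orbit has finite fundamental group, and (b) there is a sequence of $G$-invariant metrics $g_\alpha$ on $M$ for which $(M/G,\operatorname{dist}_\alpha)$ is almost non-negatively curved with $\operatorname{Ric}(M^{\mathrm{reg}}/G)\ge 1$. So the entire theorem reduces to producing the right group actions with the right orbit-space geometry.

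The plan is to use the well-known cohomogeneity-one descriptions of these spaces. Each $M\in\mathcal{Y}$ is (or is diffeomorphic to) the union of two disk bundles glued along a principal orbit—this is literally the definition of the double mapping cylinder classes \textbf{F$\mathbb{H}$P2} and \textbf{F$\mathbb{O}$P2}, and for the exotic $7$-spheres it is the Gromoll--Meyer sphere together with the Grove--Ziller-type cohomogeneity-one structure realizing all of $\Sigma^7$ (the Milnor spheres as $S^3$-bundles over $S^4$), while the exotic Kervaire spheres $\Sigma^{15}_{BP}$ carry analogous cohomogeneity-one actions with $S^7$-bundle building blocks. In each case the acting group $G$ can be taken so that the orbit space $M/G$ is a closed interval $[0,L]$, a one-dimensional Alexandrov space with two boundary points. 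The key point is that $M^{\mathrm{reg}}/G$ is then an open interval, which trivially has $\operatorname{Ric}\ge 1$ after rescaling so that $L$ is small (a $1$-manifold vacuously has any Ricci bound, the Bochner/Bonnet–Myers inequality being an empty condition in dimension one), and $([0,L],\operatorname{dist})$ has curvature $\ge 0$ and diameter $L$; taking $L=L_\alpha\to 0$ makes the sequence of orbit spaces almost non-negatively curved. Thus both hypotheses of Theorems A and B are satisfied by these cohomogeneity-one actions, and the finiteness of $\pi_1$ of the principal orbit holds because the principal orbit is a homogeneous space $G/H$ which, in the standard models, is simply connected or has finite fundamental group.

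Concretely, I would organize the proof as a short case analysis. First, recall or cite the cohomogeneity-one structure on each family: for $\Sigma^7$, use that every homotopy $7$-sphere is an $S^3$-bundle over $S^4$ (Milnor, Eells--Kuiper) and carries the Grove--Ziller cohomogeneity-one $\mathrm{SO}(3)\times\mathrm{SU}(2)$ (or $S^3\times S^3$) action with orbit space an interval; for \textbf{F$\mathbb{H}$P2}, the defining double mapping cylinder structure on $S^3$-bundles over $S^4$ is already a cohomogeneity-one decomposition with group containing the bundle structure group acting on fibers, orbit space $[0,L]$; likewise for \textbf{F$\mathbb{O}$P2} with $S^7$-bundles over $S^8$ and the corresponding larger group; and for $\Sigma^{15}_{BP}$, use the Kervaire-sphere cohomogeneity-one actions (Back--Hsiang / Grove--Ziller / Gromoll--Meyer-type constructions). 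Second, in each case verify that one can choose $G$ acting effectively with principal orbit $G/H$ having finite fundamental group—this is where one checks the isotropy groups in the standard models. Third, observe $M/G=[0,L]$ and invoke the two elementary geometric facts above to conclude both curvature hypotheses hold for a sequence $L_\alpha\to0$. Fourth, apply Theorem C to conclude.

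The main obstacle I anticipate is not the geometry of the orbit space—that part is essentially trivial once the orbit space is an interval—but rather the bookkeeping of producing, for \emph{every} member of each family simultaneously, a single uniform group action with the effectiveness and principal-isotropy properties needed, and making sure the list of spaces in $\mathcal{Y}$ is exactly covered by known cohomogeneity-one classifications. In particular, one must be careful that "all exotic $7$-spheres" really are all realized by cohomogeneity-one actions (this uses the fact that the $28$ homotopy $7$-spheres are exactly the total spaces of the Milnor $S^3$-bundles over $S^4$ together with the group actions on them), and that the homeomorphism-to-$S^7$ or $S^{15}$ condition defining the \textbf{F$\mathbb{H}$P2} and \textbf{F$\mathbb{O}$P2} classes is compatible with admitting the relevant bundle-fiberwise group action; some of these double mapping cylinders may be manifolds that require a short argument (or citation to Grove--Ziller, Crowley--Nordström, or Goette--Kerin--Shankar) to see they carry a cohomogeneity-one structure with interval orbit space and principal orbit with finite $\pi_1$. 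Once that catalog is assembled, the rest of the proof is immediate from Theorem C.
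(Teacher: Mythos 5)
There is a genuine gap, and it is fatal to the whole strategy rather than a bookkeeping issue. Your plan is to realize each $M\in\mathcal{Y}$ as a cohomogeneity-one manifold, so that $M/G$ is an interval, and then claim that the hypotheses of Theorems \ref{main} and \ref{alm nonneg thm} hold because a $1$--dimensional space ``vacuously'' satisfies any Ricci bound. That last claim is false: in dimension one the Ricci tensor is identically zero, so $Ric\geq 1$ fails, and the paper points this out explicitly in the introduction --- the hypothesis $Ric\left( M/G\right) \geq 1$ forces $\dim M/G\geq 2$, which is precisely why Theorem \ref{main} does \emph{not} recover the Grove--Ziller cohomogeneity-one result. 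So an interval orbit space can never feed into Theorem \ref{supplement}; the almost non-negative half would be fine (that is Schwachh\"ofer--Tuschmann), but the positive Ricci half collapses. There are also topological problems with the case analysis: not every exotic $7$--sphere is an $S^{3}$--bundle over $S^{4}$ (the bundle ones account for only part of the cyclic group of order $28$, and likewise only some elements of $\Sigma _{BP}^{15}$ are bundles), and by the classification of cohomogeneity-one actions on homotopy spheres the exotic spheres admitting such actions are only the Kervaire spheres, which excludes every exotic $7$--sphere.

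What is actually needed, and what the paper does, is a family of actions of cohomogeneity at least two whose orbit spaces carry positive Ricci (indeed positive or quasi-positive) curvature. The paper uses Davis' $SO\left( 3\right) $--actions on $\Sigma ^{7}$ and $G_{2}$--actions on $\Sigma _{BP}^{15}$: on the bundles $E_{m,n}$ these come from the automorphism group of $\mathbb{H}$ or $\mathbb{O}$, the non-bundle elements are reached by iterated equivariant connected sums of the generator $E_{2,-1}$ along the fixed circle, and every resulting action has the same orbit space, group diagram, and isotropy representation as the standard linear action on the round sphere. Proposition \ref{Lifting Prop} then transplants the quotient of the round metric (which has $Ric\geq 1$ after rescaling and is non-negatively curved) to a $G$--invariant metric on each exotic sphere, and Theorem \ref{supplement} finishes. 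For the double mapping cylinder classes, the quotient geometry is not free: for \textbf{F}$\mathbb{O}$\textbf{P2} the paper passes through Totaro's description of $\mathbb{O}P^{2}\#-\mathbb{O}P^{2}$ as $\left( Spin\left( 9\right) \times S^{8}\right) /Spin\left( 8\right) ,$ classifies the zero-curvature planes of the quotient metric, and proves separately (Theorem \ref{Ricc on quotient}) that the regular part of the $G_{2}$--quotient has uniformly positive Ricci curvature before applying Proposition \ref{Lifting Prop} and Theorem \ref{supplement}. None of these steps is ``immediate from Theorem C,'' and none of them can be replaced by an interval orbit space.
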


For the purpose of Theorem \ref{examples}, the double mapping cylinder on a
map $p:E\longrightarrow B$ is obtained from the disjoint union 
\begin{equation*}
B_{-}\amalg E\times \left[ -1,1\right] \amalg B_{+}
\end{equation*}%
of $E\times \left[ -1,1\right] $ and two copies of $B$, denoted $B_{-}$ and $%
B_{+}$, by making only the following identifications. For each $e$ in $E$,
let 
\begin{eqnarray*}
\left( e,-1\right) &\sim &p\left( e\right) \in B_{-}, \\
\left( e,1\right) &\sim &p\left( e\right) \in B_{+}.
\end{eqnarray*}

Next we give a brief history of prior work related to these results. It is
not meant to be comprehensive, rather we limit our attention to results that
are specifically relevant to Theorems \ref{main}, \ref{alm nonneg thm}, \ref%
{supplement}, and \ref{examples}.

Combining work of Nash \cite{N} or Poor \cite{Poor} and Fukaya-Yamaguchi 
\cite{FukYam}, gives us a family of almost non-negatively curved metrics
that are also Ricci positive on the exotic spheres that are bundles.

Wraith showed that all exotic spheres bounding parallelizable manifolds
admit metrics with $Ric>0$ \cite{Wr1}. In \cite{BoyGalNak}, Boyer, Galicki,
and Nakamaye showed that such exotic spheres admit $Ric>0$ metrics that are
also Sasaki, provided the dimension is odd.

By combining results of Grove-Ziller \cite{GrovZil1} and Guijarro \cite{Guij}
one gets non-negatively curved metrics on the class \textbf{F}$\mathbb{H}$%
\textbf{P2}. These metrics might make a good starting point for an
alternative argument that the class \textbf{F}$\mathbb{H}$\textbf{P2} has an
almost non-negatively curved family with positive Ricci curvature.

The exotic spheres in $\Sigma ^{7}$ and \textbf{$\Sigma _{BP}^{15}$} that
are not bundles and the entire class \textbf{F}$\mathbb{O}$\textbf{P2} were
not previously known to admit almost non-negative curvature. To the best of
our knowledge, the class \textbf{F}$\mathbb{O}$\textbf{P2} was not
previously known to admit positive Ricci curvature. \label{history}

Theorems \ref{main} and \ref{alm nonneg thm} were already known in the case
when the action is free. Theorem \ref{main} was established for free actions
by Nash in \cite{N} (cf. also \cite{Ber}, \cite{BB}, \cite{GPT}, \cite{Poor}%
). Theorem \ref{alm nonneg thm} was proven by Wei for free actions, with the
additional assumption that the base is non-negatively curved \cite{Wei}. For
actions with only principal orbits, Theorem \ref{alm nonneg thm} follows
from work of Fukaya-Yamaguchi (see Theorem 0.18 in \cite{FukYam}).

Various examples of $G$--manifolds with positive Ricci curvature and
isolated singular orbits are given by Bechtluft-Sachs and Wraith in \cite%
{BS-W1} and by Wraith in \cite{Wr3}.

When $\dim M/G=1,$ and $\pi _{1}\left( M\right) $ is finite, Grove and
Ziller showed that $M$ admits a $G$--invariant metric with positive Ricci
curvature \cite{GrovZil}, and Schwachh\"{o}fer and Tuschmann showed that any
cohomogeneity one manifold admits a $G$--invariant metric with almost
non-negative curvature, regardless of the fundamental group \cite{SchTu2}.
The hypothesis $Ric\left( M/G\right) \geq 1$ in Theorem \ref{main}, implies
that $\dim M/G\geq 2,$ so Theorem \ref{main} does not generalize the result
of \cite{GrovZil}, but Theorem \ref{alm nonneg thm} does extend the result
of \cite{SchTu2}.

To prove our theorems we employ two different methods to improve the metric:
Cheeger deformations and conformal changes. The same methods were combined
in \cite{Bet} to show that $S^{2}\times S^{2}$ admits positive bi-orthogonal
curvature.

In our context, Cheeger deforming a $G$--invariant metric on $M$ will
produce a metric with the desired curvature on any compact subset of the
regular part, $M^{\text{reg}},$ of $M.$ Rather than explicitly elucidating
the aforementioned principle, we have organized the paper to make the proofs
of the main results as clear as possible. Nevertheless, it is omnipresent
and manifested in Proposition \ref{orbital estimate}, Theorems \ref{Step 1}, %
\ref{generic cheeger}, and \ref{step 3}, and Corollary \ref{Step 1} below.

We obtain the desired metric in a neighborhood of the singular strata by
performing the correct $G$--invariant conformal change. There are two key
analytic ideas that make our conformal change work.

The first is based on the universal fact, established in \cite{PetWilh2},
that the Hessian of the distance from any compact Riemannian submanifold $S$
has a prescribed asymptotic behavior at nearby points. It is stated formally
in Lemma \ref{Peter's lemma} below. Informally, let $\Omega $ be a tubular
neighborhood of $S$ on which the closest point map $Pr:\Omega
\longrightarrow S$ is defined. If $\Omega $ is small enough we get an
estimate for the Hessian of $\mathrm{dist}\left( S,\cdot \right) $ on $%
\Omega $ by exploiting the fact that the intrinsic metrics on the fibers of $%
Pr$ are asymptotically Euclidean. This generalizes the known asymptotic
estimate for the Hessian of the distance function from a point, which, in
turn, is based on the fact that a neighborhood of a point in a Riemannian
manifold is asymptotically Euclidean.

The second analytic idea is to perform a conformal change of the metric with
a function of the form $e^{2\rho \left( \mathrm{dist}\left( S,\cdot \right)
\right) },$ where $\rho :\left( 0,\infty \right) \longrightarrow \mathbb{R}$
is $C^{1}$--close to $0,$ but $\rho ^{\prime \prime }\left( t\right) <<-1$
for $t$ very close to $0.$ Our estimates for the Hessian of $\mathrm{dist}%
\left( S,\cdot \right) $ in Lemma \ref{Peter's lemma} coupled with our
choice of conformal factor give that the new metric $\tilde{g}=e^{2\rho
\circ \mathrm{dist}\left( S,\cdot \right) }g$ has a more desirable
curvature. Specifically, given any positive constants $K>0$ and $\varepsilon
>0,$ there is a choice of $\rho $ and a neighborhood $\Omega $ of $S$ so
that $\tilde{g}$ has the following property. For any plane that contains a
vector tangent to the fibers of the the closest point map $Pr:\Omega
\longrightarrow S$ the sectional curvatures of $\tilde{g}$ are bounded from
below by $K,$ and, up to symmetries of the curvature tensor, all other
components of the curvature tensor of $\tilde{g}$ differ from the
corresponding components of the curvature tensor of $g$ by no more than $%
\varepsilon $ (see Theorem \ref{conf-submanf}).

The union of the singular strata of a compact $G$--manifold need not be a
submanifold, but as it is compact and the union of submanifolds, we are able
to push through a generalization of Theorem \ref{conf-submanf} that applies
to the singular strata of a $G$--manifold. This result is Theorem \ref{uber
conf}.

Our conformal change technique will also allow us to show

\begin{theoremlet}
\label{quasi positive}\noindent 1. Given $K,\varepsilon >0$, $\left(
M,g\right) $ a Riemannian $n$--manifold with $Ric_{\left( M,g\right) }\geq
n-1$ and $p\in M,$ there is a metric $\tilde{g}$ on $M$ with 
\begin{equation*}
Ric_{\left( M,\tilde{g}\right) }\geq n-1-\varepsilon \text{ and }\mathrm{sec}%
_{\left( M,\tilde{g}\right) }|_{p}\geq K.
\end{equation*}%
\vspace*{0.1in}

\noindent 2. Given $K>0$ and $\left\{ \left( M,g_{\alpha }\right) \right\}
_{\alpha =1}^{\infty }$ a family of almost non-negatively curved Riemannian $%
n $--manifolds, and $p\in M,$ there is a sequence of almost non-negatively
curved metrics $\tilde{g}_{\alpha }$ on $M$ with 
\begin{equation*}
\mathrm{sec}_{\left( M,\tilde{g}_{\alpha }\right) }|_{p}>K.
\end{equation*}%
\vspace*{0.1in}

\noindent 3. If $\left\{ \left( M,g_{\alpha }\right) \right\} _{\alpha
=1}^{\infty }$ satisfies the hypotheses of Parts 1 and 2, then there is a
sequence of metrics $\tilde{g}_{\alpha }$ on $M$ that satisfies the
conclusions of Parts 1 and 2. \vspace*{0.1in}

\noindent 4. If, in addition, $p$ is a fixed point of an isometric $G$%
--action for $g$ or $g_{\alpha }$, then the metrics $\tilde{g}$ and $\tilde{g%
}_{\alpha }$ can be chosen to be $G$--invariant.
\end{theoremlet}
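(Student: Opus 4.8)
The plan is to reduce everything to the conformal-change machinery already developed in Theorem \ref{conf-submanf}, applied with the submanifold $S$ taken to be the single point $\{p\}$. Recall that Theorem \ref{conf-submanf} produces, for any prescribed $K,\varepsilon>0$, a conformal factor $e^{2\rho\circ\mathrm{dist}(p,\cdot)}$ supported in a tubular neighborhood $\Omega$ of $p$ such that in the new metric $\tilde g = e^{2\rho\circ\mathrm{dist}(p,\cdot)}g$ every plane containing a vector tangent to the fibers of the closest-point map $Pr:\Omega\to\{p\}$ has sectional curvature $\geq K$, while every other component of the curvature tensor is changed by at most $\varepsilon$. When $S=\{p\}$ the fibers of $Pr$ are all of $\Omega$, so \emph{every} plane at $p$ contains a fiber-tangent vector; hence $\mathrm{sec}_{\tilde g}|_p\geq K$ automatically, which is the second conclusion in Part 1 (and the conclusion of Part 2, and the sectional-curvature half of Part 3).

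For Part 1, the remaining point is to check that the Ricci curvature degrades by at most $\varepsilon$. This is where the ``all other components differ by no more than $\varepsilon$'' clause does the work: at any point $q\in\Omega$ choose an orthonormal basis $e_1=\partial_r,\ e_2,\dots,e_n$ adapted to the radial direction $r=\mathrm{dist}(p,\cdot)$. The planes $\mathrm{span}(e_1,e_j)$ are fiber-containing and hence have curvature $\geq K \geq 0$, while the planes $\mathrm{span}(e_i,e_j)$ with $i,j\geq 2$ have curvature within $\varepsilon'$ of the $g$-curvature, for $\varepsilon'$ as small as we like. Summing, $\mathrm{Ric}_{\tilde g}(e_i,e_i) \geq \mathrm{Ric}_g(e_i,e_i) - (n-1)\varepsilon' \geq n-1-(n-1)\varepsilon'$ for $i\geq 2$, and $\mathrm{Ric}_{\tilde g}(e_1,e_1)$ is a sum of $n-1$ fiber-containing sectional curvatures, hence $\geq (n-1)K$ which is certainly $\geq n-1-\varepsilon$ once $K$ is (as we may assume) large, or in any case bounded below by using that those curvatures are $\geq -\varepsilon'$ rather than $\geq K$. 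Choosing $\varepsilon' = \varepsilon/(n-1)$ gives $\mathrm{Ric}_{\tilde g}\geq n-1-\varepsilon$ on $\Omega$; outside $\Omega$ the metric is unchanged, so the bound holds globally. (One must also quote, as in the setup of Theorem \ref{conf-submanf}, that $\rho$ is $C^1$-small so that the off-diagonal Ricci terms and the comparison are uniformly controlled; this is built into that theorem.)

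Part 2 is the same computation with the hypothesis $\mathrm{Ric}_g\geq n-1$ replaced by $\mathrm{curv}\geq -1/\alpha$ and $\mathrm{Diam}\leq D$: apply the construction to each $g_\alpha$ with $K$ fixed and $\varepsilon = 1/\alpha$, noting that the conformal change is supported in a small ball and raises distances, so one checks the diameter stays bounded (shrink $\Omega$ so the added ``length'' near $p$ is $\leq D$; since $\rho$ is $C^1$-small this costs nothing) and the curvature lower bound becomes $-1/\alpha - (\text{something} \to 0)$, which after reindexing is again of the form $-1/\beta$. Part 3 is then immediate: run the Part 1 and Part 2 constructions with the \emph{same} parameters — i.e. for each $\alpha$ apply Theorem \ref{conf-submanf} to $g_\alpha$ with $K$ fixed and $\varepsilon = 1/\alpha$; the single resulting $\tilde g_\alpha$ simultaneously has $\mathrm{sec}|_p \geq K$, $\mathrm{Ric}\geq n-1-1/\alpha$, $\mathrm{curv}\geq -1/\alpha - o(1)$, and bounded diameter. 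Part 4 follows because, when $p$ is a $G$-fixed point, the function $\mathrm{dist}(p,\cdot)$ is $G$-invariant, hence so is the conformal factor $e^{2\rho\circ\mathrm{dist}(p,\cdot)}$, hence so is $\tilde g$ (resp. $\tilde g_\alpha$).

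The main obstacle is purely bookkeeping rather than conceptual: one must make sure the single neighborhood $\Omega$ and single function $\rho$ chosen from Theorem \ref{conf-submanf} can be taken small enough to control \emph{simultaneously} the Ricci drop, the sectional lower bound at $p$, and — in Parts 2 and 3 — the diameter, uniformly as $\alpha\to\infty$. Since Theorem \ref{conf-submanf}'s estimates are all local near $p$ and the conformal factor is $C^1$-small, shrinking $\Omega$ handles all of these at once; the only genuine check is that in Part 2 the lower curvature bound on the whole space (not just through $p$) is not spoiled, which again follows from the $\varepsilon$-closeness of all non-fiber curvature components together with the fact that fiber-containing planes only \emph{improve}.
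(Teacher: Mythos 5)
Your proposal is correct and follows essentially the same route as the paper, which simply observes that Theorem \ref{quasi positive} is the special case of Theorem \ref{conf-submanf} in which $S$ is the single point $\{p\}$ (so $\mathcal{H}$ is trivial, every plane near $p$ is fiber-containing, and Parts 1--4 follow from conclusions 2, 4, and 5 of that theorem exactly as you argue). Your additional bookkeeping on the Ricci drop, diameter, and $G$--invariance is consistent with what the paper leaves implicit.
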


Recall that $M$ is said to have quasi-positive curvature if it is
non-negatively curved and has positive curvature at a point. Just as the the
set of almost non-negatively curved metrics is an open neighborhood of the
set of non-negatively curved metrics, so too, the condition in the
conclusion of Part 2 defines an open neighborhood of the set of metrics with
quasi-positive curvature.

Since the metrics in Part 2 satisfy $\mathrm{sec}_{\left( M,\tilde{g}%
_{\alpha }\right) }|_{p}>K,$ they also have $\mathrm{sec}_{\left( M,\tilde{g}%
_{\alpha }\right) }>K$ in a neighborhood of $p.$ However, our construction
does not allow us to conclude that this neighborhood is independent of the
metric $\tilde{g}_{\alpha }.$ Part 2 suggests that a more interesting
neighborhood of the quasi-positively curved family is the set of almost
non-negatively curved metrics with $\mathrm{sec}_{\left( M,\tilde{g}_{\alpha
}\right) }>K$ on an open subset of $M$ that is independent of $\alpha .$ The
metrics on the Milnor spheres constructed in \cite{Wilh} are in such a
neighborhood.

The paper is organized as follows. In Section \ref{notation} we fix notation
and review the structure of $G$--manifolds. The discussion of the conformal
change occurs in Section \ref{section cnf change}, where we also prove
Theorem \ref{quasi positive}. Cheeger deformations are discussed in Sections %
\ref{cheeg def} and \ref{Infinitisimal}. Section \ref{cheeg def} reviews the
generalities and also discusses the $A$--tensor of the Cheeger submersion on
compact subsets of $M^{\text{reg}}.$ Section \ref{Infinitisimal} covers the
infinitesimal geometry near the singular orbits, especially as it relates to
Cheeger deformations. In Section \ref{two step}, we analyze the curvature of
a general $G$--manifold after performing a long term Cheeger deformation,
followed by the conformal change of Theorem \ref{uber conf}. Section \ref%
{ricci lift} concludes the proof of Theorem \ref{main}. Section \ref{alm
nonneg lift} finishes the proof of Theorem \ref{alm nonneg thm}, and Section %
\ref{examples section} contains the proof of Theorem \ref{examples}.

The sequence of metric deformations used to prove Theorem \ref{alm nonneg
thm} can also be used to prove Theorem \ref{main}, and hence yields a proof
of Theorem \ref{supplement}. However, the reader who is only interested in
the proof of Theorem \ref{main} can skip Sections \ref{Infinitisimal}, \ref%
{two step}, \ref{alm nonneg lift}, and \ref{examples section}. Similarly,
the reader only interested in the proof of Theorem \ref{alm nonneg thm} can
skip Sections \ref{ricci lift} and \ref{examples section}.

\begin{remarknonum}
If $\pi _{1}\left( G\right) $ is finite, then the hypothesis of Theorem \ref%
{main} that the principal orbits, $G/H,$ have finite fundamental group is
satisfied, but the converse is false. For example, the principal orbits
could be Berger spheres represented as $\left( S^{3}\times S^{1}\right)
/\Delta \left( S^{1}\right) .$ So this is a case where Theorem \ref{main} is
applicable even though $\pi _{1}\left( G\right) $ is infinite.

On the other hand, it would be desirable if the hypothesis that $\pi
_{1}\left( G/H\right) $ is finite could be replaced by $\pi _{1}\left(
M\right) $ is finite. For example the round three sphere admits an isometric
torus action with trivial principal isotropy, hence our method does not
apply to this simple example.
\end{remarknonum}

\textbf{Acknowledgments: }We thank Wilderich Tuschmann and Burkhard Wilking
for stimulating conversations on this paper. We are grateful to Pedro Sol%
\'{o}rzano for numerous discussions with the second author on possible
applications of Theorems \ref{main} and \ref{alm nonneg thm} and to the
referee for a very thorough critique of the manuscript.

Many of the foundational ideas that allow us to understand the effect of
both of our metric deformations were developed in the course of the second
author's work with Peter Petersen in \cite{PetWilh2}; so we are especially
indebted to him for numerous conversations with the second author on
curvature calculations over the years.

\section{Notation, Conventions, and Background\label{notation}}

In this section we will establish notation and review some background
material that we will use in the rest of this paper.

We assume the reader is familiar with the basics of Riemannian submersions
as discussed in \cite{Gray} or \cite{O'Neill}. We adopt the notation of \cite%
{O'Neill} for the $A$ and $T$ tensors.

For $r>0$ and a subset $A$ of a metric space $X$ we set 
\begin{equation*}
B\left( A,r\right) \equiv \left\{ \left. x\in X\text{ }\right\vert \text{ 
\textrm{dist}}\left( x,A\right) <r\right\} .
\end{equation*}

Let $S$ be a compact submanifold of a compact Riemannian manifold $\left(
M,g\right) $, and let $inj\left( S\right) $ be the normal injectivity radius
of $S.$ Let $\Omega $ be an open subset of $B\left( S,\frac{inj\left(
S\right) }{2}\right) ,$ the $\frac{inj\left( S\right) }{2}$--tubular
neighborhood of $S.$

We give $\nu \left( S\right) ,$ the normal bundle of $S,$ the Sasaki metric 
\cite{Sas}. That is, the foot point map $\nu \left( S\right) \longrightarrow
S$ is a Riemannian submersion, the metric on the vertical distribution comes
from $g,$ and the horizontal distribution, $\mathcal{\tilde{H}},$ is
determined by normal parallel transport along $S.$

Let 
\begin{equation*}
\tilde{X}\oplus \mathcal{\tilde{V}}
\end{equation*}%
be the orthogonal decomposition of the vertical distribution of $\nu \left(
S\right) \longrightarrow S,$ where $\tilde{X}$ is the radial, unit field
from the $0$--section$,$ $\nu _{0}\left( S\right) $, and $\mathcal{\tilde{V}}
$ is the orthogonal complement of $\tilde{X}.$ Set 
\begin{eqnarray}
\mathcal{H} &\equiv &d\exp _{S}^{\perp }\left( \mathcal{\tilde{H}}\right) , 
\notag \\
\mathcal{V} &\equiv &d\exp _{S}^{\perp }\left( \mathcal{\tilde{V}}\right) ,
\label{defn of X} \\
X &=&d\exp _{S}^{\perp }\left( \tilde{X}\right) ,  \notag
\end{eqnarray}%
where $\exp _{S}^{\perp }:\nu \left( S\right) \longrightarrow M$ is the
normal exponential map.

Note that $X\oplus \mathcal{V}$ is the tangent space to the fibers of the
closest point map $Pr:\Omega \setminus S\longrightarrow S$, and on $\Omega
\setminus S,$%
\begin{equation*}
X=\mathrm{grad}\left( dist\left( S,\cdot )\right) \right) .
\end{equation*}%
The distribution $\mathcal{H}$ need not be orthogonal to $X\oplus \mathcal{V}%
;$ however, we will show in Proposition \ref{angle tween H and V} that it is
asymptotically orthogonal to $X\oplus \mathcal{V}$ near $S,$ and hence is
very close to $\overline{\mathcal{H}},$ the distribution that is orthogonal
to $span\left\{ X,\mathcal{V}\right\} .$

We use superscripts to denote components of vectors in subspaces. So, for
example, $V^{\mathrm{span}\left\{ X\right\} }$ is the component of $V$ in $%
\mathrm{span}\left\{ X\right\} $ and $V^{\mathcal{V}}$ is the component of $%
V $ in $\mathcal{V}.$

We write conformal metric changes, $\tilde{g}=e^{2f}g.$ We let $\tilde{\nabla%
},$ $\tilde{R},$ $\widetilde{\sec }$ and $\widetilde{Ric}$ denote the
covariant derivative, curvature tensor, sectional curvature and Ricci tensor
of $\tilde{g}.$ We denote $R\left( X,Y,Y,X\right) $ by $\mathrm{curv}\left(
X,Y\right) .$ We write directional derivatives as $D_{V}f,$ and parameterize
geodesics by arc length.

Following \cite{OSY}, we let $\tau :\mathbb{R}^{k}\rightarrow \mathbb{R}_{+}$
be any function that satisfies 
\begin{equation}
\lim_{x_{1},\ldots ,x_{k}\rightarrow 0}\tau \left( x_{1},\ldots
,x_{k}\right) =0.  \label{dfn of tau eqn}
\end{equation}%
When making an estimate with a function $\tau ,$ we implicitly assert the
existence of such a function for which the estimate holds.

\subsection{ The Stratification of G--Manifolds}

Let $G$ act isometrically on $M$ with both $M$ and $G$ compact. For $x\in M,$
we let $G\left( x\right) $ be the orbit of $x,$ $G_{x}$ be the isotropy
subgroup at $x,$ and

\begin{equation*}
\mathfrak{g=g}_{x}\oplus \mathfrak{m}_{x}
\end{equation*}%
be the decomposition of the Lie Algebra of $G$ into $\mathfrak{g}_{x}$, the
Lie Algebra of $G_{x},$ and $\mathfrak{m}_{x}$ the orthogonal complement of $%
\mathfrak{g}_{x}$ with respect to a fixed bi-invariant metric on $G$, $g_{%
\mathrm{bi}}.$

If $G$ acts isometrically on a Riemannian manifold $M$ and $k\in \mathfrak{g}%
,$ we let $k_{M}$ denote the Killing field on $M$ generated by $k.$

Recall that $G\left( x\right) $ is called a \emph{principal }orbit if and
only if for all $y\in M$, there is a $g\in G$ with $G_{x}\subset
gG_{y}g^{-1}.$ An orbit, $G\left( x\right) ,$ is exceptional if and only if $%
G_{x}$ is a finite extension of some principal isotropy subgroup. Otherwise $%
G\left( x\right) $ is called a singular orbit. All of our arguments about
singular orbits apply to exceptional orbits, so for this paper we use the
term \emph{singular orbit }to mean any \emph{non--principal }orbit.

There is a natural stratification of $M$ into smooth submanifolds by orbit
type. The stratum of $x\in M$ is defined to be 
\begin{equation*}
S\left( G_{x}\right) \equiv \left\{ y\in M\text{ }|\text{ }\exists g\in G%
\text{ with }G_{x}=gG_{y}g^{-1}\right\} .
\end{equation*}%
We note that $\overline{S\left( G_{x}\right) }$ is 
\begin{equation*}
\overline{S\left( G_{x}\right) }\equiv \left\{ y\in M\text{ }|\text{ }%
\exists g\in G\text{ with }G_{x}\subset gG_{y}g^{-1}\right\} .
\end{equation*}

Partially order the closed sets $\overline{S\left( G_{x}\right) }$ by
inclusion. If $\overline{S\left( G_{x}\right) }$ is minimal with respect to
this partial order, then $\overline{S\left( G_{x}\right) }=S\left(
G_{x}\right) $ is a closed submanifold.

The union of the principal orbits is called the regular part of $M,$ which
we denote by $M^{\text{reg}}.$ Recall that we have a proper Riemannian
submersion 
\begin{equation*}
\pi ^{\text{reg}}=\pi |_{M^{\text{reg}}}:M^{\text{reg}}\longrightarrow M^{%
\text{reg}}/G.
\end{equation*}

Throughout the paper we assume that $G$ is a compact, connected Lie group
acting isometrically and effectively on a compact Riemannian $n$--manifold $%
\left( M,g\right) $ with singular strata, $S_{1},S_{2},\ldots ,S_{p}.$

\begin{proposition}
\label{Omega_i Dfn}There is a neighborhood $\Omega \equiv \cup \Omega ^{i}$
of the singular strata, $\cup S_{i},$ and for each $i,$ a compact subset $%
\mathcal{C}_{i}\subset S_{i}$. For each $i$, $\Omega ^{i}$ and $\mathcal{C}%
_{i}$ are related as follows.

Let $\mathrm{int}\left( \mathcal{C}_{i}\right) $ be the interior of $%
\mathcal{C}_{i}$ when viewed as a subset of $S_{i}.$ Let $\mathrm{inj}\left( 
\mathcal{C}_{i}\right) $ be the injectivity radius of the normal bundle $\nu
\left( S_{i}\right) |_{\mathcal{C}_{i}},$ and let $\nu _{0}\left(
S_{i}\right) |_{\mathcal{C}_{i}}$ be the image of the zero section of $\nu
\left( S_{i}\right) |_{\mathcal{C}_{i}}\longrightarrow \mathcal{C}_{i}.$ Then%
\begin{equation*}
\Omega ^{i}=\exp _{S_{i}}^{\perp }\left( B\left( \nu _{0}\left( S_{i}\right)
|_{\mathrm{int}\left( \mathcal{C}_{i}\right) },r_{i}\right) \right) ,
\end{equation*}%
where $r_{i}\in \left( 0,\frac{\mathrm{inj}\left( \mathcal{C}_{i}\right) }{2}%
\right) .$
\end{proposition}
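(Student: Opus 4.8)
The statement is essentially a packaging lemma: it asserts that one can simultaneously choose, for each singular stratum $S_i$, a compact "core" $\mathcal{C}_i \subset S_i$ and a tubular radius $r_i$ so that the resulting tubes $\Omega^i = \exp_{S_i}^\perp(B(\nu_0(S_i)|_{\mathrm{int}(\mathcal{C}_i)}, r_i))$ fit together into an open set $\Omega = \cup_i \Omega^i$ that contains all of $\cup_i S_i$. The only real content is that the $S_i$ need not be closed in $M$ (only their closures are, and $\overline{S_i}$ is a union of lower strata), so one cannot simply take $\mathcal{C}_i = S_i$; and that the normal injectivity radius $\mathrm{inj}(\mathcal{C}_i)$ must stay positive, which forces $\mathcal{C}_i$ to be compact and to sit inside $S_i$ with a little room.

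The plan is to build the tubes by a downward induction on the partial order on the closed strata $\overline{S(G_x)}$ described just above the statement. Order the singular strata $S_1, \dots, S_p$ so that $\dim S_1 \le \dim S_2 \le \cdots$, or more precisely so that $\overline{S_j} \supset S_i$ implies $j \le i$ is \emph{false} — i.e. list the strata so that whenever $S_i \subset \overline{S_j} \setminus S_j$ one has $i < j$; the minimal closed strata (which by the discussion above are themselves closed submanifolds) come first. First I would handle the minimal strata: for such an $S_i$ we have $\overline{S_i} = S_i$ closed in $M$, so $\mathcal{C}_i = S_i$ works and $\mathrm{inj}(S_i) > 0$ by compactness of $M$; pick any $r_i \in (0, \mathrm{inj}(S_i)/2)$. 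Then proceeding up the order, suppose $\Omega^1, \dots, \Omega^{k-1}$ have been chosen and already cover $S_1 \cup \cdots \cup S_{k-1}$, together with all strata in $\overline{S_k} \setminus S_k$ (these are among $S_1, \dots, S_{k-1}$ by our ordering). The set $\overline{S_k} \setminus (\Omega^1 \cup \cdots \cup \Omega^{k-1})$ is then a closed subset of $M$ that misses $\overline{S_k}\setminus S_k$, hence is a compact subset of $S_k$; call it $\mathcal{C}_k'$. Enlarge it slightly to a compact $\mathcal{C}_k \subset S_k$ with $\mathcal{C}_k' \subset \mathrm{int}(\mathcal{C}_k)$ (possible since $S_k$ is a manifold and $\mathcal{C}_k'$ is compact in it). Now $\mathrm{inj}(\mathcal{C}_k) > 0$ because $\mathcal{C}_k$ is compact and contained in the submanifold $S_k$, so the normal exponential map of $S_k$ restricted over $\mathcal{C}_k$ is an embedding on $B(\nu_0(S_k)|_{\mathcal{C}_k}, \mathrm{inj}(\mathcal{C}_k))$; choose $r_k \in (0, \mathrm{inj}(\mathcal{C}_k)/2)$. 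Since $\mathcal{C}_k' \subset \mathrm{int}(\mathcal{C}_k)$, the tube $\Omega^k$ over $\mathrm{int}(\mathcal{C}_k)$ contains $\mathcal{C}_k'$, and hence $\Omega^1 \cup \cdots \cup \Omega^k$ now covers $\overline{S_k}$, in particular $S_k$.

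After the induction terminates at $k = p$ we have $\Omega = \cup_i \Omega^i \supset \cup_i S_i$, and $\Omega$ is open as a finite union of the open sets $\Omega^i$. (Each $\Omega^i$ is open in $M$: the normal exponential map $\exp_{S_i}^\perp$ is a diffeomorphism from $B(\nu_0(S_i)|_{\mathrm{int}(\mathcal{C}_i)}, r_i)$ onto its image because $r_i < \mathrm{inj}(\mathcal{C}_i)/2$ and $\mathrm{int}(\mathcal{C}_i)$ is open in $S_i$, and the image is open in $M$ since the domain is open in the total space $\nu(S_i)$.) I would also remark, if needed downstream, that one is free to shrink the $r_i$ further without losing the covering property, as long as one simultaneously re-runs the induction — this flexibility is what later sections exploit.

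The main obstacle — and it is a mild one — is bookkeeping the inductive covering condition correctly: one must carry along the \emph{joint} statement "$\Omega^1 \cup \cdots \cup \Omega^k$ covers $\overline{S_k}$" rather than just "covers $S_k$", so that at stage $k+1$ the leftover set $\overline{S_{k+1}} \setminus (\Omega^1 \cup \cdots \cup \Omega^k)$ genuinely avoids all lower strata and is therefore a compact subset of the single manifold $S_{k+1}$ (where normal injectivity radius makes sense and is positive). The choice of ordering of the strata, compatible with the partial order on closures, is what makes this leftover set land inside $S_{k+1}$; getting that ordering right is the only place care is required.
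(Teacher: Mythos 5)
Your proposal is correct and is essentially the paper's own argument: an induction over the singular strata ordered compatibly with the partial order on their closures (the paper phrases this via the ``Descendant Number''), taking at each stage the part of the stratum not yet covered as the new compact core and tubing over its interior. The only difference is bookkeeping — the paper reserves slack by carrying two nested tubes of radii $r/2$ and $r$ and removing the smaller one, while you remove the full tubes and then enlarge the leftover compact set into the interior of a slightly bigger core — and these devices are interchangeable.
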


\begin{proof}
We define the \emph{Descendant Number}\ of a stratum $S_{i}$ to be the
integer, $\mathcal{D}\left( S_{i}\right) ,$ if there are precisely $\mathcal{%
D}\left( S_{i}\right) $ strata contained in $\overline{S_{i}}.$ Call the
union of the strata with Descendant Number $l,$ $\mathbb{S}^{l}.$ We denote
by $S_{\alpha }^{l}$ the strata so that 
\begin{equation*}
\mathbb{S}^{l}\setminus \mathbb{S}^{l-1}=\cup _{\alpha \in I_{l}}S_{\alpha
}^{l}.
\end{equation*}

The first step to prove the proposition is to establish the following
induction statement.

\noindent \textbf{Induction Statement: }For each $l,$ there are compact
subsets $\mathcal{C}_{l,\alpha }$ of $S_{\alpha }^{l}$ and neighborhoods, $%
U^{l}$ and $V^{l},$ of $\mathbb{S}^{l}$ of the form 
\begin{equation*}
V^{l}\equiv \cup _{k=1}^{l}\cup _{\alpha \in I_{k}}V_{\alpha }^{k}
\end{equation*}%
and 
\begin{equation*}
U^{l}\equiv \cup _{k=1}^{l}\cup _{\alpha \in I_{k}}U_{\alpha }^{k},
\end{equation*}
where 
\begin{eqnarray*}
V_{\alpha }^{l} &=&\exp _{S_{\alpha }^{l}}^{\perp }\left( B\left( \nu
_{0}\left( S_{\alpha }^{l}\right) |_{\mathrm{int}\left( \mathcal{C}%
_{l,\alpha }\right) },r_{l,\alpha }\right) \right) , \\
U_{\alpha }^{l} &=&\exp _{S_{\alpha }^{l}}^{\perp }\left( B\left( \nu
_{0}\left( S_{\alpha }^{l}\right) |_{\mathrm{int}\left( \mathcal{C}%
_{l,\alpha }\right) },\frac{r_{l,\alpha }}{2}\right) \right) ,
\end{eqnarray*}%
and $r_{l,\alpha }\in \left( 0,\frac{\mathrm{inj}\left( \mathcal{C}%
_{l,\alpha }\right) }{2}\right) .$

We prove this statement by induction on the Descendant Number. The strata
with Descendant Number $1$ contain no strata other than themselves and hence
are compact submanifolds. Let $\mathcal{C}_{1,\alpha }=S_{\alpha }^{1}$ and
for $r_{1,\alpha }\in \left( 0,\frac{\mathrm{inj}\left( S_{\alpha
}^{1}\right) }{2}\right) ,$ let $V_{\alpha }^{1}\equiv B\left( S_{\alpha
}^{1},r_{1,\alpha }\right) $ and $U_{\alpha }^{1}\equiv B\left( S_{\alpha
}^{1},\frac{r_{1,\alpha }}{2}\right) .$

Suppose we have constructed $U^{1},\ldots ,U^{l},$ $V^{1},\ldots ,V^{l},$
and $\left\{ \mathcal{C}_{1,\alpha }\right\} _{\alpha \in I_{1}},\ldots
,\left\{ \mathcal{C}_{l,\alpha }\right\} _{\alpha \in I_{l}}$ with the
desired properties. Set $\mathcal{C}_{l+1,\alpha }=S_{\alpha
}^{l+1}\setminus \left\{ U^{l}\cap S_{\alpha }^{l+1}\right\} .$ Note that $%
\mathbb{S}^{l+1}\subset V^{l}\dbigcup \cup _{\alpha \in I_{l+1}}\mathrm{int}%
\left( \mathcal{C}_{l+1,\alpha }\right) .$ For $r_{l+1,\alpha }\in \left( 0,%
\frac{\mathrm{inj}\left( \mathcal{C}_{l+1,\alpha }\right) }{2}\right) $ we
set 
\begin{eqnarray*}
V_{\alpha }^{l+1} &=&\exp _{S_{\alpha }^{l+1}}^{\perp }\left( B\left( \nu
_{0}\left( S_{\alpha }^{l+1}\right) |_{\mathrm{int}\left( \mathcal{C}%
_{l+1,\alpha }\right) },r_{l+1,\alpha }\right) \right) \text{ and} \\
U_{\alpha }^{l+1} &=&\exp _{S_{\alpha }^{l+1}}^{\perp }\left( B\left( \nu
_{0}\left( S_{\alpha }^{l+1}\right) |_{\mathrm{int}\left( \mathcal{C}%
_{l+1,\alpha }\right) },\frac{r_{l+1,\alpha }}{2}\right) \right) .
\end{eqnarray*}

Note that 
\begin{equation*}
V^{l+1}\equiv \cup _{k=1}^{l+1}\cup _{\alpha \in I_{k}}V_{\alpha }^{k}
\end{equation*}%
and 
\begin{equation*}
U^{l+1}\equiv \cup _{k=1}^{l+1}\cup _{\alpha \in I_{k}}U_{\alpha }^{k}
\end{equation*}%
are neighborhoods of $\mathbb{S}^{l+1},$ proving the induction statement.

The proposition follows from the induction statement by re-indexing so that
the $S_{\alpha }^{l}$ become the $S_{i}$ and the $V_{\alpha }^{k}$ become
the $\Omega ^{i}$.
\end{proof}

Notice that for each $\Omega ^{i}$ we have a splitting of $T\left( \Omega
^{i}\setminus \mathcal{S}^{i}\right) $ as in Equation \ref{defn of X}. We
call this splitting 
\begin{equation}
\mathcal{H}^{i}\oplus \mathcal{V}^{i}\oplus \mathrm{span}\left\{
X^{i}\right\} .  \label{HVX splitting}
\end{equation}

\section{Conformal Change\label{section cnf change}}

In this section we establish a universal property of any compact submanifold 
$S$ of any complete Riemannian manifold, $\left( M,g\right) .$ It is stated
formally in Theorem \ref{conf-submanf}, below, which we describe briefly
here.

Given any positive constants $K$ and $\varepsilon ,$ there is a conformal
change $\tilde{g}$ of $g$ and there are neighborhoods $\Omega _{1}\subset
\Omega _{3}\subset B\left( S,\frac{inj\left( S\right) }{2}\right) $ so that
the new metric is $C^{1}$--close to $g$, agrees with $g$ outside of $\Omega
_{3},$ and also has the following property.

For any plane that contains a vector in $\mathrm{span}\left\{ X|_{\Omega
_{1}}\right\} \oplus \mathcal{V}|_{\Omega _{1}}$ the sectional curvatures of 
$\tilde{g}$ are bounded from below by $K,$ and, up to symmetries of the
curvature tensor, all other components of $\tilde{R}$ differ from the
corresponding components of $R$ by no more than $\varepsilon .$ To prove
this we exploit some universal estimates for the asymptotic behavior of $%
Hess_{\mathrm{dist}\left( S,\cdot \right) }$ near $S.$

We then generalize the conformal change result to a neighborhood of the
union of the singular strata of a $G$--action in Theorem \ref{uber conf}.
Since the singular strata are typically non-compact, we first prove an
intermediate result, Theorem \ref{non-compact conf}, that generalizes
Theorem \ref{conf-submanf} to compact subsets of non-compact manifolds. This
will allow us to extend Theorem \ref{conf-submanf} to the union of the
singular strata, in part because each stratum has a compact exhaustion.

\subsection{Conformal Change Around a Compact Submanifold}

\begin{theorem}
\label{conf-submanf}Let $\left( M,g\right) $ be a compact Riemannian $n$%
--manifold. Let $S$ be a compact, smooth submanifold of $\left( M,g\right) $%
. For any $\varepsilon ,K>0$ there are neighborhoods $\Omega _{1}\subset
\Omega _{3}$ of $S$ and a metric $\tilde{g}=e^{2f}g$ with the following
properties.\vspace*{0.1in}

\noindent 1. The metrics $\tilde{g}$ and $g$ coincide on $M\setminus \Omega
_{3}.$\vspace*{0.1in}

\noindent 2. For all $V\in \mathrm{span}\left\{ X\right\} \oplus \mathcal{V}$
and for all $Z\in T\Omega _{1}$ 
\begin{equation}
\widetilde{\mathrm{sec}}\left( V,Z\right) |_{\Omega _{1}}>K.  \label{sec > K}
\end{equation}%
\vspace*{0.1in}

\noindent 3. If $\left\{ E_{1},\ldots ,E_{n}\right\} $ is a local
orthonormal frame for $\Omega _{3}$ with $X=E_{1}$ and \textrm{span}$\left\{
E_{2},\ldots ,E_{r}\right\} =\mathcal{V}$ for $2\leq r\leq n,$ then 
\begin{equation*}
\left\vert \tilde{R}\left( E_{i},E_{j},E_{k},E_{l}\right) -R\left(
E_{i},E_{j},E_{k},E_{l}\right) \right\vert <\varepsilon ,
\end{equation*}%
except if the quadruple corresponds, up to a symmetry of the curvature
tensor, to the sectional curvature of a plane containing a vector $V\in 
\mathrm{span}\left\{ X\right\} \cup \mathcal{V}$.\vspace*{0.1in}

\noindent 4. For all $U,W\in TM.$%
\begin{equation*}
\widetilde{\mathrm{sec}}\left( U,W\right) >\mathrm{sec}\left( U,W\right)
-\varepsilon .
\end{equation*}%
\vspace*{0.1in}

\noindent 5. If $G$ acts isometrically on $\left( M,g\right) $ and $S$ is $G$%
--invariant, then we may choose $\tilde{g}$ to be $G$--invariant.
\end{theorem}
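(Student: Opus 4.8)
The plan is to perform a conformal change with factor $e^{2\rho\circ\mathrm{dist}(S,\cdot)}$, where $\rho:(0,\infty)\to\mathbb{R}$ is chosen $C^1$-small but with $\rho''\ll-1$ near $0$, exactly as sketched in the introduction. The starting point is the standard conformal change formulas: for $\tilde g=e^{2f}g$ one has
\[
\widetilde{\mathrm{curv}}(U,W)=e^{-2f}\Bigl(\mathrm{curv}(U,W)-\mathrm{Hess}\,f(U,U)|W|^2-\mathrm{Hess}\,f(W,W)|U|^2+2\,\mathrm{Hess}\,f(U,W)\langle U,W\rangle
+\bigl(\langle\nabla f,U\rangle^2+\langle\nabla f,W\rangle^2\bigr)|U\wedge W|^2-\cdots\Bigr),
\]
and for $f=\rho\circ\mathrm{dist}(S,\cdot)$ we have $\nabla f=\rho' X$ and $\mathrm{Hess}\,f=\rho''\,dr\otimes dr+\rho'\,\mathrm{Hess}\,r$. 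First I would invoke Lemma \ref{Peter's lemma}: on a small enough tubular neighborhood $\Omega$, $\mathrm{Hess}\,r$ is close (as measured by a $\tau$-function of $r$) to the model Hessian of the distance from a submanifold, i.e.\ it is $\approx 0$ on $\mathrm{span}\{X\}$, $\approx\frac1r g$ on $\mathcal V$, and $O(1)$ on $\overline{\mathcal H}$ (and the off-diagonal $X$-$\mathcal V$, $X$-$\overline{\mathcal H}$ blocks are $\tau(r)$-small). I would also invoke Proposition \ref{angle tween H and V} so that $\mathcal H$ may be replaced by $\overline{\mathcal H}$ up to $\tau(r)$ errors.

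Next, choose $\rho$ concretely: fix a small $\delta$ and a smooth $\rho$ supported in $[0,\delta)$ with $\rho'\ge0$ large-ish, $\rho''$ very negative for $r$ near $0$, $\rho$ and $\rho'$ uniformly $C^0$-small, and tapering to $0$ near $r=\delta$; the neighborhoods are then $\Omega_1=B(S,\delta_1)$, $\Omega_3=B(S,\delta)$ for suitable $\delta_1<\delta$. With this choice I would read off the curvature terms. For a plane spanned by $V\in\mathrm{span}\{X\}\oplus\mathcal V$ and any $Z$: if $V$ has a nonzero $\mathcal V$-component, the dominant contribution is $-\rho'\,\mathrm{Hess}\,r(V,V)\approx -\rho'/r$, which is large and \emph{negative}, so we actually want to be careful — the sign that helps is $+\langle\nabla f,U\rangle^2+\langle\nabla f,W\rangle^2$ when the plane contains $X$, giving $+(\rho')^2$, and the $\rho''$ term $-\rho''|dr(V)|^2|Z|^2$, which is large and positive precisely when $V$ or $Z$ has an $X$-component. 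The correct bookkeeping (this is the heart of the argument, and is the content of the analogous estimate in \cite{PetWilh2}) is: writing $V=aX+V^{\mathcal V}$ and splitting $Z$, the terms involving $\rho''$ contribute $-\rho''(a^2|Z|^2+|dr(Z)|^2|V|^2)$ after accounting for the $\mathrm{Hess}\,f$ on-diagonal and mixed pieces, and since any plane containing a vector in $\mathrm{span}\{X\}\oplus\mathcal V$ meets $\mathrm{span}\{X\}$ nontrivially after a rotation within the plane (one can always rotate so one basis vector is $\perp\mathcal V\oplus\overline{\mathcal H}$'s complement... more precisely: the plane is not contained in $\overline{\mathcal H}$), one gets a uniformly positive coefficient times $(-\rho'')$. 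Choosing $-\rho''$ large enough near $0$ and $\delta,\delta_1$ small forces $\widetilde{\mathrm{sec}}(V,Z)>K$ on $\Omega_1$, giving Part 2. For Part 3, the quadruples $R(E_i,E_j,E_k,E_l)$ not of sectional-curvature-of-a-bad-plane type only see $\rho'$ and $\rho''\cdot(\text{small})$ terms or $\mathrm{Hess}\,r$ terms multiplied by $\rho'$ (which is $C^0$-small); choosing $\|\rho\|_{C^1}$ small and $\delta$ small makes these $<\varepsilon$. Part 4 is the cheap consequence: $\widetilde{\mathrm{sec}}(U,W)=\mathrm{sec}(U,W)+(\text{conformal terms})$, and for \emph{every} plane the conformal terms are either manifestly $\ge$ (small negative) — because the genuinely large terms $-\rho''|dr|^2$ and $(\rho')^2$ come with $+$ signs — or bounded by $\varepsilon$; one checks the only possibly-large negative term is $-\rho'\,\mathrm{Hess}\,r$ on $\mathcal V$, but there $-\rho''$ on the complementary $X$-direction dominates, and on planes avoiding $\mathcal V$ all large terms are nonnegative. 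Part 1 is immediate from $\mathrm{supp}\,\rho\subset[0,\delta)$. Part 5: if $G$ acts isometrically and $S$ is $G$-invariant, then $\mathrm{dist}(S,\cdot)$ is $G$-invariant, hence $f=\rho\circ\mathrm{dist}(S,\cdot)$ is $G$-invariant and so is $\tilde g=e^{2f}g$.

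The main obstacle is Part 2 together with the compatibility demanded by Parts 3 and 4: the quantifiers are delicate because $\rho''$ must be chosen \emph{after} $K$ but the $\tau(r)$-errors from Lemma \ref{Peter's lemma} depend on $\delta$, which must in turn be chosen small enough that $\rho'/r$-type error terms (with $\rho'$ fixed small) don't overwhelm $-\rho''$. The honest way through is: (i) fix $\|\rho\|_{C^1}\le\eta$ for $\eta=\eta(\varepsilon)$ small, which handles Parts 3 and 4's ``$\varepsilon$'' clauses for whatever $\delta$ we later pick; (ii) pick $\delta$ small enough that Lemma \ref{Peter's lemma}'s errors and Proposition \ref{angle tween H and V}'s errors are controlled on $B(S,\delta)$; (iii) only then choose the profile of $-\rho''$ on a still-smaller $[0,\delta_1]$ to be large enough to beat $K$ plus all the now-fixed error constants, while keeping $\|\rho\|_{C^1}\le\eta$ — this last is possible since one can make $\rho''$ hugely negative on a tiny interval while keeping $\rho,\rho'$ small (e.g.\ $\rho(r)=-\epsilon_0\,\psi(r/\delta_1)$ with $\psi$ a fixed bump whose second derivative scales like $\delta_1^{-2}$). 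I would present the estimate as a single computation of $\widetilde{\mathrm{curv}}(E_i\wedge E_j)$ case-split by how many of the four indices lie in $\{1\}\cup\{2,\dots,r\}$, citing the submanifold Hessian asymptotics at each step, and defer the more elaborate singular-strata version to Theorem \ref{uber conf}.
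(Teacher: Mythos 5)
Your overall strategy matches the paper's (conformal factor $e^{2\rho\circ\mathrm{dist}(S,\cdot)}$, the Hessian asymptotics of Lemma \ref{Peter's lemma}, the near-orthogonality of $\mathcal{H}$ and $\mathcal{V}$ from Proposition \ref{angle tween H and V}, and the quantifier ordering $\varepsilon,K\mapsto\delta\mapsto$ radii), but the heart of the curvature estimate for Part 2 is wrong as you have set it up. First, the sign: you take $\rho'\ge 0$ and treat the term $-\rho'\,\mathrm{Hess}_{\mathrm{dist}(S,\cdot)}(V,V)\approx-\tfrac{\rho'}{r}\lvert V^{\mathcal V}\rvert^{2}$ as a large \emph{negative} nuisance to be dominated. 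In fact this term is the only possible source of the bound $\widetilde{\sec}(V,Z)>K$ when $V\in\mathcal{V}$ and $Z\in\overline{\mathcal{H}}$ (or $Z\in\mathcal{V}$): such a plane contains no vector with a nonzero $X$-component, so neither $-\rho''\lvert dr(\cdot)\rvert^{2}$ nor $(\rho')^{2}$ can help there. Your rescue, ``any plane containing a vector in $\mathrm{span}\{X\}\oplus\mathcal V$ meets $\mathrm{span}\{X\}$ nontrivially after a rotation within the plane,'' is false: $\mathrm{span}\{V,Z\}$ with $V\in\mathcal V$, $Z\in\overline{\mathcal H}$ is a counterexample (not being contained in $\overline{\mathcal H}$ does not put $X$ in the plane). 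With your sign convention the $\tfrac{\rho'}{r}$ term actually drives these curvatures down, so both Part 2 and your argument for Part 4 fail on exactly these planes; your hypotheses on $\rho$ are also internally inconsistent ($\rho'\ge0$ ``large-ish'' versus $\rho'$ uniformly small, and $\rho''\ll0$ near $0$ forces $\rho'$ to become negative unless $\rho'(0)$ is large).

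The fix is the paper's: take $\rho'\le 0$, $\rho''\le 0$ near $S$, with all odd-order derivatives of $\rho$ vanishing at $0$ (so $\rho'(0)=0$ and $e^{2f}$ is smooth), and calibrate $-\rho''$ on $[0,\sigma_1]$ to lie between $K+1-\min\sec_g$ and $K+2-\min\sec_g$. Then the Fundamental Theorem of Calculus gives $-\rho'(t)/t\ge K+1-\min\sec_g$ on $(0,\sigma_1]$, so $-\tfrac{f'}{r}\lvert V^{\mathcal V}\rvert^{2}\lvert Z\rvert^{2}$ is the large \emph{positive} term handling the $\mathcal V$-directions, while $-f''$ handles the $X$-direction; note it is not enough to make $\rho''$ ``hugely negative on a tiny interval''--what Part 2 needs is the pointwise lower bound on $-\rho'(t)/t$ for every $t\le\sigma_1$, which is what the $\rho''$-condition plus $\rho'(0)=0$ delivers. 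The $C^{1}$-smallness required for Parts 3 and 4 then comes from shrinking $\sigma_1,\sigma_3$, since $-\rho''$ only needs to be of size roughly $K$, not $\delta_1^{-2}$. Your Parts 1, 3, and 5, and the quantifier discussion, are otherwise in line with the paper (Part 5 is exactly the observation that $f$ is a function of $\mathrm{dist}(S,\cdot)$).
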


\begin{remark}
While this theorem does not imply that $\Omega _{1}$ is almost
non-negatively curved, we can conclude, with appropriate choices of $%
\varepsilon $ and $K,$ that $Ric_{\tilde{g}}|_{\Omega _{1}}>1.$
\end{remark}

We get Theorem \ref{quasi positive} by applying Theorem \ref{conf-submanf}
in the special case when $S$ is a point.

In our proof of Theorem \ref{conf-submanf}, our conformal factor will have
the form $e^{2f},$ where $f=\rho \circ \mathrm{dist}\left( S,\cdot \right) $%
, $\rho :\left[ 0,\infty \right) \longrightarrow \mathbb{R}$ is $C^{\infty
}, $ satisfies $\rho |_{\left( \frac{inj\left( S\right) }{2},\infty \right)
}\equiv 0,$ and will be further specified later. We set $\tilde{g}=e^{2f}g.$

For ease of notation we set, 
\begin{eqnarray*}
f^{\prime } &\equiv &\rho ^{\prime }\circ \mathrm{dist}\left( S,\cdot
\right) , \\
\mathrm{grad}f &=&f^{\prime }X, \\
f^{\prime \prime } &\equiv &\rho ^{\prime \prime }\circ \mathrm{dist}\left(
S,\cdot \right) .
\end{eqnarray*}%
The main step to prove Theorem \ref{conf-submanf} is the following.

\begin{keylemma}
\label{conformal curv} For every $\varepsilon ,K>0,$ there is a $\delta >0,$
and a $\sigma _{1}\in \left( 0,\frac{injS}{2}\right) $ so that the following
holds.

Suppose that for all $Z\in T\Omega $, for all $V\in \mathrm{span}\left\{ X,%
\mathcal{V}\right\} ,$ and for some $\sigma _{3}\in \left( \sigma _{1},\frac{%
injS}{2}\right) ,$ 
\begin{eqnarray}
R\left( Z,V,V,Z\right) |_{_{B\left( S,\sigma _{1}\right) }} -f^{\prime
\prime }|_{_{B\left( S,\sigma _{1}\right) }}\left\vert Z\right\vert
^{2}\left\vert V^{\mathrm{span}\left\{ X\right\} }\right\vert ^{2} - \frac{%
f^{\prime }}{\mathrm{dist}\left( S,\cdot \right) }|_{_{B\left( S,\sigma
_{1}\right) }} \left\vert V^{\mathcal{V}}\right\vert ^{2} &\left\vert
Z\right\vert ^{2} &  \notag \\
&\geq & \left( K+1\right) \left\vert V\right\vert ^{2}\left\vert
Z\right\vert ^{2}  \label{nasty inequality}
\end{eqnarray}%
\noindent 
\begin{eqnarray*}
f^{\prime } &\leq &0, \\
f^{\prime \prime }|_{B\left( S,\sigma _{1}\right) } &\leq &0,
\end{eqnarray*}%
\begin{eqnarray*}
\left\vert f\right\vert +\left\vert f^{\prime }\right\vert &<&\delta , \\
f^{\prime \prime } &<&\delta , \\
f|_{M\setminus B\left( S,\sigma _{3}\right) } &\equiv &0.
\end{eqnarray*}%
Then

\begin{enumerate}
\item 
\begin{equation}
\widetilde{\mathrm{sec}}\left( V,Z\right) |_{B\left( S,\sigma _{1}\right)
}>K.  \label{sec > K-key}
\end{equation}

\item If $\left\{ E_{1},\ldots ,E_{n}\right\} $ is a local orthonormal frame
for $B\left( S,\sigma _{3}\right) $ with $X=E_{1}$ and \textrm{span}$\left\{
E_{2},\ldots ,E_{r}\right\} =\mathcal{V}$ for $2\leq r\leq n,$ then 
\begin{equation}
\left\vert \tilde{R}\left( E_{i},E_{j},E_{k},E_{l}\right) -R\left(
E_{i},E_{j},E_{k},E_{l}\right) \right\vert <\varepsilon ,  \label{R est}
\end{equation}%
except if the quadruple corresponds, up to a symmetry of the curvature
tensor, to the sectional curvature of a plane containing a vector $V\in 
\mathrm{span}\left\{ X\right\} \cup \mathcal{V}$.

\item For all $Z,W\in TM.$ 
\begin{equation}
\widetilde{\mathrm{sec}}\left( Z,W\right) >\mathrm{sec}\left( Z,W\right)
-\varepsilon .  \label{not foulded your father}
\end{equation}
\end{enumerate}
\end{keylemma}

Recall from page 144 of \cite{Wals} 
\begin{eqnarray*}
e^{-2f}\tilde{R}\left( V,Y,Z,U\right) &=&R\left( V,Y,Z,U\right) -g\left(
V,U\right) \mathrm{Hess}_{f}\left( Y,Z\right) -g\left( Y,Z\right) \mathrm{%
Hess}_{f}\left( V,U\right) \\
&&+g\left( V,Z\right) \mathrm{Hess}_{f}\left( Y,U\right) +g\left( Y,U\right) 
\mathrm{Hess}_{f}\left( V,Z\right) \\
&&+g\left( V,U\right) D_{Y}fD_{Z}f+g\left( Y,Z\right) D_{V}fD_{U}f \\
&&-g\left( Y,U\right) D_{V}fD_{Z}f-g\left( V,Z\right) D_{Y}fD_{U}f \\
&&-g\left( Y,Z\right) g\left( V,U\right) \left\vert \mathrm{grad}%
f\right\vert ^{2}+g\left( V,Z\right) g\left( Y,U\right) \left\vert \mathrm{%
grad}f\right\vert ^{2}.
\end{eqnarray*}%
Since we assume $\left\vert f^{\prime }\right\vert <\delta $ this becomes 
\begin{eqnarray}
e^{-2f}\tilde{R}\left( V,Y,Z,U\right) &=&R\left( V,Y,Z,U\right) -g\left(
V,U\right) \mathrm{Hess}_{f}\left( Y,Z\right) -g\left( Y,Z\right) \mathrm{%
Hess}_{f}\left( V,U\right)  \notag  \label{confffcurv} \\
&&+g\left( V,Z\right) \mathrm{Hess}_{f}\left( Y,U\right) +g\left( Y,U\right) 
\mathrm{Hess}_{f}\left( V,Z\right)  \label{conf
curv tensor} \\
&&\pm O\left( \delta ^{2}\right) \left\vert V\right\vert \left\vert
Y\right\vert \left\vert Z\right\vert \left\vert U\right\vert .  \notag
\end{eqnarray}

So to prove the Key Lemma we need an understanding of $\mathrm{Hess}_{f},$
which will be addressed in the next subsection.

\subsection{Universal Infinitesimal Geometry of Tubular Neighborhoods}

\begin{proposition}
\label{Jacobi Rep}Let $X,\mathcal{V},$ and $\mathcal{H}$ be as in \ref{defn
of X}. Along a unit speed geodesic, $\gamma $ in $\Omega ,$ that leaves $S$
orthogonally at $\gamma \left( 0\right) $ we have the following.\vspace*{%
0.1in}

\noindent 1. At $\gamma \left( t\right) ,$ any vector in $\mathcal{V}$ has
the form $J\left( t\right) $ where $J$ is a Jacobi field along $\gamma $
that satisfies 
\begin{eqnarray}
J\left( 0\right) &=&0,  \notag \\
J^{\prime }\left( 0\right) &\in &\nu _{\gamma \left( 0\right) }\left(
S\right) \cap \gamma ^{\prime }\left( 0\right) ^{\perp }.\text{\label{Js
spanning V}}
\end{eqnarray}%
\vspace*{0.1in}\noindent 2. At $\gamma \left( t\right) ,$ any vector in $%
\mathcal{H}$ has the form $J\left( t\right) $ where $J$ is a Jacobi field
along $\gamma $ that satisfies 
\begin{equation}
J\left( 0\right) ,J^{\prime }\left( 0\right) \in T_{\gamma \left( 0\right)
}S.  \label{Js spanning H-II}
\end{equation}%
\vspace*{0.1in}\noindent 3. Let $Sh_{\gamma ^{\prime }\left( 0\right) }$ be
the shape operator of $S$ at $\gamma \left( 0\right) $ in the direction of $%
\gamma ^{\prime }\left( 0\right) .$ That is 
\begin{equation*}
Sh_{\gamma ^{\prime }\left( 0\right) }:T_{\gamma \left( 0\right)
}S\longrightarrow T_{\gamma \left( 0\right) }S
\end{equation*}
is $Sh_{\gamma ^{\prime }\left( 0\right) }\left( v\right) \equiv \left(
\nabla _{v}Z\right) ^{T_{\gamma \left( 0\right) }S}$ where $Z$ is any
extension of $\gamma ^{\prime }\left( 0\right) $ to a field in $\nu \left(
S\right) .$ Then the Jacobi fields in Part 2 also satisfy 
\begin{equation*}
J^{\prime }\left( 0\right) =Sh_{\gamma ^{\prime }\left( 0\right) }\left(
J\left( 0\right) \right) .
\end{equation*}%
\vspace*{0.1in}\noindent 4. The distribution 
\begin{equation}
\mathcal{\bar{V}}_{\gamma \left( t\right) }\equiv \left\{ \left. \mathcal{V}%
_{\gamma \left( t\right) }\right\vert \text{ }t>0\right\} \cup \left\{ \nu
_{\gamma \left( 0\right) }\left( S\right) \cap \gamma ^{\prime }\left(
0\right) ^{\perp }\right\}  \label{extension of V}
\end{equation}%
is smooth along $\gamma .$
\end{proposition}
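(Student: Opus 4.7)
The plan is to unpack the definitions in (\ref{defn of X}) and apply the standard identification of $d\exp$ with Jacobi fields. Set $N \equiv \gamma'(0) \in \nu_{\gamma(0)}(S)$ so that $\gamma(t) = \exp_S^\perp(tN)$.

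For Part 1, a tangent vector to $\tilde{\mathcal{V}}_{tN}$ is vertical for the foot point projection and orthogonal to $\tilde{X}$; under the canonical identification of the fiber $\nu_{\gamma(0)}(S)$ with its own tangent spaces it corresponds to some $W \in \nu_{\gamma(0)}(S) \cap N^\perp$. The geodesic variation $\Phi(s,u) = \exp_{\gamma(0)}(u(N+sW))$ has variation field $J(u) = \partial_s \Phi|_{s=0}$, which is a Jacobi field along $\gamma$ with $J(0) = 0$ and $J'(0) = W$; direct inspection gives $J(t) = d\exp_S^\perp|_{tN}(tW)$. As $W$ ranges over $\nu_{\gamma(0)}(S) \cap N^\perp$ these Jacobi fields span $\mathcal{V}_{\gamma(t)}$, proving (\ref{Js spanning V}).

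For Parts 2 and 3 I would handle both at once. A horizontal vector in $\tilde{\mathcal{H}}_{tN}$ is tangent at $s=0$ to a curve $s \mapsto tZ(s)$, where $Z(s) \in \nu_{\alpha(s)}(S)$ is the normal parallel transport of $N$ along a smooth curve $\alpha$ in $S$ with $\alpha(0) = \gamma(0)$. The geodesic variation $\Phi(s,u) = \exp_{\alpha(s)}(uZ(s))$ produces a Jacobi field $J$ along $\gamma$ with $J(0) = \alpha'(0) \in T_{\gamma(0)}S$. By symmetry of the mixed covariant derivative, $J'(0) = \nabla_u \partial_s \Phi|_{(0,0)} = \nabla_s \partial_u \Phi|_{(0,0)} = \nabla_{\alpha'(0)} Z$. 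Normal parallel transport forces $(\nabla_{\alpha'(0)} Z)^{\nu(S)} = 0$, so $J'(0) = (\nabla_{\alpha'(0)} Z)^{T_{\gamma(0)}S} = Sh_{N}(\alpha'(0)) = Sh_{N}(J(0))$, simultaneously yielding (\ref{Js spanning H-II}) and Part 3.

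For Part 4, the only subtlety is that the fields from Part 1 vanish at $t=0$ and so cannot directly span a distribution there. Fix a basis $\{W_i\}$ of $\nu_{\gamma(0)}(S) \cap N^\perp$ and let $J_i$ be the Jacobi field of Part 1 with $J_i'(0) = W_i$. Since $J_i(0) = 0$, the Jacobi equation forces $J_i''(0) = 0$, so the Taylor expansion reads $J_i(t) = tW_i + O(t^3)$. Hence the rescaled fields $\hat{J}_i(t) \equiv J_i(t)/t$ (with $\hat{J}_i(0) \equiv W_i$) are smooth along $\gamma$ and span $\mathcal{\bar{V}}_{\gamma(t)}$ for every $t \geq 0$, exhibiting the desired smooth extension. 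The main obstacle is really just the bookkeeping in Part 4: the natural parametrization of $\mathcal{V}$ by Jacobi fields degenerates at $t=0$ and must be regularized by dividing by $t$, after which smooth dependence of solutions of the Jacobi equation on their initial conditions finishes the job.
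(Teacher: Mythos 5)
Your argument is correct. Parts 1--3 follow essentially the same route as the paper: you realize vertical vectors as variation fields of geodesics emanating orthogonally from the single point $\gamma(0)$, and horizontal vectors as variation fields of the variation $\exp_{\alpha(s)}(uZ(s))$ with $Z$ the normal parallel transport of $\gamma'(0)$ along a curve $\alpha$ in $S$, after which the symmetry $\nabla_s\partial_u=\nabla_u\partial_s$ and the vanishing of $(\nabla_{\alpha'(0)}Z)^{\nu(S)}$ give Parts 2 and 3 simultaneously, exactly as in the paper. The genuine difference is in Part 4. The paper writes $\mathcal{\bar V}_{\gamma(t)}$ as $\mathrm{span}\{J(t)\mid J\in\mathcal{J}^V\}\oplus\mathrm{span}\{J'(t)\mid J\in\mathcal{J}^V,\ J(t)=0\}$ and then quotes Wilking (cf. Lemma 1.7.1 of Gromoll--Walschap) for smooth dependence on $t$, whereas you produce the smoothness directly: since $J_i(0)=0$ the rescaled fields $\hat J_i(t)=J_i(t)/t$ extend smoothly across $t=0$ with $\hat J_i(0)=J_i'(0)=W_i$, giving a smooth frame for $\mathcal{\bar V}$ along $\gamma$. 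This is a self-contained and more elementary replacement for the citation; the only implicit ingredient you should make explicit is that the $\hat J_i(t)$ remain linearly independent for $t>0$, which holds because $\gamma$ stays inside the normal injectivity radius of $S$ (so $d\exp_S^\perp$ is nonsingular there and the $J_i(t)$ cannot become dependent), while at $t=0$ independence is clear since the $W_i$ form a basis of $\nu_{\gamma(0)}(S)\cap\gamma'(0)^{\perp}$. The paper's formulation via the Riccati/Lagrangian-family viewpoint is what generalizes to Wilking's dual-foliation setting, but for the present purpose your divide-by-$t$ regularization does the same job with less machinery.
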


\begin{proof}
A vector in $\mathcal{\tilde{V}}$ is a value of a variation field of a
variation of lines leaving the origin in a normal fiber. Since $\mathcal{V}%
\equiv d\exp _{S}^{\perp }\left( \mathcal{\tilde{V}}\right) ,$ a vector in $%
\mathcal{V}$ is tangent to a variation of geodesics that leave $S$
orthogonally from a single point, and Part 1 follows.

With respect to the Sasaki metric $\nu \left( S\right) \longrightarrow S$ is
a Riemannian submersion whose horizontal spaces $\mathcal{\tilde{H}}$ are
given by normal parallel transport of vectors in $\nu \left( S\right) $
along curves in $S.$ That is, if $Z:\left[ a,b\right] \longrightarrow \nu
\left( S\right) $ is a horizontal lift of a curve $c:\left[ a,b\right]
\longrightarrow S,$ then 
\begin{equation*}
\left( \nabla _{c^{\prime }}Z\right) ^{\nu \left( S\right) }=0.
\end{equation*}

Exponentiating all real multiples of such a field, $Z,$ produces a variation
of geodesics whose tangent field is $X.$ Along the geodesic $t\longmapsto
\exp _{c\left( 0\right) }tZ\left( 0\right) $ the variation field, $J,$
satisfies 
\begin{equation*}
J\left( 0\right) =c^{\prime }\left( 0\right) \in T_{\gamma \left( 0\right)
}S.
\end{equation*}%
Since $J^{\prime }\left( 0\right) =\nabla _{J\left( 0\right) }Z=\nabla
_{c^{\prime }\left( 0\right) }Z$ and $\left( \nabla _{c^{\prime }}Z\right)
^{\nu \left( S\right) }=0,$ it follows that 
\begin{equation*}
J^{\prime }\left( 0\right) \in T_{\gamma \left( 0\right) }S,
\end{equation*}%
proving Part 2, and also Part 3 since 
\begin{equation*}
Sh_{\gamma ^{\prime }\left( 0\right) }\left( J\left( 0\right) \right)
=\nabla _{c^{\prime }\left( 0\right) }Z=J^{\prime }\left( 0\right) .
\end{equation*}

Combining the proofs of Parts 1 and 2, we see that together the families of
Jacobi fields that span $\mathcal{H}\oplus \mathcal{V}$ come from variations
of geodesics that leave $S$ orthogonally. In particular, they form an $%
\left( n-1\right) $--dimensional family of Jacobi fields on which the
Riccati operator is self--adjoint \cite{Wilk}.

Let $\mathcal{J}^{V}$ be the family of Jacobi fields along $\gamma $ from
Part $1.$ That is 
\begin{equation*}
\mathcal{J}^{V}\equiv \left\{ \left. J\right\vert J\left( 0\right)
=0,J^{\prime }\left( 0\right) \in \nu _{\gamma \left( 0\right) }\left(
S\right) \cap \gamma ^{\prime }\left( 0\right) ^{\perp }\right\} .
\end{equation*}%
It follows from Part 1, that for $t\in \left( 0,inj\left( S\right) \right) ,$%
\begin{equation*}
\left\{ \left. \mathcal{V}_{\gamma \left( t\right) }\right\vert t>0\right\} =%
\mathrm{span}\left\{ \left. J\left( t\right) \right\vert J\in \mathcal{J}%
^{V}\right\} .
\end{equation*}%
For $t=0,$ we have that 
\begin{equation*}
\nu _{\gamma \left( 0\right) }\left( S\right) \cap \gamma ^{\prime }\left(
0\right) ^{\perp }=\mathrm{span}\left\{ \left. J^{\prime }\left( 0\right)
\right\vert J\in \mathcal{J}^{V}\right\} .
\end{equation*}%
On the other hand, given a nonzero $J\in \mathcal{J}^{V},$ then for all $%
t\in \left( 0,inj\left( S\right) \right) ,$ $J\left( t\right) \neq 0;$ so%
\begin{equation*}
\mathrm{span}\left\{ \left. J^{\prime }\left( t\right) \right\vert J\in 
\mathcal{J}^{V},J\left( t\right) =0\right\} =\mathrm{span}\left\{ \left.
J^{\prime }\left( 0\right) \right\vert J\in \mathcal{J}^{V}\right\} .
\end{equation*}

Therefore for $t\in \left( -inj\left( S\right) ,inj\left( S\right) \right) $ 
\begin{equation}
\mathcal{\bar{V}}_{\gamma \left( t\right) }=\mathrm{span}\left\{ \left.
J\left( t\right) \right\vert J\in \mathcal{J}^{V}\right\} \oplus \mathrm{span%
}\left\{ \left. J^{\prime }\left( t\right) \right\vert J\in \mathcal{J}%
^{V},J\left( t\right) =0\right\} .  \label{wilking decomp--2}
\end{equation}

As asserted on page 1300 of \cite{Wilk}, $\mathcal{\bar{V}}_{\gamma \left(
t\right) }$ depends smoothly on $t$, cf. Lemma 1.7.1 in \cite{GromWals}.
This proves Part 4.
\end{proof}

\begin{remark}
Note that the first summand in \ref{wilking decomp--2} vanishes only at $t=0$
and the second summand is only nonzero at $t=0$.
\end{remark}

\begin{lemma}
\label{Jacobi Bounds}Let $S$ be a compact submanifold of a Riemannian $n$%
--manifold $M.$ There are constants $C_{1},C_{2}$ so that if $\gamma :\left[
0,l\right] \longrightarrow \Omega \subset B\left( S,\frac{inj\left( S\right) 
}{2}\right) $ is any unit speed geodesic that leaves $S$ orthogonally and $J$
is any Jacobi field along $\gamma $ as in \ref{Js spanning H-II}, then 
\begin{equation*}
\left\vert J\left( t\right) \right\vert \geq C_{1}\left\vert J\left(
0\right) \right\vert \text{ and }\left\vert J^{\prime }\left( t\right)
\right\vert \leq C_{2}\left\vert J\left( 0\right) \right\vert .
\end{equation*}
\end{lemma}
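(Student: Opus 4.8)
The statement concerns Jacobi fields along geodesics leaving a compact submanifold $S$ orthogonally, of the type described in \eqref{Js spanning H-II}: that is, $J(0)\in T_{\gamma(0)}S$ and, by Part 3 of Proposition \ref{Jacobi Rep}, $J'(0)=Sh_{\gamma'(0)}(J(0))$. The claim is a two-sided control: $|J(t)|$ stays bounded below by a constant multiple of $|J(0)|$, and $|J'(t)|$ stays bounded above by a constant multiple of $|J(0)|$, uniformly over all such geodesics of length $\le \tfrac{inj(S)}{2}$. The natural approach is a compactness argument: the space of initial conditions $(\gamma(0),\gamma'(0),J(0))$ with $\gamma(0)\in S$, $\gamma'(0)\in\nu_{\gamma(0)}(S)$ a unit vector, and $J(0)\in T_{\gamma(0)}S$ with $|J(0)|=1$, is a compact set (it is a sphere bundle over the unit normal bundle of the compact manifold $S$). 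Since $J'(0)=Sh_{\gamma'(0)}(J(0))$ depends continuously on these data, and the Jacobi equation has solutions depending continuously (indeed smoothly) on initial conditions and on the base point, the map $(\text{initial data},t)\mapsto (J(t),J'(t))$ is continuous on a compact domain.

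First I would normalize to $|J(0)|=1$ by linearity of the Jacobi equation, so it suffices to produce constants $C_1,C_2$ working for all unit-normalized initial data and all $t\in[0,\tfrac{inj(S)}{2}]$. Next I would observe that $J(t)\ne 0$ for every such $t$: the Jacobi fields of Part 2 arise from variations through geodesics emanating orthogonally from $S$, so a zero of $J$ at some $t_0>0$ would be a focal point of $S$ along $\gamma$ at distance $t_0<inj(S)$, contradicting the definition of the normal injectivity radius (and at $t_0=0$ we have $|J(0)|=1\ne0$). Hence $|J(t)|>0$ on the compact set of (normalized initial data)$\times[0,\tfrac{inj(S)}{2}]$, so it attains a positive minimum $C_1>0$; this gives the first inequality. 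For the second inequality, $|J'(t)|$ is a continuous function on the same compact domain, hence attains a finite maximum $C_2$; this gives $|J'(t)|\le C_2=C_2|J(0)|$ after undoing the normalization.

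The main point requiring care is the continuous dependence across \emph{different} base points: one is not looking at Jacobi fields along a single geodesic but along a whole family of geodesics with varying footpoints on $S$. The clean way to handle this is to work on the unit normal bundle $\nu^1(S)$, which is compact, and to note that the geodesic flow of $M$ and the linearized geodesic flow (which is exactly the Jacobi equation along the resulting geodesics) are smooth flows on $TM$ and $T(TM)$ respectively; restricting to the compact set of admissible initial data and to the time interval $[0,\tfrac{inj(S)}{2}]$ yields the needed uniform bounds by continuity and compactness. The only subtlety beyond bookkeeping is the nonvanishing of $J$, which is precisely the focal-point/normal-injectivity-radius observation above; everything else is a routine extreme-value-theorem argument. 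No $\tau$-type estimate or asymptotic analysis is needed here—these are crude uniform bounds, and they will feed into the asymptotic Hessian estimates for $\mathrm{dist}(S,\cdot)$ later.
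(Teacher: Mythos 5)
Your proof is correct and follows essentially the same route as the paper: normalize $\left\vert J\left( 0\right) \right\vert =1$, note that $J$ cannot vanish inside the normal injectivity radius (so $\left\vert J\left( t\right) \right\vert $ has a uniform positive minimum by compactness of the unit normal/tangent data over the compact $S$), and get the remaining bound from compactness as well. The only (immaterial) difference is in the derivative estimate: you invoke continuous dependence of $\left( J\left( t\right) ,J^{\prime }\left( t\right) \right) $ on the compact set of initial data directly, whereas the paper bounds $\left\vert J^{\prime }\left( 0\right) \right\vert $ by the shape operator and then integrates the Jacobi equation (a uniform bound on $\left\vert R\right\vert $ plus the Fundamental Theorem of Calculus) to control $\left\vert J^{\prime }\left( t\right) \right\vert $.
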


\begin{proof}
All constants that we discuss in this proof are independent of $\gamma .$
For simplicity we also suppose that $\left\vert J\left( 0\right) \right\vert
=1.$

Let $\tilde{V}$ be the variation of lines in $\nu \left( S\right) $ that
corresponds to $J.$ Then $\tilde{V}\left( t,0\right) =t\gamma ^{\prime
}\left( 0\right) $ and the variation field $\frac{\partial }{\partial s}%
\tilde{V}|_{s=0}$ consists of lifts of $J\left( 0\right) $ to the normal
bundle, $\nu \left( S\right) ,$ along $t\gamma ^{\prime }\left( 0\right) .$
In particular, $\left\vert \frac{\partial }{\partial s}\tilde{V}%
|_{s=0}\right\vert \equiv \left\vert J\left( 0\right) \right\vert =1.$ Since 
$J$ is the image of $\frac{\partial }{\partial s}\tilde{V}|_{s=0}$ under $%
d\exp ,$ it follows from compactness that there is a constant $C_{3}>0$ so
that 
\begin{equation}
\left\vert J\left( t\right) \right\vert \leq C_{3}.  \label{J upper}
\end{equation}%
Also since $\gamma \left( t\right) \subset B\left( S,\frac{inj\left(
S\right) }{2}\right) ,$ $J\left( t\right) \neq 0,$ so there is a constant $%
C_{1}>0$ so that 
\begin{equation*}
\left\vert J\left( t\right) \right\vert \geq C_{1}=C_{1}\left\vert J\left(
0\right) \right\vert .
\end{equation*}%
Since $J^{\prime }\left( 0\right) =Sh_{\gamma ^{\prime }\left( 0\right)
}\left( J\left( 0\right) \right) ,$ by continuity of the shape operator and
compactness of the unit normal bundle of $S$ there is a constant $C_{4}>0$
so that 
\begin{equation}
\left\vert J^{\prime }\left( 0\right) \right\vert \leq C_{4}=C_{4}\left\vert
J\left( 0\right) \right\vert .  \label{J' upper B}
\end{equation}%
Let $\left\{ E_{i}\right\} _{i=1}^{n}$ be an orthonormal parallel frame
along $\gamma $ with $E_{1}\left( t\right) =\gamma ^{\prime }\left( t\right) 
$ and write 
\begin{equation*}
J\left( t\right) =\Sigma _{i=2}^{n}e^{i}\left( t\right) E_{i}.
\end{equation*}%
Then 
\begin{equation*}
J^{\prime }\left( t\right) =\Sigma _{i=2}^{n}\left( e^{i}\right) ^{\prime
}\left( t\right) E_{i}\text{ and }-R\left( J,\gamma ^{\prime }\right) \gamma
^{\prime }=J^{\prime \prime }\left( t\right) =\Sigma _{i=2}^{n}\left(
e^{i}\right) ^{\prime \prime }\left( t\right) E_{i}.
\end{equation*}

So 
\begin{eqnarray*}
\left\vert \Sigma _{i=2}^{n}\left( e^{i}\right) ^{\prime \prime }\left(
t\right) E_{i}\right\vert &\leq &\left\vert R\right\vert \left\vert
J\right\vert \\
&\leq &C_{3}\left\vert R\right\vert \left\vert J\left( 0\right) \right\vert ,%
\text{ by }\ref{J upper} \\
&\leq &C_{5}\left\vert J\left( 0\right) \right\vert ,\text{ by compactness
of }M,
\end{eqnarray*}%
for some constant $C_{5}>0$. Combining this with Inequality \ref{J' upper B}
and the Fundamental Theorem of Calculus completes the proof.
\end{proof}

The following is from a revised version of \cite{PetWilh2}.

\begin{lemma}
\label{Peter's lemma}There is a constant $C>0$ so that on $\Omega \setminus
S $ we have

\begin{enumerate}
\item $\left( \mathrm{Hess}_{\mathrm{dist}\left( S,\cdot \right) }\right)
\left( X,\cdot \right) =0.$

\item For $Z\in \mathcal{H}$ and $Y\in \mathcal{V}\oplus \mathcal{H}$ 
\begin{equation*}
\left\vert \left( \mathrm{Hess}_{\mathrm{dist}\left( S,\cdot \right)
}\right) \left( Y,Z\right) \right\vert <C\left\vert Y\right\vert \left\vert
Z\right\vert .
\end{equation*}

\item For $V\in \mathcal{V}$ and $W\in \mathcal{V}\oplus \mathcal{H}$ 
\begin{equation}
\left\vert \left( \mathrm{Hess}_{\mathrm{dist}\left( S,\cdot \right) }\left(
V,W\right) -\frac{1}{\mathrm{dist}\left( S,\cdot \right) }g\left( V,W\right)
\right) \right\vert <O\left( \mathrm{dist}\left( S,\cdot \right) \right)
\left\vert V\right\vert \left\vert W\right\vert .
\end{equation}
\end{enumerate}
\end{lemma}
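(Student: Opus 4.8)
The strategy is to realize the Hessian of $\mathrm{dist}\left( S,\cdot \right)$ as the Riccati/shape operator of the geodesic spheres (tubes) around $S$, and then read off its asymptotics from the Jacobi field description in Proposition \ref{Jacobi Rep} together with the bounds in Lemma \ref{Jacobi Bounds}. Fix a point $q \in \Omega \setminus S$, let $\gamma : [0, l] \longrightarrow \Omega$ be the unit speed geodesic realizing $\mathrm{dist}\left( S, q\right) = l$, leaving $S$ orthogonally at $\gamma(0)$, so that $q = \gamma(l)$ and $X|_q = \gamma'(l)$. For any vector field $W$ that is a Jacobi field along $\gamma$ arising from a variation through geodesics emanating orthogonally from $S$ (equivalently, $W$ lies in $\mathcal{H} \oplus \mathcal{V}$ at each $\gamma(t)$ by Proposition \ref{Jacobi Rep}), the standard first-variation computation gives $\left( \mathrm{Hess}_{\mathrm{dist}\left( S,\cdot \right)} \right)\left( W, \cdot \right)|_{\gamma(t)} = g\left( W'(t), \cdot \right)$ — i.e., the Hessian is the symmetric operator $W \longmapsto W'$ on this $(n-1)$-dimensional space of Jacobi fields, which is self-adjoint by the result of Wilking cited after Proposition \ref{Jacobi Rep}. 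Part 1 is then immediate: $X$ corresponds to the radial Jacobi field $t \gamma'(t)$ (up to normalization $\gamma'(t)$), and $\mathrm{Hess}$ of a distance function always annihilates the gradient direction since $\nabla_X X = 0$ along the geodesic.

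For Part 2, take $Z \in \mathcal{H}|_q$, so $Z = J(l)$ for a Jacobi field $J$ with $J(0), J'(0) \in T_{\gamma(0)}S$ and $J'(0) = Sh_{\gamma'(0)}(J(0))$ (Proposition \ref{Jacobi Rep}, Parts 2--3). Then $\left( \mathrm{Hess}_{\mathrm{dist}\left( S,\cdot \right)} \right)\left( Z, Y \right) = g\left( J'(l), Y \right)$, so it suffices to bound $|J'(l)|$. This is exactly the content of Lemma \ref{Jacobi Bounds}: $|J'(l)| \leq C_2 |J(0)| \leq C_2' |J(l)| = C_2' |Z|$, where the middle inequality uses $|J(l)| \geq C_1 |J(0)|$ from the same lemma. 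By Cauchy-Schwarz this yields $\left| \left( \mathrm{Hess}_{\mathrm{dist}\left( S,\cdot \right)} \right)\left( Y, Z \right) \right| < C |Y| |Z|$ for $Y \in \mathcal{V} \oplus \mathcal{H}$, using symmetry of the Hessian to move $Z$ into either slot.

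Part 3 is the crux and the one requiring real work: for $V \in \mathcal{V}|_q$ we must show $\mathrm{Hess}_{\mathrm{dist}\left( S,\cdot \right)}(V, \cdot)$ is, to leading order, $\frac{1}{\mathrm{dist}\left( S,\cdot \right)} g(V, \cdot)$. Here $V = J(l)$ with $J(0) = 0$ and $J'(0) \in \nu_{\gamma(0)}(S) \cap \gamma'(0)^\perp$. The model is the normal fiber with its Sasaki (flat Euclidean) metric, in which such a Jacobi field is literally $J(t) = t\, J'(0)$, giving $J'(t) = \frac{1}{t} J(t)$ exactly; the claim is that the ambient curvature perturbs this only by $O(t)$. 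I would argue this by comparing $J$ with the flat solution: writing $J = W_{\mathrm{flat}} + E$ where $W_{\mathrm{flat}}(t) = t\, J'(0)$ solves the flat Jacobi equation and $E$ absorbs the curvature term, Duhamel's principle / variation of parameters gives $E(t) = O(t^2)$ and $E'(t) = O(t)$ relative to $|J'(0)|$, with constants uniform over the unit normal bundle of $S$ by compactness of $S$ and of the relevant bundle of initial conditions. Since $|J(t)| = t|J'(0)| + O(t^2)$ as well, one gets $J'(l) = \frac{1}{l} J(l) + O(l)\cdot|J'(0)| = \frac{1}{\mathrm{dist}\left( S,\cdot \right)} V + O\left( \mathrm{dist}\left( S,\cdot \right)\right)|V|$, and pairing with $W \in \mathcal{V} \oplus \mathcal{H}$ and invoking symmetry of $\mathrm{Hess}$ once more finishes the estimate. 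The main obstacle is making the $O(t)$ in Part 3 genuinely uniform — one needs the curvature operator $R(\cdot, \gamma', \gamma')\gamma'$ to be bounded independently of $\gamma$, which holds by compactness of $M$, and one needs the initial data $J'(0)$ to be controlled, which it is since $J(0) = 0$ forces everything to be governed by $J'(0)$ alone. The smoothness of the distribution $\bar{\mathcal{V}}$ through $t = 0$ (Proposition \ref{Jacobi Rep}, Part 4) is what guarantees these comparisons can be made on a full neighborhood rather than just pointwise.
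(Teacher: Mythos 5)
Parts 1 and 2 of your argument are essentially the paper's proof: Part 1 from $\nabla_X X=0$, and Part 2 from $\mathrm{Hess}_{\mathrm{dist}(S,\cdot)}(J(t),Y)=g(J'(t),Y)$ together with the two bounds of Lemma \ref{Jacobi Bounds}. Your Part 3 also follows the same underlying idea (compare $J$ with the flat solution $t\,J'(0)$; the paper implements this as a third-order Taylor expansion in a parallel frame), but the quantitative bounds you claim there are one power of $t$ too weak, and the final equality in your chain is false as written. If all you know is $E(t)=O(t^{2})\,|J'(0)|$ and $E'(t)=O(t)\,|J'(0)|$, then $J'(l)-\tfrac{1}{l}J(l)=E'(l)-\tfrac{1}{l}E(l)=O(l)\,|J'(0)|$; but $|V|=|J(l)|=l\,|J'(0)|+o(l)$, so $|J'(0)|\sim |V|/l$ and $O(l)\,|J'(0)|$ is only $O(1)\,|V|$. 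That yields a bounded error in $\mathrm{Hess}_{\mathrm{dist}(S,\cdot)}(V,W)-\tfrac{1}{\mathrm{dist}(S,\cdot)}g(V,W)$, not the required $O\bigl(\mathrm{dist}(S,\cdot)\bigr)|V|\,|W|$; your step ``$\;=\tfrac{1}{\mathrm{dist}(S,\cdot)}V+O(\mathrm{dist}(S,\cdot))|V|$'' silently replaces $|J'(0)|$ by $|V|$, which loses a factor of $l$.

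The repair lives entirely inside your own variation-of-parameters argument: since $J(0)=0$, the forcing term $-R(J,\gamma')\gamma'$ is itself $O(t)\,|J'(0)|$ (equivalently $J''(0)=0$), so one integration gives $E'(t)=O(t^{2})\,|J'(0)|$ and a second gives $E(t)=O(t^{3})\,|J'(0)|$. Then $J'(l)-\tfrac{1}{l}J(l)=O(l^{2})\,|J'(0)|=O(l)\,|V|$, and pairing with any $W\in\mathcal{V}\oplus\mathcal{H}$ gives exactly the estimate of Part 3. This vanishing of the second-order term is precisely what the paper records as $(e^{i})''(0)=0$ in its Taylor expansion, and it is the one point your write-up must make explicit; the uniformity of the constants via compactness of the unit normal bundle is handled the same way in both arguments.
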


\begin{proof}
Recall that $X=\mathrm{grad}\left( \mathrm{dist}\left( S,\cdot \right)
\right) .$ So $\nabla _{X}X=0,$ and therefore $\left( \mathrm{Hess}_{\mathrm{%
dist}\left( S,\cdot \right) }\right) \left( X,\cdot \right) =0.$

To prove the estimates in Parts 2 and 3, we first focus on a fixed geodesic $%
\gamma :\left[ 0,l\right] \longrightarrow \Omega $ that leaves $S$
orthogonally at time $0.$

For the second estimate, we let $J$ be a Jacobi field along $\gamma $ as in %
\ref{Js spanning H-II}. Then for $Y\in \mathcal{V}_{\gamma \left( t\right)
}\oplus \mathcal{H}_{\gamma \left( t\right) }$ 
\begin{eqnarray}
\mathrm{Hess}_{\mathrm{dist}\left( S,\cdot \right) }\left( J\left( t\right)
,Y\right) &=&g\left( \nabla _{J\left( t\right) }X,Y\right)  \notag \\
&=&g\left( J^{\prime }\left( t\right) ,Y\right) .  \label{rough Hess}
\end{eqnarray}%
By Lemma \ref{Jacobi Bounds}, there are constants $C_{1},C_{2}$ so that $%
\left\vert J\left( t\right) \right\vert \geq C_{1}\left\vert J\left(
0\right) \right\vert $ and $\left\vert J^{\prime }\left( t\right)
\right\vert \leq C_{2}\left\vert J\left( 0\right) \right\vert .$ So for $%
Y\in \mathcal{V}_{\gamma \left( t\right) }\oplus \mathcal{H}_{\gamma \left(
t\right) }$ and $Z\in \mathcal{H}_{\gamma \left( t\right) }$ with $Z=J\left(
t\right) $ as in \ref{Js spanning H-II} we get from \ref{rough Hess} 
\begin{eqnarray*}
\left\vert \left( \mathrm{Hess}_{\mathrm{dist}\left( S,\cdot \right)
}\right) \left( Z,Y\right) \right\vert &\leq &\left\vert J^{\prime }\left(
t\right) \right\vert \left\vert Y\right\vert \\
&\leq &C_{2}\left\vert J\left( 0\right) \right\vert \left\vert Y\right\vert
\\
&\leq &\frac{C_{2}}{C_{1}}\left\vert J\left( t\right) \right\vert \left\vert
Y\right\vert \\
&\leq &C\left\vert Y\right\vert \left\vert Z\right\vert ,
\end{eqnarray*}%
for $C=C_{2}/C_{1}$, proving Part 2, along $\gamma .$

Similarly, for $J$ as in \ref{Js spanning V} and $Y\in \mathcal{V}\oplus 
\mathcal{H}$, we have $\mathrm{Hess}_{\mathrm{dist}\left( S,\cdot \right)
}\left( J\left( t\right) ,Y\right) =g\left( J^{\prime }\left( t\right)
,Y\right) .$ So 
\begin{eqnarray*}
\mathrm{Hess}_{\mathrm{dist}\left( S,\cdot \right) }\left( \frac{J\left(
t\right) }{\left\vert J\left( t\right) \right\vert },Y\right) &=&\frac{1}{%
\left\vert J\left( t\right) \right\vert }g\left( J^{\prime }\left( t\right)
,Y\right) \text{ and} \\
\mathrm{Hess}_{\mathrm{dist}\left( S,\cdot \right) }\left( \frac{J\left(
t\right) }{\left\vert J\left( t\right) \right\vert },\frac{J\left( t\right) 
}{\left\vert J\left( t\right) \right\vert }\right) &=&\frac{1}{\left\vert
J\left( t\right) \right\vert ^{2}}g\left( J^{\prime }\left( t\right)
,J\left( t\right) \right) .
\end{eqnarray*}

Write $J\left( t\right) =\sum\limits_{i=2}^{n}e^{i}E_{i},$ where $\left\{
E_{i}\right\} _{i=1}^{n}$ is an orthonormal parallel frame along $\gamma $
with $E_{1}\left( t\right) =\gamma ^{\prime }\left( t\right) $ and $%
E_{2}\left( 0\right) =J^{\prime }\left( 0\right) .$ Then 
\begin{eqnarray*}
e^{i}\left( 0\right) &=&0\text{ for }i\geq 2, \\
\left( e^{2}\right) ^{\prime }\left( 0\right) &=&1,\text{ }\left(
e^{i}\right) ^{\prime }\left( 0\right) =0\text{ for }i\geq 3,
\end{eqnarray*}

and since $0=-R\left( J,\dot{\gamma}\right) \dot{\gamma}|_{0}=J^{\prime
\prime }\left( 0\right) =\sum\limits_{i=2}^{n}\left( e^{i}\right) ^{\prime
\prime }\left( 0\right) E_{i}\left( 0\right) ,$ 
\begin{equation*}
\left( e^{i}\right) ^{\prime \prime }\left( 0\right) =0\text{ for }i\geq 2.
\end{equation*}%
So%
\begin{eqnarray}
J\left( t\right) &=&\left( t+O\left( t^{3}\right) \right) E_{2}\left(
t\right) +\sum\limits_{i=3}^{n}O\left( t^{3}\right) E_{i}\text{,\label{J
Taylor}} \\
J^{\prime }\left( t\right) &=&\left( 1+O\left( t^{2}\right) \right)
E_{2}\left( t\right) +\sum\limits_{i=3}^{n}O\left( t^{2}\right) E_{i}  \notag
\\
\left\vert J\left( t\right) \right\vert &=&t+O\left( t^{3}\right)  \notag \\
\left\vert J\left( t\right) \right\vert ^{2} &=&t^{2}+O\left( t^{4}\right) .
\notag
\end{eqnarray}%
So 
\begin{eqnarray*}
\mathrm{Hess}_{\mathrm{dist}\left( S,\cdot \right) }\left( \frac{J\left(
t\right) }{\left\vert J\left( t\right) \right\vert },\frac{J\left( t\right) 
}{\left\vert J\left( t\right) \right\vert }\right) &=&\frac{1}{\left\vert
J\left( t\right) \right\vert ^{2}}g\left( J^{\prime }\left( t\right)
,J\left( t\right) \right) \\
&=&\frac{t+O\left( t^{3}\right) }{t^{2}+O\left( t^{4}\right) } \\
&=&\frac{1}{t}+O\left( t\right) ,
\end{eqnarray*}%
and Part 3 holds along $\gamma $ when $V=W.$

For $Y\in \mathcal{V}\oplus \mathcal{H}$ with $Y\perp J\left( t\right) ,$ $%
\left\vert Y\right\vert =1,$ we write 
\begin{equation*}
Y\left( t\right) =\sum\limits_{i=2}^{n}\alpha _{i}E_{i}.
\end{equation*}%
Since $\left\vert Y\right\vert =1$, $\left\vert \alpha _{i}\right\vert \leq
1.$ Combining this with $Y\perp J\left( t\right) $ we get 
\begin{equation*}
\alpha _{2}=O\left( t^{2}\right) .
\end{equation*}

So 
\begin{eqnarray*}
\mathrm{Hess}_{\mathrm{dist}\left( S,\cdot \right) }\left( \frac{J\left(
t\right) }{\left\vert J\left( t\right) \right\vert },Y\right) &=&\frac{1}{%
\left\vert J\left( t\right) \right\vert }g\left( J^{\prime }\left( t\right)
,Y\right) \text{ } \\
&=&\frac{O\left( t^{2}\right) }{t+O\left( t^{3}\right) } \\
&=&O\left( t\right) ,
\end{eqnarray*}%
and Part 3 holds along $\gamma .$

The result follows in general from continuity and the compactness of the
unit normal bundle of $S.$
\end{proof}

The distributions $\mathcal{H}$ and $\mathcal{V}$ are not orthogonal, but
they are asymptotically orthogonal to a high order as $t\rightarrow 0,$ as
we show in the following proposition.

\begin{proposition}
\label{angle tween H and V}There is a constant $C>0$ with the following
property. Let $\gamma $ be a unit speed geodesic in $\Omega ,$ that leaves $%
S $ orthogonally at $\gamma \left( 0\right) $. Let $J_{1}$ and $J_{2}$ be
Jacobi fields along $\gamma $ with 
\begin{eqnarray*}
J_{1}\left( 0\right) &=&0,J_{1}^{\prime }\left( 0\right) \in \nu _{\gamma
\left( 0\right) }\left( S\right) ,\text{ }\left\vert J_{1}^{\prime }\left(
0\right) \right\vert =1\text{ and} \\
J_{2}\left( 0\right) ,J_{2}^{\prime }\left( 0\right) &\in &T_{\gamma \left(
0\right) }S,\text{ }\left\vert J_{2}\left( 0\right) \right\vert =1.
\end{eqnarray*}

Then 
\begin{equation*}
\left\vert g\left( J_{1}\left( \gamma \left( t\right) \right) ,J_{2}\left(
\gamma \left( t\right) \right) \right) \right\vert \leq Ct^{3}.
\end{equation*}
\end{proposition}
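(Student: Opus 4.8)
The plan is to compare the two Jacobi fields via their Taylor expansions at $t=0$ along $\gamma$, using a parallel orthonormal frame, exactly as in the proof of Lemma \ref{Peter's lemma}. Fix the parallel frame $\{E_i\}_{i=1}^n$ along $\gamma$ with $E_1(t)=\gamma'(t)$. Since $J_1(0)=0$ and $J_1'(0)\in\nu_{\gamma(0)}(S)$ with $|J_1'(0)|=1$, and since $\gamma'(0)\in\nu_{\gamma(0)}(S)$ as well, I may arrange $E_2(0)=J_1'(0)$; then, writing $J_1=\sum_{i\ge 2} e^i E_i$ and using the Jacobi equation with $J_1''(0)=-R(J_1,\gamma')\gamma'|_0=0$ (because $J_1(0)=0$), I get $e^2(t)=t+O(t^3)$ and $e^i(t)=O(t^3)$ for $i\ge 3$. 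This is the same computation that produced Equation \ref{J Taylor}.

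The second step is the Taylor expansion of $J_2$, which is the genuinely new ingredient. Write $J_2=\sum_{i\ge 2} b^i E_i + b^1 E_1$. From $J_2(0),J_2'(0)\in T_{\gamma(0)}S$ and $\gamma'(0)\perp T_{\gamma(0)}S$ we get $b^1(0)=0$ and $(b^1)'(0)=0$; moreover $(b^1)''(0)=-R(J_2,\gamma',\gamma',\gamma')|_0\cdot(\text{coefficient})=0$ from the Jacobi equation projected onto $E_1$ together with the antisymmetry of the curvature tensor $R(J_2,\gamma',\gamma',\gamma')=0$, so in fact $b^1(t)=O(t^3)$. The key point is therefore that the $\gamma'$-component of $J_2$ vanishes to third order, which is Part 3 of Proposition \ref{Jacobi Rep} applied at the shape-operator level, and the $E_2$-component of $J_2$ need not be small but is bounded (by Lemma \ref{Jacobi Bounds}, $|J_2(t)|$ and $|J_2'(t)|$ are bounded by a constant times $|J_2(0)|=1$).

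The third step assembles the estimate. Since $J_1(t)=tE_2(t)+O(t^3)$ and $J_2(t)=b^1(t)E_1(t)+b^2(t)E_2(t)+\sum_{i\ge 3}b^i(t)E_i(t)$ with $b^1(t)=O(t^3)$, the inner product is
\begin{equation*}
g(J_1(t),J_2(t)) = t\,b^2(t) + O(t^3).
\end{equation*}
So everything reduces to showing $b^2(t)=O(t^2)$, i.e. that the $J_1'(0)$-component of $J_2$ vanishes to second order. Here I use that $b^2(0)=g(J_2(0),E_2(0))=g(J_2(0),J_1'(0))=0$ since $J_2(0)\in T_{\gamma(0)}S$ while $J_1'(0)\in\nu_{\gamma(0)}(S)$, and that $(b^2)'(0)=g(J_2'(0),E_2(0))+g(J_2(0),E_2'(0))=0$ since $J_2'(0)\in T_{\gamma(0)}S\perp J_1'(0)=E_2(0)$ and $E_2'=0$. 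Hence $b^2(t)=O(t^2)$ and $g(J_1(t),J_2(t))=O(t^3)$, as claimed. Finally I invoke continuity of all the relevant data (curvature, shape operator, the constants in the $O$-terms) together with compactness of the unit normal bundle of $S$ restricted to the compact closure of the region where the geodesics live, to make the constant $C$ uniform over all such $\gamma$, $J_1$, $J_2$.

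The main obstacle is the bookkeeping in the second and third steps: one must be careful that the "$E_2$-direction" is the direction $J_1'(0)$, track that $J_2$ has no component in $E_1=\gamma'$ to third order and no component in $E_2$ to second order, and verify that the remaining cross terms $\sum_{i\ge3} b^i(t)\,O(t^3)$ genuinely contribute only $O(t^3)$ — which they do since the $b^i$ are bounded. There is no hard analysis; the content is entirely in the initial-condition matching and the Jacobi equation, which force the precise vanishing orders.
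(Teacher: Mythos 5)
Your proposal is correct and is essentially the paper's own argument: both amount to showing that the scalar function $t\mapsto g\left( J_{1}\left( t\right) ,J_{2}\left( t\right) \right) $ vanishes to second order at $t=0$, using $J_{1}\left( 0\right) =0$, the orthogonality of $\nu _{\gamma \left( 0\right) }\left( S\right) $ to $T_{\gamma \left( 0\right) }S$, and $J_{1}^{\prime \prime }\left( 0\right) =-R\left( J_{1},\dot{\gamma}\right) \dot{\gamma}|_{0}=0$ from the Jacobi equation, with the uniform constant coming from compactness. The paper just differentiates $g\left( J_{1},J_{2}\right) $ twice at $0$ directly instead of expanding both fields in a parallel frame, which avoids the componentwise bookkeeping but is the same computation.
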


\begin{remark}
Since $J_{1}$ satisfies Conditions \ref{Js spanning V} and $J_{2}$ satisfies
Conditions \ref{Js spanning H-II} this tells us that near $S$ the
distributions $\mathcal{H}$ and $\mathcal{V}$ are almost orthogonal.
\end{remark}

\begin{proof}
Just notice that%
\begin{eqnarray*}
g\left( J_{1},J_{2}\right) \left( 0\right) &=&0,\text{ since }J_{1}\left(
0\right) =0, \\
g\left( J_{1},J_{2}\right) ^{\prime }\left( 0\right) &=&g\left(
J_{1}^{\prime },J_{2}\right) \left( 0\right) +g\left( J_{1},J_{2}^{\prime
}\right) \left( 0\right) \\
&=&0,\text{ since }J_{1}\left( 0\right) =0,J_{2}\left( 0\right) \in
T_{\gamma \left( 0\right) }S,\text{ and }J_{1}^{\prime }\left( 0\right) \in
\nu _{\gamma \left( 0\right) }\left( S\right) ,
\end{eqnarray*}

and%
\begin{eqnarray*}
g\left( J_{1},J_{2}\right) ^{\prime \prime }\left( 0\right) &=&g\left(
J_{1}^{\prime \prime },J_{2}\right) \left( 0\right) +2g\left( J_{1}^{\prime
},J_{2}^{\prime }\right) +g\left( J_{1},J_{2}^{\prime \prime }\right) \left(
0\right) \\
&=&0,
\end{eqnarray*}%
since $J_{1}^{\prime }\left( 0\right) \in \nu _{\gamma \left( 0\right)
}\left( S\right) ,J_{2}^{\prime }\left( 0\right) \in T_{\gamma \left(
0\right) }S,$ $J_{1}\left( 0\right) =0,$ and $J_{1}^{\prime \prime }\left(
0\right) =-R\left( J_{1},\dot{\gamma}\right) \dot{\gamma}|_{0}=0.$

This gives us the desired estimate for any particular choice of $\gamma ,$ $%
J_{1}$ and $J_{2}.$ We then get the result with a uniform constant $C$ from
compactness of the unit tangent bundle of $M$ along $S.$
\end{proof}

Proposition \ref{angle tween H and V} allows us to estimate the entire
Hessian of $f$ near $S$ by estimating its values on vectors in $X\cup 
\mathcal{V}\cup \mathcal{H}.$

\begin{lemma}
\label{Hessian Est}On $\Omega \setminus S$

\begin{enumerate}
\item $\mathrm{Hess}_{f}\left( X,X\right) =f^{\prime \prime }.$

\item For $Y\in \mathrm{span}\left\{ X,\mathcal{V},\mathcal{H}\right\} $ and 
$Z\in \mathcal{H}$ and $\delta >0$ as in the Key Lemma%
\begin{equation*}
\left\vert \mathrm{Hess}_{f}\left( Y,Z\right) \right\vert <O\left( \delta
\right) \left\vert Y\right\vert \left\vert Z\right\vert .
\end{equation*}

\item For $Y\in \mathcal{V}$ and $Z\in \mathrm{span}\left\{ X,\mathcal{V}%
\right\} $ 
\begin{equation*}
\left\vert \mathrm{Hess}_{f}\left( Y,Z\right) -\frac{f^{\prime }}{\mathrm{%
dist}\left( S,\cdot \right) }g\left( Y,Z\right) \right\vert \leq \delta
O\left( \mathrm{dist}\left( S,\cdot \right) \right) \left\vert Y\right\vert
\left\vert Z\right\vert
\end{equation*}
\end{enumerate}
\end{lemma}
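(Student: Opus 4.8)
The plan is to reduce all three estimates to the pointwise conformal Hessian identity for $f=\rho \circ \dist \left( S,\cdot \right) $ and then feed in Lemma \ref{Peter's lemma}. Write $r\equiv \dist \left( S,\cdot \right) $, so that $\mathrm{grad}\,f=f^{\prime }X$. For $Y,Z\in T\left( \Omega \setminus S\right) $ the standard computation gives
\begin{align*}
\mathrm{Hess}_{f}\left( Y,Z\right) &=g\left( \nabla _{Y}\left( f^{\prime }X\right) ,Z\right) =\left( D_{Y}f^{\prime }\right) g\left( X,Z\right) +f^{\prime }g\left( \nabla _{Y}X,Z\right) \\
&=f^{\prime \prime }\,g\left( X,Y\right) g\left( X,Z\right) +f^{\prime }\,\mathrm{Hess}_{\dist \left( S,\cdot \right) }\left( Y,Z\right) ,
\end{align*}
since $D_{Y}f^{\prime }=\left( D_{Y}r\right) f^{\prime \prime }=g\left( X,Y\right) f^{\prime \prime }$ and $g\left( \nabla _{Y}X,Z\right) =\mathrm{Hess}_{\dist \left( S,\cdot \right) }\left( Y,Z\right) $ because $X=\mathrm{grad}\,r$. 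Taking $Y=Z=X$, using $\left\vert X\right\vert =1$ and $\mathrm{Hess}_{\dist \left( S,\cdot \right) }\left( X,X\right) =0$ from Part 1 of Lemma \ref{Peter's lemma}, yields Part 1 at once.

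The next point is that $X$ is orthogonal to every vector of $\mathcal{V}$ and of $\mathcal{H}$. Along a unit speed geodesic $\gamma $ that leaves $S$ orthogonally, every vector of $\mathcal{V}_{\gamma \left( t\right) }$ or $\mathcal{H}_{\gamma \left( t\right) }$ is $J\left( t\right) $ for a Jacobi field $J$ as in Parts 1 and 2 of Proposition \ref{Jacobi Rep}, and in both cases $J\left( 0\right) ,J^{\prime }\left( 0\right) \perp \gamma ^{\prime }\left( 0\right) $ (for the $\mathcal{H}$--fields because $\gamma $ leaves $S$ orthogonally). Hence $\varphi \left( t\right) \equiv g\left( J\left( t\right) ,X\right) =g\left( J\left( t\right) ,\gamma ^{\prime }\left( t\right) \right) $ satisfies $\varphi \left( 0\right) =\varphi ^{\prime }\left( 0\right) =0$ and $\varphi ^{\prime \prime }=g\left( J^{\prime \prime },\gamma ^{\prime }\right) =-R\left( J,\gamma ^{\prime },\gamma ^{\prime },\gamma ^{\prime }\right) \equiv 0$, so $\varphi \equiv 0$; equivalently this is the Gauss Lemma, since $\mathcal{V}$ and $\mathcal{H}$ are spanned by variation fields of families of unit speed geodesics normal to $S$ and hence are tangent to the level sets of $r$, while $X=\mathrm{grad}\,r$. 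Since $\mathrm{span}\left\{ X\right\} \oplus \mathcal{V}\oplus \mathcal{H}=T\left( \Omega \setminus S\right) $, it follows that $\mathcal{V}\oplus \mathcal{H}=X^{\perp }$ pointwise on $\Omega \setminus S$. I stress that the \emph{exact} vanishing is what matters here, not merely the $O\left( t^{3}\right) $ estimate of Proposition \ref{angle tween H and V}: in the identity above the factor $f^{\prime \prime }$ is only bounded above and may be very large in absolute value near $S$, so an approximation of $g\left( X,Z\right) $ would not control the first term.

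Parts 2 and 3 are then bookkeeping. For Part 2, $Z\in \mathcal{H}$ forces $g\left( X,Z\right) =0$, so the identity collapses to $\mathrm{Hess}_{f}\left( Y,Z\right) =f^{\prime }\,\mathrm{Hess}_{\dist \left( S,\cdot \right) }\left( Y,Z\right) $; writing $Y=g\left( X,Y\right) X+Y^{\perp }$ with $Y^{\perp }\in X^{\perp }=\mathcal{V}\oplus \mathcal{H}$ and $\left\vert Y^{\perp }\right\vert \leq \left\vert Y\right\vert $, the $X$--term drops out by Part 1 of Lemma \ref{Peter's lemma} and Part 2 of that lemma gives $\left\vert \mathrm{Hess}_{\dist \left( S,\cdot \right) }\left( Y^{\perp },Z\right) \right\vert <C\left\vert Y^{\perp }\right\vert \left\vert Z\right\vert \leq C\left\vert Y\right\vert \left\vert Z\right\vert $; multiplying by $\left\vert f^{\prime }\right\vert <\delta $ (one of the hypotheses of the Key Lemma) finishes Part 2. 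For Part 3, $Y\in \mathcal{V}$ forces $g\left( X,Y\right) =0$, so again $\mathrm{Hess}_{f}\left( Y,Z\right) =f^{\prime }\,\mathrm{Hess}_{\dist \left( S,\cdot \right) }\left( Y,Z\right) $; writing $Z=g\left( X,Z\right) X+Z_{\mathcal{V}}$ with $Z_{\mathcal{V}}\in \mathcal{V}$ and $\left\vert Z_{\mathcal{V}}\right\vert \leq \left\vert Z\right\vert $, and using that $\mathrm{Hess}_{\dist \left( S,\cdot \right) }\left( Y,\cdot \right) $ annihilates $X$ (Part 1 of Lemma \ref{Peter's lemma}) as does $g\left( Y,\cdot \right) $, one has
\[
\mathrm{Hess}_{f}\left( Y,Z\right) -\frac{f^{\prime }}{r}g\left( Y,Z\right) =f^{\prime }\left( \mathrm{Hess}_{\dist \left( S,\cdot \right) }\left( Y,Z_{\mathcal{V}}\right) -\frac{1}{r}g\left( Y,Z_{\mathcal{V}}\right) \right) .
\]
Part 3 of Lemma \ref{Peter's lemma} bounds the parenthesis by $O\left( r\right) \left\vert Y\right\vert \left\vert Z_{\mathcal{V}}\right\vert \leq O\left( r\right) \left\vert Y\right\vert \left\vert Z\right\vert $, and multiplying by $\left\vert f^{\prime }\right\vert <\delta $ gives Part 3.

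The main obstacle is the orthogonality step: one must notice that $X$ is \emph{exactly} orthogonal to $\mathcal{V}$ and $\mathcal{H}$, since the factor $f^{\prime \prime }$ in the conformal Hessian identity is not controlled in absolute value. Everything else is the purely mechanical care of splitting off $X$--components so that the remaining vectors land in the precise slots for which the three parts of Lemma \ref{Peter's lemma} are stated — the $\mathcal{H}$--vector in the second argument for Part 2, and the $\mathcal{V}$--vector in the first argument for Part 3.
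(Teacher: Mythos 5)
Your proof is correct and follows essentially the same route as the paper: derive the identity $\mathrm{Hess}_{f}\left( Y,Z\right) =f^{\prime \prime }g\left( Y,X\right) g\left( X,Z\right) +f^{\prime }\,\mathrm{Hess}_{\mathrm{dist}\left( S,\cdot \right) }\left( Y,Z\right) $ and then invoke Lemma \ref{Peter's lemma} together with $\left\vert f^{\prime }\right\vert <\delta $. Your explicit verification that $X$ is exactly orthogonal to $\mathcal{V}$ and $\mathcal{H}$ (so the $f^{\prime \prime }$--term vanishes in Parts 2 and 3 despite $f^{\prime \prime }$ having no lower bound) is a detail the paper leaves implicit, not a different approach.
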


\begin{proof}
Recall our notational shorthand $f=\rho \circ \mathrm{dist}\left( S,\cdot
\right) ,$ $f^{\prime }\equiv \rho ^{\prime }\circ \mathrm{dist}\left(
S,\cdot \right) ,$ and $f^{\prime \prime }\equiv \rho ^{\prime \prime }\circ 
\mathrm{dist}\left( S,\cdot \right) .$ So $\mathrm{grad}f=f^{\prime }X$ and $%
\mathrm{grad}f^{\prime }=f^{\prime \prime }X.$ Thus%
\begin{eqnarray}
\mathrm{Hess}_{f}\left( Y,Z\right) &=&g\left( \nabla _{Y}f^{\prime
}X,Z\right)  \notag \\
&=&\left( D_{Y}f^{\prime }\right) g\left( X,Z\right) +f^{\prime }g\left(
\nabla _{Y}X,Z\right)  \notag \\
&=&g\left( Y,\mathrm{grad}f^{\prime }\right) g\left( X,Z\right) +f^{\prime }%
\mathrm{Hess}_{\mathrm{dist}\left( S,\cdot \right) }\left( Y,Z\right)  \notag
\\
&=&f^{\prime \prime }g\left( Y,X\right) g\left( X,Z\right) +f^{\prime }%
\mathrm{Hess}_{\mathrm{dist}\left( S,\cdot \right) }\left( Y,Z\right) .
\label{hess f III}
\end{eqnarray}

The lemma follows from Lemma \ref{Peter's lemma}, Equation \ref{hess f III},
and our hypothesis that $\left\vert f^{\prime }\right\vert <\delta $.
\end{proof}

Combining the previous two results gives us the following.

\begin{corollary}
\label{Hess H bar}For $\bar{Y}\in \mathrm{span}\left\{ X,\mathcal{V},%
\overline{\mathcal{H}}\right\} $, $\bar{Z}\in \overline{\mathcal{H}},$ with
footpoint in $\Omega \setminus S$ sufficiently close to $S,$ and for $\delta 
$ as in the Key Lemma%
\begin{equation*}
\left\vert \mathrm{Hess}_{f}\left( \bar{Y},\bar{Z}\right) \right\vert
<O\left( \delta \right) \left\vert \bar{Y}\right\vert \left\vert \bar{Z}%
\right\vert .
\end{equation*}
\end{corollary}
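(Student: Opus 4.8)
The plan is to reduce the estimate for $\bar{Y}\in\mathrm{span}\{X,\mathcal{V},\overline{\mathcal{H}}\}$ and $\bar Z\in\overline{\mathcal{H}}$ to the estimates already obtained in Lemma~\ref{Hessian Est}, which are stated for vectors in $\mathrm{span}\{X,\mathcal{V},\mathcal{H}\}$ and in $\mathcal{H}$, by using Proposition~\ref{angle tween H and V} to compare the barred and unbarred distributions. First I would fix a footpoint $q\in\Omega\setminus S$ close to $S$, write $t=\mathrm{dist}(S,q)$, and decompose $\bar Y$ and $\bar Z$ with respect to the splitting $\mathrm{span}\{X\}\oplus\mathcal{V}\oplus\mathcal{H}$. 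Since $\overline{\mathcal{H}}$ is by definition orthogonal to $\mathrm{span}\{X,\mathcal{V}\}$, Proposition~\ref{angle tween H and V} (together with Part~1 of Lemma~\ref{Peter's lemma}, which handles the $X$-direction exactly) shows that any unit $\bar Z\in\overline{\mathcal{H}}$ differs from its $\mathcal{H}$-component by an error of size $O(t^{3})$ in the $\mathrm{span}\{X,\mathcal{V}\}$ directions; that is, $\bar Z=Z+E$ with $Z\in\mathcal{H}$, $|E|=O(t^{3})|\bar Z|$. Likewise $\bar Y$ is within $O(t^{3})|\bar Y|$ of a vector $Y\in\mathrm{span}\{X,\mathcal{V},\mathcal{H}\}$.

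Next I would expand $\mathrm{Hess}_{f}(\bar Y,\bar Z)=\mathrm{Hess}_{f}(Y,Z)+\mathrm{Hess}_{f}(Y,E)+\mathrm{Hess}_{f}(E,Z)+\mathrm{Hess}_{f}(E,E)$. The first term is bounded by $O(\delta)|Y||Z|$ by Part~2 of Lemma~\ref{Hessian Est} (since $Z\in\mathcal{H}$). For the remaining terms the point is that $E$ lies in $\mathrm{span}\{X,\mathcal{V}\}$, so I can control $\mathrm{Hess}_{f}(\,\cdot\,,E)$ and $\mathrm{Hess}_{f}(E,\,\cdot\,)$ using Parts~1 and~3 of Lemma~\ref{Hessian Est}: on $\mathrm{span}\{X\}$ the Hessian equals $f''$ and is $<\delta$, and on $\mathcal{V}$ it is $\tfrac{f'}{\mathrm{dist}(S,\cdot)}g(\,\cdot\,,\cdot\,)+\delta\,O(t)$, hence bounded in operator norm by $\tfrac{|f'|}{t}+O(\delta)\le\tfrac{\delta}{t}+O(\delta)$. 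Multiplying by $|E|=O(t^{3})$ gives a contribution of size $O(\delta t^{2})$, which is absorbed into $O(\delta)|\bar Y||\bar Z|$ once $t$ is small; the $\mathrm{Hess}_{f}(E,E)$ term is even smaller. Collecting, $|\mathrm{Hess}_{f}(\bar Y,\bar Z)|<O(\delta)|\bar Y||\bar Z|$, as claimed.

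The one subtlety worth being careful about is the factor $\tfrac{1}{t}$ appearing in the $\mathcal{V}$-component of $\mathrm{Hess}_{\mathrm{dist}(S,\cdot)}$: a priori a $\tfrac{1}{t}$ blowup could ruin the estimate. The reason it does not is twofold: first, the conformal factor carries $f'$ rather than $1$, and $|f'|<\delta$; second, and more importantly, the mismatch $E$ between $\overline{\mathcal{H}}$ and $\mathcal{H}$ is $O(t^{3})$, which beats the $\tfrac{1}{t}$ with room to spare. So the genuine content is exactly the high-order asymptotic orthogonality from Proposition~\ref{angle tween H and V} feeding into the bounds of Lemma~\ref{Hessian Est}; the rest is bookkeeping with the bilinearity and symmetry of the Hessian. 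I expect the main (minor) obstacle to be writing the decomposition of $\bar Y$ and $\bar Z$ cleanly enough that the cross terms are visibly of the stated order without having to track constants.
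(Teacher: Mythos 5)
Your proposal takes essentially the same route as the paper's proof: decompose $\bar{Y}$ and $\bar{Z}$ against the splitting $\mathrm{span}\left\{ X\right\} \oplus \mathcal{V}\oplus \mathcal{H}$, expand $\mathrm{Hess}_{f}$ by bilinearity, bound each term with Lemma \ref{Hessian Est}, and control the mismatch between $\overline{\mathcal{H}}$ and $\mathcal{H}$ via Proposition \ref{angle tween H and V}, noting that the high-order near-orthogonality beats the $\tfrac{1}{\mathrm{dist}\left( S,\cdot \right) }$ in the $\mathcal{V}$--direction. One small correction that does not affect the conclusion: since the Jacobi fields spanning $\mathcal{V}$ have norm comparable to $t$, the normalized component of a unit vector of $\overline{\mathcal{H}}$ in the $\mathcal{V}$--direction is $O\left( t^{2}\right) $ rather than $O\left( t^{3}\right) $ (the paper writes $\left\vert W\right\vert =\left\vert \bar{Z}\right\vert O\left( \mathrm{dist}\left( S,\cdot \right) ^{2}\right) $), and the error has no $X$--component at all since $\mathcal{H}\perp X$; with $O\left( t^{2}\right) $ the cross terms are still $O\left( \delta t\right) $, so the estimate goes through exactly as you describe.
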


\begin{proof}
Write $\bar{Y}=Y+V$ with $Y\in \mathcal{H}$ and $V\in \mathrm{span}\left\{ X,%
\mathcal{V}\right\} ,$ and write $\bar{Z}=Z+W$ with $Z\in \mathcal{H}$ and $%
W\in \mathrm{span}\left\{ \mathcal{V}\right\} .$ So 
\begin{equation*}
\left\vert \mathrm{Hess}_{f}\left( \bar{Y},\bar{Z}\right) \right\vert \leq
\left\vert \mathrm{Hess}_{f}\left( Y,Z\right) \right\vert +\left\vert 
\mathrm{Hess}_{f}\left( Y,W\right) \right\vert +\left\vert \mathrm{Hess}%
_{f}\left( V,Z\right) \right\vert +\left\vert \mathrm{Hess}_{f}\left(
V,W\right) \right\vert
\end{equation*}%
By Proposition \ref{angle tween H and V}, we have $\left\vert W\right\vert
=\left\vert \bar{Z}\right\vert O\left( \mathrm{dist}\left( S,\cdot \right)
^{2}\right) .$ Combining this with Lemma \ref{Hessian Est} and our
hypothesis that $\Omega \subset B\left( S,\frac{inj\left( S\right) }{2}%
\right) $ gives us 
\begin{equation*}
\left\vert \mathrm{Hess}_{f}\left( \bar{Y},\bar{Z}\right) \right\vert \leq
O\left( \delta \right) \left\vert \bar{Y}\right\vert \left\vert \bar{Z}%
\right\vert .
\end{equation*}
\end{proof}

We are now in a position to prove the Key Lemma.

\begin{proof}[Proof of the Key Lemma]
From Equation \ref{conf curv tensor}, we have for $Y\perp U$ 
\begin{eqnarray}
e^{-2f}\tilde{R}\left( U,Y,Y,U\right) &\geq &R\left( U,Y,Y,U\right)  \notag
\\
&&-g\left( U,U\right) \mathrm{Hess}_{f}\left( Y,Y\right) -g\left( Y,Y\right) 
\mathrm{Hess}_{f}\left( U,U\right) -\left\vert O\left( \delta ^{2}\right)
\right\vert \left\vert Y\right\vert ^{2}\left\vert U\right\vert ^{2}.
\label{gen conf}
\end{eqnarray}

Combining this with Lemma \ref{Hessian Est} we have for $Z\in T\Omega $ and
all $V\in \mathrm{span}\left\{ X,\mathcal{V}\right\} $ with $Z\perp V$%
\begin{eqnarray*}
e^{-2f}\tilde{R}\left( Z,V,V,Z\right) |_{_{B\left( S,\sigma _{1}\right) }}
&\geq &R\left( Z,V,V,Z\right) |_{_{B\left( S,\sigma _{1}\right) }}-f^{\prime
\prime }|_{_{B\left( S,\sigma _{1}\right) }}\left( \left\vert Z\right\vert
^{2}\left\vert V^{\mathrm{span}\left\{ X\right\} }\right\vert
^{2}+\left\vert Z^{\mathrm{span}\left\{ X\right\} }\right\vert
^{2}\left\vert V\right\vert ^{2}\right) \\
&&-\frac{f^{\prime }}{\mathrm{dist}\left( S,\cdot \right) }|_{_{B\left(
S,\sigma _{1}\right) }}\left( \left\vert V^{\mathcal{V}}\right\vert
^{2}\left\vert Z\right\vert ^{2}+\left\vert Z^{\mathcal{V}}\right\vert
^{2}\left\vert V\right\vert ^{2}\right) -\left\vert O\left( \delta \right)
\right\vert \left\vert Z\right\vert ^{2}\left\vert V\right\vert ^{2}
\end{eqnarray*}
provided $\sigma _{1}$ is sufficiently small. Since we assumed that 
\begin{equation*}
R\left( Z,V,V,Z\right) |_{_{B\left( S,\sigma _{1}\right) }}-f^{\prime \prime
}|_{_{B\left( S,\sigma _{1}\right) }}\left\vert Z\right\vert ^{2}\left\vert
V^{\mathrm{span}\left\{ X\right\} }\right\vert ^{2}-\frac{f^{\prime }}{%
\mathrm{dist}\left( S,\cdot \right) }|_{_{B\left( S,\sigma _{1}\right)
}}\left\vert V^{\mathcal{V}}\right\vert ^{2}\left\vert Z\right\vert ^{2}\geq
\left( K+1\right) \left\vert V\right\vert ^{2}\left\vert Z\right\vert ^{2},
\end{equation*}%
$f^{\prime }|_{_{B\left( S,\sigma _{1}\right) }}\leq 0,$ $f^{\prime \prime
}|_{_{B\left( S,\sigma _{1}\right) }}\leq 0,$ and $\left\vert f\right\vert
<\delta $ we obtain 
\begin{equation*}
\widetilde{\mathrm{sec}}\left( V,Z\right) |_{B\left( S,\sigma _{1}\right)
}>K,
\end{equation*}%
provided $\delta $ is sufficiently small.

Now consider, not necessarily distinct, orthonormal vectors $E,Y,Z,U\in 
\mathrm{span}\left\{ X\right\} \cup \mathcal{V}\cup \mathcal{\bar{H}}.$ Then%
\begin{eqnarray}
e^{-2f}\tilde{R}\left( E,Y,Z,U\right) &=&R\left( E,Y,Z,U\right)  \notag \\
&&-g\left( E,U\right) \mathrm{Hess}_{f}\left( Y,Z\right) -g\left( Y,Z\right) 
\mathrm{Hess}_{f}\left( E,U\right)  \notag \\
&&+g\left( E,Z\right) \mathrm{Hess}_{f}\left( Y,U\right) +g\left( Y,U\right) 
\mathrm{Hess}_{f}\left( E,Z\right)  \notag \\
&&\pm O\left( \delta ^{2}\right) \left\vert E\right\vert \left\vert
Y\right\vert \left\vert Z\right\vert \left\vert U\right\vert .
\end{eqnarray}

If we further assume that $R\left( E,Y,Z,U\right) $ does not correspond, up
to a symmetry of the curvature tensor, to the sectional curvature of a plane
containing a vector $V\in \mathrm{span}\left\{ X\right\} \cup \mathcal{V}$,
it then follows from Lemma \ref{Hessian Est} and Corollary \ref{Hess H bar}
that all four Hessian terms are bounded from above by $O\left( \delta
\right) .$ So 
\begin{equation*}
e^{-2f}\tilde{R}\left( E,Y,Z,U\right) =R\left( E,Y,Z,U\right) \pm O\left(
\delta \right) \left\vert E\right\vert \left\vert Y\right\vert \left\vert
Z\right\vert \left\vert U\right\vert .
\end{equation*}%
We then get Inequality \ref{R est} by choosing $\delta $ to be sufficiently
small.

On $M\setminus B\left( S,\sigma _{3}\right) $ Inequality \ref{not foulded
your father} follows from the hypothesis that $f|_{M\setminus B\left(
S,\sigma _{3}\right) }\equiv 0.$ We get Inequality \ref{not foulded your
father} on $B\left( S,\sigma _{3}\right) $ by combining Inequalities \ref{R
est}, and \ref{gen conf} with Lemma \ref{Hessian Est}, Corollary \ref{Hess H
bar}, and the hypothesis that $\left\vert f^{\prime }\right\vert +$ $%
f^{\prime \prime }<2\delta .$
\end{proof}

Now we prove Theorem \ref{conf-submanf}.

\begin{proof}[Proof of Theorem \protect\ref{conf-submanf}]
Given $\varepsilon ,K>0,$ choose $\delta $ and $\sigma _{1}$ as in the Key
Lemma. Let $\sigma _{2},\sigma _{3},$ and $\sigma _{4}$ be such that $\sigma
_{1}<\sigma _{2}<<\sigma _{3}<\sigma _{4}<\min \left\{ \frac{inj\left(
S\right) }{2},\frac{1}{4}\right\} ,$ and let $\rho :\left[ 0,\infty \right)
\longrightarrow \mathbb{R}$ satisfy the following conditions.

\begin{enumerate}
\item All derivatives of $\rho $ of odd order at $0$ are equal to $0.$

\item $K+2>-\rho ^{\prime \prime }\left( t\right) |_{\left[ 0,\sigma _{1}%
\right] }+\min \sec _{g}>K+1.$

\item $\rho ^{\prime \prime }\left( t\right) |_{\left[ 0,\sigma _{2}\right]
}\leq 0,$ $\rho ^{\prime }\left( t\right) \leq 0.$

\item $0\leq \rho ^{\prime \prime }|_{\left( \sigma _{2},\infty \right)
}<\delta .$

\item $\left\vert \rho ^{\prime }\right\vert +\left\vert \rho \right\vert
<\delta .$

\item $\rho |_{\left[ \sigma _{3},\infty \right) }\equiv 0.$
\end{enumerate}

Since $f=\rho \circ \mathrm{dist}\left( S,\cdot \right) ,$ Condition 1 gives
us that our conformal factor $e^{2f}$ is a smooth function on $M.$

The Fundamental Theorem of Calculus and Condition 2 give 
\begin{eqnarray}
-\rho ^{\prime }\left( t\right) |_{\left[ 0,\sigma _{1}\right] } &>&\left(
K+1-\min \sec _{g}\right) t,\text{ so}  \notag \\
-\frac{\rho ^{\prime }\left( t\right) |_{\left[ 0,\sigma _{1}\right] }}{t}%
+\min \sec _{g} &>&\left( K+1\right) .  \label{lambda'}
\end{eqnarray}

For $V\in \mathrm{span}\left\{ X,\mathcal{V}\right\} ,$ write $V=$ $V^{%
\mathrm{span}\left\{ X\right\} }+V^{\mathcal{V}}.$ Then Condition 2 gives 
\begin{equation*}
-\rho ^{\prime \prime }\left( t\right) |_{\left[ 0,\sigma _{1}\right]
}\left\vert V^{\mathrm{span}\left\{ X\right\} }\right\vert ^{2}+\min \sec
_{g}\left\vert V^{\mathrm{span}\left\{ X\right\} }\right\vert ^{2}>\left(
K+1\right) \left\vert V^{\mathrm{span}\left\{ X\right\} }\right\vert ^{2},
\end{equation*}%
and Inequality \ref{lambda'} gives 
\begin{equation*}
-\frac{\rho ^{\prime }\left( t\right) |_{\left[ 0,\sigma _{1}\right] }}{t}%
\left\vert V^{\mathcal{V}}\right\vert ^{2}+\min \sec _{g}\left\vert V^{%
\mathcal{V}}\right\vert ^{2}>\left( K+1\right) \left\vert V^{\mathcal{V}%
}\right\vert ^{2}.
\end{equation*}%
Adding the previous two inequalities we get 
\begin{equation*}
-\rho ^{\prime \prime }\left( t\right) |_{\left[ 0,\sigma _{1}\right]
}\left\vert V^{\mathrm{span}\left\{ X\right\} }\right\vert ^{2}-\frac{\rho
^{\prime }\left( t\right) |_{\left[ 0,\sigma _{1}\right] }}{t}\left\vert V^{%
\mathcal{V}}\right\vert ^{2}+\min \sec _{g}\left\vert V\right\vert
^{2}>\left( K+1\right) \left\vert V\right\vert ^{2}.
\end{equation*}

Let $t=\mathrm{dist}\left( S,\cdot \right) ,$ then $f^{\prime }\equiv \rho
^{\prime }\left( t\right) $ and $f^{\prime \prime }\equiv \rho ^{\prime
\prime }\left( t\right) .$ Making these substitutions, multiplying both
sides by $\left\vert Z\right\vert ^{2}$, and using $R\left( Z,V,V,Z\right)
|_{_{B\left( S,\sigma _{1}\right) }}\geq \min \sec _{g}\left\vert
V\right\vert ^{2}\left\vert Z\right\vert ^{2}$ gives%
\begin{eqnarray*}
&R\left( Z,V,V,Z\right) |_{_{B\left( S,\sigma _{1}\right) }}-f^{\prime
\prime }|_{_{B\left( S,\sigma _{1}\right) }}\left\vert Z\right\vert
^{2}\left\vert V^{\mathrm{span}\left\{ X\right\} }\right\vert ^{2}-\frac{%
f^{\prime }}{\mathrm{dist}\left( S,\cdot \right) }|_{_{B\left( S,\sigma
_{1}\right) }}\left\vert V^{\mathcal{V}}\right\vert ^{2}\left\vert
Z\right\vert ^{2} \\
&\geq \left( K+1\right) \left\vert V\right\vert ^{2}\left\vert Z\right\vert
^{2}.
\end{eqnarray*}

This establishes Inequality \ref{nasty inequality} of the Key Lemma. The
other hypotheses of the Key Lemma follow from the properties of $\rho $
(numbered 3--6, above). We then apply the Key Lemma to obtain the curvature
bounds of Theorem \ref{conf-submanf}. Finally, if $G$ acts isometrically on $%
M$ and $S$ is $G$--invariant, then $\tilde{g}$ is as well, since $f=\rho
\circ \mathrm{dist}\left( S,\cdot \right) .$
\end{proof}

\begin{remark}
\label{c-1 close}Given $\varepsilon $ and $K,$ if the Key Lemma holds for $%
\delta =\delta _{0},$ then it also holds for all $\delta \in \left( 0,\delta
_{0}\right) .$ Since $\left\vert \rho ^{\prime }\right\vert +\left\vert \rho
\right\vert <\delta ,$ and $f=\rho \circ \mathrm{dist}\left( S,\cdot \right)
,$ our conformal factor, $e^{2f}$ can be as close as we please in the $C^{1}$%
--topology to $1.$
\end{remark}

\subsection{Conformal Change Near a Compact Subset of a Non-compact
Submanifold}

Since the strata can be non-compact manifolds, we will need to generalize
Theorem \ref{conf-submanf}. Let $\left( M,g\right) $ be a compact Riemannian 
$n$--manifold. Let $S$ be a smooth submanifold of $\left( M,g\right) $. Let $%
\mathcal{C}_{1}$ be a compact subset of $S$. Let $inj\left( \mathcal{C}%
_{1}\right) $ be the injectivity radius of the normal bundle $\nu \left(
S\right) |_{\mathcal{C}_{1}}.$ Let $\nu _{0}\left( S\right) |_{\mathcal{C}%
_{1}}$ be the image of the zero section of $\nu \left( S\right) |_{\mathcal{C%
}_{1}}\longrightarrow \mathcal{C}_{1}.$ Let 
\begin{equation*}
\Omega \equiv \exp _{S}^{\perp }\left( B\left( \nu _{0}\left( S\right) |_{%
\mathcal{C}_{1}},\frac{inj\left( \mathcal{C}_{1}\right) }{2}\right) \right) ,
\end{equation*}%
and let 
\begin{equation*}
X\oplus \mathcal{V}\oplus \mathcal{H}
\end{equation*}%
be the splitting of $T\Omega $ given in \ref{defn of X}.

\begin{theorem}
\label{non-compact conf}Let $\left( M,g\right) ,$ $S,$ $\mathcal{C}_{1}$, $%
X, $ and $\mathcal{V}$ be as above, and let $\mathcal{C}_{3}$ be any compact
subset of $S$ with $\mathcal{C}_{1}\subset \mathrm{Int}\left( \mathcal{C}%
_{3}\right) .$ For any $\varepsilon ,K>0$ there are numbers $\sigma
_{1},\sigma _{3}$ with $0<\sigma _{1}<\sigma _{3}<\frac{inj\left( \mathcal{C}%
_{3}\right) }{2}$ and a metric $\tilde{g}=e^{2f}g$ with the following
properties.

\begin{enumerate}
\item Setting $\Omega _{1}\equiv \exp _{S}^{\perp }B\left( \nu _{0}(S)|_{%
\mathcal{C}_{1}},\sigma _{1}\right) $ and $\Omega _{3}\equiv \exp
_{S}^{\perp }B\left( \nu _{0}(S)|_{\mathcal{C}_{3}},\sigma _{3}\right) ,$
the metrics $\tilde{g}$ and $g$ coincide on $M\setminus \Omega _{3}.$

\item For all $Z\in T\Omega _{1}$ and all $V\in \mathrm{span}\left\{ X,%
\mathcal{V}\right\} $ 
\begin{equation}
\widetilde{\mathrm{sec}}\left( V,Z\right) |_{\Omega _{1}}>K.
\label{big curv}
\end{equation}

\item If $\left\{ E_{1},\ldots ,E_{n}\right\} $ is a local orthonormal frame
for $\Omega _{3}$ with $X=E_{1}$ and \textrm{span}$\left\{ E_{2},\ldots
,E_{r}\right\} =\mathcal{V}$ for $2\leq r\leq n,$ then 
\begin{equation}
\left\vert \tilde{R}\left( E_{i},E_{j},E_{k},E_{l}\right) -R\left(
E_{i},E_{j},E_{k},E_{l}\right) \right\vert <\varepsilon ,
\label{not too
Badd-II}
\end{equation}%
except if the quadruple corresponds, up to a symmetry of the curvature
tensor, to the sectional curvature of a plane containing a vector $V\in 
\mathrm{span}\left\{ X\right\} \cup \mathcal{V}$.

\item 
\begin{equation}
\widetilde{\mathrm{sec}}\left( V,W\right) >\mathrm{sec}\left( V,W\right)
-\varepsilon  \label{not bad curv}
\end{equation}%
for all $V,W\in TM.$
\end{enumerate}

Moreover, if $G$ acts isometrically on $\left( M,g\right) $ and $S$ and $%
\mathcal{C}_{1}$ are $G$--invariant, then we may choose $\tilde{g}$ to be $G$%
--invariant.
\end{theorem}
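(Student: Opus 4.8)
The plan is to imitate the proof of Theorem \ref{conf-submanf}, modifying the conformal factor by a cutoff in the directions tangent to $S$ so that the new metric is genuinely supported in the bounded tube $\Omega _{3}$. The first point to note is that the universal infinitesimal geometry established above---Lemmas \ref{Jacobi Bounds} and \ref{Peter's lemma}, Proposition \ref{angle tween H and V}, Lemma \ref{Hessian Est}, and Corollary \ref{Hess H bar}, and hence the Key Lemma---was proved by first fixing a unit speed geodesic that leaves $S$ orthogonally and only then invoking compactness of the unit normal bundle of $S$ for the uniformity of the constants. Since our conformal factor will vanish outside the tube over $\mathcal{C}_{3}$ and $\sigma _{3}<inj(\mathcal{C}_{3})/2$, every geodesic that matters leaves $\mathcal{C}_{3}$ orthogonally and runs for time less than the normal injectivity radius of $\mathcal{C}_{3}$; as $\mathcal{C}_{3}$ and $M$ are compact, all of these results, and the Key Lemma, remain valid over $\Omega _{3}$ with $B\left( S,\sigma _{i}\right) $ replaced by the tubes $\Omega _{i}$.

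Next I would fix a compact set $\mathcal{C}_{2}$ with $\mathcal{C}_{1}\subset \mathrm{Int}\left( \mathcal{C}_{2}\right) \subset \mathcal{C}_{2}\subset \mathrm{Int}\left( \mathcal{C}_{3}\right) $ and a smooth $\psi :S\rightarrow \lbrack 0,1]$ with $\psi \equiv 1$ on a neighborhood of $\mathcal{C}_{1}$ and $\mathrm{supp}\,\psi \subset \mathrm{Int}\left( \mathcal{C}_{2}\right) .$ On the tube of radius $inj(\mathcal{C}_{3})/2$ over $\mathcal{C}_{3}$ the closest point map $Pr$ is smooth, so $\psi \circ Pr$ is smooth there and $D_{X}\left( \psi \circ Pr\right) =0;$ moreover, since $\mathrm{span}\left\{ X\right\} \oplus \mathcal{V}$ is the tangent space to the fibers of $Pr,$ the form $d\left( \psi \circ Pr\right) $ annihilates $\mathrm{span}\left\{ X\right\} \oplus \mathcal{V},$ so $\nabla \left( \psi \circ Pr\right) \in \overline{\mathcal{H}}.$ There is a constant $C_{\psi },$ depending only on $\psi $ and the geometry over $\mathcal{C}_{3}$ and not on anything chosen later, with $\left\vert \nabla \left( \psi \circ Pr\right) \right\vert ,\left\vert \mathrm{Hess}_{\psi \circ Pr}\right\vert \leq C_{\psi }$ on that tube. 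I would then choose $\sigma _{1}$ and the profile $\rho $ (with its small parameter $\delta $) exactly as in the proof of Theorem \ref{conf-submanf}---vanishing outside a radius slightly smaller than $\sigma _{3},$ with $\rho ^{\prime }\leq 0,$ $\rho ^{\prime \prime }\leq 0$ near $0,$ $0\leq \rho ^{\prime \prime }<\delta $ further out, $\left\vert \rho \right\vert +\left\vert \rho ^{\prime }\right\vert <\delta ,$ and $-\rho ^{\prime \prime }+\min \sec _{g}>K+1$ near $0$---but now also insisting that $\delta $ be small relative to $\varepsilon ,$ $K,$ \emph{and} $C_{\psi }.$ Set $f=\left( \psi \circ Pr\right) \cdot \left( \rho \circ \mathrm{dist}\left( S,\cdot \right) \right) $ on the tube and $f\equiv 0$ elsewhere. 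Because $\mathrm{supp}\,f$ has compact closure inside $\Omega _{3}$ and $\rho $ has vanishing odd-order derivatives at $0,$ $f$ is smooth on $M$ and $\tilde{g}=e^{2f}g$ coincides with $g$ off $\Omega _{3},$ which is Part 1.

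Writing $u=\psi \circ Pr$ and $v=\rho \circ \mathrm{dist}\left( S,\cdot \right) ,$ we have $\mathrm{Hess}_{f}=v\,\mathrm{Hess}_{u}+u\,\mathrm{Hess}_{v}+du\otimes dv+dv\otimes du.$ On $\Omega _{1}$ we have $u\equiv 1,$ so $f=v$ and the proof of Theorem \ref{conf-submanf} applies word for word: Inequality \ref{nasty inequality} holds over $\Omega _{1}$ and the Key Lemma yields $\widetilde{\mathrm{sec}}\left( V,Z\right) |_{\Omega _{1}}>K,$ which is Part 2. For Parts 3 and 4 the three cutoff terms are each $O\left( \delta \right) $---indeed $\left\vert v\right\vert <\delta $ and $\left\vert dv\right\vert =\left\vert \rho ^{\prime }\right\vert <\delta ,$ while $\left\vert \mathrm{Hess}_{u}\right\vert ,\left\vert du\right\vert \leq C_{\psi }$ is already fixed---and, since $dv=\rho ^{\prime }X^{\flat }$ vanishes on $\overline{\mathcal{H}}$ and $\nabla u\in \overline{\mathcal{H}},$ these terms never enter the purely $\overline{\mathcal{H}}$--components of $\mathrm{Hess}_{f}$ that control the ``other'' components of the curvature tensor in \ref{not too Badd-II}. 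Those components, together with the lower bound \ref{not bad curv}, then follow from Lemma \ref{Hessian Est}, Corollary \ref{Hess H bar}, and the sign conditions $\rho ^{\prime }\leq 0,$ $\rho ^{\prime \prime }\leq 0$ near $0,$ just as in the Key Lemma, after shrinking $\delta .$ Finally, if $G$ acts isometrically with $S$ and $\mathcal{C}_{1}$ $G$--invariant, take $\mathcal{C}_{2},\mathcal{C}_{3},\psi $ to be $G$--invariant (possible since $G$ is compact); then $Pr$ is $G$--equivariant and $\mathrm{dist}\left( S,\cdot \right) $ is $G$--invariant, so $f,$ hence $\tilde{g},$ is $G$--invariant.

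The one point that needs genuine care is the bookkeeping just indicated: $\psi ,$ and with it the constant $C_{\psi },$ must be fixed \emph{before} $\delta $ is chosen, so that the cutoff terms $v\,\mathrm{Hess}_{u},$ $u\,\mathrm{Hess}_{v},$ and $du\otimes dv$ are honestly $O\left( \delta \right) $ and can be absorbed into the $\varepsilon $--error, and one must verify that they live in $\overline{\mathcal{H}}$--directions (or pair $\overline{\mathcal{H}}$ with $X$) so that they cannot degrade the strong positivity of $\widetilde{\mathrm{sec}}$ on $\mathrm{span}\left\{ X\right\} \oplus \mathcal{V}$ over $\Omega _{1},$ where, since $\psi \equiv 1,$ they vanish identically in any case. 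Everything else is a transcription of the compact case once the infinitesimal geometry is known to hold uniformly over the compact piece $\mathcal{C}_{3}.$
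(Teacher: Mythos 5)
Your proposal is correct and follows essentially the same route as the paper: multiply the radial profile $\rho \circ \mathrm{dist}\left( S,\cdot \right) $ by a cutoff pulled back from $S$ via the footpoint map, observe that the cutoff's gradient and Hessian bounds are fixed before $\delta $ is shrunk so the extra terms in $\mathrm{Hess}_{f}$ are $O\left( \delta \right) $, and then run the Key Lemma argument, with uniformity supplied by compactness of the tube over $\mathcal{C}_{3}$. The paper's proof is the same construction (its $\varphi =\bar{\varphi}\circ \mathrm{footpoint}$ is your $\psi \circ Pr$), differing only cosmetically in where the cutoff is supported and in securing $G$--invariance via a smoothed function of $\mathrm{dist}\left( \mathcal{C}_{1},\cdot \right) $ rather than an appeal to averaging.
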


\begin{remark}
As was the case for Theorem \ref{conf-submanf}, with appropriate choices of $%
\varepsilon $ and $K,$ $Ric_{\tilde{g}}|_{\Omega _{1}}>1.$
\end{remark}

\begin{proof}[Proof of Theorem \protect\ref{non-compact conf}]
Let $\mathcal{C}_{2}$ and $\mathcal{C}_{4}$ be compact subsets of $S$ with $%
\mathcal{C}_{1}\subset \mathrm{Int}\left( \mathcal{C}_{2}\right) $, $%
\mathcal{C}_{2}\subset \mathrm{Int}\left( \mathcal{C}_{3}\right) ,$ and $%
\mathcal{C}_{3}\subset \mathrm{Int}\left( \mathcal{C}_{4}\right) .$ Let $%
inj\left( \mathcal{C}_{4}\right) $ be the injectivity radius of the normal
bundle $\nu \left( S\right) |_{\mathcal{C}_{4}}.$ Let $\bar{\varphi}%
:S\longrightarrow \left[ 0,1\right] $ be $C^{\infty }$ and satisfy%
\begin{equation*}
\bar{\varphi}=\left\{ 
\begin{array}{cc}
1 & \text{on }\mathcal{C}_{2} \\ 
0 & S\setminus \mathcal{C}_{3}%
\end{array}%
\right. .
\end{equation*}%
Given $\sigma _{4}\in \left( 0,\frac{inj\left( \mathcal{C}_{4}\right) }{2}%
\right) ,$ extend $\bar{\varphi},$ by exponentiation, to a function $\varphi
,$ defined on $\exp _{S}^{\perp }B\left( \nu _{0}\left( S\right) |_{\mathcal{%
C}_{4}},\sigma _{4}\right) $ by setting 
\begin{equation*}
\varphi \left( x\right) =\bar{\varphi}\left( \mathrm{footpoint}\left( \left(
\exp _{S}^{\perp }\right) ^{-1}\left( x\right) \right) \right) .
\end{equation*}%
Our conformal factor is $e^{2f},$ where 
\begin{equation*}
f\left( x\right) \equiv \left\{ 
\begin{array}{ccc}
\left( \rho \circ \mathrm{dist}\left( S,x\right) \right) \cdot \varphi
\left( x\right) &  & \text{for }x\in \exp _{S}^{\perp }B\left( \nu
_{0}\left( S\right) |_{\mathcal{C}_{4}},\sigma _{4}\right) \\ 
0 &  & \text{for }x\in \text{ }M\setminus \exp _{S}^{\perp }B\left( \nu
_{0}\left( S\right) |_{\mathcal{C}_{3}},\sigma _{3}\right)%
\end{array}%
\right.
\end{equation*}%
and $\rho $ is as in the proof of Theorem \ref{conf-submanf}$.$ Since $%
\left( \rho \circ \mathrm{dist}\left( S,x\right) \right) \cdot \varphi
\left( x\right) $ is $0$ on $\left( \exp _{S}^{\perp }B\left( \nu _{0}\left(
S\right) |_{\mathcal{C}_{4}},\sigma _{4}\right) \right) \setminus \left(
\exp _{S}^{\perp }B\left( \nu _{0}\left( S\right) |_{\mathcal{C}_{3}},\sigma
_{3}\right) \right) ,$ $f$ is a well defined $C^{\infty }$ function.

Setting $\tilde{f}\equiv \rho \circ \mathrm{dist}\left( S,\cdot \right) ,$
we have that on $\exp _{S}^{\perp }B\left( \nu _{0}\left( S\right) |_{%
\mathcal{C}_{4}},\sigma _{4}\right) ,$ 
\begin{equation*}
f=\varphi \cdot \tilde{f},
\end{equation*}%
\begin{equation}
\text{grad}\left( f\right) =\varphi \text{grad}\left( \tilde{f}\right) +%
\tilde{f}\text{grad}\left( \varphi \right) .  \label{grad phi f}
\end{equation}%
Since $\left\vert \tilde{f}\right\vert ,\left\vert \mathrm{grad}\left( 
\tilde{f}\right) \right\vert <\delta $ and $\left\vert \varphi \right\vert
\leq 1,$ we see from Equation \ref{grad phi f} that if $\delta $ is
sufficiently small compared to $\left\vert \text{grad}\varphi \right\vert ,$
then 
\begin{equation}
\left\vert \text{grad}\left( f\right) \right\vert <O\left( \delta \right) .
\label{grad f}
\end{equation}%
Since grad$\left( f\right) =\varphi $grad$\left( \tilde{f}\right) +\tilde{f}$%
grad$\left( \varphi \right) ,$ 
\begin{eqnarray*}
\mathrm{Hess}_{f}\left( V,W\right) &=&g\left( \nabla _{V}\left( \varphi 
\text{grad}\left( \tilde{f}\right) +\tilde{f}\text{grad}\left( \varphi
\right) \right) ,W\right) \\
&=&\left( D_{V}\varphi \right) g\left( \text{grad}\left( \tilde{f}\right)
,W\right) +\varphi g\left( \nabla _{V}\left( \text{grad}\left( \tilde{f}%
\right) \right) ,W\right) \\
&&+\left( D_{V}\tilde{f}\right) g\left( \text{grad}\left( \varphi \right)
,W\right) +\tilde{f}g\left( \nabla _{V}\text{grad}\left( \varphi \right)
,W\right) \\
&=&\left( D_{V}\varphi \right) D_{W}\tilde{f}+\varphi \mathrm{Hess}_{\tilde{f%
}}\left( V,W\right) \\
&&+\left( D_{V}\tilde{f}\right) D_{W}\varphi +\tilde{f}\mathrm{Hess}%
_{\varphi }\left( V,W\right) .
\end{eqnarray*}

Using $\left\vert \tilde{f}\right\vert ,\left\vert \mathrm{grad}\left( 
\tilde{f}\right) \right\vert <\delta $ and choosing $\delta $ small compared
to both $\left\vert \text{grad}\varphi \right\vert $ and $\left\vert \mathrm{%
Hess}_{\varphi }\right\vert $ gives us%
\begin{equation}
\mathrm{Hess}_{f}\left( V,W\right) =\varphi \mathrm{Hess}_{\tilde{f}}\left(
V,W\right) +O\left( \delta \right) \left\vert V\right\vert \left\vert
W\right\vert .  \label{Hess f}
\end{equation}

Inequality \ref{grad f} and Equation \ref{Hess f} allow us to argue, as in
the proof of Theorem \ref{conf-submanf}, to obtain the curvature estimates
in \ref{big curv}, \ref{not too Badd-II}, and \ref{not bad curv}.

If $S$ and $\mathcal{C}_{1}$ are $G$--invariant, we take $\mathcal{C}_{2},%
\mathcal{C}_{3},$ and $\mathcal{C}_{4}$ to be metric neighborhoods of $%
\mathcal{C}_{1}$ within $S.$ Let $\bar{\varphi}$ have the form $\bar{\varphi}%
=\psi \circ \mathrm{dist}\left( \mathcal{C}_{1},\cdot \right) $ where $\psi :%
\mathbb{R}\longrightarrow \mathbb{R}.$ Such functions, $\bar{\varphi},$ are $%
G$--invariant and have $G$--invariant smoothings, using the Riemannian
convolution technique of, for example, \cite{GrWu1}, \cite{GrWu2}, \cite%
{GrovShio}. Extending $\bar{\varphi}$ by exponentiation as above then gives
a smooth, $G$--invariant $\varphi ,$ and hence a $G$--invariant $\tilde{g}.$
\end{proof}

\begin{remark}
As was the case for Theorem \ref{conf-submanf}, the conformal factor, $%
e^{2f},$ can be as close to $1$ as we please in the $C^{1}$--topology.
\end{remark}

\subsection{Conformal change in a Neighborhood of the Entire Singular Strata}

The conformal change that we actually use to prove Theorems \ref{main} and %
\ref{alm nonneg thm} is the one obtained from the following theorem.

\begin{theorem}
\label{uber conf}Let $G$ be a compact, connected Lie group acting
isometrically and effectively on a compact Riemannian $n$--manifold $\left(
M,g\right) $ with singular strata, $S_{1},S_{2},\ldots ,S_{p}.$

For any $\varepsilon ,K>0$ there are neighborhoods $\Omega _{1}\subset
\Omega _{3}$ of $S_{1}\cup S_{2}\cup \cdots \cup S_{p},$ and a $G$%
--invariant metric $\tilde{g}=e^{2f}g$ which have the following properties.

\begin{enumerate}
\item For each $S_{i}$ there is a compact subset $\mathcal{C}_{i}\subset $ $%
S_{i}$ and tubular neighborhoods $\Omega _{1}^{i}\subset \Omega _{3}^{i}$ as
in Theorem \ref{non-compact conf} with $\Omega _{1}=\cup \Omega _{1}^{i}$
and $\Omega _{3}=\cup \Omega _{3}^{i}.$ Let $T\Omega _{3}^{i}=\mathrm{span}%
\left\{ X_{i}\right\} \oplus \mathcal{V}_{i}\oplus \mathcal{H}_{i}$ be the
splitting as in \ref{defn of X}.

\item The metrics $\tilde{g}$ and $g$ coincide on $M\setminus \Omega _{3}.$

\item For all $i\in \left\{ 1,\ldots ,p\right\} ,$ all $Z\in T\Omega
_{1}^{i} $ and all $V\in \mathrm{span}\left\{ X_{i}\right\} \oplus \mathcal{V%
}_{i}$ 
\begin{equation}
\widetilde{\mathrm{sec}}\left( V,Z\right) |_{\Omega _{1}^{i}}>K.
\label{V-Curv}
\end{equation}

\item If $\left\{ E_{1},\ldots ,E_{n}\right\} $ is a local orthonormal frame
for $\Omega _{3}^{i}$ with $X=E_{1}$ and \textrm{span}$\left\{ E_{2},\ldots
,E_{r}\right\} =\mathcal{V}_{i}$ for $2\leq r\leq n,$ then%
\begin{equation}
\left\vert \tilde{R}\left( E_{i},E_{j},E_{k},E_{l}\right) -R\left(
E_{i},E_{j},E_{k},E_{l}\right) \right\vert <\varepsilon ,
\label{not
fouled your mother}
\end{equation}%
except if the quadruple corresponds, up to a symmetry of the curvature
tensor, to the sectional curvature of a plane containing a vector $V\in 
\mathrm{span}\left\{ X_{i}\right\} \cup \mathcal{V}_{i}$.

\item 
\begin{equation}
\widetilde{\mathrm{sec}}\left( V,W\right) >\mathrm{sec}\left( V,W\right)
-\varepsilon  \label{not fouled--II}
\end{equation}%
for all $V,W\in TM.$
\end{enumerate}
\end{theorem}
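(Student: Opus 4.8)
The plan is to prove this by a finite induction over the singular strata, handling them from the most to the least singular and applying the single-stratum result, Theorem~\ref{non-compact conf}, at each step. First I would invoke Proposition~\ref{Omega_i Dfn} to get, for each $i$, the compact set $\mathcal C_i\subset S_i$ and the tube $\Omega^i=\exp^{\perp}_{S_i}\!\bigl(B(\nu_0(S_i)|_{\mathrm{int}\,\mathcal C_i},r_i)\bigr)$, with $\bigcup_i\Omega^i$ a neighborhood of $S_1\cup\cdots\cup S_p$ and with $\mathcal C_i$ cut back so as to avoid a neighborhood of the union $\mathbb S^{\mathcal D(S_i)-1}$ of all strata of strictly smaller Descendant Number. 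After reordering so that $\mathcal D(S_1)\le\cdots\le\mathcal D(S_p)$, I would construct conformal factors $e^{2f_1},\dots,e^{2f_p}$ in turn: at step $i$, fix a $G$-invariant compact $\mathcal C_3^i\subset S_i$ with $\mathcal C_i\subset\mathrm{Int}(\mathcal C_3^i)$, and apply Theorem~\ref{non-compact conf} to the current metric $g_{i-1}:=e^{2(f_1+\cdots+f_{i-1})}g$, to $S_i$, with $\mathcal C_1=\mathcal C_i$, $\mathcal C_3=\mathcal C_3^i$, and suitably chosen parameters $\varepsilon_i,K_i$. This produces $0<\sigma_1^i<\sigma_3^i<\mathrm{inj}(\mathcal C_3^i)/2$, the nested tubes $\Omega_1^i\subset\Omega_3^i$, the splitting $T\Omega_3^i=\mathrm{span}\{X_i\}\oplus\mathcal V_i\oplus\mathcal H_i$ of \ref{defn of X}, and $f_i$ with $f_i\equiv 0$ off $\Omega_3^i$; since $S_i$ and $\mathcal C_i$ are $G$-invariant, the last clause of Theorem~\ref{non-compact conf} lets us take $f_i$ to be $G$-invariant. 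Setting $\tilde g:=e^{2(f_1+\cdots+f_p)}g$, $\Omega_1:=\bigcup_i\Omega_1^i$, and $\Omega_3:=\bigcup_i\Omega_3^i$, item (1) of Theorem~\ref{uber conf} holds by construction and item (2) since each $f_i$ vanishes off $\Omega_3^i$, and $\tilde g$ is $G$-invariant.

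The work lies in the choice of all the parameters — the radii in Proposition~\ref{Omega_i Dfn} and the $\sigma^i$, the enlarged compacta $\mathcal C_3^i$, and the $\varepsilon_i,K_i$ — which must be made coherently, processing the strata down the Descendant-Number hierarchy. First, $\Omega_1=\bigcup_i\Omega_1^i$ must still be a neighborhood of $S_1\cup\cdots\cup S_p$: the portion of a less singular stratum $S_i$ that was removed from its own $\mathcal C_i$ lies in a definite tube around more singular strata, hence is recaptured inside the tubes $\Omega_1^j$ of those strata once the radii are coordinated — this is exactly the nesting already exploited in the proof of Proposition~\ref{Omega_i Dfn}. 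Second, items (3) and (5) follow by telescoping: because adjacent strata in the hierarchy accumulate onto one another, the tube carrying the curvature $>K_i$ produced at step $i$ is in general met by later factors $f_j$; but item (4) of Theorem~\ref{non-compact conf} guarantees that each step lowers every sectional curvature by no more than the $\varepsilon$ fed to it, so $\widetilde{\mathrm{sec}}(V,Z)|_{\Omega_1^i}>K_i-\sum_{j>i}\varepsilon_j$ and $\widetilde{\mathrm{sec}}(V,W)>\mathrm{sec}_g(V,W)-\sum_j\varepsilon_j$ for all $V,W$, and taking $\sum_j\varepsilon_j<\varepsilon$ and $K_i:=K+\sum_j\varepsilon_j$ delivers (3) and (5). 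Third, item (4) requires that, on each overlap $\Omega_3^i\cap\Omega_3^j$, the later conformal change disturb only those components of $\tilde R$ already exempted in the conclusion; here one uses that such an overlap arises only near a junction where one stratum lies in the closure of the other, so that the vertical distribution $\mathrm{span}\{X_j\}\oplus\mathcal V_j$ of the less singular stratum is asymptotically contained in $\mathrm{span}\{X_i\}\oplus\mathcal V_i$, and one chooses the local frame along $S_i$ adapted to this nesting so that the large Hessian contributions of $f_j$ fall precisely on the exempted quadruples.

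The genuine obstacle — and where I expect the bulk of the argument to go — is this last point: verifying that the sharply localized curvature which a conformal change deliberately creates near one stratum does not spoil, beyond the allowed $\varepsilon$, the curvature estimate (4) on the overlapping tube of a comparable stratum. This is what dictates the hierarchical, simultaneous choice of all the parameters above; once it is in place, Theorem~\ref{uber conf} follows, stratum by stratum, from Theorem~\ref{non-compact conf} together with the structure provided by Proposition~\ref{Omega_i Dfn}.
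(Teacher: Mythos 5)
Your proposal follows essentially the same route as the paper's proof: an induction down the descendant-number hierarchy, treating the most singular (closed, hence compact) strata first and, at each later stage, choosing a compact $\mathcal{C}\subset S$ with $\bar{S}\subset \mathcal{C}\cup \Omega _{1}^{l}$ and applying Theorem \ref{non-compact conf} (the paper handles the descendant-number-one strata via Theorem \ref{conf-submanf}, which is the same thing since those strata are compact). If anything, the paper's proof is terser than yours: it performs no $\varepsilon$--telescoping and does not discuss the overlap/frame-compatibility issue you single out as the main obstacle, simply invoking the single-stratum theorem at each stage, so your additional bookkeeping is a refinement of, not a departure from, its argument.
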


\begin{proof}
Our proof is by induction on Descendant Number, which was defined in the
proof of Proposition \ref{Omega_i Dfn}. A stratum with Descendant Number $1$
contains no stratum other than itself and hence is a compact submanifold. We
apply Theorem \ref{conf-submanf} to obtain a $G$--invariant conformal change
and neighborhoods, $\Omega _{1}^{1}$ and $\Omega _{3}^{1}$, of all the
strata with Descendant Number $1$ on which the Inequalities \ref{V-Curv}, %
\ref{not fouled your mother}, and \ref{not fouled--II} hold.

Now suppose we have such a $G$--invariant conformal change and neighborhoods 
$\Omega _{1}^{l}$ and $\Omega _{3}^{l}$ for all strata whose Descendant
Number is $l.$ For each stratum, $S,$ with Descendant Number $l+1$, we
choose a compact subset $\mathcal{C}$ of $S$ so that $\bar{S}\subset 
\mathcal{C}\cup \Omega _{1}^{l}.$ Applying Theorem \ref{non-compact conf} to
each such $S,$ yields a $G$--invariant conformal change and neighborhoods $%
\Omega _{1}^{l+1}$ and $\Omega _{3}^{l+1}$ of all the strata with Descendant
Number $l+1$ that satisfy Inequalities \ref{V-Curv}, \ref{not fouled your
mother}, and \ref{not fouled--II}.
\end{proof}

\begin{remark}
\label{uber c-1 close}Since our conformal factor comes from repeated
applications of Theorem \ref{non-compact conf}, it can be as close as we
please in the $C^{1}$--topology to $1.$
\end{remark}

\section{Cheeger Deformations\label{cheeg def}}

In the presence of a group of isometries, $G,$ a method for deforming the
metric on a manifold, $M$, of non-negative sectional curvature is given in 
\cite{Cheeg}. It is based on the Gray-O'Neill principle that Riemannian
submersions do not decrease the curvatures of horizontal planes. We briefly
review the basics of this construction here, largely following the
exposition from \cite{PetWilh1}.

Let $G$ be a compact group of isometries of $\left( M,g_{M}\right) ,$ $g_{%
\mathrm{bi}}$ a bi-invariant metric on $G,$ and consider the one parameter
family $l^{2}g_{\mathrm{bi}}+g_{M}$ of metrics on $G\times M.$ Then $G$ acts
on $\left( G\times M,l^{2}g_{\mathrm{bi}}+g_{M}\right) $ via 
\begin{equation}
g(p,m)=(pg^{-1},gm).  \label{skew action}
\end{equation}

Modding out by the action \ref{skew action} we obtain a one parameter family 
$g_{l}$ of metrics on $M\cong \left( G\times M\right) /G.$ As $l\rightarrow
\infty ,$ $\left( M,g_{l}\right) $ converges to $g_{M}$ \cite{PetWilh1}.

The quotient map for the action (\ref{skew action}) is 
\begin{equation*}
q_{G\times M}:(g,m)\mapsto gm.
\end{equation*}%
The vertical space for $q_{G\times M}$ at $(g,m)\in G\times M$ is 
\begin{equation}
\mathcal{V}_{q_{G\times M}}=\{(-k_{G}\left( g\right) ,k_{M}\left( m\right)
)\ |\ k\in \mathfrak{g}\},  \label{cheeg vertt}
\end{equation}%
where we are employing the convention that for $k\in \mathfrak{g,}$ $k_{G}$
is the Killing field on $G$ generated by $k$ and $k_{M}$ is the Killing
field on $M$ generated by $k\mathfrak{.}$

We recall from \cite{Cheeg}, \cite{PetWilh1} that there is a
reparametrization of the tangent space, that we call the Cheeger
reparametrization. We denote it by 
\begin{equation*}
Ch_{l}:TM\rightarrow TM.
\end{equation*}%
It is defined by 
\begin{equation*}
Ch_{l}\left( v\right) =d\left( q_{G\times M}\right) \left( \hat{v}%
_{l}\right) ,
\end{equation*}%
where $\hat{v}_{l}\in TG\times TM$ is the horizontal vector for $q_{G\times
M}:\left( G\times M,l^{2}g_{\mathrm{bi}}+g_{M}\right) \longrightarrow \left(
M,g_{l}\right) $ that maps to $v$ under the projection $d\pi _{2}:$ $T\left(
G\times M\right) \longrightarrow TM.$

Note that every $G$--orbit in $G\times M$ has a point of the form $\left(
e,x\right) .$ At such a point, we let $\kappa _{v}$ be the element of $TG$
so that when $l=1$ 
\begin{equation*}
\hat{v}_{1}=\left( \kappa _{v},v\right) .
\end{equation*}%
Because $\hat{v}_{l}$ is $q_{G\times M}$--horizontal, $\kappa _{v}$ is
orthogonal to the Lie Algebra of the isotropy at $x,$ that is, $\kappa
_{v}\in \mathfrak{m}_{x}.$ For any $l,$ we then have 
\begin{equation*}
\hat{v}_{l}=\left( \frac{\kappa _{v}}{l^{2}},v\right) .
\end{equation*}%
For simplicity we will write $\hat{v}$ for $\hat{v}_{l}.$

Although $\kappa _{v}$ is completely determined by $v,$ $g_{\mathrm{bi}},$ $%
g_{M},$ and the $G$--action, we will not give its explicit formula since it
is somewhat unpleasant. Instead, we develop some key abstract properties in
the following proposition.

\begin{proposition}
\label{bound on kappa}

\begin{enumerate}
\item There is a constant $C_{1}>0$ so that for all unit vectors $V\in TM,$%
\begin{equation*}
\left\vert \kappa _{V}\right\vert _{g_{\mathrm{bi}}}\leq C_{1}.
\end{equation*}

\item For any compact subset $\mathcal{K}\subset M^{\text{reg }}$ there is a
constant $C_{2}>0$ so that for all $x\in \mathcal{K}$ and all unit $V\in
TG\left( x\right) ,$ 
\begin{equation*}
\left\vert \kappa _{V}\right\vert _{g_{\mathrm{bi}}}\geq C_{2}.
\end{equation*}

\item The maps $T_{p}M\longrightarrow T_{\left( e,p\right) }\left( G\times
M\right) ,$ $v\longmapsto \hat{v}_{l}$ and $T_{p}M\longrightarrow T_{p}M,$ $%
v\longmapsto Ch_{l}\left( v\right) $ are linear.
\end{enumerate}
\end{proposition}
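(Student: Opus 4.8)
The plan is to pin $\kappa_v$ down by a single linear equation and then read off all three statements from it. Working at a point $(e,x)$ of the $G$--orbit, the vector $\hat v_1=(\kappa_v,v)$ is $q_{G\times M}$--horizontal precisely when it is $(g_{\mathrm{bi}}+g_M)$--orthogonal to the vertical space described in \ref{cheeg vertt}; pairing $(\kappa_v,v)$ against the generators $(-k_G(e),k_M(x))$, $k\in\mathfrak g$, shows that $\kappa_v$ is the unique element of $\mathfrak g$ with
\begin{equation*}
g_{\mathrm{bi}}(\kappa_v,k)=-\,g_M(v,k_M(x))\qquad\text{for all }k\in\mathfrak g
\end{equation*}
(the sign depends on conventions and plays no role below). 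Equivalently, fixing a $g_{\mathrm{bi}}$--orthonormal basis $k_1,\dots,k_m$ of $\mathfrak g$ one has $\kappa_v=-\sum_{i}g_M(v,(k_i)_M(x))\,k_i$, so $(x,v)\mapsto\kappa_v$ is a smooth $\mathfrak g$--valued map on $TM$.

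Given this, Part 3 is immediate: the displayed equation has right-hand side linear in $v$ and determines $\kappa_v$ uniquely, so $v\mapsto\kappa_v$ is linear on each $T_pM$; hence $v\mapsto\hat v_l=(\kappa_v/l^2,v)$ is linear from $T_pM$ into $T_{(e,p)}(G\times M)$, and $Ch_l=d(q_{G\times M})_{(e,p)}\circ(v\mapsto\hat v_l)$ is a composition of linear maps. Part 1 is then a compactness statement: the unit sphere bundle of $(M,g_M)$ is compact since $M$ is, and $v\mapsto|\kappa_v|_{g_{\mathrm{bi}}}$ is continuous on it, hence bounded. (Alternatively, substitute $k=\kappa_v$ into the displayed equation and use $|k_M(x)|_{g_M}\le A|k|_{g_{\mathrm{bi}}}$, a bound valid for all $x\in M$ by smoothness of the action and compactness of $M$, to get $|\kappa_v|_{g_{\mathrm{bi}}}\le A|v|_{g_M}$ at once.)

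For Part 2, I would fix a compact $\mathcal K\subset M^{\text{reg}}$. Over $M^{\text{reg}}$ the orbit tangent spaces $T_xG(x)$ form the vertical distribution of the Riemannian submersion $\pi^{\text{reg}}$, a smooth subbundle of $TM^{\text{reg}}$; hence the set $\{\,V\in T_xG(x)\ :\ x\in\mathcal K,\ |V|_{g_M}=1\,\}$ is the unit sphere bundle of a vector bundle over the compact base $\mathcal K$, and so is compact. On this set $V\mapsto|\kappa_V|_{g_{\mathrm{bi}}}$ is continuous and strictly positive: writing a unit $V\in T_xG(x)$ as $V=k_M(x)$ with $0\ne k\in\mathfrak m_x$ (the map $k\mapsto k_M(x)$ restricts to an isomorphism $\mathfrak m_x\to T_xG(x)$) and plugging this $k$ into the displayed equation gives $g_{\mathrm{bi}}(\kappa_V,k)=-|V|_{g_M}^{2}=-1$, so $\kappa_V\ne 0$. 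A continuous strictly positive function on a compact set attains a positive minimum, and this is the constant $C_2$.

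The only step with genuine content is Part 2, and its crux is the input that over $M^{\text{reg}}$ the orbit tangent spaces vary smoothly (equivalently, that $\dim\mathfrak g_x$ is locally constant there), which is what makes the relevant sphere bundle compact so that the infimum is attained and positive. This is exactly the property that fails along the singular strata, and explains why the lower bound in Part 2 is asserted only on $M^{\text{reg}}$.
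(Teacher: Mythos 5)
Your proposal is correct and takes essentially the same route as the paper: both proofs rest on the defining orthogonality relation $g_{\mathrm{bi}}\left( \kappa _{V},k\right) =g_{M}\left( V,k_{M}\left( x\right) \right) $ for all $k\in \mathfrak{g}$, obtaining Part 1 from compactness of the unit tangent bundle of $M$, Part 2 by pairing against the Killing field in $\mathfrak{m}_{x}$ that generates $V$ and then using compactness of the unit orbit-sphere bundle over $\mathcal{K}\subset M^{\text{reg}}$ (where the orbit distribution varies continuously), and Part 3 directly from the definition of $\hat{v}_{l}$. Your explicit basis formula for $\kappa _{v}$ is a convenient bookkeeping device but does not change the substance of the argument.
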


\begin{proof}
By definition, $\left( \kappa _{V},V\right) $ is $q_{G\times M}$--horizontal
with respect to $\left( g_{\mathrm{bi}}+g_{M}\right) ,$ so for all $k\in 
\mathfrak{g}$ with $\left\vert k_{G}\right\vert _{g_{\mathrm{bi}}}=1,$ 
\begin{equation*}
0=\left( g_{\mathrm{bi}}+g_{M}\right) \left( \left( \kappa _{V},V\right)
,\left( -k_{G},k_{M}\right) \right) .
\end{equation*}%
So%
\begin{eqnarray*}
\left\vert \kappa _{V}\right\vert _{g_{\mathrm{bi}}} &=&\max \left\{
\left\vert g_{\mathrm{bi}}\left( \kappa _{V},-k_{G}\right) \right\vert \text{
s.t. }k\in \mathfrak{g}\text{ with }\left\vert k_{G}\right\vert _{g_{\mathrm{%
bi}}}=1\right\} \\
&=&\left\vert g_{\mathrm{bi}}\left( \kappa _{V},-\left( k_{\max }\right)
_{G}\right) \right\vert ,\text{ for some }k_{\max }\in \mathfrak{g}\text{
with }\left\vert \left( k_{\max }\right) _{G}\right\vert _{g_{\mathrm{bi}}}=1
\\
&=&\left\vert g_{M}\left( V,\left( k_{\max }\right) _{M}\right) \right\vert 
\text{ } \\
&\leq &\max \left\{ \left\vert k_{M}\right\vert _{g_{M}}\text{ s.t. }k\in 
\mathfrak{g}\text{ with }\left\vert k_{G}\right\vert _{g_{\mathrm{bi}%
}}=1\right\} ,\text{ since }\left\vert V\right\vert _{g_{M}}=1.
\end{eqnarray*}%
By compactness, the right hand side is bounded from above by some constant $%
C_{1}>0$, proving Part 1.

For Part 2, take $x\in \mathcal{K}\subset M^{\text{reg }}$ and choose $k\in 
\mathfrak{m}_{x}$ with $\left\vert k_{G}\right\vert _{g_{\mathrm{bi}}}=1$
and $\frac{k_{M}}{\left\vert k_{M}\right\vert _{g_{M}}}=V,$ then%
\begin{equation*}
0=\left( g_{\mathrm{bi}}+g_{M}\right) \left( \left( \kappa _{V},V\right)
,\left( -k_{G},k_{M}\right) \right) .
\end{equation*}%
So 
\begin{eqnarray*}
\left\vert \kappa _{V}\right\vert _{g_{\mathrm{bi}}} &\geq &\left\vert g_{%
\mathrm{bi}}\left( \kappa _{V},-k_{G}\right) \right\vert \\
&=&\left\vert g_{M}\left( V,k_{M}\right) \right\vert \\
&=&\left\vert k_{M}\right\vert _{g_{M}}.
\end{eqnarray*}%
Since $k\in \mathfrak{m}_{x}$ and $\left\vert k_{G}\right\vert _{g_{\mathrm{%
bi}}}=1,$ $\left\vert k_{M}\left( x\right) \right\vert _{g_{M}}>0$. By
compactness, there is a positive constant $C_{2}$ so that 
\begin{equation*}
\min_{x\in \mathcal{K}}\left\{ \min_{k\in \mathfrak{m}_{x},\left\vert
k_{G}\right\vert _{g_{\mathrm{bi}}}=1}\left\vert k_{M}\left( x\right)
\right\vert _{g_{M}}\right\} >C_{2}>0,
\end{equation*}%
and Part 2 follows.

Part 3 is an immediate consequence of the definitions of $\hat{v}_{l}$ and $%
Ch_{l}\left( v\right) .$
\end{proof}

Next we bound the sectional curvatures of $g_{l}$ from below.

\begin{proposition}
\label{orbital estimate}If $\left\{ V,W\right\} $ is $g_{M}$--orthonormal,
then 
\begin{eqnarray*}
\mathrm{sec}_{g_{l}}\left( Ch_{l}\left( W\right) ,Ch_{l}\left( V\right)
\right) &\geq &\mathrm{sec}_{l^{2}g_{\mathrm{bi}}+g_{M}}\left( \hat{W},\hat{V%
}\right) \\
&\geq &\max \left\{ -1,-\frac{l^{2}}{\left\vert \kappa _{V}\right\vert _{g_{%
\mathrm{bi}}}^{2}},-\frac{l^{2}}{\left\vert \kappa _{W}\right\vert _{g_{%
\mathrm{bi}}}^{2}}\right\} \left\vert \mathrm{sec}_{g_{M}}\left( V,W\right)
\right\vert
\end{eqnarray*}%
Moreover, if $V$ and $W$ are perpendicular to the orbits of $G,$ that is, if 
$V,W\in \left( TG\left( x\right) \right) ^{\perp },$ then 
\begin{equation}
\mathrm{sec}_{g_{l}}\left( Ch_{l}\left( W\right) ,Ch_{l}\left( V\right)
\right) \geq \mathrm{sec}_{g_{M}}\left( V,W\right) ,  \label{horiz curv}
\end{equation}%
and if $\mathrm{sec}_{g_{M}}\left( V,W\right) >0,$ then $\mathrm{sec}%
_{g_{l}}\left( Ch_{l}\left( W\right) ,Ch_{l}\left( V\right) \right) >0.$
\end{proposition}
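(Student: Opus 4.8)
The plan is to realize $\mathrm{sec}_{g_{l}}(Ch_l(W),Ch_l(V))$ as a base curvature of the Riemannian submersion $q_{G\times M}\colon (G\times M,\ l^{2}g_{\mathrm{bi}}+g_{M})\longrightarrow (M,g_{l})$ and then estimate the product metric $l^{2}g_{\mathrm{bi}}+g_{M}$ by hand. I would work at a point of the form $(e,x)$, so that $\hat V=(\kappa_{V}/l^{2},V)$ and $\hat W=(\kappa_{W}/l^{2},W)$ with $\kappa_{V},\kappa_{W}\in\mathfrak{m}_{x}$; since $V,W$ are linearly independent, so are $\hat V,\hat W$, and hence so are $Ch_l(V)=dq_{G\times M}(\hat V)$ and $Ch_l(W)=dq_{G\times M}(\hat W)$, so all three sectional curvatures in the statement are defined. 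Because $\hat V,\hat W$ are $q_{G\times M}$--horizontal, the Gray--O'Neill principle that Riemannian submersions do not decrease horizontal curvatures gives at once
\[
\mathrm{sec}_{g_{l}}\bigl(Ch_l(W),Ch_l(V)\bigr)\ \geq\ \mathrm{sec}_{l^{2}g_{\mathrm{bi}}+g_{M}}\bigl(\hat W,\hat V\bigr),
\]
which is the first asserted inequality; it also reduces the ``moreover'' claim to computing the right-hand side.

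For that right-hand side I would use that $l^{2}g_{\mathrm{bi}}+g_{M}$ is a product metric, so its $(0,4)$--curvature tensor is the sum of those of the two factors:
\[
\mathrm{curv}_{l^{2}g_{\mathrm{bi}}+g_{M}}(\hat V,\hat W)\ =\ \mathrm{curv}_{l^{2}g_{\mathrm{bi}}}\!\left(\tfrac{\kappa_{V}}{l^{2}},\tfrac{\kappa_{W}}{l^{2}}\right)+\mathrm{curv}_{g_{M}}(V,W)\ \geq\ \mathrm{curv}_{g_{M}}(V,W),
\]
the inequality because a scaled bi-invariant metric is non-negatively curved. For the area of the plane, using $|V|_{g_{M}}=|W|_{g_{M}}=1$, $g_{M}(V,W)=0$, and Cauchy--Schwarz $g_{\mathrm{bi}}(\kappa_{V},\kappa_{W})^{2}\leq|\kappa_{V}|^{2}|\kappa_{W}|^{2}$,
\[
|\hat V|^{2}|\hat W|^{2}-\langle\hat V,\hat W\rangle^{2}\ =\ \Bigl(1+\tfrac{|\kappa_{V}|^{2}}{l^{2}}\Bigr)\Bigl(1+\tfrac{|\kappa_{W}|^{2}}{l^{2}}\Bigr)-\tfrac{g_{\mathrm{bi}}(\kappa_{V},\kappa_{W})^{2}}{l^{4}}\ \geq\ \max\Bigl\{1,\tfrac{|\kappa_{V}|^{2}}{l^{2}},\tfrac{|\kappa_{W}|^{2}}{l^{2}}\Bigr\},
\]
all norms and inner products on the left being those of $l^{2}g_{\mathrm{bi}}+g_{M}$.

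Dividing finishes the proof in two cases. If $\mathrm{sec}_{g_{M}}(V,W)=\mathrm{curv}_{g_{M}}(V,W)\geq 0$, then $\mathrm{sec}_{l^{2}g_{\mathrm{bi}}+g_{M}}(\hat W,\hat V)\geq 0$, which dominates $\max\{-1,-l^{2}/|\kappa_{V}|^{2},-l^{2}/|\kappa_{W}|^{2}\}\,|\mathrm{sec}_{g_{M}}(V,W)|$ since that maximum is non-positive. If $\mathrm{sec}_{g_{M}}(V,W)<0$, then dividing the negative number $\mathrm{curv}_{g_{M}}(V,W)$ by the area, which is bounded below by the displayed maximum, yields precisely $\max\{-1,-l^{2}/|\kappa_{V}|^{2},-l^{2}/|\kappa_{W}|^{2}\}\,|\mathrm{sec}_{g_{M}}(V,W)|$ (with the convention $l^{2}/0=+\infty$ when $\kappa_{V}$ or $\kappa_{W}$ vanishes), giving the second inequality. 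For the ``moreover'' statement, if $V,W\in(TG(x))^{\perp}$ then testing the horizontality condition for $\hat v$ against all vertical fields $(-k_{G},k_{M})$ and using $g_{M}(V,k_{M})=g_{M}(W,k_{M})=0$ forces $\kappa_{V}=\kappa_{W}=0$; hence $\hat V=(0,V)$, $\hat W=(0,W)$, the area term equals $1$, and $\mathrm{sec}_{l^{2}g_{\mathrm{bi}}+g_{M}}(\hat W,\hat V)=\mathrm{sec}_{g_{M}}(V,W)$, so (\ref{horiz curv}) follows from the Gray--O'Neill inequality, with strictness when $\mathrm{sec}_{g_{M}}(V,W)>0$. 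I expect the only delicate point to be the sign bookkeeping in this final division together with the degenerate cases $\kappa_{V}=0$ or $\kappa_{W}=0$; the product-curvature identity, the non-negativity of bi-invariant curvature, and the Gray--O'Neill inequality are all standard and can simply be quoted.
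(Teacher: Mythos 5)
Your proposal is correct and follows essentially the same route as the paper: Gray--O'Neill for the first inequality, the product-metric splitting of the curvature tensor plus non-negativity of the bi-invariant factor, the same lower bound $(1+|\kappa_V|^2/l^2)(1+|\kappa_W|^2/l^2)-g_{\mathrm{bi}}(\kappa_V,\kappa_W)^2/l^4\geq\max\{1,|\kappa_V|^2/l^2,|\kappa_W|^2/l^2\}$ for the wedge norm, and the observation that $\hat V=(0,V)$, $\hat W=(0,W)$ when $V,W\perp TG(x)$ for the ``moreover'' claims. The sign bookkeeping and degenerate cases $\kappa_V=0$ or $\kappa_W=0$ that you flag are handled correctly and match the paper's estimate.
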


\begin{proof}
From the Gray-O'Neill Horizontal Curvature Equation we have 
\begin{eqnarray}
\mathrm{curv}_{g_{l}}\left( Ch_{l}\left( V\right) ,Ch_{l}\left( W\right)
\right) &\geq &\mathrm{curv}_{l^{2}g_{\mathrm{bi}}+g_{M}}\left( \hat{W},\hat{%
V}\right)  \notag \\
&=&\mathrm{curv}_{l^{2}g_{\mathrm{bi}}}\left( \frac{\kappa _{V}}{l^{2}},%
\frac{\kappa _{W}}{l^{2}}\right) +\mathrm{curv}_{g_{M}}\left( V,W\right) 
\notag \\
&\geq &\mathrm{curv}_{g_{M}}\left( V,W\right)  \notag \\
&=&\mathrm{sec}_{g_{M}}\left( V,W\right)  \label{cheeg's idea eqn}
\end{eqnarray}

For $U\in \left( TG\left( x\right) \right) ^{\perp }$, $\hat{U}=\left(
0,U\right) .$ So for $U,V\in \left( TG\left( x\right) \right) ^{\perp },$ $%
\left\vert Ch_{l}\left( U\wedge V\right) \right\vert _{g_{l}}=\left\vert
U\wedge V\right\vert _{g_{M}}$ $,$ and Inequality \ref{horiz curv} follows
from Equation \ref{cheeg's idea eqn}. It also follows that $\mathrm{sec}%
_{g_{l}}\left( Ch_{l}\left( W\right) ,Ch_{l}\left( V\right) \right) >0,$ if $%
\mathrm{sec}_{g_{M}}\left( V,W\right) >0.$

In general we have 
\begin{eqnarray*}
\left\vert Ch_{l}\left( V\right) \right\vert _{g_{l}}^{2} &=&\left\vert 
\frac{\kappa _{V}}{l^{2}}\right\vert _{l^{2}g_{\mathrm{bi}}}^{2}+\left\vert
V\right\vert _{g_{M}}^{2} \\
&=&\frac{1}{l^{2}}\left\vert \kappa _{V}\right\vert _{g_{\mathrm{bi}}}^{2}+1
\\
\left\vert Ch_{l}\left( W\right) \right\vert _{g_{l}}^{2} &=&\frac{1}{l^{2}}%
\left\vert \kappa _{W}\right\vert _{g_{\mathrm{bi}}}^{2}+1,\text{ and} \\
g_{l}\left( Ch_{l}\left( V\right) ,Ch_{l}\left( W\right) \right) ^{2}
&=&l^{4}g_{\mathrm{bi}}\left( \frac{\kappa _{V}}{l^{2}},\frac{\kappa _{W}}{%
l^{2}}\right) ^{2} \\
&=&\frac{1}{l^{4}}g_{\mathrm{bi}}\left( \kappa _{V},\kappa _{W}\right) ^{2}
\end{eqnarray*}%
So%
\begin{eqnarray*}
\left\vert Ch_{l}\left( V\right) \wedge Ch_{l}\left( W\right) \right\vert
_{g_{l}}^{2} &=&\left( \frac{1}{l^{2}}\left\vert \kappa _{V}\right\vert _{g_{%
\mathrm{bi}}}^{2}+1\right) \left( \frac{1}{l^{2}}\left\vert \kappa
_{W}\right\vert _{g_{\mathrm{bi}}}^{2}+1\right) \\
&&-\frac{1}{l^{4}}g_{\mathrm{bi}}\left( \kappa _{V},\kappa _{W}\right) ^{2}
\\
&=&\frac{1}{l^{4}}\left\vert \kappa _{V}\wedge \kappa _{W}\right\vert _{g_{%
\mathrm{bi}}}^{2}+\frac{1}{l^{2}}\left\vert \kappa _{V}\right\vert _{g_{%
\mathrm{bi}}}^{2}+\frac{1}{l^{2}}\left\vert \kappa _{W}\right\vert _{g_{%
\mathrm{bi}}}^{2}+1 \\
&\geq &\frac{1}{l^{2}}\left\vert \kappa _{V}\right\vert _{g_{\mathrm{bi}%
}}^{2}+\frac{1}{l^{2}}\left\vert \kappa _{W}\right\vert _{g_{\mathrm{bi}%
}}^{2}+1 \\
&\geq &\max \left\{ \frac{1}{l^{2}}\left\vert \kappa _{V}\right\vert _{g_{%
\mathrm{bi}}}^{2},\frac{1}{l^{2}}\left\vert \kappa _{W}\right\vert _{g_{%
\mathrm{bi}}}^{2},1\right\}
\end{eqnarray*}%
\begin{eqnarray*}
-\left\vert Ch_{l}\left( V\right) \wedge Ch_{l}\left( W\right) \right\vert
_{g_{l}}^{2} &\leq &-\max \left\{ \frac{1}{l^{2}}\left\vert \kappa
_{V}\right\vert _{g_{\mathrm{bi}}}^{2},\frac{1}{l^{2}}\left\vert \kappa
_{W}\right\vert _{g_{\mathrm{bi}}}^{2},1\right\} \\
&=&\mathrm{min}\left\{ -\frac{1}{l^{2}}\left\vert \kappa _{V}\right\vert
_{g_{\mathrm{bi}}}^{2},-\frac{1}{l^{2}}\left\vert \kappa _{W}\right\vert
_{g_{\mathrm{bi}}}^{2},-1\right\}
\end{eqnarray*}%
and 
\begin{eqnarray*}
-\frac{1}{\left\vert Ch_{l}\left( V\right) \wedge Ch_{l}\left( W\right)
\right\vert _{g_{l}}^{2}} &\geq &\frac{1}{\mathrm{min}\left\{ -\frac{1}{l^{2}%
}\left\vert \kappa _{V}\right\vert _{g_{\mathrm{bi}}}^{2},-\frac{1}{l^{2}}%
\left\vert \kappa _{W}\right\vert _{g_{\mathrm{bi}}}^{2},-1\right\} } \\
&=&\max \left\{ -\frac{l^{2}}{\left\vert \kappa _{V}\right\vert _{g_{\mathrm{%
bi}}}^{2}},-\frac{l^{2}}{\left\vert \kappa _{W}\right\vert _{g_{\mathrm{bi}%
}}^{2}},-1\right\} .
\end{eqnarray*}%
Combining this with Equation \ref{cheeg's idea eqn}%
\begin{eqnarray*}
\mathrm{sec}_{g_{l}}\left( Ch_{l}\left( V\right) ,Ch_{l}\left( W\right)
\right) &\geq &-\frac{1}{\left\vert Ch_{l}\left( V\right) \wedge
Ch_{l}\left( W\right) \right\vert _{g_{l}}^{2}}\left\vert \mathrm{sec}%
_{g_{M}}\left( V,W\right) \right\vert \\
&\geq &\max \left\{ -\frac{l^{2}}{\left\vert \kappa _{V}\right\vert _{g_{%
\mathrm{bi}}}^{2}},-\frac{l^{2}}{\left\vert \kappa _{W}\right\vert _{g_{%
\mathrm{bi}}}^{2}},-1\right\} \left\vert \mathrm{sec}_{g_{M}}\left(
V,W\right) \right\vert ,
\end{eqnarray*}%
as desired.
\end{proof}

\begin{remark}
In particular, Proposition \ref{orbital estimate} shows that the family $%
\left\{ \left( M,g_{l}\right) \right\} _{l>0}$ has a uniform lower curvature
bound. Since $\left\{ \left( M,g_{l}\right) \right\} _{l>0}$ converges to $%
M/G$ in the Gromov--Hausdorff topology, this provides a simple proof that
any $G$--manifold collapses with a lower curvature bound to $M/G,$ as
remarked in Example 1.2(c) of \cite{Yam}.
\end{remark}

\subsection{The A--Tensor for the Cheeger Deformation on the Regular Set}

In this subsection, we show that the curvature of a horizontal plane for $%
\pi ^{\text{reg}}:\left( M^{\text{reg}},g_{l}\right) \longrightarrow M^{%
\text{reg}}/G$ converges to the curvature of its projection in $M^{\text{reg}%
}/G$ as $l\rightarrow 0.$

\begin{proposition}
\label{Cheeg A--tensor}Let $A^{Ch}$ denote the $A$--tensor of the Cheeger
submersion $q_{G\times M}:G\times M\longrightarrow M,$ and let $A^{\text{reg}%
}$ denote the $A$--tensor of the Riemannian submersion 
\begin{equation*}
\pi ^{\text{reg}}:M^{\text{reg}}\longrightarrow M^{\text{reg}}/G.
\end{equation*}

Given any compact subset $\mathcal{K}\subset M^{\text{reg}}$, $x\in \mathcal{%
K}$ and any unit vectors $Z_{1},Z_{2}\in T_{x}G\left( x\right) ^{\perp },$ 
\begin{equation*}
\left\vert A_{\hat{Z}_{1}}^{Ch}\hat{Z}_{2}\right\vert _{l^{2}g_{\mathrm{bi}%
}+g_{M}}\longrightarrow \left\vert A_{{Z}_{1}}^{^{\text{reg}}}{Z}%
_{2}\right\vert _{g_{M}}\text{ uniformly on }\mathcal{K}\text{ as }%
l\rightarrow 0.
\end{equation*}
\end{proposition}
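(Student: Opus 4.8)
The plan is to express both $A$-tensors through O'Neill's bracket formula and then analyse the $l\to0$ behaviour of a single orthogonal projection in $G\times M$.

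The key simplification is that if a vector field $Z$ on a neighborhood of $x$ in $M^{\mathrm{reg}}$ satisfies $Z(y)\in T_yG(y)^{\perp}$ for all $y$, then the field $(g,m)\mapsto(0,Z(m))$ on $G\times M$ is $q_{G\times M}$-horizontal for \emph{every} $l$: a vector $(0,Z(m))$ is orthogonal to the vertical space $\{(-k_G(g),k_M(m))\mid k\in\mathfrak g\}$ in the metric $l^{2}g_{\mathrm{bi}}+g_M$ exactly when $g_M(Z(m),k_M(m))=0$ for all $k\in\mathfrak g$. So I would extend the given unit vectors $Z_1,Z_2\in T_xG(x)^{\perp}$ to local sections $\tilde Z_1,\tilde Z_2$ of the horizontal distribution of $\pi^{\mathrm{reg}}$, and use $\hat{\tilde Z}_i=(0,\tilde Z_i)$ as honest $q_{G\times M}$-horizontal vector fields, whose definition is independent of $l$. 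Since $[\hat{\tilde Z}_1,\hat{\tilde Z}_2]=(0,[\tilde Z_1,\tilde Z_2])$, O'Neill's formula gives, at $(e,x)$,
\begin{equation*}
A^{Ch}_{\hat Z_1}\hat Z_2=\frac12\big(0,\xi\big)^{\mathcal V_l},\qquad \xi\equiv[\tilde Z_1,\tilde Z_2](x),
\end{equation*}
where $\mathcal V_l$ is orthogonal projection onto $\mathcal V_{q_{G\times M}}$ with respect to $l^{2}g_{\mathrm{bi}}+g_M$; tensoriality of $A$ makes this independent of the chosen extensions, and the same reasoning for $\pi^{\mathrm{reg}}$ gives $A^{\mathrm{reg}}_{Z_1}Z_2=\frac12\,\xi^{T_xG(x)}$.

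Next I would compute the projection. Because $x\in M^{\mathrm{reg}}$, the map $\mathfrak m_x\to\mathcal V_{q_{G\times M}}|_{(e,x)}$, $k\mapsto(-k,k_M(x))$, is a linear isomorphism, and writing $(0,\xi)^{\mathcal V_l}=\big(-k(l),(k(l))_M(x)\big)$ the defining orthogonality conditions reduce to the linear equation $(l^{2}\,\mathrm{id}+P_x)\,k(l)=\eta_x$ in $\mathfrak m_x$, where $P_x$ and $\eta_x$ are defined by $g_{\mathrm{bi}}(P_xk,k')=g_M(k_M(x),k'_M(x))$ and $g_{\mathrm{bi}}(\eta_x,k')=g_M(\xi,k'_M(x))$. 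On $M^{\mathrm{reg}}$ the operator $P_x$ is positive definite — this is precisely Part 2 of Proposition \ref{bound on kappa} — so $k(l)=(l^{2}\,\mathrm{id}+P_x)^{-1}\eta_x\to P_x^{-1}\eta_x\equiv k^{0}_x$ as $l\to0$, and $k^{0}_x$ is characterized by $(k^{0}_x)_M(x)=\xi^{T_xG(x)}$. Using $g_M\big((k(l))_M(x),(k(l))_M(x)\big)=g_{\mathrm{bi}}\big(P_xk(l),k(l)\big)$ one gets
\begin{equation*}
\big|A^{Ch}_{\hat Z_1}\hat Z_2\big|^{2}_{l^{2}g_{\mathrm{bi}}+g_M}=\frac14\Big(l^{2}\,|k(l)|^{2}_{g_{\mathrm{bi}}}+g_{\mathrm{bi}}\big(P_xk(l),k(l)\big)\Big),
\end{equation*}
which converges, as $l\to0$, to $\frac14\,g_{\mathrm{bi}}(P_xk^{0}_x,k^{0}_x)=\frac14\,|\xi^{T_xG(x)}|^{2}_{g_M}=\big|A^{\mathrm{reg}}_{Z_1}Z_2\big|^{2}_{g_M}$, the asserted pointwise limit.

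For uniformity on $\mathcal K$, I would observe that $x\mapsto P_x$ and $x\mapsto\eta_x$ are continuous and that, by Proposition \ref{bound on kappa}(2) together with compactness of $\mathcal K$, the smallest eigenvalue of $P_x$ is bounded below by a positive constant independent of $x\in\mathcal K$ and of the unit pair $Z_1,Z_2$; hence $(l^{2}\,\mathrm{id}+P_x)^{-1}\to P_x^{-1}$ and $k(l)\to k^{0}_x$ uniformly, and the estimate above then holds uniformly. Taking square roots (the quantities are uniformly bounded) gives the statement. I expect the only genuine obstacle to be this uniform invertibility of the ``orbit-metric operator'' $P_x$, i.e. the non-degeneration of $G$-orbits over a compact subset of $M^{\mathrm{reg}}$; the rest is bookkeeping with O'Neill's formula and a first-order perturbation of a positive-definite linear system.
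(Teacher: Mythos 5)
Your argument is correct, but it takes a more computational route than the paper. The paper never extends $Z_{1},Z_{2}$ to vector fields and never computes a projection: it uses tensoriality and the product structure of $l^{2}g_{\mathrm{bi}}+g_{M}$ (Remark \ref{product field}) to obtain the exact, $l$--independent identity $\left( l^{2}g_{\mathrm{bi}}+g_{M}\right) \left( A_{\hat{Z}_{1}}^{Ch}\hat{Z}_{2},\left( -k_{G},k_{M}\right) \right) =g_{M}\left( A_{Z_{1}}^{\mathrm{reg}}Z_{2},k_{M}\right) $ for every $k\in \mathfrak{g}$; the pairings with $k\in \mathfrak{g}_{x}$ vanish, and for $k\in \mathfrak{m}_{x}$ one divides by $\left\vert \left( -k_{G},k_{M}\right) \right\vert _{l^{2}g_{\mathrm{bi}}+g_{M}}=\sqrt{l^{2}+\left\vert k_{M}\right\vert _{g_{M}}^{2}}$ and lets $l\rightarrow 0$, the uniformity coming from exactly the lower bound on $\left\vert k_{M}\right\vert $ over $\mathcal{K}$ that you also invoke (it is the displayed bound in the proof of Part 2 of Proposition \ref{bound on kappa}). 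Your version replaces this duality step by O'Neill's bracket formula together with the resolvent computation $k\left( l\right) =\left( l^{2}\,\mathrm{id}+P_{x}\right) ^{-1}\eta _{x}$; it proves a little more (it identifies the vertical vector $A_{\hat{Z}_{1}}^{Ch}\hat{Z}_{2}$ itself and yields a quantitative $O\left( l^{2}\right) $ rate), at the cost of extending $Z_{1},Z_{2}$ to horizontal fields, but both proofs rest on the same two facts: $\left( 0,Z\right) $ is $q_{G\times M}$--horizontal for every $l$ when $Z\perp TG\left( x\right) $, and the orbit form $P_{x}$ is uniformly non-degenerate on compact subsets of $M^{\mathrm{reg}}$.

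One inaccuracy should be repaired: $k\longmapsto \left( -k,k_{M}\left( x\right) \right) $ is an isomorphism from all of $\mathfrak{g}$ onto $\mathcal{V}_{q_{G\times M}}|_{\left( e,x\right) }$, since the action \ref{skew action} on $G\times M$ is free; it is not an isomorphism from $\mathfrak{m}_{x}$ unless the principal isotropy is discrete, which need not hold (for instance the $G_{2}$--action of Section \ref{examples section} has principal isotropy $SU\left( 2\right) $). This does not damage your proof: the vertical directions $\left( -k,0\right) $ with $k\in \mathfrak{g}_{x}$ are $\left( l^{2}g_{\mathrm{bi}}+g_{M}\right) $--orthogonal both to $\left( 0,\xi \right) $ and to $\left\{ \left( -k,k_{M}\left( x\right) \right) \mid k\in \mathfrak{m}_{x}\right\} $, so the projection of $\left( 0,\xi \right) $ onto the full vertical space has no $\mathfrak{g}_{x}$--component and your linear system on $\mathfrak{m}_{x}$ is still the correct one --- but this is what should be said, rather than asserting the isomorphism from $\mathfrak{m}_{x}$.
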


\begin{remark}
\label{product field}Let $\left( -k_{G},k_{M}\right) \in T\left( G\times
M\right) ,$ as in \ref{cheeg vertt}, be vertical for $q_{G\times M}.$ Notice
that 
\begin{equation*}
\left( -k_{G},k_{M}\right) =\left( -k_{G},0\right) +\left( 0,k_{M}\right)
\end{equation*}%
is the sum of a vector field $\left( -k_{G},0\right) $ that only depends on
the $G$--coordinate of the foot point and a vector field $\left(
0,k_{M}\right) $ that only depends on the $M$--coordinate of the foot point.
In the proof below, we exploit the fact that $k_{G}$ does not depend on the $%
M$--coordinate.
\end{remark}

\begin{proof}
Notice that $Z_{1},Z_{2}\in T_{x}G\left( x\right) ^{\perp }$ implies that $%
\hat{Z}_{1}=\left( 0,Z_{1}\right) $ and $\hat{Z}_{2}=\left( 0,Z_{2}\right) $
for all $l.$ So 
\begin{eqnarray*}
\left( l^{2}g_{\mathrm{bi}}+g_{M}\right) \left( A_{\hat{Z}_{1}}^{Ch}\hat{Z}%
_{2},\left( -k_{G},k_{M}\right) \right) &=&-\left( l^{2}g_{\mathrm{bi}%
}+g_{M}\right) \left( \left( 0,Z_{2}\right) ,A_{\left( 0,Z_{1}\right)
}^{Ch}\left( -k_{G},k_{M}\right) \right) \\
&=&-\left( l^{2}g_{\mathrm{bi}}+g_{M}\right) \left( \left( 0,Z_{2}\right)
,\left( 0,\nabla _{Z_{1}}^{g_{M}}k_{M}\right) \right) ,\text{ by Remark \ref%
{product field}} \\
&=&-g_{M}\left( Z_{2},A_{Z_{1}}^{\text{reg}}k_{M}\right) \\
&=&g_{M}\left( A_{Z_{1}}^{\text{reg}}Z_{2},k_{M}\right)
\end{eqnarray*}%
For $k\in \mathfrak{g}_{x}$, $k_{M}\left( x\right) =0,$ so $\left( l^{2}g_{%
\mathrm{bi}}+g_{M}\right) \left( A_{\hat{Z}_{1}}^{Ch}\hat{Z}_{2},\left(
-k_{G},k_{M}\right) \right) =g_{M}\left( A_{Z_{1}}^{\text{reg}%
}Z_{2},k_{M}\right) =0.$

So we may assume that $k\in \mathfrak{m}_{x}.$ In this case, divide both
sides of the previous display by $\left\vert \left( -k_{G},k_{M}\right)
\right\vert _{l^{2}g_{\mathrm{bi}}+g_{M}}$ and observe that for $k\in 
\mathfrak{m}_{x}$ with $\left\vert k_{G}\right\vert _{g_{bi}}=1,$ $%
\left\vert k_{M}\right\vert _{g_{M}}$ is uniformly bounded from below on $%
\mathcal{K},$ so 
\begin{equation*}
\frac{1}{\left\vert \left( -k_{G},k_{M}\right) \right\vert _{l^{2}g_{\mathrm{%
bi}}+g_{M}}}=\frac{1}{\sqrt{l^{2}+\left\vert k_{M}\right\vert _{g_{M}}^{2}}}%
\longrightarrow \frac{1}{\left\vert k_{M}\right\vert _{g_{M}}}
\end{equation*}%
uniformly on $\mathcal{K}$ as $l\rightarrow 0,$ and the result follows.
\end{proof}

\section{Infinitesimal Geometry Near the Singular Orbits\label{Infinitisimal}%
}

This section culminates with the proof of Lemma \ref{H---A--tensor}, which
shows that for planes that are orthogonal to both the orbits of $G$ and to
the fibers of the metric projection to the singular strata, the sectional
curvatures of $g_{l}$ converge to the curvatures of their images under $d\pi
^{\mathrm{reg}}$ as $l$ approaches $0.$ The other crucial result in this
section is Corollary \ref{kappa not 0}. It gives a lower bound for $%
\left\vert \kappa _{v}\right\vert $ for certain vectors $v$ whose footpoints
are near the singular strata.

Recall from Proposition \ref{Omega_i Dfn} that the singular strata have a
neighborhood $\Omega =\cup \Omega ^{i}$ so that for each $i,$ we have a
splitting of $T\left( \Omega ^{i}\setminus S_{i}\right) ,$ $\mathcal{H}%
^{i}\oplus X^{i}\oplus \mathcal{V}^{i}.$ Recall that $\mathcal{H}^{i}$ need
not be orthogonal to $span\left\{ X^{i},\mathcal{V}^{i}\right\} .$ So we
define $\mathcal{\bar{H}}^{i}$ to be the distribution that is orthogonal to $%
span\left\{ X^{i},\mathcal{V}^{i}\right\} .$ We let $Pr_{i}:B\left( \mathcal{%
C}_{i},\frac{inj\left( \mathcal{C}_{i}\right) }{2}\right) \longrightarrow 
\mathcal{C}_{i}$ be 
\begin{equation*}
Pr_{i}\left( x\right) =\mathrm{footpt}\left( \left( \exp _{S_{i}}^{\perp
}\right) ^{-1}\left( x\right) \right) .
\end{equation*}%
That is, $Pr_{i}$ is the metric projection map onto $\mathcal{C}_{i}\subset
S_{i}.$

\begin{proposition}
\label{orphan}For $x\in \exp _{S_{i}}^{\perp }B\left( \nu _{0}(S_{i})|_{%
\mathcal{C}_{i}},\frac{inj\left( \mathcal{C}_{i}\right) }{2}\right) ,$ 
\begin{equation}
\mathcal{V}_{x}^{i}\supset T_{x}G_{Pr_{i}\left( x\right) }\left( x\right) .%
\text{ \label{orphan contain}}
\end{equation}%
In fact, 
\begin{equation*}
\mathcal{V}_{x}^{i}\cap T_{x}G\left( x\right) =T_{x}G_{Pr_{i}\left( x\right)
}\left( x\right) .
\end{equation*}
\end{proposition}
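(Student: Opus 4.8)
The plan is to move into the normal slice of $S_i$ through $y:=Pr_i(x)$ and exploit the equivariance of the normal exponential map and of the closest‑point projection under isotropy groups. Write $x=\exp_{S_i}^{\perp}(v)$ with $v\in\nu_y(S_i)$ and $|v|<inj(\mathcal C_i)/2$. The case $v=0$ is degenerate: then $x=y\in S_i$, the group $G_{Pr_i(x)}=G_x$ fixes $x$, and both assertions reduce to $\{0\}$ on each side (for the equality one uses $T_xG(x)\subseteq T_xS_i\perp\mathcal V_x^i$), so I assume $v\neq 0$. Since $S_i$ is a stratum of the orbit‑type stratification it is $G$‑invariant; hence $G_y$ fixes $y$ and preserves $S_i$, so its isotropy representation on $T_yM$ preserves $T_yS_i$ and therefore $\nu_y(S_i)=(T_yS_i)^{\perp}$. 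Because each $h\in G_y$ is an isometry fixing $y$, it carries the normal geodesic $t\mapsto\exp_{S_i}^{\perp}(tv)$ to $t\mapsto\exp_{S_i}^{\perp}(t\,dh\,v)$; that is, $\exp_{S_i}^{\perp}$ is $G_y$‑equivariant on $\nu_y(S_i)$ and $h\cdot x=\exp_{S_i}^{\perp}(dh\,v)$.

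For the containment $\mathcal V_x^i\supseteq T_xG_{Pr_i(x)}(x)$, the point I would use is that $G_y\cdot x=\exp_{S_i}^{\perp}(G_y\cdot v)$, where the orbit $G_y\cdot v$ lies inside the Euclidean fiber $\nu_y(S_i)$ and, since $G_y$ acts there by linear isometries, inside the sphere of radius $|v|$. Hence its tangent space at $v$ is contained in $v^{\perp}\cap\nu_y(S_i)$, which under the usual identification of the vertical space of $\nu(S_i)\to S_i$ at $v$ with $\nu_y(S_i)$ is precisely $\tilde{\mathcal V}_v$, the orthogonal complement of the radial unit field $\tilde X_v=v/|v|$ among vertical vectors. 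Applying the isomorphism $d(\exp_{S_i}^{\perp})_v$ and recalling from \ref{defn of X} (applied to $S_i$) that $\mathcal V^i=d\exp_{S_i}^{\perp}(\tilde{\mathcal V})$, I get $T_x(G_y\cdot x)\subseteq\mathcal V_x^i$; and $T_x(G_y\cdot x)=T_xG_{Pr_i(x)}(x)$, which is the claim.

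For the equality, the inclusion $\supseteq$ is immediate from $G_y\subseteq G$ together with the containment just proved. For $\subseteq$, I would take $w\in\mathcal V_x^i\cap T_xG(x)$, write $w=k_M(x)$ for some $k\in\mathfrak g$, and set $\phi_t=\exp(tk)\in G$. Each $\phi_t$ is an isometry preserving $S_i$, so as before $\phi_t(x)=\exp_{S_i}^{\perp}(d\phi_t\,v)$ has normal footpoint $\phi_t(y)$; thus $Pr_i(\phi_t x)=\phi_t(y)$, and differentiating at $t=0$ gives $(dPr_i)_x(w)=k_M(y)$. But $\mathcal V_x^i$ lies in the tangent space to the fiber of $Pr_i$, i.e.\ in $\ker(dPr_i)_x$, so $(dPr_i)_x(w)=0$; hence $k_M(y)=0$, i.e.\ $k\in\mathfrak g_y$, and therefore $w=k_M(x)\in T_xG_{Pr_i(x)}(x)$.

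The computations involved are routine; the points that need care are (a) that $S_i$ is genuinely $G$‑invariant, so that $G_y$ acts on $T_yM$ preserving $T_yS_i$ and $\nu_y(S_i)$; (b) the clean identification of the vertical space of $\nu(S_i)$ at $v$ with the Euclidean space $\nu_y(S_i)$, so that ``the $G_y$‑orbit of $v$ lies in a metric sphere'' becomes ``$T_v(G_y\cdot v)\subseteq\tilde{\mathcal V}_v$''; and (c) the (local) $G$‑equivariance of $Pr_i$, which rests on the fact that an isometry preserving $S_i$ sends minimal $S_i$‑geodesics to minimal $S_i$‑geodesics. I do not expect any genuine obstacle here --- the proposition is essentially slice‑theorem bookkeeping --- so the only real work is to state these identifications carefully enough that the two equivariance arguments go through.
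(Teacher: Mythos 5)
Your argument is correct, and it splits naturally into a half that matches the paper and a half that does not. For the containment $T_{x}G_{Pr_{i}\left( x\right) }\left( x\right) \subset \mathcal{V}_{x}^{i}$ you argue essentially as the paper does: the isotropy group $G_{Pr_{i}\left( x\right) }$ carries the normal vector $v$ through normal vectors of the same length at the same footpoint, so the orbit tangent lies in the radial--orthogonal part of the vertical space and hence, via $d\exp _{S_{i}}^{\perp }$ and \ref{defn of X}, in $\mathcal{V}^{i}$; the paper phrases this through the family of normal geodesics $G_{Pr_{i}\left( x\right) }\left( \gamma \right) $ and orthogonality to $X^{i}$, you phrase it through the linear isotropy representation preserving the sphere of radius $\left\vert v\right\vert $ in $\nu _{y}\left( S_{i}\right) $ --- the same idea with slightly cleaner bookkeeping. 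Where you genuinely diverge is the reverse inclusion: the paper uses Part 1 of Proposition \ref{Jacobi Rep}, i.e.\ for $k\in \mathfrak{g}\setminus \mathfrak{g}_{Pr_{i}\left( x\right) }$ the Killing variation of $\gamma $ is a Jacobi field with nonzero initial value, so its value at $\gamma \left( t_{0}\right) $ cannot lie in $\mathcal{V}^{i}$ (implicitly using that $\mathcal{V}^{i}$ and $\mathcal{H}^{i}$ are complementary), whereas you use the $G$--equivariance of the footpoint projection $Pr_{i}$ together with the fact, recorded in Section \ref{notation}, that $\mathrm{span}\left\{ X\right\} \oplus \mathcal{V}$ is tangent to the fibers of $Pr$, so that $dPr_{i}\left( k_{M}\left( x\right) \right) =k_{M}\left( Pr_{i}\left( x\right) \right) $ forces $k\in \mathfrak{g}_{Pr_{i}\left( x\right) }$ whenever $k_{M}\left( x\right) \in \mathcal{V}_{x}^{i}$. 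Your route is a bit more elementary (no Jacobi--field decomposition, only equivariance of the closest--point map, which is legitimate since $G$ preserves $S_{i}$ and the metric, exactly your caveat (c)), while the paper's route stays inside the Jacobi--field framework it has already built and reuses elsewhere. The only cosmetic quibble is your $v=0$ case: the splitting \ref{HVX splitting}, and hence $\mathcal{V}_{x}^{i}$, is only defined on $\Omega ^{i}\setminus S_{i}$, so that case should simply be excluded rather than interpreted.
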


\begin{proof}
Let $\gamma $ be the minimal geodesic from $Pr_{i}\left( x\right) $ to $x.$
Then $G_{Pr_{i}\left( x\right) }\left( \gamma \right) $ is a family of
minimal geodesics emanating from $Pr_{i}\left( x\right) ,$ normal to $S_{i}.$
In particular, 
\begin{equation}
G_{Pr_{i}\left( x\right) }\left( \dot{\gamma}\left( 0\right) \right) \subset
\nu _{Pr_{i}\left( x\right) }\left( S_{i}\right) .  \label{isotropy act}
\end{equation}

Since $\left\{ \tilde{X}^{i},\mathcal{\tilde{V}}^{i}\right\} $ spans the
vertical distribution of $\nu \left( S\right) \longrightarrow S,$ and 
\begin{eqnarray*}
\mathcal{V}^{i} &\equiv &d\exp _{S}^{\perp }\left( \mathcal{\tilde{V}}%
^{i}\right) , \\
X^{i} &=&d\exp _{S}^{\perp }\left( \tilde{X}^{i}\right) ,
\end{eqnarray*}%
exponentiating \ref{isotropy act} then gives 
\begin{equation*}
TG_{Pr_{i}\left( x\right) }\left( \gamma \right) \subset \mathrm{span}%
\left\{ X^{i},\mathcal{V}^{i}\right\} .
\end{equation*}%
Since $X^{i}\left( x\right) \perp T_{x}G_{Pr_{i}\left( x\right) }\left(
x\right) $ and $X^{i}\perp \mathcal{V}^{i},$ we get that for any small fixed
number $t_{0}>0,$ $T_{\gamma \left( t_{0}\right) }G_{Pr_{i}\left( x\right)
}\left( \gamma \left( t_{0}\right) \right) \subset span\left\{ \mathcal{V}%
_{\gamma \left( t_{0}\right) }^{i}\right\} $, proving \ref{orphan contain}.

On the other hand, if $k\in \mathfrak{g}\setminus \mathfrak{g}_{Pr_{i}\left(
x\right) },$ then%
\begin{equation*}
\frac{\partial }{\partial s}\left. \exp _{G}\left( sk\right) \cdot \gamma
\left( 0\right) \right\vert _{s=0}\neq 0,
\end{equation*}%
so from Part 1 of Proposition \ref{Jacobi Rep} we have for any small fixed
number $t_{0}>0$ 
\begin{equation*}
\frac{\partial }{\partial s}\left. \exp _{G}\left( sk\right) \cdot \gamma
\left( t_{0}\right) \right\vert _{s=0}\notin \mathcal{V}_{x}^{i}.
\end{equation*}%
So%
\begin{equation*}
\mathcal{V}_{x}^{i}\cap T_{x}G\left( x\right) =T_{x}G_{Pr_{i}\left( x\right)
}\left( x\right) .
\end{equation*}
\end{proof}

\begin{proposition}
For $i\in \left\{ 1,2,\ldots ,p\right\} $, let $U$ be a neighborhood of the
vectors 
\begin{equation*}
\cup _{x\in \Omega ^{i}\setminus \mathcal{C}_{i}}\mathrm{span}\left\{ 
\overline{\mathcal{H}}_{x}^{i}\cap T_{x}G\left( x\right) ^{\perp },X_{x}^{i},%
\mathcal{V}_{x}^{i}\right\} \bigcup \cup _{x\in \mathcal{C}_{i}}\mathrm{span}%
\left\{ T_{x}\left( S_{i}\right) \cap T_{x}G\left( x\right) ^{\perp },\nu
_{x}\left( S_{i}\right) \right\} .
\end{equation*}%
There is a constant $C>0$ so that for all $x\in \Omega ^{i}$ that are close
enough to $\mathcal{C}_{i}$ and all $v\in \left( T\Omega ^{i}\right)
\setminus U,$ there is a $k\in \mathfrak{g}$ with $\left\vert k\right\vert
_{g_{\mathrm{bi}}}=1$ so that 
\begin{equation*}
g_{M}\left( v,k_{M}\right) \geq C\left\vert v\right\vert _{g_{M}}.
\end{equation*}
\end{proposition}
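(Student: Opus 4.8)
The plan is to identify the orthogonal complement of the subspace attached to $x$ in the statement with the \emph{horizontal part of the orbit directions} — the image of $T_xG(x)$ under orthogonal projection onto $\overline{\mathcal{H}}^i_x$ — and then to produce the Killing field by a compactness argument. Fix $i$. Enlarging $\mathcal{C}_i$ to a compact $\mathcal{C}'\subset S_i$ with $\mathcal{C}_i\subset\mathrm{int}(\mathcal{C}')$, I would choose a small closed tube $N$ about $\mathcal{C}'$ on which $\overline{\mathcal{H}}^i$, $\mathcal{V}^i$, $X^i$, and $Pr_i$ are all defined; every $x\in\Omega^i$ close enough to $\mathcal{C}_i$ lies in $N$. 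For $x\in N\setminus S_i$ set $\mathcal{O}_x\equiv P_{\overline{\mathcal{H}}^i_x}\bigl(T_xG(x)\bigr)$, and for $x\in N\cap S_i$ set $\mathcal{O}_x\equiv T_xG(x)$. The elementary identity $(W\cap A^{\perp})^{\perp_W}=P_W(A)$ shows that in either case $\mathcal{O}_x$ is exactly the orthogonal complement in $T_xM$ of the span attached to $x$ in the statement; so, replacing $U$ by its interior (which only strengthens the conclusion) we may take $U$ open, whence $U\supset\bigcup_x\mathcal{O}_x^{\perp}$, and for a unit vector $v$ with footpoint $x$ the hypothesis $v\notin U$ forces $|v^{\mathcal{O}_x}|$ to be positive.

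The step I expect to be the main obstacle is the continuity of $x\mapsto\mathcal{O}_x$ on all of $N$, \emph{including the limit onto} $S_i$, even though $x\mapsto T_xG(x)$ jumps dimension off $S_i$. The device is to use $\mathfrak{m}_{Pr_i(x)}$ in place of $\mathfrak{m}_x$. Since an isometry fixing $x\in N$ preserves $S_i$ and hence fixes the nearest point $Pr_i(x)$, one has $G_x\subseteq G_{Pr_i(x)}$, so $k\mapsto k_M(x)$ is injective on $\mathfrak{m}_{Pr_i(x)}$ with rank $\dim\mathfrak{m}_{Pr_i(x)}$, which is constant along the single stratum $S_i$; hence $\mathcal{E}_x\equiv\{k_M(x):k\in\mathfrak{m}_{Pr_i(x)}\}$ is a smooth distribution on $N$. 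By Proposition \ref{orphan} the remaining orbit directions, those generated by $\mathfrak{g}_{Pr_i(x)}$, lie in $\mathcal{V}^i_x$, so $\mathcal{O}_x=P_{\overline{\mathcal{H}}^i_x}(T_xG(x))=P_{\overline{\mathcal{H}}^i_x}(\mathcal{E}_x)$. Because $\mathrm{span}\{X^i\}\oplus\mathcal{V}^i$ extends continuously across $S_i$ to $\nu(S_i)$ (Proposition \ref{Jacobi Rep}(4)), the distribution $\overline{\mathcal{H}}^i$ extends continuously to $TS_i$; and the restricted projection $\mathcal{E}_x\to\mathcal{O}_x$ has constant rank near $S_i$, being the identity on $S_i$ itself. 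Thus $\mathcal{O}_x$ — equivalently the displayed span — varies continuously on $N$.

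Next I would assemble the compactness input. The set of unit $v\in(T\Omega^i)\setminus U$ with footpoint in $N$ is compact, and on it $v\mapsto|v^{\mathcal{O}_x}|=\mathrm{dist}(v,\mathcal{O}_x^{\perp})$ is continuous and nowhere zero (its zero set lies in $\bigcup_x\mathcal{O}_x^{\perp}\subset U$), hence $\geq\varepsilon_0>0$. The linear isomorphism $L_x:\mathfrak{m}_{Pr_i(x)}\to\mathcal{O}_x$, $k\mapsto P_{\overline{\mathcal{H}}^i_x}(k_M(x))$, depends continuously on $x$, so $\|L_x^{-1}\|\leq C'$ on $N$. Finally, since $k_M(p)\in T_pS_i$ has no $(\mathrm{span}\{X^i\}\oplus\mathcal{V}^i)$-component for $p\in S_i$ and this distribution is continuous, shrinking $N$ makes $\eta\equiv\sup\{|(k_M(x))^{\mathrm{span}\{X^i_x\}\oplus\mathcal{V}^i_x}|:x\in N,\ k\in\mathfrak{m}_{Pr_i(x)},\ |k|_{g_{\mathrm{bi}}}=1\}$ as small as we please.

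Finally, to produce the bound, let $v\notin U$ have footpoint $x\in N$; we may take $|v|_{g_M}=1$. Put $w=v^{\mathcal{O}_x}$ (so $|w|\geq\varepsilon_0$), $k=L_x^{-1}(w)$ (so $|k|_{g_{\mathrm{bi}}}\leq C'|w|\leq C'$), and $\hat{k}=k/|k|_{g_{\mathrm{bi}}}$, which has $|\hat{k}|_{g_{\mathrm{bi}}}=1$. Since $\hat{k}_M(x)$ is tangent to the orbit, its component in $\overline{\mathcal{H}}^i_x\cap T_xG(x)^{\perp}$ vanishes; hence its component in the displayed span equals its $(\mathrm{span}\{X^i_x\}\oplus\mathcal{V}^i_x)$-component, of norm $\leq\eta$, while its $\mathcal{O}_x$-component is $P_{\overline{\mathcal{H}}^i_x}(\hat{k}_M(x))=w/|k|_{g_{\mathrm{bi}}}$. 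Writing $v=v^{\mathrm{span}}+w$ orthogonally gives
\[
g_M\bigl(v,\hat{k}_M(x)\bigr)=g_M\bigl(v^{\mathrm{span}},(\hat{k}_M(x))^{\mathrm{span}}\bigr)+\frac{|w|^2}{|k|_{g_{\mathrm{bi}}}}\geq-\eta+\frac{\varepsilon_0^2}{C'}.
\]
Choosing $N$ small enough that $\eta<\varepsilon_0^2/(2C')$ and setting $C=\varepsilon_0^2/(2C')$ yields $g_M(v,\hat{k}_M(x))\geq C$ for unit $v$, and then $g_M(v,\hat{k}_M(x))\geq C|v|_{g_M}$ for arbitrary $v$ by homogeneity, which is the assertion.
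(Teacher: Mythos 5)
Your reduction of the statement to a lower bound on $|v^{\mathcal{O}_x}|$, the identification of the orthogonal complement of the displayed span with $\mathcal{O}_x=P_{\overline{\mathcal{H}}^i_x}\bigl(T_xG(x)\bigr)$, the continuity of $\mathcal{O}$ across $S_i$ via $\mathfrak{m}_{Pr_i(x)}$, and the final $L_x^{-1}$/$\eta$ assembly of the Killing field are all sound, and are in fact more detailed than the paper's own argument. The genuine gap is the uniformity step that produces $\varepsilon_0$. First, the set of unit $v\in(T\Omega^i)\setminus U$ with footpoint in $N$ is not compact: footpoints range over $\Omega^i\cap N$, and $\Omega^i$ (the open tube over $\mathrm{int}(\mathcal{C}_i)$) is not closed in your $N$, which is a tube over the larger $\mathcal{C}'$. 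Second, and more seriously, passing to the closure brings in footpoints $x^*\in N$ lying outside $\Omega^i\cup\mathcal{C}_i$ (for instance points over $\partial\mathcal{C}_i$ or over $\mathcal{C}'\setminus\mathcal{C}_i$ at positive distance from $S_i$), and at such $x^*$ the subspace $\mathcal{O}_{x^*}^{\perp}$ is \emph{not} among the spans that $U$ is assumed to contain; so your inclusion ``zero set $\subset\bigcup_x\mathcal{O}_x^{\perp}\subset U$'' is unjustified. This is not a removable technicality: $U$ is an arbitrary neighborhood, so it may pinch as one approaches such an $x^*$ (openness of $U$ forces a uniform thickness only near the spans over $\mathcal{C}_i$ itself), and then there are unit vectors $v_n\notin U$ at footpoints $x_n\in\Omega^i\cap N$ with $|v_n^{\mathcal{O}_{x_n}}|\to 0$. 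For such a $U$ your $\varepsilon_0$ equals $0$, so the intermediate claim is actually false for a tube $N$ chosen independently of $U$, not merely unproven.

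The cure is exactly the clause you never use: ``for all $x\in\Omega^i$ close enough to $\mathcal{C}_i$,'' with the threshold allowed to depend on $U$. Argue by contradiction: if no $C$ works, there are unit $v_n\notin U$ at $x_n\in\Omega^i$ with $\mathrm{dist}(x_n,\mathcal{C}_i)\to 0$ and $\max_{|k|_{g_{\mathrm{bi}}}=1}g_M\bigl(v_n,k_M(x_n)\bigr)\to 0$; after passing to a subsequence, $x_n\to x^*\in\mathcal{C}_i$ and $v_n\to v^*$ with $v^*\perp T_{x^*}G(x^*)$, so by your own computation at points of $\mathcal{C}_i$ the vector $v^*$ lies in $\mathrm{span}\left\{ T_{x^*}S_i\cap T_{x^*}G(x^*)^{\perp},\nu_{x^*}(S_i)\right\}\subset U$, while $v^*\notin U$ because the $v_n$ avoid the open set $U$ --- a contradiction. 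This is the paper's proof (its appeal to ``compactness of the unit vectors in $TM\setminus U$''), and note that it needs neither the continuity of $\mathcal{O}$ across $S_i$ nor the quantitative $L_x^{-1}$/$\eta$ bookkeeping: joint continuity of $(x,v)\mapsto\max_{|k|_{g_{\mathrm{bi}}}=1}g_M(v,k_M(x))$ together with the exact orthogonal splitting $T_xM=\left\{ T_xS_i\cap T_xG(x)^{\perp}\right\}\oplus\nu_x(S_i)\oplus T_xG(x)$ at $x\in\mathcal{C}_i$ suffices. Alternatively, you can keep your construction, but you must first shrink the tube $N$ \emph{depending on} $U$, using openness of $U$ along the compact set of unit vectors in the spans over $\mathcal{C}_i$ to guarantee $\varepsilon_0>0$; as written, your $N$ is chosen independently of $U$ and the argument does not close.
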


\begin{remark}
It follows from Part 4 of Proposition \ref{Jacobi Rep} that along any
geodesic $\gamma $ that leaves $S_{i}$ orthogonally at $\gamma \left(
0\right) ,$ 
\begin{eqnarray}
&&\lim_{t\rightarrow 0}\mathrm{span}\left\{ \overline{\mathcal{H}}_{\gamma
\left( t\right) }^{i}\cap T_{\gamma \left( t\right) }G\left( \gamma \left(
t\right) \right) ^{\perp },X_{\gamma \left( t\right) }^{i},\mathcal{V}%
_{\gamma \left( t\right) }^{i}\right\}  \notag \\
&=&\mathrm{span}\left\{ T_{\gamma \left( 0\right) }\left( S_{i}\right) \cap
T_{\gamma \left( 0\right) }G\left( \gamma \left( 0\right) \right) ^{\perp
},\nu _{\gamma \left( 0\right) }\left( S_{i}\right) \right\} .
\label{cove of
H + V +X}
\end{eqnarray}
\end{remark}

\begin{proof}
Since $S_{i}$ is $G$--invariant, $T_{x}\left( S_{i}\right) =\left\{
T_{x}\left( S_{i}\right) \cap T_{x}G\left( x\right) ^{\perp }\right\} \oplus
T_{x}G\left( x\right) .$ So for $x\in \mathcal{C}_{i}$ we have the
orthogonal splitting 
\begin{equation*}
T_{x}M=\left\{ T_{x}\left( S_{i}\right) \cap T_{x}G\left( x\right) ^{\perp
}\right\} \oplus \nu _{x}\left( S_{i}\right) \oplus T_{x}G\left( x\right) .
\end{equation*}%
So if $v\in T_{x}M$ is not in the span of the first two summands, its
projection to $TG\left( x\right) $ is nonzero.

Combining this with Equation \ref{cove of H + V +X} and continuity, we have
that for $x\in \Omega ^{i}$, close enough to $\mathcal{C}_{i},$ if 
\begin{equation*}
v\notin \left[ \overline{\mathcal{H}}_{x}^{i}\cap T_{x}G\left( x\right)
^{\perp }\right] \oplus X_{x}^{i}\oplus \mathcal{V}_{x}^{i},
\end{equation*}%
then its projection to $TG\left( x\right) $ is nonzero.

The result then follows from compactness of the unit vectors in $TM\setminus
U.$
\end{proof}

\begin{corollary}
\label{kappa not 0}For $i\in \left\{ 1,2,\ldots ,p\right\} $, let $U$ be a
neighborhood of the vectors 
\begin{equation*}
\cup _{x\in \Omega ^{i}}\mathrm{span}\left\{ \overline{\mathcal{H}}%
_{x}^{i}\cap T_{x}G\left( x\right) ^{\perp },X_{x}^{i},\mathcal{V}%
_{x}^{i}\right\} \bigcup \cup _{x\in \mathcal{C}_{i}}\mathrm{span}\left\{
T_{x}\left( S_{i}\right) \cap T_{x}G\left( x\right) ^{\perp },\nu _{x}\left(
S_{i}\right) \right\} .
\end{equation*}%
There is a constant $C>0$ so that for all $x\in \Omega ^{i},$ and all $v\in
\left( T_{x}\Omega ^{i}\right) \setminus U,$ 
\begin{equation*}
\left\vert \kappa _{v}\right\vert _{g_{\mathrm{bi}}}\geq C\left\vert
v\right\vert _{g_{M}}.
\end{equation*}
\end{corollary}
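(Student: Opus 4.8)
The plan is to deduce this from the Proposition immediately preceding it together with the computation carried out in the proof of Part 2 of Proposition~\ref{bound on kappa}. That Proposition already tells us that, for footpoints near the singular strata, a vector $v$ lying outside a neighborhood $U$ of $\mathrm{span}\left\{ \overline{\mathcal{H}}^{i}\cap TG\left( x\right) ^{\perp },X^{i},\mathcal{V}^{i}\right\} $ must pair substantially with some Killing field; all that is left to do is convert that pairing into a lower bound for $\left\vert \kappa _{v}\right\vert _{g_{\mathrm{bi}}}$.

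First I would isolate the mechanism behind Part 2 of Proposition~\ref{bound on kappa}. Since $\left( \kappa _{v},v\right) $ is horizontal for $q_{G\times M}:\left( G\times M,g_{\mathrm{bi}}+g_{M}\right) \longrightarrow \left( M,g_{1}\right) $ and, for each $k\in \mathfrak{g}$, the vector $\left( -k_{G},k_{M}\right) $ is vertical,
\begin{equation*}
0=\left( g_{\mathrm{bi}}+g_{M}\right) \left( \left( \kappa _{v},v\right) ,\left( -k_{G},k_{M}\right) \right) ,
\end{equation*}
so $\left\vert g_{\mathrm{bi}}\left( \kappa _{v},k_{G}\right) \right\vert =\left\vert g_{M}\left( v,k_{M}\right) \right\vert $. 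Because $g_{\mathrm{bi}}$ is bi-invariant, $\left\vert k_{G}\right\vert _{g_{\mathrm{bi}}}$ is constant and equals $\left\vert k\right\vert _{g_{\mathrm{bi}}}$, so for any $k$ with $\left\vert k\right\vert _{g_{\mathrm{bi}}}=1$ the Cauchy--Schwarz inequality gives
\begin{equation*}
\left\vert \kappa _{v}\right\vert _{g_{\mathrm{bi}}}\geq \left\vert g_{\mathrm{bi}}\left( \kappa _{v},k_{G}\right) \right\vert =\left\vert g_{M}\left( v,k_{M}\right) \right\vert .
\end{equation*}

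Next I would apply the preceding Proposition. Since the radii $r_{i}$ in Proposition~\ref{Omega_i Dfn} are at our disposal, I would first shrink each $\Omega ^{i}$ so that every point of $\Omega ^{i}$ lies in the region where that Proposition applies (the threshold there is uniform over the compact set $\mathcal{C}_{i}$, by Part 4 of Proposition~\ref{Jacobi Rep} and compactness of the unit normal bundle over $\mathcal{C}_{i}$). Then, given $x\in \Omega ^{i}$ and $v\in \left( T_{x}\Omega ^{i}\right) \setminus U$, the preceding Proposition furnishes $k\in \mathfrak{g}$ with $\left\vert k\right\vert _{g_{\mathrm{bi}}}=1$ and $g_{M}\left( v,k_{M}\right) \geq C\left\vert v\right\vert _{g_{M}}\geq 0$, and combining this with the displayed inequality yields
\begin{equation*}
\left\vert \kappa _{v}\right\vert _{g_{\mathrm{bi}}}\geq \left\vert g_{M}\left( v,k_{M}\right) \right\vert =g_{M}\left( v,k_{M}\right) \geq C\left\vert v\right\vert _{g_{M}},
\end{equation*}
which is the assertion, with the same constant $C$.

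The one step that genuinely requires care is the passage from ``footpoints close to $\mathcal{C}_{i}$'' to all of $\Omega ^{i}$. Away from the zero section the splitting $\overline{\mathcal{H}}^{i}\oplus \mathrm{span}\left\{ X^{i}\right\} \oplus \mathcal{V}^{i}$ is no longer nearly adapted to the orbit tangent spaces, so $U$ need not contain $TG\left( x\right) ^{\perp }$ there, and the estimate can fail on a tube of fixed radius --- take $v\in TG\left( x\right) ^{\perp }\setminus U$, for which $\kappa _{v}=0$. This is exactly why the $\Omega ^{i}$ are shrunk at the outset; since the Corollary is only ever invoked near $\cup S_{i}$, this normalization costs nothing, and I expect it --- rather than any substantial new estimate --- to be the only real content of the argument.
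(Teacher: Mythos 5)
Your argument is essentially the paper's proof: the defining horizontality identity $0=\left( g_{\mathrm{bi}}+g_{M}\right) \left( \left( \kappa _{v},v\right),\left( -k_{G},k_{M}\right) \right)$, combined with the preceding proposition supplying a unit $k$ with $g_{M}\left( v,k_{M}\right) \geq C\left\vert v\right\vert _{g_{M}}$, immediately yields $\left\vert \kappa _{v}\right\vert _{g_{\mathrm{bi}}}\geq C\left\vert v\right\vert _{g_{M}}$, exactly as in the paper. Your additional remark about shrinking the $\Omega ^{i}$ so that the proposition's ``close enough to $\mathcal{C}_{i}$'' hypothesis covers all of $\Omega ^{i}$ addresses a point the paper leaves implicit, and your handling of it is sound.
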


\begin{proof}
By definition of $\kappa _{v}$ we have 
\begin{equation*}
0=\left( g_{\mathrm{bi}}+g_{M}\right) \left( \left( \kappa _{v},v\right)
,\left( -k_{G},k_{M}\right) \right) =-g_{\mathrm{bi}}\left( \kappa
_{v},k_{G}\right) +g_{M}\left( v,k_{M}\right)
\end{equation*}%
for all $k\in \mathfrak{g}.$ From the previous proposition we have a
constant $C>0$ and a $k\in \mathfrak{g}$ with $\left\vert k\right\vert _{g_{%
\mathrm{bi}}}=1$ so that 
\begin{equation*}
g_{M}\left( v,k_{M}\right) \geq C\left\vert v\right\vert _{g_{M}}.
\end{equation*}

The result follows by combining the previous two displays.
\end{proof}

\subsection{The A--Tensor Near the Singular Orbits}

In this subsection we prove Lemma \ref{H---A--tensor}, which shows that for
planes that are orthogonal to both the orbits of $G$ and to the fibers of
the metric projection to the singular strata, the sectional curvatures of $%
g_{l}$ converge to the curvatures of their images under $d\pi ^{\mathrm{reg}%
} $ as $l$ approaches $0.$

\begin{lemma}
\label{nabla k}For each $i\in \left\{ 1,2,\ldots ,p\right\} $, there is a
constant $C_{i}>0$ so that the following hold.

\begin{enumerate}
\item If our foot point, $x,$ is in $\Omega ^{i}$, $k\in \mathfrak{m}%
_{Pr_{i}\left( x\right) },$ and $Y,Z\in \mathrm{span}\left\{ TG\left(
x\right) ^{\perp }\cap \overline{\mathcal{H}}_{x}^{i}\right\} ,$ 
\begin{equation}
\left\vert g\left( A_{Z}^{\mathrm{reg}}Y,k_{M}\right) \right\vert \leq
C_{i}\left\vert k_{M}\right\vert \left\vert Y\right\vert \left\vert
Z\right\vert .
\end{equation}

\item If our foot point, $x,$ is in $\Omega ^{i}$, $k\in \mathfrak{g}%
_{Pr_{i}\left( x\right) },$ $Y\in \mathrm{span}\left\{ TG\left( x\right)
^{\perp }\cap \overline{\mathcal{H}}_{x}^{i}\right\} ,$ and $Z\in \mathrm{%
span}\left\{ X^{i},\text{ }TG\left( x\right) ^{\perp }\cap \overline{%
\mathcal{H}}_{x}^{i}\right\} ,$%
\begin{equation*}
\left\vert g\left( A_{Z}^{\mathrm{reg}}Y,k_{M}\right) \right\vert \leq
C_{i}\left\vert Z\right\vert \left\vert Y\right\vert \left\vert
k_{M}\right\vert \mathrm{dist}\left( \mathcal{C}_{i},x\right) .
\end{equation*}
\end{enumerate}
\end{lemma}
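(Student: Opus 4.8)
The plan is to reduce both inequalities to a single estimate on the covariant differential of the Killing field $k_M$, and then to extract the extra decay needed for Part~2 by working along the normal geodesic to $S_i$. Since $Y,Z\in T_xG(x)^{\perp}$ are horizontal for $\pi^{\mathrm{reg}}$ while $k_M$ is tangent to the orbit, hence vertical, the skew-symmetry of O'Neill's $A$--tensor gives, at every regular point (and by continuity also on $S_i$),
\[
g\bigl(A^{\mathrm{reg}}_Z Y,k_M\bigr)=-g\bigl(Y,\nabla_Z k_M\bigr)=-g\bigl(Y,(\nabla k_M)Z\bigr),
\]
where $\nabla k_M$ is the globally defined smooth covariant differential of $k_M$ on the compact manifold $M$. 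So everything reduces to bounding the bilinear form $(Y,Z)\mapsto g\bigl((\nabla k_M)Z,Y\bigr)$ at footpoints near the compact $\mathcal C_i$, and since $k\mapsto k_M$ is linear we may normalise $|k|_{g_{\mathrm{bi}}}=1$.

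Part~1 is then immediate: boundedness of $\nabla k_M$ on the compact $M$ gives $\bigl|g((\nabla k_M)Z,Y)\bigr|\le C\,|k|_{g_{\mathrm{bi}}}|Y||Z|$, and for $k\in\mathfrak m_{Pr_i(x)}$ one has $k_M(Pr_i(x))\ne 0$; since the orbit type is constant along $S_i$, the map $y\mapsto\mathfrak g_y$ is continuous there, so $y\mapsto\min\{|k_M(y)|:k\in\mathfrak m_y,\ |k|_{g_{\mathrm{bi}}}=1\}$ is continuous, positive, hence bounded below on the compact $\mathcal C_i$, and after shrinking $\Omega^i$ we get $|k_M(x)|\ge c_i|k|_{g_{\mathrm{bi}}}$ on $\Omega^i$. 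Part~1 follows with $C_i=C/c_i$.

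For Part~2, $k\in\mathfrak g_{Pr_i(x)}$, so $k_M(Pr_i(x))=0$, and since $S_i$ is $G$--invariant $k_M$ is tangent to $S_i$ along $S_i$. The structural fact I will use is that the slice representation of $G_{Pr_i(x)}$ on $T_{Pr_i(x)}G(Pr_i(x))^{\perp}=\mathcal E\oplus\nu_{Pr_i(x)}(S_i)$, where $\mathcal E:=T_{Pr_i(x)}S_i\cap T_{Pr_i(x)}G(Pr_i(x))^{\perp}$, has fixed-vector set exactly $\mathcal E$; hence the skew operator $P_0:=(\nabla k_M)_{Pr_i(x)}$ annihilates $\mathcal E$, respects the block splitting $\mathcal E\oplus T_{Pr_i(x)}G(Pr_i(x))\oplus\nu_{Pr_i(x)}(S_i)$, and satisfies $P_0\dot\gamma(0)=\nabla_{\dot\gamma(0)}k_M\in\nu_{Pr_i(x)}(S_i)$ along the minimal geodesic $\gamma$ from $Pr_i(x)$ to $x$. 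Put $t=\mathrm{dist}(\mathcal C_i,x)$ and extend $Y$ and the $\overline{\mathcal H}^i$--part of $Z$ by the Jacobi fields of Proposition~\ref{Jacobi Rep}, Part~2, and the radial part of $Z$ by $X^i=\dot\gamma$; by Proposition~\ref{angle tween H and V} these differ from the true $\overline{\mathcal H}^i$--vectors by $O(t^3)$. Let $F(t)=g\bigl((\nabla k_M)_{\gamma(t)}Z(t),Y(t)\bigr)$. Since at $t=0$ the initial data of $Y$ and of the $\overline{\mathcal H}^i$--part of $Z$ lie in $\mathcal E$, the radial part of $Z$ lies in $\nu_{Pr_i(x)}(S_i)$, and $\mathrm{im}\,P_0\perp\mathcal E$, one reads off $F(0)=0$; a short computation using the Killing--Jacobi identity $\nabla^2_{\dot\gamma,\,\cdot}\,k_M=-R(k_M,\dot\gamma)\cdot$ together with $k_M(\gamma(0))=0$ gives $(\nabla_{\dot\gamma}\nabla k_M)_{\gamma(0)}=0$ and hence $F'(0)=0$. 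Therefore $F(t)=\tfrac12F''(0)t^2+O(t^3)$; and since $F$ is linear in $k$ and vanishes identically along $\gamma$ exactly when $\nabla_{\dot\gamma(0)}k_M=0$ (a Jacobi field vanishing to order $2$ at $0$ is zero), the second derivative $F''$ factors, uniformly on $[0,t_0]$, through the linear map $k\mapsto\nabla_{\dot\gamma(0)}k_M$, so $\sup_{[0,t]}|F''|\le C\,|\nabla_{\dot\gamma(0)}k_M|\,|Y||Z|$. Finally $k_M|_\gamma$ is the Jacobi field $J$ with $J(0)=0$, so $|k_M(x)|=|J(t)|\ge c\,t\,|\nabla_{\dot\gamma(0)}k_M|$ for $t\le t_0$ when $\nabla_{\dot\gamma(0)}k_M\ne0$, giving $|F(t)|\le C t^2|\nabla_{\dot\gamma(0)}k_M|\,|Y||Z|\le (C/c)\,t\,|k_M(x)|\,|Y||Z|$; when $\nabla_{\dot\gamma(0)}k_M=0$ both sides of the claimed inequality vanish. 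This proves Part~2, with uniform constants because $Pr_i(x)$ ranges over the compact $\mathcal C_i$.

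The step I expect to be the main obstacle is precisely this last passage: the easy bounds coming from boundedness of $\nabla k_M$ only control everything by $|k|_{g_{\mathrm{bi}}}$, whereas Part~2 demands the sharper factor $|k_M|$. Bridging the gap requires genuinely using the degeneracy of $k_M$ at $Pr_i(x)$ to produce a factor $\mathrm{dist}(\mathcal C_i,x)$, and then matching it against the rate at which $k_M$ vanishes along $\gamma$ --- which is why one needs, simultaneously, the slice-representation structure of $(\nabla k_M)_{Pr_i(x)}$, the Killing--Jacobi identity at the zero of $k_M$, the Jacobi-field descriptions of $\overline{\mathcal H}^i$ and of $k_M|_\gamma$ from Propositions~\ref{Jacobi Rep} and \ref{angle tween H and V} (with their $O(t^3)$ error control), and the observation that the relevant Taylor coefficient of $F$ is governed by $|\nabla_{\dot\gamma(0)}k_M|$ and not by $|k|_{g_{\mathrm{bi}}}$. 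Keeping all estimates uniform in the footpoint via compactness of $\mathcal C_i$ is then routine.
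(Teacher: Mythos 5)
Your reduction via the skew-symmetry of the $A$--tensor, $g\left(A^{\mathrm{reg}}_{Z}Y,k_{M}\right)=-g\left(Y,\nabla_{Z}k_{M}\right)$, is fine, and your Part 1 (bound $\left\vert \nabla k_{M}\right\vert$ on $M$, bound $\left\vert k_{M}\right\vert$ from below for $k\in\mathfrak{m}_{Pr_{i}(x)}$ over the tube) is correct and close in spirit to the paper's. The problem is in Part 2, at the step you yourself flag as the crux. You assert that the Jacobi--field extensions (Proposition~\ref{Jacobi Rep}, Part 2) of $Y$ and of the $\overline{\mathcal{H}}^{i}$--part of $Z$ have initial values in $\mathcal{E}=T_{\gamma(0)}S_{i}\cap T_{\gamma(0)}G\left(\gamma(0)\right)^{\perp}$, and you use this to get $F(0)=F'(0)=0$. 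That assertion is unjustified and false in general: the $\mathcal{H}$--family only guarantees $J(0)\in T_{\gamma(0)}S_{i}$, and transport along this family does not preserve orthogonality to the orbits (the fields $k_{M}|_{\gamma}$, $k\in\mathfrak{m}_{\gamma(0)}$, belong to the same Lagrangian family and their inner products with your extensions are not constant in $t$). All one can say is that the orbit components of $Y(0)$ and $Z(0)$ are $O\left(\mathrm{dist}\left(\mathcal{C}_{i},x\right)\right)$. Feeding that into your Taylor expansion produces contributions to $F(0)$ and $F'(0)$ of size $t^{2}\left\Vert \left(\nabla k_{M}\right)_{\gamma(0)}\right\Vert$, i.e.\ controlled only by $\left\vert k\right\vert_{g_{\mathrm{bi}}}$, not by $\left\vert \nabla_{\dot{\gamma}(0)}k_{M}\right\vert$; in the near--degenerate regime where $\left\vert \nabla_{\dot{\gamma}(0)}k_{M}\right\vert$ is small compared with $\left\vert k\right\vert_{g_{\mathrm{bi}}}$ (so $\left\vert k_{M}(x)\right\vert\ll t\left\vert k\right\vert_{g_{\mathrm{bi}}}$), this does not yield the required bound $C\,t\left\vert k_{M}(x)\right\vert\left\vert Y\right\vert\left\vert Z\right\vert$. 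The same scaling defect already enters when you replace $\overline{\mathcal{H}}^{i}$--vectors by $\mathcal{H}^{i}$--Jacobi values: the $O(t^{2})$ discrepancy pairs against $\nabla k_{M}$ to give an error of order $t^{2}\left\vert k\right\vert_{g_{\mathrm{bi}}}$, again not of order $t\left\vert k_{M}(x)\right\vert$.

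A second, related soft spot is the claimed uniform factorization $\sup\left\vert F''\right\vert\leq C\left\vert \nabla_{\dot{\gamma}(0)}k_{M}\right\vert\left\vert Y\right\vert\left\vert Z\right\vert$. Your justification (linearity in $k$ plus ``$F\equiv 0$ when $\nabla_{\dot{\gamma}(0)}k_{M}=0$'') is not established --- in the degenerate case $k\in\mathfrak{g}_{\gamma(t)}$ for all $t$, but $F(t)=g\left(\left(\nabla k_{M}\right)Z(t),Y(t)\right)$ need not vanish at intermediate $t$ because $Y(t),Z(t)$ are then not orthogonal to the orbits --- and even granting it, a factorization with a \emph{uniform} constant does not follow from compactness alone, since the kernel $\left\{k:\nabla_{\dot{\gamma}(0)}k_{M}=0\right\}=\mathfrak{g}_{x}$ jumps in dimension as the normal direction varies, so the relevant parameter set is not closed. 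By contrast, the paper obtains the extra factor of $\mathrm{dist}\left(\mathcal{C}_{i},x\right)$ relative to $\left\vert k_{M}\right\vert$ from a different mechanism: for $Y,Z$ in the fixed--point directions, $\nabla_{Z}Y$ is tangent to $S_{i}$ at the foot point because $\mathrm{Fix}\left(M;G_{\gamma(0)}\right)$ is totally geodesic, while $k_{M}/\left\vert k_{M}\right\vert$ limits into $\nu_{\gamma(0)}\left(S_{i}\right)$ by Proposition~\ref{orphan} and Part 4 of Proposition~\ref{Jacobi Rep}; the radial case $Z=X$ is handled separately with Lemma~\ref{Peter's lemma} and Proposition~\ref{angle tween H and V}. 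To repair your argument you would need either that mechanism or an explicit splitting $k=k_{0}+k_{1}$ with $k_{0}\in\mathfrak{g}_{x}$ (for which $g\left(Y,\nabla_{Z}k_{0,M}\right)=0$ exactly) together with a genuine uniformity argument for the complementary part.
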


\begin{proof}
First we prove both inequalities for the special case of a fixed $k\in 
\mathfrak{g}$ and a fixed geodesic $\gamma :\left[ -l,l\right]
\longrightarrow \Omega ^{i}$ with $\gamma \left( 0\right) \in \mathcal{C}%
_{i},$ $\gamma ^{\prime }\left( 0\right) \in \nu _{\gamma \left( 0\right)
}S_{i}.$

If $k\in \mathfrak{m}_{\gamma \left( 0\right) }\setminus \left\{ 0\right\} ,$
then 
\begin{equation*}
t\longmapsto \left. \frac{\left\vert \nabla _{\centerdot }\,k_{M}\right\vert 
}{\left\vert k_{M}\right\vert }\right\vert _{\gamma \left( t\right) }
\end{equation*}%
is continuous on $\left[ -l,l\right] $ and hence has a maximum, proving Part
1 for a fixed $k\in \mathfrak{g}$ and a fixed geodesic $\gamma .$

For Part 2, we first consider the case when both $Y,Z$ are in $T_{\gamma
\left( t\right) }G\left( \gamma \left( t\right) \right) ^{\perp }\cap 
\overline{\mathcal{H}}_{\gamma \left( t\right) }^{i}.$ Since $\overline{%
\mathcal{H}}^{i}$ is the orthogonal complement of $span\left\{ X^{i},%
\mathcal{V}^{i}\right\} ,$ it follows from Part 4 of Lemma \ref{Jacobi Rep}
that $\cup _{t\in \left[ -l,l\right] \setminus \left\{ 0\right\} }\overline{%
\mathcal{H}}_{\gamma \left( t\right) }^{i}$ has an extension to a smooth
distribution along $\gamma .$ Let $\overline{\mathcal{H}}_{\gamma \left(
0\right) }^{i}$ be the vectors at $\gamma \left( 0\right) $ that are in this
distribution. From Part 4 of Lemma \ref{Jacobi Rep}, it follows that 
\begin{equation*}
\overline{\mathcal{H}}_{\gamma \left( 0\right) }^{i}=T_{\gamma \left(
0\right) }S_{i}.
\end{equation*}

By the Slice Theorem we have 
\begin{equation*}
T_{\gamma \left( 0\right) }G\left( \gamma \left( 0\right) \right) ^{\perp
}\cap T_{\gamma \left( 0\right) }S_{i}\subset T_{\gamma \left( 0\right) }%
\mathrm{Fix}\left( M;G_{\gamma \left( 0\right) }\right) ,
\end{equation*}%
where $\mathrm{Fix}\left( M;G_{\gamma \left( 0\right) }\right) $ is the
fixed point set of the $G_{\gamma \left( 0\right) }$-action on $M.$ On the
other hand, $G_{\gamma \left( 0\right) }$ acts on $\nu _{\gamma \left(
0\right) }S_{i}$ without fixed points, so 
\begin{equation*}
T_{\gamma \left( 0\right) }\mathrm{Fix}\left( M;G_{\gamma \left( 0\right)
}\right) \subset T_{\gamma \left( 0\right) }S_{i}.
\end{equation*}%
Combining the previous three displays gives 
\begin{equation*}
T_{\gamma \left( 0\right) }G\left( \gamma \left( 0\right) \right) ^{\perp
}\cap \overline{\mathcal{H}}_{\gamma \left( 0\right) }^{i}\subset T\mathrm{%
Fix}\left( M;G_{\gamma \left( 0\right) }\right) \subset TS_{i}.
\end{equation*}

As components of fixed point sets are totally geodesic, it follows that for
\linebreak $Y,Z\in T_{\gamma \left( 0\right) }G\left( \gamma \left( 0\right)
\right) ^{\perp }\cap \overline{\mathcal{H}}_{\gamma \left( 0\right) }^{i},$ 
$\nabla _{Z}Y\in T\mathrm{Fix}\left( M;G_{\gamma \left( 0\right) }\right)
\subset TS_{i}.$ In particular, for any vector $W$ normal to $S_{i},$ 
\begin{equation}
g\left( \nabla _{Z}Y,W\right) =0.  \label{FIXED!!}
\end{equation}

Combining Part 4 of Lemma \ref{Jacobi Rep}, Proposition \ref{orphan}, and
Equation \ref{FIXED!!} yields 
\begin{equation*}
\left\vert g\left( A_{Z}^{\mathrm{reg}}Y,\frac{k_{M}}{\left\vert
k_{M}\right\vert }\right) \right\vert =\left\vert g\left( \nabla _{Z}Y,\frac{%
k_{M}}{\left\vert k_{M}\right\vert }\right) \right\vert \leq C\cdot \mathrm{%
dist}\left( \mathcal{C}_{i},\gamma \left( t\right) \right)
\end{equation*}%
for unit $Y,Z\in \mathrm{span}\left\{ TG\left( \gamma \left( t\right)
\right) ^{\perp }\cap \overline{\mathcal{H}}_{\gamma \left( t\right)
}^{i}\right\} $, $k\in \mathfrak{g}_{Pr_{i}\left( \gamma \left( t\right)
\right) }$ and some $C>0.$

For the case when $Z=X$ and $Y\in \mathrm{span}\left\{ TG\left( \gamma
\left( t\right) \right) ^{\perp }\cap \overline{\mathcal{H}}_{\gamma \left(
t\right) }^{i}\right\} $ is unit we write\linebreak\ $Y=Y^{\mathcal{H}}+Y^{%
\mathcal{V}}.$ For $k\in \mathfrak{g}_{Pr_{i}\left( \gamma \left( t\right)
\right) }$ by Proposition \ref{orphan} we then have%
\begin{eqnarray*}
0 &=&g\left( Y,\frac{k_{M}}{\left\vert k_{M}\right\vert }\right) \\
&=&g\left( Y^{\mathcal{H}},\frac{k_{M}}{\left\vert k_{M}\right\vert }\right)
+g\left( Y^{\mathcal{V}},\frac{k_{M}}{\left\vert k_{M}\right\vert }\right) .
\end{eqnarray*}%
By Propositions \ref{angle tween H and V} and \ref{orphan}, $\left\vert
g\left( Y^{\mathcal{H}},\frac{k_{M}}{\left\vert k_{M}\right\vert }\right)
\right\vert \leq C\left( t^{2}\right) .$ It follows that $\left\vert g\left(
Y^{\mathcal{V}},\frac{k_{M}}{\left\vert k_{M}\right\vert }\right)
\right\vert \leq C\left( t^{2}\right) .$

From Part 3 of Lemma \ref{Peter's lemma} we then conclude that 
\begin{eqnarray*}
\left\vert g\left( A_{Y}^{\mathrm{reg}}X,\frac{k_{M}}{\left\vert
k_{M}\right\vert }\right) \right\vert &\leq &\left\vert g\left( \nabla _{Y^{%
\mathcal{H}}}X,\frac{k_{M}}{\left\vert k_{M}\right\vert }\right) \right\vert
+\left\vert g\left( \nabla _{Y^{\mathcal{V}}}X,\frac{k_{M}}{\left\vert
k_{M}\right\vert }\right) \right\vert \\
&\leq &C\mathrm{dist}\left( \mathcal{C}_{i},\gamma \left( t\right) \right) ,
\end{eqnarray*}%
for some $C>0.$ So Part 2 follows for a fixed $k\in \mathfrak{g}$ and a
fixed geodesic $\gamma .$

Now observe that in both cases, the left hand side depends continuously on
the choice of $k\in \mathfrak{g}$ and the choice of normal geodesic $\gamma
. $ Thus the theorem follows in general from the compactness of the unit
sphere in $\mathfrak{g}$ and the unit normal bundle of $\mathcal{C}_{i}.$
\end{proof}

\begin{proposition}
\label{nearly orthog killing}Let $\gamma $ be a geodesic that leaves $S_{i}$
orthogonally from a point of $\mathcal{C}_{i}.$ For $k^{1}\in \mathfrak{g}%
_{\gamma \left( 0\right) }$ and $k^{2}\in \mathfrak{m}_{\gamma \left(
0\right) }$ 
\begin{equation*}
g_{M}\left( k_{M}^{1}\left( \gamma \left( t\right) \right) ,k_{M}^{2}\left(
\gamma \left( t\right) \right) \right) =O\left( t\right) \left\vert
k_{M}^{1}\left( \gamma \left( t\right) \right) \right\vert \left\vert
k_{M}^{2}\left( \gamma \left( t\right) \right) \right\vert .
\end{equation*}

In particular, the angles between the subspaces 
\begin{equation*}
\left\{ \left. \left( -k_{G},k_{M}\right) \right\vert k\in \mathfrak{g}%
_{\gamma \left( 0\right) }\right\} \text{ and }\left\{ \left. \left(
-k_{G},k_{M}\right) \right\vert k\in \mathfrak{m}_{\gamma \left( 0\right)
}\right\}
\end{equation*}%
and the subspaces%
\begin{equation*}
\left\{ \left. k_{M}\right\vert k\in \mathfrak{g}_{\gamma \left( 0\right)
}\right\} \text{ and }\left\{ \left. k_{M}\right\vert k\in \mathfrak{m}%
_{_{\gamma \left( 0\right) }}\right\}
\end{equation*}%
are both $\frac{\pi }{2}\pm O\left( t\right) .$
\end{proposition}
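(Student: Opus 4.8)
The plan is to restrict the relevant Killing fields to $\gamma$, observe that they become Jacobi fields, and run a computation modeled on the proof of Proposition \ref{angle tween H and V}. Write $p=\gamma(0)\in\mathcal{C}_i\subset S_i$, and set $J_1(t)\equiv k^2_M(\gamma(t))$ and $J_2(t)\equiv k^1_M(\gamma(t))$. Since $G$ acts by isometries, $g\circ\gamma$ is a geodesic for every $g\in G$, so $\Gamma(s,t)\equiv\exp_G(sk^j)\cdot\gamma(t)$ is a variation of $\gamma$ through geodesics; hence its variation field $J_j$ is a Jacobi field along $\gamma$.

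Now I identify the initial data. Because $k^1\in\mathfrak{g}_p$, we have $k^1_M(p)=0$, so $J_2(0)=0$. Because $k^2\in\mathfrak{m}_p$, the map $k\mapsto k_M(p)$ is injective on $\mathfrak{m}_p$, so $J_1(0)=k^2_M(p)\in T_pG(p)\setminus\{0\}$ (provided $k^2\neq0$); in particular $J_1(0)\in T_pS_i$. Finally, $\nabla k^1_M|_p$ is the skew-symmetric infinitesimal generator of the isotropy action of $G_p$ on $T_pM$; since $G_p$ fixes $p$ and preserves the orbit $G(p)$, this endomorphism preserves $T_pG(p)$, hence also $(T_pG(p))^\perp$. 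As $\gamma$ leaves $S_i$ orthogonally, $\gamma'(0)\in\nu_p(S_i)\subseteq(T_pG(p))^\perp$, so $J_2'(0)=\nabla_{\gamma'(0)}k^1_M\in(T_pG(p))^\perp$. Consequently $g_M(J_1(0),J_2'(0))=0$.

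Consider $h(t)\equiv g_M(J_1(t),J_2(t))$. Then $h(0)=g_M(J_1(0),0)=0$ and $h'(0)=g_M(J_1'(0),J_2(0))+g_M(J_1(0),J_2'(0))=0+0=0$, so $h(t)=O(t^2)$. To convert this into the stated scale-invariant bound, normalize $|k^1|_{g_{\mathrm{bi}}}=|k^2|_{g_{\mathrm{bi}}}=1$. If $J_2'(0)=0$ then (together with $J_2(0)=0$) $J_2\equiv0$ and there is nothing to prove, so assume $J_2'(0)\neq0$ and put $\bar J\equiv J_2/|J_2'(0)|$. The standard Taylor expansion of a Jacobi field vanishing at the origin gives $|\bar J(t)|=t+O(t^3)$, hence $|J_2(t)|\geq\frac{1}{2}t\,|J_2'(0)|$ for small $t$; moreover $g_M(J_1,\bar J)$ has vanishing value and first derivative at $0$, so $g_M(J_1,\bar J)(t)=O(t^2)$. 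Therefore $|g_M(J_1,J_2)(t)|=|J_2'(0)|\,|g_M(J_1,\bar J)(t)|\leq O(t)\,|J_2(t)|$, and since $|J_1(t)|=|k^2_M(\gamma(t))|$ is bounded away from $0$ for small $t$, we get $|g_M(k^1_M,k^2_M)(\gamma(t))|\leq O(t)\,|k^1_M(\gamma(t))|\,|k^2_M(\gamma(t))|$. All implied constants are uniform: $\mathcal{C}_i$ is compact, its unit normal bundle is compact, the subalgebras $\mathfrak{g}_p,\mathfrak{m}_p$ (and their unit spheres) vary smoothly with $p\in\mathcal{C}_i$ since $S_i$ is a single orbit type, and $M$ is compact.

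For the ``in particular'' assertion, note that if $k^1\perp k^2$ in $(\mathfrak{g},g_{\mathrm{bi}})$ then $k^1_G$ and $k^2_G$ are pointwise $g_{\mathrm{bi}}$-orthogonal on $G$ by invariance of $g_{\mathrm{bi}}$, so for the metric $l^2g_{\mathrm{bi}}+g_M$ (any $l>0$) we have $(l^2g_{\mathrm{bi}}+g_M)\big((-k^1_G,k^1_M),(-k^2_G,k^2_M)\big)=g_M(k^1_M(\gamma(t)),k^2_M(\gamma(t)))=O(t)\,|k^1_M|\,|k^2_M|$, while each of these vectors has norm $\geq|k^j_M(\gamma(t))|$; dividing, the cosine of the angle is $O(t)$, so the angle is $\frac{\pi}{2}\pm O(t)$. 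The statement for the subspaces $\{k_M:k\in\mathfrak{g}_p\}$ and $\{k_M:k\in\mathfrak{m}_p\}$ is immediate from the displayed inequality. The one step requiring genuine care is the passage from $h(t)=O(t^2)$ to the scale-invariant estimate with a uniform constant, via the rescaling by $|J_2'(0)|$; the rest parallels Proposition \ref{angle tween H and V}.
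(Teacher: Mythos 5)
Your proposal is correct and follows essentially the same route as the paper: both show that $g_M\left(k_M^{1},k_M^{2}\right)\circ \gamma$ vanishes to second order at $t=0$ (using that $k_M^{1}\left(\gamma(0)\right)=0$ and that $\nabla_{\gamma'(0)}k_M^{1}$ is orthogonal to $k_M^{2}\left(\gamma(0)\right)$), and then convert the resulting $t^{2}$ bound into the scale-invariant estimate via $\left\vert k_M^{1}\left(\gamma(t)\right)\right\vert \sim t\left\vert \nabla_{\gamma'(0)}k_M^{1}\right\vert$, a lower bound on $\left\vert k_M^{2}\right\vert$, and compactness for uniformity. The only cosmetic differences are that you obtain the initial orthogonality from the linear isotropy representation (skew-symmetry of $\nabla k_M^{1}|_{\gamma(0)}$ preserving $T_{\gamma(0)}G\left(\gamma(0)\right)^{\perp}$) rather than from the variation of normal geodesics used in the paper, and that you package the quadratic bound through a normalized Jacobi field instead of the explicit second-derivative computation; both are sound.
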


\begin{proof}
The action of the circle generated by $k^{1}$ on $\gamma $ produces a
variation of $\gamma $ by geodesics that leave $S_{i}$ orthogonally from $%
\gamma \left( 0\right) ,$ and hence shows that $\nabla _{\gamma ^{\prime
}\left( 0\right) }k_{M}^{1}\in \nu _{\gamma \left( 0\right) }S_{i}.$

Since $k_{M}^{1}\left( \gamma \left( 0\right) \right) =0,$ 
\begin{equation*}
g_{M}\left( k_{M}^{1}\left( \gamma \left( 0\right) \right) ,k_{M}^{2}\left(
\gamma \left( 0\right) \right) \right) =0,
\end{equation*}%
and 
\begin{eqnarray*}
\left. \frac{d}{dt}g_{M}\left( k_{M}^{1}\left( \gamma \left( t\right)
\right) ,k_{M}^{2}\left( \gamma \left( t\right) \right) \right) \right\vert
_{t=0} &=&g_{M}\left( \nabla _{\gamma ^{\prime }\left( 0\right)
}k_{M}^{1},k_{M}^{2}\right) +g_{M}\left( k_{M}^{1},\nabla _{\gamma ^{\prime
}\left( 0\right) }k_{M}^{2}\right) \\
&=&0,
\end{eqnarray*}%
since $\nabla _{\gamma ^{\prime }\left( 0\right) }k_{M}^{1}\in \nu _{\gamma
\left( 0\right) }S_{i}$ and $k_{M}^{2}\in TS_{i}$ and $k_{M}^{1}\left(
\gamma \left( 0\right) \right) =0.$ On the other hand, 
\begin{eqnarray*}
\left. \frac{d^{2}}{dt^{2}}g_{M}\left( k_{M}^{1}\left( \gamma \left(
t\right) \right) ,k_{M}^{2}\left( \gamma \left( t\right) \right) \right)
\right\vert _{t=0} &=&g_{M}\left( \nabla _{\gamma ^{\prime }\left( 0\right)
}\nabla _{\gamma ^{\prime }\left( 0\right) }k_{M}^{1},k_{M}^{2}\right) \\
&&+2g_{M}\left( \nabla _{\gamma ^{\prime }\left( 0\right) }k_{M}^{1},\nabla
_{\gamma ^{\prime }\left( 0\right) }k_{M}^{2}\right) +g_{M}\left(
k_{M}^{1},\nabla _{\gamma ^{\prime }\left( 0\right) }\nabla _{\gamma
^{\prime }\left( 0\right) }k_{M}^{2}\right) \\
&=&-\left. R\left( k_{M}^{1},\gamma ^{\prime },\gamma ^{\prime
},k_{M}^{2}\right) \right\vert _{t=0}+2g_{M}\left( \nabla _{\gamma ^{\prime
}\left( 0\right) }k_{M}^{1},\nabla _{\gamma ^{\prime }\left( 0\right)
}k_{M}^{2}\right) \\
&=&2g_{M}\left( \nabla _{\gamma ^{\prime }\left( 0\right) }k_{M}^{1},\nabla
_{\gamma ^{\prime }\left( 0\right) }k_{M}^{2}\right)
\end{eqnarray*}

\noindent since $k_{M}^{1}\left( \gamma \left( 0\right) \right) =0.$ So%
\begin{eqnarray}
\left\vert g_{M}\left( k_{M}^{1}\left( \gamma \left( t\right) \right)
,k_{M}^{2}\left( \gamma \left( t\right) \right) \right) \right\vert
&=&\left\vert g_{M}\left( \nabla _{\gamma ^{\prime }\left( 0\right)
}k_{M}^{1},\nabla _{\gamma ^{\prime }\left( 0\right) }k_{M}^{2}\right)
\right\vert t^{2}+O\left( t^{3}\right)  \notag \\
&\leq &2\left\vert \nabla _{\gamma ^{\prime }\left( 0\right)
}k_{M}^{1}\right\vert _{g_{M}}\left\vert \nabla _{\gamma ^{\prime }\left(
0\right) }k_{M}^{2}\right\vert _{g_{M}}t^{2}.  \label{2nd tay}
\end{eqnarray}

Since $k_{M}^{1}\left( \gamma \left( 0\right) \right) =0$ 
\begin{equation}
\left\vert k_{M}^{1}\left( \gamma \left( t\right) \right) \right\vert
_{g_{M}}=\left\vert \nabla _{\gamma ^{\prime }\left( 0\right)
}k_{M}^{1}\right\vert _{g_{M}}t+O\left( t^{2}\right) .  \label{K_1}
\end{equation}%
By compactness of $\mathcal{C}_{i}$ and the unit sphere in $\mathfrak{m}%
_{\gamma \left( 0\right) }$ 
\begin{equation*}
\left\vert \nabla _{\gamma ^{\prime }\left( 0\right) }k_{M}^{2}\right\vert
_{g_{M}}\leq C_{1}\left\vert k_{G}^{2}\right\vert _{g_{\mathrm{bi}}}
\end{equation*}%
for some constant $C_{1}>0.$ By compactness of the unit sphere in $\mathfrak{%
m}_{\gamma \left( 0\right) },$%
\begin{equation*}
\left\vert k_{G}^{2}\right\vert _{g_{\mathrm{bi}}}\leq C_{2}\left\vert
k_{M}^{2}\left( \gamma \left( 0\right) \right) \right\vert _{g_{M}},
\end{equation*}%
for some constant $C_{2}>0,$ and for all sufficiently small $t$ 
\begin{equation*}
\left\vert k_{M}^{2}\left( \gamma \left( 0\right) \right) \right\vert
_{g_{M}}\leq 2\left\vert k_{M}^{2}\left( \gamma \left( t\right) \right)
\right\vert _{g_{M}}.
\end{equation*}%
Combining the previous three inequalities gives 
\begin{equation*}
\left\vert \nabla _{\gamma ^{\prime }\left( 0\right) }k_{M}^{2}\right\vert
_{g_{M}}\leq C\left\vert k_{M}^{2}\left( \gamma \left( t\right) \right)
\right\vert _{g_{M}}
\end{equation*}%
for all sufficiently small $t>0$ and some $C>0.$ Together with Inequality %
\ref{2nd tay} and Equation \ref{K_1} this gives 
\begin{equation*}
\left\vert g_{M}\left( k_{M}^{1}\left( \gamma \left( t\right) \right)
,k_{M}^{2}\left( \gamma \left( t\right) \right) \right) \right\vert =O\left(
t\right) \left\vert k_{M}^{1}\left( \gamma \left( t\right) \right)
\right\vert \left\vert k_{M}^{2}\left( \gamma \left( t\right) \right)
\right\vert ,
\end{equation*}%
for all sufficiently small $t.$

So the angle between 
\begin{equation*}
\left\{ \left. k_{M}\right\vert k\in \mathfrak{g}_{\gamma \left( 0\right)
}\right\} \text{ and }\left\{ \left. k_{M}\right\vert k\in \mathfrak{m}%
_{\gamma \left( 0\right) }\right\}
\end{equation*}%
is $\frac{\pi }{2}\pm O\left( t\right) ,$ and the angle between 
\begin{equation*}
\left\{ \left. \left( -k_{G},k_{M}\right) \right\vert k\in \mathfrak{g}%
_{\gamma \left( 0\right) }\right\} \text{ and }\left\{ \left. \left(
-k_{G},k_{M}\right) \right\vert k\in \mathfrak{m}_{\gamma \left( 0\right)
}\right\}
\end{equation*}%
is only closer to $\frac{\pi }{2}.$
\end{proof}

\begin{lemma}
\label{H---A--tensor}

For all $x\in \cup _{i}\Omega ^{i},$ $Y\in \mathrm{span}\left\{ TG\left(
x\right) ^{\perp }\cap \overline{\mathcal{H}}_{x}^{i}\right\} $ and $Z\in 
\mathrm{span}\left\{ X^{i},\text{ }TG\left( x\right) ^{\perp }\cap \overline{%
\mathcal{H}}_{x}^{i}\right\} $ 
\begin{equation*}
\left\vert |A_{\hat{Y}}^{Ch}\hat{Z}|-\left\vert A_{Y}^{\mathrm{reg}%
}Z\right\vert \right\vert \leq \left[ O\left( l\right) +O\left( \mathrm{dist}%
\left( \mathcal{C}_{1}\cup \cdots \cup \mathcal{C}_{p},x\right) \right) %
\right] \left\vert Z\right\vert \left\vert Y\right\vert ,
\end{equation*}%
where $A^{Ch}$ is the $A$--tensor of the Riemannian submersion $\left(
G\times M,l^{2}g_{\mathrm{bi}}+g_{M}\right) \longrightarrow \left(
M,g_{l}\right) .$
\end{lemma}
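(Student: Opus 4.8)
By bilinearity both sides of the asserted inequality are homogeneous of degree one in each of $Y$ and $Z$, so I may assume $|Y|_{g_{M}}=|Z|_{g_{M}}=1$. The plan is to reproduce the computation in the proof of Proposition \ref{Cheeg A--tensor} to get an exact formula for $A^{Ch}$, and then to bound the resulting error terms using Lemma \ref{nabla k} and Proposition \ref{nearly orthog killing}. Away from a fixed neighborhood of $\bigcup_{j}\mathcal C_{j}$ the statement is already Proposition \ref{Cheeg A--tensor} applied on a compact subset of $M^{\text{reg}}$, so the content is the estimate for $x$ close to $\bigcup_{j}\mathcal C_{j}$.

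First I note $\hat Y=(0,Y)$ and $\hat Z=(0,Z)$. For $Y$ this is because $Y\in TG(x)^{\perp}$; for $Z$, its only possibly non-$\mathcal H^{i}$ direction is $X^{i}=\mathrm{grad}\bigl(\mathrm{dist}(S_{i},\cdot)\bigr)$, and $\mathrm{dist}(S_{i},\cdot)$ is $G$-invariant because $S_{i}$ is, so $X^{i}$ is orthogonal to every orbit; hence $Z\in TG(x)^{\perp}$, $|\hat Y|_{l^{2}g_{\mathrm{bi}}+g_{M}}=1=|\hat Z|_{l^{2}g_{\mathrm{bi}}+g_{M}}$, and both lifts are horizontal. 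Since the $q_{G\times M}$-vertical vectors at $(e,x)$ are exactly the $(-k_{G},k_{M})$, $k\in\mathfrak g$, the argument of Proposition \ref{Cheeg A--tensor}, via Remark \ref{product field}, applies verbatim and gives
\begin{equation*}
\bigl(l^{2}g_{\mathrm{bi}}+g_{M}\bigr)\bigl(A^{Ch}_{\hat Y}\hat Z,(-k_{G},k_{M})\bigr)=g_{M}\bigl(A^{\mathrm{reg}}_{Y}Z,k_{M}\bigr)\qquad\text{for all }k\in\mathfrak g .
\end{equation*}
Since $A^{Ch}_{\hat Y}\hat Z$ is $q_{G\times M}$-vertical and $A^{\mathrm{reg}}_{Y}Z\in T_{x}G(x)$, it follows that $|A^{Ch}_{\hat Y}\hat Z|$ is the supremum of the right-hand side over unit $q_{G\times M}$-vertical vectors, while $|A^{\mathrm{reg}}_{Y}Z|$ is the supremum of $g_{M}(A^{\mathrm{reg}}_{Y}Z,w)$ over unit $w\in T_{x}G(x)$.

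Split $T_{x}G(x)=V_{1}\oplus V_{2}$, where $V_{1}$ and $V_{2}$ are the images of $\mathfrak g_{Pr_{i}(x)}$ and $\mathfrak m_{Pr_{i}(x)}$ under $k\mapsto k_{M}(x)$; by Proposition \ref{nearly orthog killing} these are orthogonal up to an angle $O(\mathrm{dist}(\mathcal C_{i},x))$, and $|k_{M}(x)|$ is bounded below for unit $k\in\mathfrak m_{Pr_{i}(x)}$ near $\mathcal C_{i}$, so $\mathfrak m_{Pr_{i}(x)}\to V_{2}$ has bounded inverse. For the upper bound, write a unit vertical vector as $(-k_{G},k_{M})$ with $k=k_{1}+k_{2}$ in the respective summands; near-orthogonality and $|k_{M}(x)|\le1$ give $|(k_{1})_{M}(x)|,|(k_{2})_{M}(x)|\le1+O(\mathrm{dist})$, so by Part 2 of Lemma \ref{nabla k} the $(k_{1})_{M}$-part of $g_{M}(A^{\mathrm{reg}}_{Y}Z,k_{M})$ is $O(\mathrm{dist})$, while the $(k_{2})_{M}$-part is at most $|A^{\mathrm{reg}}_{Y}Z|(1+O(\mathrm{dist}))\le|A^{\mathrm{reg}}_{Y}Z|+O(\mathrm{dist})$, using that $|A^{\mathrm{reg}}_{Y}Z|$ is bounded near $\mathcal C_{i}$ (which follows from Lemma \ref{nabla k} and the compactness arguments in its proof). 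Taking the supremum yields $|A^{Ch}_{\hat Y}\hat Z|\le|A^{\mathrm{reg}}_{Y}Z|+O(\mathrm{dist})$. For the lower bound, let $w=A^{\mathrm{reg}}_{Y}Z/|A^{\mathrm{reg}}_{Y}Z|$ and write $w=w_{1}+w_{2}$ along $V_{1}\oplus V_{2}$; Part 2 of Lemma \ref{nabla k} gives $g_{M}(A^{\mathrm{reg}}_{Y}Z,w_{1})=O(\mathrm{dist})$, hence $g_{M}(A^{\mathrm{reg}}_{Y}Z,w_{2})=|A^{\mathrm{reg}}_{Y}Z|-O(\mathrm{dist})$. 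Choose $k_{2}\in\mathfrak m_{Pr_{i}(x)}$ with $(k_{2})_{M}(x)=w_{2}$; then $|(k_{2})_{G}|_{g_{\mathrm{bi}}}$ is bounded, so $|(-(k_{2})_{G},(k_{2})_{M})|_{l^{2}g_{\mathrm{bi}}+g_{M}}=\sqrt{l^{2}|(k_{2})_{G}|^{2}+|w_{2}|^{2}}\le|w_{2}|+O(l)\le1+O(l)+O(\mathrm{dist})$, and feeding $k_{2}$ into the displayed identity and dividing gives
\begin{equation*}
|A^{Ch}_{\hat Y}\hat Z|\ \ge\ \frac{|A^{\mathrm{reg}}_{Y}Z|-O(\mathrm{dist})}{1+O(l)+O(\mathrm{dist})}\ \ge\ |A^{\mathrm{reg}}_{Y}Z|-O(l)-O(\mathrm{dist}),
\end{equation*}
again using the bound on $|A^{\mathrm{reg}}_{Y}Z|$. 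Combining the two inequalities and restoring the factor $|Y||Z|$ finishes the proof.

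The step I expect to be the main obstacle is exactly the one just sketched: near $S_{i}$ the $q_{G\times M}$-vertical space contains directions $(-k_{G},k_{M})$ with $k_{M}$ nearly vanishing, so a unit vertical vector can have an unboundedly large $G$-component, and the uniform lower bound on $|k_{M}|$ that drives the proof of Proposition \ref{Cheeg A--tensor} is not available. The argument survives only because $A^{\mathrm{reg}}$ is itself of order $\mathrm{dist}(\mathcal C_{i},\cdot)$ along precisely those degenerate directions (Lemma \ref{nabla k}, Part 2) and because the two families of Killing fields stay almost orthogonal (Proposition \ref{nearly orthog killing}); organizing the bookkeeping so that these cancel the potentially large contributions and leave only the error $O(l)+O(\mathrm{dist})$, uniformly in the vectors and geodesics involved via compactness of the relevant unit bundles, is where the real work lies.
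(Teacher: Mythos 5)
Your proposal is correct and follows essentially the same route as the paper: the identity $\left( l^{2}g_{\mathrm{bi}}+g_{M}\right) \left( A_{\hat{Y}}^{Ch}\hat{Z},\left( -k_{G},k_{M}\right) \right) =g_{M}\left( A_{Y}^{\mathrm{reg}}Z,k_{M}\right)$ from the proof of Proposition \ref{Cheeg A--tensor}, the splitting $\mathfrak{g}=\mathfrak{g}_{Pr_{i}\left( x\right) }\oplus \mathfrak{m}_{Pr_{i}\left( x\right) }$ with Proposition \ref{nearly orthog killing}, Part 2 of Lemma \ref{nabla k} for the isotropy directions, and the uniform lower bound on $\left\vert k_{M}\right\vert $ for unit $k\in \mathfrak{m}_{Pr_{i}\left( x\right) }$ to produce the $O\left( l\right) $ error. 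The only difference is bookkeeping---you compare norms via a supremum over unit vertical vectors plus a test-vector lower bound, while the paper compares projections onto the two nearly orthogonal subspaces---which is an equivalent organization of the same estimates.
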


\begin{proof}
Since the splittings 
\begin{equation*}
\left\{ \left. \left( -k_{G},k_{M}\right) \right\vert k\in \mathfrak{g}%
_{Pr_{i}\left( x\right) }\right\} \oplus \left\{ \left. \left(
-k_{G},k_{M}\right) \right\vert k\in \mathfrak{m}_{Pr_{i}\left( x\right)
}\right\}
\end{equation*}%
and%
\begin{equation*}
\left\{ \left. k_{M}\right\vert k\in \mathfrak{g}_{Pr_{i}\left( x\right)
}\right\} \oplus \left\{ \left. k_{M}\right\vert k\in \mathfrak{m}%
_{Pr_{i}\left( x\right) }\right\}
\end{equation*}%
are nearly orthogonal, its is enough to compare the projections of $A_{\hat{Y%
}}^{Ch}\hat{Z}$ and $A_{Y}^{\mathrm{reg}}Z$ onto the corresponding subspaces.

For all $Y\in \mathrm{span}\left\{ TG\left( x\right) ^{\perp }\cap \overline{%
\mathcal{H}}_{x}^{i}\right\} $ and $Z\in \mathrm{span}\left\{ X^{i},\text{ }%
TG\left( x\right) ^{\perp }\cap \overline{\mathcal{H}}_{x}^{i}\right\} ,$ as
in the proof of Proposition \ref{Cheeg A--tensor}, we have 
\begin{eqnarray}
\left( l^{2}g_{\mathrm{bi}}+g_{M}\right) \left( A_{\hat{Y}}^{Ch}\hat{Z}%
,\left( -k_{G},k_{M}\right) \right) &=&-g_{M}\left( Z,\nabla _{Y}k_{M}\right)
\notag \\
&=&g_{M}\left( A_{Y}^{\mathrm{reg}}Z,k_{M}\right)  \label{stupid}
\end{eqnarray}

The set of real numbers 
\begin{equation*}
\left\{ \left. \left\vert k_{M}\left( x\right) \right\vert _{g_{M}}\text{ }%
\right\vert \text{ }x\in \Omega ^{i},\text{ }k\in \mathfrak{m}_{Pr_{i}\left(
x\right) },\text{ }\left\vert k_{G}\right\vert _{g_{bi}}=1\right\}
\end{equation*}%
has a positive lower bound. So for $x\in \Omega ^{i}$ and $k\in \mathfrak{m}%
_{Pr_{i}\left( x\right) }$ with $\left\vert k_{G}\right\vert _{g_{bi}}=1$ we
have 
\begin{equation*}
\frac{1}{\left\vert \left( -k_{G},k_{M}\right) \right\vert _{l^{2}g_{\mathrm{%
bi}}+g_{M}}^{2}}=\frac{1}{l^{2}\left\vert k_{G}\right\vert _{g_{\mathrm{bi}%
}}^{2}+\left\vert k_{M}\right\vert _{g_{M}}^{2}}\longrightarrow \frac{1}{%
\left\vert k_{M}\right\vert _{g_{M}}^{2}}\text{ as }l\rightarrow 0,
\end{equation*}%
uniformly on $\Omega ^{i}.$ So for $k\in \mathfrak{m}_{Pr_{i}\left( x\right)
}$ and unit $Y$ and $Z$ we have 
\begin{equation*}
\frac{\left( l^{2}g_{\mathrm{bi}}+g_{M}\right) \left( A_{\hat{Y}}^{Ch}\hat{Z}%
,\left( -k_{G},k_{M}\right) \right) }{\left\vert \left( -k_{G},k_{M}\right)
\right\vert _{l^{2}g_{\mathrm{bi}}+g_{M}}}\longrightarrow g_{M}\left( A_{Y}^{%
\mathrm{reg}}Z,\frac{k_{M}}{\left\vert k_{M}\right\vert _{g_{M}}}\right) 
\text{ as }l\rightarrow 0,
\end{equation*}%
uniformly on $\Omega ^{i}.$

On the other hand, if $k\in \mathfrak{g}_{Pr_{i}\left( x\right) },$ we
combine Equation \ref{stupid} with Part 2 of Lemma \ref{nabla k} to get 
\begin{equation*}
\left( l^{2}g_{\mathrm{bi}}+g_{M}\right) \left( A_{\hat{Y}}^{Ch}\hat{Z}%
,\left( -k_{G},k_{M}\right) \right) ^{2}\leq C\left\vert Z\right\vert
_{g_{M}}^{2}\left\vert Y\right\vert _{g_{M}}^{2}\left\vert k_{M}\right\vert
_{g_{M}}^{2}\mathrm{dist}\left( \mathcal{C}_{1}\cup \cdots \cup \mathcal{C}%
_{p},x\right) ^{2},
\end{equation*}%
for $C=\max \left\{ C_{i}\right\} .$ Dividing both sides by $\left\vert
\left( -k_{G},k_{M}\right) \right\vert _{l^{2}g_{\mathrm{bi}}+g_{M}}^{2}$ we
have 
\begin{eqnarray*}
\frac{\left( l^{2}g_{\mathrm{bi}}+g_{M}\right) \left( A_{\hat{Y}}^{Ch}\hat{Z}%
,\left( -k_{G},k_{M}\right) \right) ^{2}}{\left\vert \left(
-k_{G},k_{M}\right) \right\vert _{l^{2}g_{\mathrm{bi}}+g_{M}}^{2}} &\leq
&C\left\vert Z\right\vert _{g_{M}}^{2}\left\vert Y\right\vert _{g_{M}}^{2}%
\mathrm{dist}\left( \mathcal{C}_{1}\cup \cdots \cup \mathcal{C}_{p},x\right)
^{2}\frac{\left\vert k_{M}\right\vert _{g_{M}}^{2}}{l^{2}\left\vert
k_{G}\right\vert _{g_{\mathrm{bi}}}^{2}+\left\vert k_{M}\right\vert
_{g_{M}}^{2}} \\
&\leq &C\left\vert Z\right\vert _{g_{M}}^{2}\left\vert Y\right\vert
_{g_{M}}^{2}\mathrm{dist}\left( \mathcal{C}_{1}\cup \cdots \cup \mathcal{C}%
_{p},x\right) ^{2}.
\end{eqnarray*}%
Finally, by Part 2 of Lemma \ref{nabla k} 
\begin{equation*}
\frac{g_{M}\left( A_{Y}^{\mathrm{reg}}Z,k_{M}\right) ^{2}}{\left\vert
k_{M}\right\vert _{g_{M}}^{2}}\leq C\left\vert Z\right\vert
_{g_{M}}^{2}\left\vert Y\right\vert _{g_{M}}^{2}\mathrm{dist}\left( \mathcal{%
C}_{1}\cup \cdots \cup \mathcal{C}_{p},x\right) ^{2},
\end{equation*}%
for $C=\max \left\{ C_{i}\right\} ,$ and the result follows.
\end{proof}

\section{Two Steps To Better Curvature\label{two step}}

In this section, we prove two results that are the first two steps in the
proofs of Theorems \ref{main}, \ref{alm nonneg thm}, and \ref{supplement}.
They track the effects of first Cheeger deforming $g_{M}$ and then
performing the conformal change of Theorem \ref{uber conf}, allowing us to
improve the curvature of $M$.

One can also omit the first step and still prove Theorem \ref{main}. So the
reader who is only interested in the proof of Theorem \ref{main} can skip
this section.

Recall that $g_{l}$ is the metric on $M$ induced by the Riemannian
submersion 
\begin{equation*}
q_{G\times M}:\left( G\times M,l^{2}g_{\mathrm{bi}}+g\right) \longrightarrow
M,
\end{equation*}
and $d\pi ^{\text{reg }}\left( g_{l}\right) $ is the Riemannian metric on $%
\left( M/G\right) ^{\text{reg }}$induced by the Riemannian submersion $\pi ^{%
\text{reg }}:\left( M,g\right) \longrightarrow M/G.$

\begin{theorem}[\textbf{Step 1}]
\label{Step 1} Let $G$ be a compact Lie group acting isometrically on a
Riemannian $n$--manifold $\left( M,g\right) .$ For any $\varepsilon >0$
there is a neighborhood $\Omega ^{\prime }$ of $S_{1}\cup S_{2}\cup \cdots
\cup S_{p}$ as in Proposition \ref{Omega_i Dfn} and a Cheeger parameter $%
l_{1}$ such that for all $l\in \left( 0,l_{1}\right) $

\begin{equation}
\left\vert \mathrm{sec}_{g_{l}}(Y,Z)-\mathrm{sec}_{d\pi \left( g_{l}\right)
}(d\pi ^{\mathrm{reg}}\left( Y\right) ,d\pi ^{\mathrm{reg}}\left( Z\right)
)\right\vert <\frac{\varepsilon }{2}\text{ \label{Alm on H}}
\end{equation}%
if either $Y,Z\in TG(x)^{\perp }|_{M\setminus \Omega ^{\prime }}$ or $Y,Z\in
\left\{ TG(x)^{\perp }\cap \overline{\mathcal{H}}^{i}\right\} |_{\Omega
^{\prime }\setminus S_{1}\cup S_{2}\cup \cdots \cup S_{p}}$ for some $i\in
\left\{ 1,2,\ldots ,p\right\} $.

Moreover, $d\pi ^{\text{reg }}\left( g_{l}\right) $ is independent of $l$
and is equal to $d\pi ^{\text{reg }}\left( g\right) .$
\end{theorem}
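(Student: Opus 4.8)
The plan is to first settle the final ("moreover") sentence, then use the Gray-O'Neill curvature equation to express the quantity controlled in (\ref{Alm on H}) entirely in terms of $A$--tensors, and finally feed in Proposition \ref{Cheeg A--tensor} and Lemma \ref{H---A--tensor}. For the "moreover" sentence: if $U\in T_{x}G(x)^{\perp }$ (orthogonality with respect to $g$), then the element $\kappa _{U}\in \mathfrak{m}_{x}$ from Section \ref{cheeg def} is determined by $g_{\mathrm{bi}}(\kappa _{U},k)=g(U,k_{M}(x))$ for all $k\in \mathfrak{g}$; since the right side vanishes, $\kappa _{U}=0$, so $\hat{U}_{l}=(0,U)$ for every $l$. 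Hence $T_{x}G(x)^{\perp }$ is also the $g_{l}$--orthogonal complement of $T_{x}G(x)$ and $g_{l}$ restricts to $g$ there, so $d\pi ^{\mathrm{reg}}(g_{l})=d\pi ^{\mathrm{reg}}(g)$. (Equivalently: $\pi ^{\mathrm{reg}}\circ q_{G\times M}$ and $\pi ^{\mathrm{reg}}\circ \pi _{2}$ are the same map $(G\times M,l^{2}g_{\mathrm{bi}}+g)\longrightarrow M^{\mathrm{reg}}/G$ with the same fibers, hence they induce the same quotient metric, while $\pi _{2}:(G\times M,l^{2}g_{\mathrm{bi}}+g)\longrightarrow (M,g)$ is a Riemannian submersion independent of $l$.)

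Now let $Y,Z$ be $g$--orthonormal and orthogonal to $G(x)$. Then they are also $g_{l}$--orthonormal, $\hat{Y}=(0,Y)$ and $\hat{Z}=(0,Z)$ are the $q_{G\times M}$--horizontal lifts of $Y$ and $Z$, and $\mathrm{sec}_{l^{2}g_{\mathrm{bi}}+g}(\hat{Y},\hat{Z})=\mathrm{sec}_{g}(Y,Z)$ because $\{e\}\times M$ is totally geodesic in the product. Applying the equality form of the Gray-O'Neill curvature equation (cf.\ \cite{O'Neill}) to $q_{G\times M}$ and to $\pi ^{\mathrm{reg}}:(M^{\mathrm{reg}},g)\longrightarrow M^{\mathrm{reg}}/G$ gives
\begin{equation*}
\mathrm{sec}_{g_{l}}(Y,Z)=\mathrm{sec}_{g}(Y,Z)+3\,|A_{\hat{Y}}^{Ch}\hat{Z}|^{2}\quad \text{and}\quad \mathrm{sec}_{d\pi (g)}(d\pi ^{\mathrm{reg}}Y,d\pi ^{\mathrm{reg}}Z)=\mathrm{sec}_{g}(Y,Z)+3\,|A_{Y}^{\mathrm{reg}}Z|^{2}.
\end{equation*}
Subtracting, and using $d\pi ^{\mathrm{reg}}(g_{l})=d\pi ^{\mathrm{reg}}(g)$,
\begin{equation*}
\bigl| \mathrm{sec}_{g_{l}}(Y,Z)-\mathrm{sec}_{d\pi (g_{l})}(d\pi ^{\mathrm{reg}}Y,d\pi ^{\mathrm{reg}}Z)\bigr| =3\,\bigl| \,|A_{Y}^{\mathrm{reg}}Z|^{2}-|A_{\hat{Y}}^{Ch}\hat{Z}|^{2}\,\bigr| \leq 3\bigl( |A_{Y}^{\mathrm{reg}}Z|+|A_{\hat{Y}}^{Ch}\hat{Z}|\bigr) \bigl| \,|A_{Y}^{\mathrm{reg}}Z|-|A_{\hat{Y}}^{Ch}\hat{Z}|\,\bigr| .
\end{equation*}
Since $\mathrm{sec}$ depends only on the plane, it suffices to make this last expression less than $\varepsilon /2$ in the two cases of the theorem, using that $|A_{Y}^{\mathrm{reg}}Z|$ and $|A_{\hat{Y}}^{Ch}\hat{Z}|$ stay bounded there (by compactness on $M\setminus \Omega ^{\prime }$, and near the singular strata by Lemma \ref{nabla k} together with Lemma \ref{H---A--tensor}).

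For $x\in M\setminus \Omega ^{\prime }$, a compact subset of $M^{\mathrm{reg}}$ once $\Omega ^{\prime }$ is a neighborhood of the singular strata, Proposition \ref{Cheeg A--tensor} gives $|A_{\hat{Y}}^{Ch}\hat{Z}|\longrightarrow |A_{Y}^{\mathrm{reg}}Z|$ uniformly as $l\rightarrow 0$, so a single $l_{1}$ makes the defect less than $\varepsilon /2$ there. For $Y,Z\in \{TG(x)^{\perp }\cap \overline{\mathcal{H}}^{i}\}$ with footpoint $x\in \Omega ^{\prime }\setminus (S_{1}\cup \cdots \cup S_{p})$, Lemma \ref{H---A--tensor} gives $\bigl| \,|A_{Y}^{\mathrm{reg}}Z|-|A_{\hat{Y}}^{Ch}\hat{Z}|\,\bigr| \leq O(l)+O(\mathrm{dist}(\mathcal{C}_{1}\cup \cdots \cup \mathcal{C}_{p},x))$. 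Here the order of the choices is what makes the argument work: first shrink the tubular radii that define $\Omega ^{\prime }$ in Proposition \ref{Omega_i Dfn} so that the $O(\mathrm{dist})$ term is uniformly small on $\Omega ^{\prime }$, and only then shrink $l_{1}$ so that the $O(l)$ term is small for $l\in (0,l_{1})$. Replacing $l_{1}$ by the smaller of the two values obtained then yields (\ref{Alm on H}) on both regions for all $l\in (0,l_{1})$.

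The main obstacle is precisely this interplay. The convergence in Proposition \ref{Cheeg A--tensor} is only locally uniform on $M^{\mathrm{reg}}$ and degenerates toward the singular strata (the lower bound for $|\kappa _{v}|$ fails there), which is why $\Omega ^{\prime }$ must be introduced at all and why near the strata one has to fall back on the quantitatively weaker but strata--uniform Lemma \ref{H---A--tensor}, whose two error terms must be absorbed in the correct order. Verifying that the $A$--tensor norms are genuinely bounded on the relevant sets, and that $\Omega ^{\prime }$ can be taken of the form in Proposition \ref{Omega_i Dfn} while still keeping $\mathrm{dist}(\bigcup_{j}\mathcal{C}_{j},\cdot )$ small on it, are the remaining routine points.
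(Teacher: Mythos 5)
Your proposal is correct and follows essentially the same route as the paper: the Gray--O'Neill horizontal curvature equation for both $q_{G\times M}$ and $\pi^{\mathrm{reg}}$, Proposition \ref{Cheeg A--tensor} on the compact set $M\setminus\Omega'$, Lemma \ref{H---A--tensor} near the strata, and the fact that Cheeger deformation leaves the metric on $TG(x)^{\perp}$ unchanged for the ``moreover'' clause. Your extra care in passing from the difference of $A$--tensor norms to the difference of their squares (via boundedness, using Lemma \ref{nabla k}) and in fixing $\Omega'$ before shrinking $l_{1}$ only makes explicit what the paper leaves implicit.
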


\begin{proof}
For orthonormal $Y,Z\in TG(x)^{\perp }$ we have $\hat{Y}=\left( 0,Y\right) $
and $\hat{Z}=\left( 0,Z\right) ,$ so using the Horizontal Curvature Equation
we obtain%
\begin{eqnarray*}
\mathrm{sec}_{g_{l}}\left( Y,Z\right) &=&\mathrm{sec}_{l^{2}g_{\mathrm{bi}%
}+g_{M}}\left( \left( 0,Y\right) ,\left( 0,Z\right) \right) +3\left\vert A_{%
\hat{Y}}^{Ch}\hat{Z}\right\vert _{l^{2}g_{\mathrm{bi}}+g_{M}}^{2} \\
&=&\mathrm{sec}_{g_{M}}\left( Y,Z\right) +3\left\vert A_{\hat{Y}}^{Ch}\hat{Z}%
\right\vert _{l^{2}g_{\mathrm{bi}}+g_{M}}^{2}.
\end{eqnarray*}%
On the other hand, for $\pi ^{\text{reg}}:M\longrightarrow M/G,$ the
Horizontal Curvature Equation becomes 
\begin{equation*}
\mathrm{\sec }\left( d\pi ^{\text{reg}}Y,d\pi ^{\text{reg}}Z\right) =\mathrm{%
sec}_{g_{M}}\left( Y,Z\right) +3\left\vert A_{Y}^{\text{reg}}Z\right\vert
_{g_{M}}^{2}.
\end{equation*}

From Lemma \ref{H---A--tensor} we have 
\begin{equation*}
\left\vert \left\vert A_{\hat{Y}}^{Ch}\hat{Z}\right\vert _{l^{2}g_{\mathrm{bi%
}}+g_{M}}^{2}-\left\vert A_{Y}^{\text{reg}}Z\right\vert
_{g_{M}}^{2}\right\vert <O\left( l\right) +O\left( \mathrm{dist}\left( 
\mathcal{C}_{1}\cup \cdots \cup \mathcal{C}_{p},x\right) \right) ,
\end{equation*}%
for $Y,Z\in \left\{ TG(x)^{\perp }\cap \overline{\mathcal{H}}^{i}\right\}
|_{\Omega ^{\prime }\setminus S_{1}\cup S_{2}\cup \cdots \cup S_{p}},$ and
from Proposition \ref{Cheeg A--tensor} we have 
\begin{equation*}
\left\vert \left\vert A_{\hat{Y}}^{Ch}\hat{Z}\right\vert _{l^{2}g_{\mathrm{bi%
}}+g_{M}}^{2}-\left\vert A_{Y}^{\text{reg}}Z\right\vert
_{g_{M}}^{2}\right\vert <\frac{\varepsilon }{6},
\end{equation*}%
for all $Y,Z\in TG(x)^{\perp }|_{M\setminus \Omega ^{\prime }},$ provided $l$
is sufficiently small.

Inequality \ref{Alm on H} follows by combining the previous four displays.

Finally, since a Cheeger deformation does not change the metric on the
distribution that is orthogonal to the orbits $d\pi ^{\text{reg}}\left(
g_{l}\right) $ is independent of $l$ and equal to $d\pi ^{\text{reg }}\left(
g\right) .$
\end{proof}

Next we apply Theorem \ref{uber conf} to the metrics $g_{l}$ and obtain the
following.

\begin{theorem}[\textbf{Step 2}]
\label{Step 2} Let $M$ and $G$ be as in Theorem \ref{Step 1}. For any $%
\varepsilon >0,$ let $g_{l}$ be a metric that satisfies the conclusion of
Theorem \ref{Step 1}$.$ For any $K>0$, there is a neighborhood $\Omega _{1}$
of $S_{1}\cup S_{2}\cup \cdots \cup S_{p}$ and a $G$--invariant metric $%
\widetilde{g_{l}}=e^{2f}g_{l}$ so that if $V\in \mathrm{span}\left\{ 
\mathcal{V}^{i},X^{i}\right\} |_{\Omega _{1}}$ for some $i\in \left\{
1,\ldots ,p\right\} ,$ then%
\begin{equation*}
\mathrm{sec}_{\widetilde{g_{l}}}(V,W)\geq K
\end{equation*}%
for all $W\in T\Omega _{1}.$ Moreover, 
\begin{equation*}
\mathrm{sec}_{\widetilde{g_{l}}}(V,W)\geq \mathrm{sec}_{g_{l}}(V,W)-\frac{%
\varepsilon }{2}\text{ \label{Alm on Omega''}}
\end{equation*}%
for all $V,W\in TM.$
\end{theorem}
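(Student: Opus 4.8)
The plan is to obtain Theorem~\ref{Step 2} as a direct application of Theorem~\ref{uber conf}, the only new idea being that one applies the conformal change of Theorem~\ref{uber conf} not to the original metric $g$ but to the Cheeger deformation $g_l$. First I would note that this is legitimate: $G$ still acts isometrically on $(M,g_l)$ (a Cheeger deformation carries the residual $G$--action on $M\cong(G\times M)/G$ isometrically), $M$ is still a compact manifold, and the orbit--type stratification --- hence the singular strata $S_1,\dots,S_p$ --- depends only on the smooth action and not on the metric, so it is unchanged. Thus $(M,g_l)$ satisfies the hypotheses of Theorem~\ref{uber conf}.

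Given $\varepsilon,K>0$, I would then invoke Theorem~\ref{uber conf} for $(M,g_l)$, with $\varepsilon$ there replaced by $\varepsilon/2$ and with the given $K$. This produces neighborhoods $\Omega_1\subset\Omega_3$ of $S_1\cup\dots\cup S_p$, compact subsets $\mathcal{C}_i\subset S_i$ with tubular neighborhoods $\Omega_1^i\subset\Omega_3^i$, the splittings $T\Omega_3^i=\mathrm{span}\{X^i\}\oplus\mathcal{V}^i\oplus\mathcal{H}^i$ of \ref{defn of X} computed in $g_l$ --- these are precisely the distributions $\mathcal{V}^i,X^i$ named in the statement --- and a $G$--invariant metric $\widetilde{g_l}=e^{2f}g_l$ agreeing with $g_l$ off $\Omega_3$. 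The first conclusion of Theorem~\ref{Step 2} is then Inequality~\ref{V-Curv}: if $V\in\mathrm{span}\{\mathcal{V}^i,X^i\}$ has foot point in $\Omega_1$, that foot point lies in some $\Omega_1^i$, on which $V\in\mathrm{span}\{X^i\}\oplus\mathcal{V}^i$, so for every $W\in T\Omega_1$ (which at that foot point coincides with $T\Omega_1^i$) Inequality~\ref{V-Curv} gives $\widetilde{\mathrm{sec}}(V,W)>K$, hence $\mathrm{sec}_{\widetilde{g_l}}(V,W)\ge K$. The second conclusion, $\mathrm{sec}_{\widetilde{g_l}}(V,W)\ge\mathrm{sec}_{g_l}(V,W)-\varepsilon/2$ for all $V,W\in TM$, is Inequality~\ref{not fouled--II} with $\varepsilon/2$ in place of $\varepsilon$, and the $G$--invariance of $\widetilde{g_l}$ is part of the conclusion of Theorem~\ref{uber conf}.

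There is essentially no obstacle to overcome: the substantive work was done in Section~\ref{section cnf change} and what remains is bookkeeping. The only points requiring a word of care are that the distributions $\mathcal{V}^i,X^i$ in the conclusion must be interpreted relative to $g_l$ --- which is exactly what the cited theorem produces, since its conformal factor is built from $\mathrm{dist}(S_i,\cdot)$ measured in whatever metric the theorem is applied to --- and the harmless quantifier translation between ``$W\in T\Omega_1$'' in the statement and the stratum--by--stratum formulation ``$Z\in T\Omega_1^i$'' in \ref{V-Curv}.
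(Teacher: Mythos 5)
Your proposal is correct and coincides with the paper's own treatment: the paper introduces Theorem \ref{Step 2} simply by applying Theorem \ref{uber conf} to the Cheeger-deformed metrics $g_{l}$ (with $\varepsilon/2$ in place of $\varepsilon$), exactly as you do. The bookkeeping points you flag (the $G$--action remains isometric for $g_{l}$, the strata are metric-independent, and the splittings are taken with respect to $g_{l}$) are the right ones and require no further argument.
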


\section{Lifting Positive Ricci Curvature\label{ricci lift}}

In this section we prove Theorem \ref{main}. For convenience, re-scale so
that $Ric_{M^{\text{reg}}/G}\geq 2.$

To the best of our knowledge, the most efficient metric construction to
prove Theorem \ref{main} is to perform the conformal change of Theorem \ref%
{uber conf}, and then to Cheeger deform the resulting metric. In contrast,
to prove Theorem \ref{alm nonneg thm} we first Cheeger deform, then perform
a conformal change, and then further Cheeger deform. Consequently we also
use this $3$--step deformation to prove Theorem \ref{supplement}.

If $\left( M,g\right) $ satisfies the hypotheses of Theorem \ref{main}, and $%
g_{l}$ is a Cheeger deformation of $g,$ then $\left( M,g_{l}\right) $ also
satisfies the hypotheses of Theorem \ref{main}. So for simplicity of
notation we will write $g$ for $g_{l}$ in this section. On the one hand,
this points to the most efficient path to proving Theorem \ref{main}, on the
other hand, since $g_{l}$ satisfies the hypotheses of Theorem \ref{main} we
will simultaneously verify the positive Ricci curvature portion of the
conclusion of Theorem \ref{supplement}.

We obtain Theorem \ref{main} by combining the conformal change of Theorem %
\ref{uber conf} and the following two results, both of which are proven in
this section. The first result shows that all Cheeger deformations of $%
\tilde{g}$ have positive Ricci curvature on $\Omega _{1},$ where $\tilde{g}$
and $\Omega _{1}$ are as in Theorem \ref{uber conf}. The second result shows
that there are Cheeger deformations of $\tilde{g}$ that have positive Ricci
curvature on $M\setminus \Omega _{1}.$

\begin{theorem}
\label{Ric_omega >0}Let $\left( M,g\right) $ be as in Theorem \ref{main}.
Given $\varepsilon ,K>0,$ let $\widetilde{g}$ be the $G$--invariant metric
on $M$ from Theorem \ref{uber conf}. If $\varepsilon $ is sufficiently small
and $K$ is sufficiently large$,$ then for all $\lambda \in \left( 0,\infty
\right) $ 
\begin{equation*}
Ric_{\tilde{g}_{\lambda }}|_{\Omega _{1}}>0,
\end{equation*}%
where $\tilde{g}_{\lambda }$ is the metric on $M$ induced by the Riemannian
submersion 
\begin{equation*}
q_{G\times M}:\left( G\times M,\lambda ^{2}g_{\mathrm{bi}}+\tilde{g}\right)
\longrightarrow M,
\end{equation*}
and $\Omega _{1}$ is as in Theorem \ref{uber conf}.
\end{theorem}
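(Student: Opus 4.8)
The plan is to prove that $\mathrm{Ric}_{\tilde g_{\lambda}}(U,U)>0$ for every unit vector $U$ based in $\overline{\Omega_1}$ and every $\lambda\in(0,\infty)$; since $\overline{\Omega_1}$ is compact this yields $\mathrm{Ric}_{\tilde g_{\lambda}}|_{\Omega_1}>0$ for each fixed $\lambda$. Three facts drive the argument. (a) A Cheeger deformation alters neither the distribution $TG(x)^{\perp}$ nor the metric on it, so the Riemannian submersion $\pi^{\mathrm{reg}}:(M^{\mathrm{reg}},\tilde g_{\lambda})\to M^{\mathrm{reg}}/G$ has one fixed base metric $\bar h$, independent of $\lambda$, namely the conformal change by $\bar f$ (the descent of $f$) of the original orbital metric; moreover $Ch_{\lambda}$ is the identity on $TG(x)^{\perp}$ and preserves $TG(x)$. (b) Since $X^{i}=\mathrm{grad}\,\mathrm{dist}(S_i,\cdot)$ is orthogonal to the orbits, it is the horizontal lift of the gradient of the orbital distance to the image of $S_i$, so the Hessian estimates of Lemma \ref{Peter's lemma} descend to $M^{\mathrm{reg}}/G$; combining this with the $C^{1}$-smallness of $f$ (Remark \ref{uber c-1 close}) and the sign of $\rho''$, the computation proving Theorem \ref{conf-submanf} runs verbatim on the orbit space and gives $\mathrm{Ric}_{\bar h}>1$ on the image of $\Omega_1$ (as large as we like in the radial direction). (c) By Theorem \ref{uber conf}, on $\Omega_1$ we have $\mathrm{sec}_{\tilde g}(V,Z)>K$ for every $V\in\mathrm{span}\{X^{i}\}\oplus\mathcal{V}^{i}$ and every $Z$, and $\mathrm{sec}_{\tilde g}>\mathrm{sec}_{g}-\varepsilon$ everywhere; in particular $\mathrm{Ric}_{\tilde g}|_{\Omega_1}>1$ once $\varepsilon$ is small and $K$ is large, as in the Remark following Theorem \ref{conf-submanf}.

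I would then bound $\mathrm{Ric}_{\tilde g_{\lambda}}(U,U)=\sum_{j\ge 2}\mathrm{sec}_{\tilde g_{\lambda}}(U,E_j)$ using a $\tilde g_{\lambda}$-orthonormal frame $\{E_1=U,E_2,\dots,E_n\}$ adapted both to $TG(x)\oplus TG(x)^{\perp}$ and to $Ch_{\lambda}(\mathrm{span}\{X^{i}\}\oplus\mathcal{V}^{i})$, sorting the planes into three kinds. Planes through $U$ whose other spanning vector lies in $Ch_{\lambda}(\mathrm{span}\{X^{i}\}\oplus\mathcal{V}^{i})$ pull back under $Ch_{\lambda}^{-1}$ to $\tilde g$-planes meeting $\mathrm{span}\{X^{i}\}\oplus\mathcal{V}^{i}$, hence have $\mathrm{sec}_{\tilde g}>K>0$, so by Proposition \ref{orbital estimate} their $\tilde g_{\lambda}$-curvature is strictly positive (and comparably large when the plane is purely horizontal, where $Ch_{\lambda}$ is an isometry). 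Planes spanned by two vectors of $TG(x)^{\perp}$ are handled by the Gray--O'Neill equation for $\pi^{\mathrm{reg}}$, where the $A$-tensor terms are controlled on $\Omega_1$ by Lemmas \ref{H---A--tensor} and \ref{nabla k} (they are $O(\lambda)+O(\mathrm{dist}(\cup_i\mathcal{C}_i,x))$ on $\overline{\mathcal H}^i\cap TG(x)^{\perp}$), so these curvatures differ from the base curvatures of $\bar h$ by only $O(\varepsilon)+O(\mathrm{dist}(\cup_i\mathcal{C}_i,x))$. Planes carrying an orbit direction obey $\mathrm{sec}_{\tilde g_{\lambda}}\ge\max\{-1,-\lambda^2/|\kappa_{V}|^2,-\lambda^2/|\kappa_{W}|^2\}\,|\mathrm{sec}_{\tilde g}|$, hence are bounded below by a fixed constant; and when the orbit direction involved lies outside a neighborhood of $\overline{\mathcal H}^i\oplus\mathrm{span}\{X^i\}\oplus\mathcal{V}^i$, Corollary \ref{kappa not 0} forces $|\kappa|$ to be bounded below, which drives these curvatures to $0$ from below (rather than to a fixed negative value) as $\lambda\to 0$. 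Assembling: if $U$ is horizontal, its Ricci is $\mathrm{Ric}_{\bar h}(d\pi^{\mathrm{reg}}U)$ minus the controlled $A$-terms plus the $\ge -o(1)$ mixed terms, hence $>1-o(1)$; and if $U$ has an orbit component, that component splits (using $T_xG_{Pr_i(x)}(x)\subset\mathcal{V}^i$, Proposition \ref{orphan}) into an isotropy part feeding the positive first group and a ``new'' orbit part whose positive and negative contributions both scale like $\lambda^2$ but with a factor of $K$ in front of the positive ones; so in every case the positive terms dominate the $-\varepsilon$ errors once $\varepsilon$ is small and $K$ is large.

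The step I expect to be the main obstacle is exactly this final assembly into one inequality valid for all $\lambda\in(0,\infty)$: none of $\mathrm{span}\{X^i\}\oplus\mathcal{V}^i$, $\overline{\mathcal H}^i$, or $\mathrm{span}\{X^i\}$ is orthogonal to the orbit splitting, $Ch_{\lambda}$ mixes the isotropy orbit into $\mathcal{V}^i$, the isotropy orbit and $\mathcal V^i$ themselves collapse as $x\to S_i$, and all the $O(\mathrm{dist}(\mathcal{C}_i,\cdot))$ errors must be absorbed by taking $\Omega_1$ thin --- consistently with the choices already fixed inside Theorem \ref{uber conf}. The cleanest way to organize it is a two-regime argument: for $\lambda$ above a threshold, $\tilde g_{\lambda}$ is $C^2$-close to $\tilde g$, which already has $\mathrm{Ric}>1$ on $\Omega_1$ by fact (c), so positivity is immediate; for small $\lambda$, positivity comes from $\mathrm{Ric}_{\bar h}>1$ downstairs together with the near-vanishing of the collapsing mixed curvatures and the dominance of the $K$-curvature terms. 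Uniformity across $\lambda$ then reduces, for each fixed $\lambda$, to continuity of $\mathrm{Ric}_{\tilde g_{\lambda}}$ on the compact set $\overline{\Omega_1}$.
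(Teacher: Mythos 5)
Your proposal takes a genuinely different route from the paper (quotient Ricci curvature plus collapse asymptotics), but as written it has a real hole: the two-regime assembly does not cover all $\lambda \in \left( 0,\infty \right)$. The large-$\lambda$ regime (where $\tilde{g}_{\lambda }$ is $C^{2}$--close to $\tilde{g}$) only applies above a threshold depending on the rate of convergence $\tilde{g}_{\lambda }\rightarrow \tilde{g}$ and on the slack in $Ric_{\tilde{g}}|_{\Omega _{1}}>1$, while every estimate you invoke in the small-$\lambda$ regime (closeness of horizontal--horizontal curvatures to those of the base via Proposition \ref{Cheeg A--tensor} and Lemma \ref{H---A--tensor}, the ``$-o(1)$'' vertizontal terms, the use of Corollary \ref{kappa not 0}) is an asymptotic statement as the Cheeger parameter tends to $0$. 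Nothing treats intermediate $\lambda$, and this cannot be repaired by continuity or compactness, since positivity for each such $\lambda$ is exactly what is being proved. Two secondary gaps: (i) your fact (b) is not ``verbatim'': the strata images are not submanifolds of the open manifold $M^{\text{reg}}/G$, so Lemma \ref{Peter's lemma} does not apply downstairs and the Hessian asymptotics of the orbital distance would have to be re-derived through the submersion; (ii) the errors you must beat are not of size $\varepsilon$: for a general $\lambda$ the horizontal--horizontal curvatures on $\Omega _{1}$ are only bounded below by $\min \sec _{g}-\varepsilon$ (a fixed negative number, not close to the base curvature), and the mixed terms $R^{\tilde{g}}\left( X,E_{i},E_{i},V\right)$ are first order, not quadratic, in the collapsing component, so your claim that positive and negative contributions ``both scale like $\lambda ^{2}$'' fails precisely for the terms that matter.

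For contrast, the paper's proof needs neither the base metric, nor $Ric\left( M/G\right) \geq 1$, nor any regime split: write a unit vector as $\cos \sigma \,X+\sin \sigma \,V$ with $V\perp X$, choose a frame with $E_{2}\in \mathrm{span}\left\{ X,V\right\}$; since $X$ is orbit-perpendicular, Gray--O'Neill for $q_{G\times M}$ gives, for every $\lambda$, $\mathrm{sec}_{\tilde{g}_{\lambda }}\left( Ch_{\lambda }\left( W\right) ,Ch_{\lambda }\left( E_{2}\right) \right) \geq \mathrm{curv}_{\tilde{g}}\left( X,V\right) \geq K\left\vert V\right\vert _{\tilde{g}}^{2}$ and $\mathrm{sec}_{\tilde{g}_{\lambda }}\left( Ch_{\lambda }\left( W\right) ,Ch_{\lambda }\left( E_{i}\right) \right) \geq \mathrm{curv}_{\tilde{g}}\left( W,E_{i}\right)$; the first-order cross terms are absorbed by completing a square using $K$ large compared to $\left\vert R^{g}\right\vert$ and $\left\vert \min \sec _{g}\right\vert$; and uniform positivity over the noncompact regular part of $\Omega _{1}$ follows from $\left\vert V\right\vert _{g}^{2},\left\vert E_{i}\right\vert _{g}^{2}\geq \lambda ^{2}/\left( C+\lambda ^{2}\right)$ via Proposition \ref{bound on kappa}. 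Your first sorting of planes is close in spirit to this; replacing the two-regime collapse bookkeeping with that single $\lambda$--uniform computation is what makes the argument close.
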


\begin{theorem}
\label{generic cheeger}Given $\left( M,g\right) $ as in Theorem \ref{main},
let $\tilde{g}_{\lambda }$ be the metric on $M$ from Theorem \ref{Ric_omega
>0}, and let $\Omega _{1}$ be as in Theorem \ref{uber conf}. Then 
\begin{equation*}
Ric_{\tilde{g}_{\lambda }}|_{M\setminus \Omega _{1}}>0,
\end{equation*}%
provided $\lambda $ is sufficiently small.
\end{theorem}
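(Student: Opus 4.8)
Since $\Omega_1$ is a $G$--invariant open neighborhood of every singular stratum, $\mathcal{K}\equiv M\setminus\Omega_1$ is a compact, $G$--invariant subset of the regular part $M^{\mathrm{reg}}$, and over $\mathcal{K}$ the quotient map restricts to a genuine Riemannian submersion $\pi^{\mathrm{reg}}\colon(M^{\mathrm{reg}},\tilde g_\lambda)\longrightarrow(M^{\mathrm{reg}}/G,d\pi^{\mathrm{reg}}(\tilde g_\lambda))$ whose fibers are the principal orbits $G/H$, which by hypothesis have finite fundamental group. As in Theorem \ref{Step 1}, a Cheeger deformation does not alter the metric in directions orthogonal to the orbits, so the base metric $d\pi^{\mathrm{reg}}(\tilde g_\lambda)=d\pi^{\mathrm{reg}}(\tilde g)$ is independent of $\lambda$. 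The plan is, at each $x\in\mathcal{K}$, to split a $\tilde g_\lambda$--unit vector as $w=\cos\theta\,Z+\sin\theta\,U$ with $Z\in T_xG(x)^\perp$ and $U\in T_xG(x)$, and to bound $Ric_{\tilde g_\lambda}(w,w)$ from below by controlling the three pieces $Ric_{\tilde g_\lambda}(Z,Z)$, $Ric_{\tilde g_\lambda}(U,U)$, and $Ric_{\tilde g_\lambda}(Z,U)$ uniformly on $\mathcal{K}$ as $\lambda\to0$.

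For the horizontal piece I would use the Gray--O'Neill formula for the horizontal Ricci curvature of $\pi^{\mathrm{reg}}$. All of its correction terms are built from the $A$-- and $T$--tensors of $\pi^{\mathrm{reg}}$ for $\tilde g_\lambda$, whose relevant values lie in the orbit directions and hence collapse as $\lambda\to0$; so $Ric_{\tilde g_\lambda}(Z,Z)\to Ric_{d\pi^{\mathrm{reg}}(\tilde g)}(d\pi^{\mathrm{reg}}Z,d\pi^{\mathrm{reg}}Z)$ uniformly on $\mathcal{K}$. On $\mathcal{K}\setminus\Omega_3$ we have $\tilde g=g$, so this limit is $\ge 2$ after the rescaling $Ric_{M^{\mathrm{reg}}/G}\ge2$; on $\mathcal{K}\cap(\Omega_3\setminus\Omega_1)$ it is $\ge 2-C\varepsilon$, because there the curvature tensor of $\tilde g$ differs from that of $g$ by at most $\varepsilon$ in the relevant components (items 4 and 5 of Theorem \ref{uber conf}, combined with the O'Neill formula relating the curvatures of $(M,\tilde g)$ and of its quotient). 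Choosing $\varepsilon$ small, which is permitted by Theorems \ref{uber conf} and \ref{Ric_omega >0}, gives $Ric_{\tilde g_\lambda}(Z,Z)\ge1$ on $\mathcal{K}$ for all small $\lambda$.

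For the vertical piece, the point is that as $\lambda\to0$ the principal orbits through $\mathcal{K}$ collapse, and their induced metrics approach, after rescaling by $\lambda^{-2}$, normal homogeneous metrics determined by $g_{\mathrm{bi}}$; since $\pi_1(G/H)$ is finite, a normal homogeneous metric on $G/H$ has strictly positive Ricci curvature, and compactness of $\mathcal{K}$ makes this lower bound uniform over the orbits meeting $\mathcal{K}$. Feeding this into the O'Neill formula for the vertical Ricci curvature (whose $A$--correction is $+\sum_i|A_{E_i}U|^2\ge0$, and whose remaining terms are built from the second fundamental form of the orbits and are of lower order than $\lambda^{-2}$) yields $Ric_{\tilde g_\lambda}(U,U)\ge c\,\lambda^{-2}$ on $\mathcal{K}$ for some $c>0$ and all small $\lambda$. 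The cross term $Ric_{\tilde g_\lambda}(Z,U)$ is controlled by tracking it through the Cheeger reparametrization exactly as in Proposition \ref{Cheeg A--tensor} and Lemma \ref{H---A--tensor}; it stays bounded uniformly in $\lambda$ on $\mathcal{K}$. Consequently $Ric_{\tilde g_\lambda}(w,w)$ is bounded below by the least eigenvalue of a $2\times2$ symmetric matrix with diagonal entries $\ge1$ and $\ge c\,\lambda^{-2}$ and uniformly bounded off--diagonal entry, which is positive once $\lambda$ is small; since every estimate is uniform over the compact $\mathcal{K}$, this gives $Ric_{\tilde g_\lambda}|_{M\setminus\Omega_1}>0$ for $\lambda$ small.

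The step I expect to be the main obstacle is the vertical estimate: one must pin down the $\lambda\to0$ asymptotics of the orbit metrics (a refinement of Proposition \ref{orbital estimate} that keeps the positive $g_{\mathrm{bi}}$--term rather than only the stated lower bound), invoke the fact that normal homogeneous metrics with finite fundamental group have positive Ricci curvature with a lower bound uniform over the compact family of orbits that meet $\mathcal{K}$, and---most delicately---verify that the second fundamental forms of the orbits together with the mixed O'Neill terms are genuinely of lower order than the $\lambda^{-2}$ blow--up, so that they cannot cancel it. Given this, the horizontal estimate and the final eigenvalue assembly are routine consequences of results already established above.
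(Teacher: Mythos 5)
Your outline ends with the same $2\times 2$ assembly the paper uses (large vertical diagonal entry, order-one horizontal entry, controlled cross term), but the route you take to the three estimates is genuinely different from the paper's, and as written it has real gaps. The paper does not work with the submersion $\pi^{\mathrm{reg}}\colon (M,\tilde g_{\lambda})\to M/G$ at all. Instead it goes upstairs: by Proposition \ref{Gray-O'Neill}, $Ric_{\tilde g_{\lambda}}\geq Ric^{\mathrm{Horiz}}$ for the Cheeger submersion $q_{G\times M}\colon (G\times M,\lambda^{2}g_{\mathrm{bi}}+\tilde g)\to M$ after dropping the non-negative term $3\,Ric^{A}$, and since the upstairs metric is a product, every curvature component entering $Ric^{\mathrm{Horiz}}$ is explicit and bounded. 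The $\lambda^{-2}$ blow-up in orbit directions then comes from Berestovskii's theorem (Proposition \ref{Berestovski}) applied to $\kappa_{V},\kappa_{W_i}\in\mathfrak m_{x}$ together with the norm estimates of Proposition \ref{bound on kappa} (this is Proposition \ref{vertical Ricci}); the cross term is small because bounded product-curvature components get divided by $|\hat V|\to\infty$ (Proposition \ref{vertizontal Ricci}); and the horizontal bound is Proposition \ref{horizontal cheeger}, which is why the paper's metric is really the iterated deformation $(\tilde g_{\lambda_{0}})_{\lambda_{1}}$. The upstairs route exists precisely to avoid the fiber-geometry terms your downstairs route must confront.

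Concretely, the gaps are these. First, in your horizontal estimate the Gray--O'Neill/Besse formula for $Ric_{\tilde g_{\lambda}}(Z,Z)$ in terms of the base Ricci contains, besides the $A$-- and $T$--norm terms, a term of the form $\tilde g_{\lambda}\left(\nabla_{Z}N,Z\right)$, where $N$ is the mean curvature vector of the orbits; $N$ is \emph{horizontal}, so it does not ``lie in the orbit directions,'' and showing that it and its horizontal derivatives tend to $0$ uniformly on $M\setminus\Omega_{1}$ requires an asymptotic analysis of the second fundamental form of the orbits under Cheeger deformation that you do not supply and that appears nowhere in the paper. Second, the same issue infects the vertical estimate: besides the intrinsic Ricci of the $\lambda^{-2}$--rescaled orbits (whose uniform $C^{2}$ convergence to the normal homogeneous metrics, and the uniform positivity of their Ricci via finite $\pi_{1}$ --- i.e.\ Berestovskii's theorem --- you assert rather than prove), you must show the terms built from $T$, its divergence, and $\left\langle N,T_{U}U\right\rangle$ are $o\!\left(\lambda^{-2}\right)$ uniformly; you flag this yourself, but it is exactly the hard step. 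Third, the cross-term claim is misattributed: Proposition \ref{Cheeg A--tensor} and Lemma \ref{H---A--tensor} estimate the $A$--tensor of $q_{G\times M}$ on vectors orthogonal to the orbits and say nothing about mixed components $Ric_{\tilde g_{\lambda}}(Z,U)$ with $U$ a $\tilde g_{\lambda}$--unit orbit vector, whose constituent curvature components (with several nearly vertical entries) are not obviously bounded as $\lambda\to 0$; the paper's analogue of this bound is Proposition \ref{vertizontal Ricci}, proved upstairs using the product structure. So while your Nash/B\'erard--Bergery--style strategy could plausibly be completed, the three key estimates are not established as written, and the proof would end up substantially longer than the paper's.
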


Before proceeding with the proofs, we record the following result, which is
obtained by taking the trace of the Horizontal Curvature Equation.

\begin{proposition}
\label{Gray-O'Neill}Let $\pi :\left( E,g\right) \longrightarrow B$ be a
Riemannian submersion with horizontal distribution $H.$ Using the
superscript $^{\mathrm{Horiz}}$ to denote the $H$--component of a vector,
for $x,y,z\in H$ we define 
\begin{eqnarray*}
Ric^{\mathrm{Horiz}}\left( x,y\right) &\equiv &Trace\left( z\mapsto \left\{
R\left( z,x\right) y\right\} ^{\mathrm{Horiz}}\right) , \\
R^{A}\left( z,x\right) y &\equiv &2A_{y}A_{z}x-A_{z}A_{x}y-A_{x}A_{y}z,\text{
and} \\
Ric^{A}\left( x,y\right) &\equiv &Trace\left( z\mapsto R^{A}\left(
z,x\right) y\right) .
\end{eqnarray*}%
Extend $Ric^{\mathrm{Horiz}}$ and $Ric^{A}$ to be $\left( 0,2\right) $%
--tensors on $M$ by setting $Ric^{\mathrm{Horiz}}\left( v,\cdot \right)
=Ric^{A}\left( v,\cdot \right) =0,$ if $v$ is vertical.

Then 
\begin{equation}
\pi ^{\ast }\left( Ric_{B}\right) =Ric^{\mathrm{Horiz}}+3Ric^{A}.
\label{Ricc O'neill}
\end{equation}
\end{proposition}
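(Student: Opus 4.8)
The plan is to contract the Gray--O'Neill Horizontal Curvature Equation, as the remark preceding the proposition indicates. The sectional form of that equation, already used above (proof of Theorem \ref{Step 1}), reads $\mathrm{sec}_{B}=\mathrm{sec}_{E}+3\left\vert A_{X}Y\right\vert ^{2}$ on horizontal lifts. My first step is to promote it to the full $\left( 0,4\right) $--tensor identity: $\pi ^{\ast }R_{B}-R_{E}$, restricted to horizontal four--tuples, is a tensor with all the algebraic symmetries of a curvature tensor whose value on $\left( X,Y,Y,X\right) $ is $3\left\vert A_{X}Y\right\vert ^{2}$, and this pins it down as an explicit quadratic expression in $A$. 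A short computation --- expanding and using the skew--adjointness $\left\langle A_{U}a,b\right\rangle =-\left\langle a,A_{U}b\right\rangle $ together with the skew--symmetry $A_{U}V=-A_{V}U$ of $A$ on horizontal vectors --- shows that this expression is exactly $\left\langle R^{A}\left( X,Y\right) Z,W\right\rangle $, with $R^{A}$ as in the statement; i.e., for horizontal $X,Y,Z,W$,
\begin{equation*}
\pi ^{\ast }R_{B}\left( X,Y,Z,W\right) =R_{E}\left( X,Y,Z,W\right) +\left\langle R^{A}\left( X,Y\right) Z,W\right\rangle .
\end{equation*}
This is the observation that makes the operator $R^{A}$ in the statement the ``right'' object: it is the $A$--correction in O'Neill's horizontal curvature formula, written in curvature--operator form.

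Next I dispose of the vertical directions and contract. If $v$ is vertical then $d\pi \left( v\right) =0$, so $\pi ^{\ast }\left( Ric_{B}\right) \left( v,\cdot \right) =0$, which matches the stated convention $Ric^{\mathrm{Horiz}}\left( v,\cdot \right) =Ric^{A}\left( v,\cdot \right) =0$; hence it suffices to treat horizontal arguments $x,y$. Choosing an orthonormal frame $\left\{ e_{1},\ldots ,e_{n}\right\} $ of $TE$ whose first $k$ members are horizontal lifts of an orthonormal frame of $B$ at the image point and whose remaining members are vertical, one has $\pi ^{\ast }\left( Ric_{B}\right) \left( x,y\right) =\sum_{j\leq k}\pi ^{\ast }R_{B}\left( e_{j},x,y,e_{j}\right) $. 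Feeding in the displayed identity, the $R_{E}$--part is $\sum_{j\leq k}R_{E}\left( e_{j},x,y,e_{j}\right) $, which equals $Ric^{\mathrm{Horiz}}\left( x,y\right) $ because the vertical frame vectors contribute $0$ to the defining trace (each pairs to zero against the horizontal vector $\left\{ R_{E}\left( e_{j},x\right) y\right\} ^{\mathrm{Horiz}}$); the leftover is $\sum_{j\leq k}\left\langle R^{A}\left( e_{j},x\right) y,e_{j}\right\rangle $, a trace of the $A$--correction.

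The remaining step is to unwind this trace of the $A$--correction into the form $3\,Ric^{A}\left( x,y\right) $ appearing in the statement. Carrying this out means expanding $\sum_{j}\left\langle R^{A}\left( e_{j},x\right) y,e_{j}\right\rangle $ and simplifying by $A_{e_{j}}e_{j}=0$, the skew--adjointness of each $A_{U}$, the identity $A_{U}=0$ for $U$ vertical, and the fact that $A$ interchanges the horizontal and vertical distributions; most of the $A$--bilinear terms then cancel or coincide and the sum reduces to a fixed multiple of $\sum _{j}\left\langle A_{x}e_{j},A_{y}e_{j}\right\rangle $, and the same reductions applied to $\mathrm{Trace}\left( z\mapsto R^{A}\left( z,x\right) y\right) $ reconcile it with the normalization in the statement. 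I expect precisely this bookkeeping --- keeping track of which $A$--bilinears survive the contraction and of the numerical coefficient --- to be the only genuine obstacle; the polarization of the Horizontal Curvature Equation, its contraction, and the vanishing on vertical vectors are all routine.
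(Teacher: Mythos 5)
Your overall route coincides with the paper's: Proposition \ref{Gray-O'Neill} is stated there with only the one-line justification that it follows by tracing the Horizontal Curvature Equation, and your plan is exactly that (with a polarization detour to pass from the sectional statement to the $\left( 0,4\right) $ identity; note that this detour tacitly requires checking that your quadratic-in-$A$ candidate has the algebraic curvature symmetries, which you could avoid by simply quoting O'Neill's full horizontal curvature equation). Your displayed identity is correct: expanding with skew-adjointness of each $A_{U}$ and $A_{Z}X=-A_{X}Z$ gives, for horizontal $z,x,y,w$,
\begin{equation*}
\pi ^{\ast }R_{B}\left( z,x,y,w\right) -R_{E}\left( z,x,y,w\right) =g\left(
A_{x}y,A_{z}w\right) +g\left( A_{y}z,A_{x}w\right) -2g\left(
A_{z}x,A_{y}w\right) =g\left( R^{A}\left( z,x\right) y,w\right) ,
\end{equation*}
i.e.\ $\pi ^{\ast }R_{B}\left( z,x\right) y=\left\{ R_{E}\left( z,x\right)
y\right\} ^{\mathrm{Horiz}}+R^{A}\left( z,x\right) y$ in operator form.

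The gap is the final step, which you defer as ``bookkeeping'' and assert will reconcile with the normalization $3\,Ric^{A}$ in the statement. It will not: tracing your own identity over a horizontal orthonormal frame $\left\{ e_{j}\right\} $ yields
\begin{equation*}
\pi ^{\ast }\left( Ric_{B}\right) \left( x,y\right) =Ric^{\mathrm{Horiz}
}\left( x,y\right) +\sum_{j}g\left( R^{A}\left( e_{j},x\right)
y,e_{j}\right) =Ric^{\mathrm{Horiz}}\left( x,y\right) +Ric^{A}\left(
x,y\right) ,
\end{equation*}
because the middle sum \emph{is} $Ric^{A}\left( x,y\right) $ by definition; there is no further factor of $3$ to extract. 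The $3$ is already inside $Ric^{A}$: as the Remark following the proposition shows, $Ric^{A}\left( x,x\right) =3\sum_{j}g\left( A_{x}e_{j},A_{x}e_{j}\right) $, and tracing the sectional equation $\mathrm{sec}_{B}=\mathrm{sec}_{E}+3\left\vert A_{x}e_{j}\right\vert ^{2}$ likewise produces $3\sum_{j}\left\vert A_{x}e_{j}\right\vert ^{2}$, i.e.\ exactly one copy of $Ric^{A}$. So with the definitions as printed, the identity holds with coefficient $1$, and an honest completion of your computation would have exposed this: you should either prove $\pi ^{\ast }\left( Ric_{B}\right) =Ric^{\mathrm{Horiz}}+Ric^{A}$ and flag the coefficient in the displayed formula, or renormalize $R^{A}$ so that its horizontal trace is $\sum_{j}g\left( A_{x}e_{j},A_{y}e_{j}\right) $. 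Nothing downstream is affected either way, since the paper only uses $Ric^{A}\geq 0$ and hence $\pi ^{\ast }\left( Ric_{B}\right) \geq Ric^{\mathrm{Horiz}}$; but since this normalization is the only non-routine point of the whole proof, deferring it means your proposal does not yet establish the stated equation.
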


\begin{remark}
Let $\left\{ e_{i}\right\} _{i=2}^{\mathrm{\dim B}}$ be an extension of $x$
to an orthonormal basis for the horizontal distribution. Then we have 
\begin{eqnarray*}
Ric^{A}\left( x,x\right) &=&\Sigma _{i=2}^{\mathrm{\dim B}}g\left(
R^{A}\left( e_{i},x\right) x,e_{i}\right) \\
&=&\Sigma _{i=2}^{\mathrm{\dim B}}\left( 2g\left(
A_{x}A_{e_{i}}x,e_{i}\right) -g\left( A_{x}A_{x}e_{i},e_{i}\right) \right) \\
&=&\Sigma _{i=2}^{\mathrm{\dim B}}3g\left( A_{x}e_{i},A_{x}e_{i}\right) \\
&\geq &0.
\end{eqnarray*}%
Combined with Equation \ref{Ricc O'neill}, this yields $\pi ^{\ast }\left(
Ric_{B}\right) \geq Ric^{\mathrm{Horiz}}.$ In contrast, the inequality $\pi
^{\ast }\left( Ric_{B}\right) \geq Ric_{\left( E,g\right) }$ does not hold
for all Riemannian submersions \cite{ProWilh2}.
\end{remark}

\begin{proof}[Proof of Theorem \protect\ref{Ric_omega >0}]
Recall that $\Omega _{1}=\cup \Omega _{1}^{i}$ where $\Omega _{1}^{i}$ is as
in Proposition \ref{Omega_i Dfn}, and for each $\Omega _{1}^{i}$ we have a
splitting 
\begin{equation*}
T\left( \Omega _{1}^{i}\right) =\mathcal{H}^{i}\oplus \mathcal{V}^{i}\oplus 
\mathrm{span}\left\{ X^{i}\right\} ,
\end{equation*}%
as in \ref{HVX splitting}. For simplicity, throughout this proof, we will
write $X$ for any of the $X^{i}$s.

Let $W\in T\Omega _{1}$ be any vector with $\left\vert Ch_{\lambda }\left(
W\right) \right\vert _{\tilde{g}_{\lambda }}=1.$ Since $X\in TG\left(
x\right) ^{\perp }$, we have $\tilde{g}\left( X,V\right) =0$ if and only if $%
\tilde{g}_{\lambda }\left( Ch_{\lambda }\left( X\right) ,Ch_{\lambda }\left(
V\right) \right) =0.$ So we may write 
\begin{equation*}
Ch_{\lambda }\left( W\right) =Ch_{\lambda }\left( X\right) \cos \sigma
+Ch_{\lambda }\left( V\right) \sin \sigma
\end{equation*}%
with $X\perp V$ and $\left\vert Ch_{\lambda }\left( V\right) \right\vert =1.$

Choose $\left\{ E_{i}\right\} _{i=2}^{n}\subset T\Omega _{1}$ so that $%
\left\{ Ch_{\lambda }\left( W\right) ,Ch_{\lambda }\left( E_{2}\right)
,\left\{ Ch_{\lambda }\left( E_{i}\right) \right\} _{i=3}^{n}\right\} $ is
an orthonormal basis with $E_{2}\in \mathrm{span}\left\{ X,V\right\} .$ By
the Horizontal Curvature Equation and Theorem \ref{uber conf}%
\begin{eqnarray}
\mathrm{sec}_{\tilde{g}_{\lambda }}\left( Ch_{\lambda }\left( W\right)
,Ch_{\lambda }\left( E_{2}\right) \right) &=&\mathrm{sec}_{\tilde{g}%
_{\lambda }}\left( Ch_{\lambda }\left( X\right) ,Ch_{\lambda }\left(
V\right) \right)  \notag \\
&\geq &\mathrm{sec}_{\lambda ^{2}g_{\mathrm{bi}}+\tilde{g}}\left( \hat{X},%
\hat{V}\right)  \notag \\
&=&\mathrm{curv}_{\lambda ^{2}g_{\mathrm{bi}}+\tilde{g}}\left( \left(
0,X\right) ,\left( \frac{\kappa _{V}}{\lambda ^{2}},V\right) \right)  \notag
\\
&=&\mathrm{curv}_{\tilde{g}}\left( X,V\right)  \notag \\
&\geq &K\left\vert V\right\vert _{\tilde{g}}^{2}.  \label{sec(W,E_2)}
\end{eqnarray}

For $i\geq 3$ we have%
\begin{eqnarray}
\mathrm{sec}_{\tilde{g}_{\lambda }}\left( Ch_{\lambda }\left( W\right)
,Ch_{\lambda }\left( E_{i}\right) \right) &\geq &\mathrm{curv}_{\lambda
^{2}g_{\mathrm{bi}}+\tilde{g}}\left( \hat{W},\hat{E}_{i}\right)  \notag \\
&\geq &\mathrm{curv}_{\tilde{g}}\left( W,E_{i}\right)  \label{curv(W, E_i)}
\\
&=&\mathrm{curv}_{\tilde{g}}\left( X\cos \sigma +V\sin \sigma ,E_{i}\right) 
\notag \\
&\geq &\cos ^{2}\sigma K\left\vert E_{i}\right\vert ^{2}+2\sin \sigma \cos
\sigma R^{\tilde{g}}\left( X,E_{i},E_{i},V\right)  \notag \\
&&-2\sin ^{2}\sigma \left\vert \mathrm{minsec}_{g}\right\vert \left\vert
V\right\vert ^{2}\left\vert E_{i}\right\vert ^{2},  \notag
\end{eqnarray}%
where we applied Part 5 of Theorem \ref{uber conf} to replace $-\left\vert 
\mathrm{minsec}_{\tilde{g}}\right\vert $ by $-2\left\vert \mathrm{minsec}%
_{g}\right\vert .$

By the antisymmetry of $R^{\tilde{g}},$ $R^{\tilde{g}}\left(
X,E_{i},E_{i},V\right) =R^{\tilde{g}}\left( X,E_{i},E_{i},V^{\perp
,E_{i}}\right) ,$ where $V^{\perp ,E_{i}}$ is the component of $V$ that is
perpendicular to $E_{i}.$ For $i\geq 3,$ $X\perp E_{i},$ so $X\perp V^{\perp
,E_{i}}.$ Combining this with Lemma \ref{Hessian Est} and Equation \ref%
{confffcurv} we conclude%
\begin{eqnarray*}
-\left\vert R^{\tilde{g}}\left( X,E_{i},E_{i},V\right) \right\vert
&=&-\left\vert R^{\tilde{g}}\left( X,E_{i},E_{i},V^{\perp ,E_{i}}\right)
\right\vert \\
&\geq &-\left\vert R^{g}\left( X,E_{i},E_{i},V^{\perp ,E_{i}}\right)
\right\vert -\left\vert E_{i}\right\vert _{g}^{2}\left\vert V\right\vert _{g}
\\
&\geq &-(\left\vert R^{g}\right\vert +1)\left\vert E_{i}\right\vert
_{g}^{2}\left\vert V\right\vert _{g}.
\end{eqnarray*}%
Thus%
\begin{eqnarray*}
\mathrm{sec}_{\tilde{g}_{\lambda }}\left( Ch_{\lambda }\left( W\right)
,Ch_{\lambda }\left( E_{i}\right) \right) &\geq &\cos ^{2}\sigma K\left\vert
E_{i}\right\vert ^{2}-2\sin \sigma \cos \sigma \left( \left\vert
R^{g}\right\vert +1\right) \left\vert E_{i}\right\vert _{g}^{2}\left\vert
V\right\vert _{g} \\
&&-2\sin ^{2}\sigma \left\vert \mathrm{minsec}_{g}\right\vert \left\vert
V\right\vert _{g}^{2}\left\vert E_{i}\right\vert _{g}^{2}.
\end{eqnarray*}%
Combining this with Equation \ref{sec(W,E_2)} we have 
\begin{eqnarray*}
Ric\left( W,W\right) &\geq &K\left\vert V\right\vert
_{g}^{2}+\sum_{i=3}^{n}\left( \cos ^{2}\sigma K-2\sin \sigma \cos \sigma
\left( \left\vert R^{g}\right\vert +1\right) \left\vert V\right\vert
_{g}\right) \left\vert E_{i}\right\vert _{g}^{2} \\
&&-\sum_{i=3}^{n}\left( 2\sin ^{2}\sigma \left\vert \mathrm{minsec}%
_{g}\right\vert \left\vert V\right\vert _{g}^{2}\right) \left\vert
E_{i}\right\vert _{g}^{2}.
\end{eqnarray*}

Since $\left\vert E_{i}\right\vert _{g}\leq 1$ and we can choose $K$ to be
as large as we please compared to $\left\vert \mathrm{minsec}_{g}\right\vert 
$, we have 
\begin{eqnarray}
Ric\left( W,W\right) &\geq &\frac{K}{2}\left\vert V\right\vert
_{g}^{2}+\sum_{i=3}^{n}\left( \cos ^{2}\sigma K-2\sin \sigma \cos \sigma
\left( \left\vert R^{g}\right\vert +1\right) \left\vert V\right\vert
_{g}\right) \left\vert E_{i}\right\vert _{g}^{2}  \notag \\
&\geq &\sum_{i=3}^{n}\left( \cos ^{2}\sigma K-2\cos \sigma \left( \left\vert
R^{g}\right\vert +1\right) \left\vert V\right\vert _{g}+\frac{K}{2\left(
n-2\right) }\left\vert V\right\vert _{g}^{2}\right) \left\vert
E_{i}\right\vert _{g}^{2}  \notag \\
&=&\sum_{i=3}^{n}\left( \left( \cos \sigma -\left( \left\vert
R^{g}\right\vert +1\right) \left\vert V\right\vert _{g}\right) ^{2}-\cos
^{2}\sigma -\left( \left\vert R^{g}\right\vert +1\right) ^{2}\left\vert
V\right\vert _{g}^{2}\right) \left\vert E_{i}\right\vert _{g}^{2}  \notag \\
&&+\sum_{i=3}^{n}\left( \cos ^{2}\sigma K+\frac{K}{2\left( n-2\right) }%
\left\vert V\right\vert _{g}^{2}\right) \left\vert E_{i}\right\vert _{g}^{2}
\notag \\
&\geq &\sum_{i=3}^{n}\left( \cos ^{2}\sigma \frac{K}{2}+\frac{K}{3\left(
n-2\right) }\left\vert V\right\vert _{g}^{2}\right) \left\vert
E_{i}\right\vert _{g}^{2},  \label{ricci lower 2}
\end{eqnarray}%
since $K$ can be arbitrarily large. This gives us a positive lower bound for
the Ricci curvature on the regular part of $\Omega _{1}.$ Since the regular
part of $\Omega _{1}$ is not compact we also need to see that this bound is
uniformly positive. For this we analyze the norms $\left\vert V\right\vert
_{g}$ and $\left\vert E_{i}\right\vert _{g}.$

Writing $Y$ for either $\frac{V}{\left\vert V\right\vert _{\tilde{g}}}$ or $%
\frac{E_{i}}{\left\vert E_{i}\right\vert _{\tilde{g}}},$ we have%
\begin{equation*}
\left\vert \hat{Y}\right\vert _{\lambda ^{2}g_{\mathrm{bi}}+\tilde{g}}^{2}=%
\frac{\left\vert \kappa _{Y}\right\vert _{g_{\mathrm{bi}}}^{2}}{\lambda ^{2}}%
+\left\vert Y\right\vert _{\tilde{g}}^{2}
\end{equation*}%
Since $\left\vert Y\right\vert _{\tilde{g}}=1,$ by Part 1 of Proposition \ref%
{bound on kappa} we have%
\begin{equation*}
\left\vert \hat{Y}\right\vert _{\lambda ^{2}g_{\mathrm{bi}}+\tilde{g}%
}^{2}\leq \frac{C}{\lambda ^{2}}+1
\end{equation*}%
for some $C>0.$ So when $Y=\frac{E_{i}}{\left\vert E_{i}\right\vert _{\tilde{%
g}}}$ we have 
\begin{eqnarray*}
\frac{\left\vert \hat{E}_{i}\right\vert _{\lambda ^{2}g_{\mathrm{bi}}+\tilde{%
g}}^{2}}{\left\vert E_{i}\right\vert _{\tilde{g}}^{2}} &=&\left\vert \hat{Y}%
\right\vert _{\lambda ^{2}g_{\mathrm{bi}}+\tilde{g}}^{2} \\
&\leq &\frac{C}{\lambda ^{2}}+1.
\end{eqnarray*}%
Since $\left\vert \hat{E}_{i}\right\vert _{\lambda ^{2}g_{\mathrm{bi}}+%
\tilde{g}}^{2}=1,$ we conclude 
\begin{equation*}
\frac{\lambda ^{2}}{C+\lambda ^{2}}=\frac{1}{\frac{C}{\lambda ^{2}}+1}\leq
\left\vert E_{i}\right\vert _{g}^{2}.
\end{equation*}%
The same argument gives us 
\begin{equation*}
\frac{\lambda ^{2}}{C+\lambda ^{2}}\leq \left\vert V\right\vert _{g}^{2}.
\end{equation*}

Combining the previous two displays with Inequality \ref{ricci lower 2}
gives a uniform positive lower bound for $Ric_{\Omega _{1}}.$
\end{proof}

\begin{remark}
The positive lower bound on $Ric_{\tilde{g}_{\lambda }}|_{\Omega _{1}}$
above is far from optimal. In fact, the lower bound on $\left\vert
E_{i}\right\vert _{g}^{2}$ is minimal on the vectors $E_{i}$ for which $%
\left\vert \kappa _{E_{i}}\right\vert _{g_{\mathrm{bi}}}^{2}$ is maximal,
and one of the non-negative terms that we dropped in Inequality \ref{curv(W,
E_i)} is very large for these vectors, if $\lambda $ is small, cf.
Proposition \ref{vertical Ricci}.
\end{remark}

We will make use of Theorem 1 of \cite{Ber}, which is a consequence of the
proof of Proposition 3.4 of \cite{N}.

\begin{proposition}
\label{Berestovski}(Theorem 1 of \cite{Ber}) Let $M=G/H$ be an effective
homogeneous space with $G$ a connected Lie group and $H$ a compact subgroup.
Let $\mathfrak{h}$ be the Lie Algebra of $H$ and $\mathfrak{m}$ the
orthogonal complement of $\mathfrak{h}$ with respect to $g_{\mathrm{bi}}.$
Let 
\begin{equation*}
C\left( \mathfrak{m}\right) \equiv \left\{ \left. v\in \mathfrak{m}%
\right\vert \left[ v,w\right] =0\text{ for all }w\in \mathfrak{m}\right\} .
\end{equation*}%
If $\pi _{1}\left( M\right) $ is finite, then $C\left( \mathfrak{m}\right)
=0.$
\end{proposition}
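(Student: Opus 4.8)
The plan is to establish the contrapositive: if $C(\mathfrak{m})\neq 0$ then $\pi_1(M)$ is infinite. Throughout I take $G$ to be compact, which is the case we need, so that $M=G/H$ is a closed manifold. The first step is to observe that every $v\in C(\mathfrak{m})$ in fact lies in the center $\mathfrak{z}(\mathfrak{g})$. Indeed $[v,\mathfrak{m}]=0$ by definition, so it suffices to show $[v,\mathfrak{h}]=0$. Since $g_{\mathrm{bi}}$ is $\mathrm{Ad}$-invariant, the splitting $\mathfrak{g}=\mathfrak{h}\oplus\mathfrak{m}$ is reductive, so $[v,h]\in\mathfrak{m}$ for $h\in\mathfrak{h}$; and for every $w\in\mathfrak{m}$ we have $g_{\mathrm{bi}}([v,h],w)=-g_{\mathrm{bi}}(h,[v,w])=0$. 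Thus $[v,h]$ is a vector of $\mathfrak{m}$ orthogonal to all of $\mathfrak{m}$, hence zero, and $v\in\mathfrak{z}(\mathfrak{g})$.

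Next, fix $0\neq v\in C(\mathfrak{m})$ and equip $M$ with the normal homogeneous metric, so that $\pi\colon(G,g_{\mathrm{bi}})\to M$ is a Riemannian submersion; the choice of metric on $M$ is harmless, since both $C(\mathfrak{m})$ and $\pi_1(M)$ are independent of it. Let $V$ be the Killing field on $M$ generated by $v$. Because $v$ is central, the left-invariant field $v^{L}$ on $G$ equals the right-invariant field $v^{R}$, which is $\pi$-related to $V$; and since $v\in\mathfrak{m}=\mathfrak{h}^{\perp}$ while the fibres of $\pi$ are the left $H$-cosets, $v^{L}$ is everywhere horizontal, i.e. it is the basic lift of $V$. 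For a bi-invariant metric $\nabla^{G}_{X}v^{L}=\tfrac12[X,v]=0$ for all left-invariant $X$, so $v^{L}$ is parallel on $(G,g_{\mathrm{bi}})$; feeding this into O'Neill's formula $\nabla^{G}_{\widetilde Y}\widetilde X=\widetilde{\nabla^{M}_{Y}X}+A_{\widetilde Y}\widetilde X$ (the $A$-term being vertical) forces $\nabla^{M}_{Y}V=0$ for all $Y$. Alternatively, $V$ is $G$-invariant since $\mathrm{Ad}(g)v=v$, and the standard Koszul formula for Killing fields on a normal homogeneous space shows $\nabla V$ vanishes at $eH$, hence everywhere by homogeneity. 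Either way, $V$ is a nowhere-zero parallel vector field on $M$, of constant length $|v|_{g_{\mathrm{bi}}}>0$.

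To conclude, the dual $1$-form $V^{\flat}$ is parallel, hence closed, and it is not exact: were $V^{\flat}=df$, then $f$ would attain a maximum on the compact manifold $M$, where $V=0$ — a contradiction. Hence $[V^{\flat}]\neq 0$ in $H^{1}(M;\mathbb{R})$, so $b_{1}(M)\geq 1$ and $\pi_1(M)$ is infinite, contrary to hypothesis. The only genuinely delicate point I foresee is the reduction to compact $G$: for a general $G$ merely carrying a bi-invariant metric, one should first replace $G$ by the closure of its image in the isometry group of the normal metric on $M$ (or handle the flat central directions separately), after which the argument above runs verbatim; the geometric core — central element $\Rightarrow$ parallel Killing field $\Rightarrow$ $b_{1}(M)\geq 1$ — is short and stable.
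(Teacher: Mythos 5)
The paper does not actually prove this proposition: it is imported verbatim as Theorem 1 of \cite{Ber}, with the remark that it follows from the proof of Proposition 3.4 of \cite{N}. So there is no internal argument to compare with; what you have supplied is a self-contained proof, and for compact $G$ it is correct. Your reductive-complement computation ($[v,h]\in\mathfrak{m}$ by $\mathrm{Ad}$-invariance, while $g_{\mathrm{bi}}\left( [v,h],w\right) =-g_{\mathrm{bi}}\left( h,[v,w]\right) =0$ for all $w\in\mathfrak{m}$) correctly gives $C\left( \mathfrak{m}\right) \subset \mathfrak{z}\left( \mathfrak{g}\right) $; connectedness of $G$ gives $\mathrm{Ad}\left( g\right) v=v$, so $v^{L}=v^{R}$ is the basic lift of the induced Killing field $V$ for the submersion $\left( G,g_{\mathrm{bi}}\right) \rightarrow M$, it is parallel since $\nabla _{X}v^{L}=\frac{1}{2}\left[ X,v\right] =0$, and the O'Neill relation pushes parallelism down, so $V$ is a nowhere-zero parallel field; on the compact $M$ the closed, non-exact form $V^{\flat }$ forces $b_{1}\left( M\right) \geq 1$, contradicting finiteness of $\pi _{1}\left( M\right) $. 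The one point to retract is your closing claim that the argument extends "verbatim" to general $G$ carrying a bi-invariant metric: no repair can work there, because the statement as literally quoted fails without compactness. Take $G=\mathbb{R}^{n}$, $H=\left\{ e\right\} $: the action is effective, $H$ is compact, $\pi _{1}\left( M\right) $ is trivial, yet $C\left( \mathfrak{m}\right) =\mathbb{R}^{n}\neq 0$. Compactness of $M$ (here supplied by compactness of $G$) is essential both to your maximum-principle step and to the truth of the statement; since the paper only invokes the proposition for principal orbits $G/H$ of a compact group, your restriction costs nothing for the application.
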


Before proving Theorem \ref{generic cheeger} we establish three preliminary
results.

\begin{proposition}
\label{vertical Ricci}Let $g$ be any $G$--invariant metric on $M,$ and let $%
\mathcal{K}$ be any compact subset of $M^{\text{reg}}.$ Given any $C>0$,
there is an $l\left( C\right) >0$ so that for all $V\in TG\left( x\right) $
with $x\in \mathcal{K}$ 
\begin{equation*}
Ric_{\lambda ^{2}g_{\mathrm{bi}}+g}^{\mathrm{Horiz}}\left( \hat{V},\hat{V}%
\right) >C\left\vert \hat{V}\right\vert _{\lambda ^{2}g_{\mathrm{bi}%
}+g}^{2}>0
\end{equation*}%
for all $\lambda \in \left( 0,l\left( C\right) \right) .$
\end{proposition}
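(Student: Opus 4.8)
The plan is to estimate $Ric^{\mathrm{Horiz}}_{\lambda^2 g_{\mathrm{bi}}+g}(\hat V,\hat V)$ from below by isolating the contribution coming from the $G$-factor, which blows up like $\lambda^{-2}$, and to bound the cross terms and the pure $M$-terms by constants independent of $\lambda$. First I would recall that for $V\in T_xG(x)$ with $x\in M^{\mathrm{reg}}$ the horizontal lift is $\hat V=(\kappa_V/\lambda^2,V)$ with $\kappa_V\in\mathfrak{m}_x$, and by Parts 1 and 2 of Proposition~\ref{bound on kappa} there are constants $C_1\geq|\kappa_V|_{g_{\mathrm{bi}}}\geq C_2>0$ uniformly over unit $V$ tangent to orbits through $\mathcal K$. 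Consequently $|\hat V|^2_{\lambda^2 g_{\mathrm{bi}}+g}=|\kappa_V|^2_{g_{\mathrm{bi}}}/\lambda^2+|V|^2_g$ is comparable, up to uniform constants, to $\lambda^{-2}$ (for unit $V$). So it suffices to show $Ric^{\mathrm{Horiz}}(\hat V,\hat V)\geq c\,\lambda^{-2}$ for some uniform $c>0$ and all small $\lambda$; the factor $C$ is then absorbed by shrinking $\lambda$.

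Next I would trace through the curvature of $(G\times M,\lambda^2 g_{\mathrm{bi}}+g)$, which splits as a product, so $R_{\lambda^2 g_{\mathrm{bi}}+g}=R_{\lambda^2 g_{\mathrm{bi}}}\oplus R_g$. Extending $\hat V$ to an orthonormal horizontal frame and taking the trace in the definition of $Ric^{\mathrm{Horiz}}$, the $R_g$-part contributes a term bounded in absolute value by (const)$\cdot|V|_g^2\cdot(\dim M)$, hence $O(1)$ for unit $V$. The $R_{\lambda^2 g_{\mathrm{bi}}}$-part contributes $\lambda^{-2}$ times the Ricci curvature, with respect to $g_{\mathrm{bi}}$, of the bi-invariant metric on $G$ evaluated in the direction $\kappa_V/|\kappa_V|$, summed against the orbit directions in the horizontal distribution. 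This is where the hypothesis on $\pi_1$ of the principal orbit enters: by Proposition~\ref{Berestovski} (Theorem 1 of \cite{Ber}), since $\pi_1(G/G_x)$ is finite, the center $C(\mathfrak m_x)$ is trivial, so no nonzero $\kappa_V\in\mathfrak m_x$ commutes with all of $\mathfrak m_x$; equivalently the bi-invariant Ricci form is strictly positive on $\mathfrak m_x$ in the relevant directions. By compactness of $\mathcal K$ and of the unit sphere bundle of the orbit directions, this positivity is uniform: there is $c_0>0$ with the bi-invariant Ricci contribution $\geq c_0|\kappa_V|^2_{g_{\mathrm{bi}}}\geq c_0 C_2^2>0$ for all unit $V$. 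Multiplying by $\lambda^{-2}$ gives the desired lower bound.

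I would then assemble the estimate: $Ric^{\mathrm{Horiz}}_{\lambda^2 g_{\mathrm{bi}}+g}(\hat V,\hat V)\geq c_0 C_2^2\lambda^{-2}-O(1)$ for unit $V$, while $|\hat V|^2_{\lambda^2 g_{\mathrm{bi}}+g}\leq C_1^2\lambda^{-2}+1$; so for $\lambda$ small the ratio exceeds any prescribed $C$, and the general (non-unit) case follows by homogeneity in $V$. The main obstacle is the step where the finite-$\pi_1$ hypothesis is converted into a \emph{uniform} positive lower bound for the bi-invariant Ricci form across the compact family of isotropy subalgebras $\{\mathfrak m_x\}_{x\in\mathcal K}$: one must check that the isotropy type is locally constant on $M^{\mathrm{reg}}$ (it is, since $\mathcal K\subset M^{\mathrm{reg}}$ and principal isotropy groups are all conjugate), and then argue that $\kappa_V\mapsto Ric_{g_{\mathrm{bi}}}(\kappa_V,\kappa_V)$ restricted to the unit sphere of $\mathfrak m_x$ attains a positive minimum varying continuously, hence uniformly bounded below, with $x$. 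Once that uniformity is in hand, everything else is a routine product-curvature and norm computation.
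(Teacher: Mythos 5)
Your overall route is the same as the paper's: split the horizontal trace at $\hat V$ into orbit directions and orbit--perpendicular directions, use the product structure of $\lambda ^{2}g_{\mathrm{bi}}+g$ so that the perpendicular part only contributes $g$--curvatures bounded independently of $\lambda ,$ and convert the finite $\pi _{1}$ hypothesis, via Proposition \ref{Berestovski} and compactness of $\mathcal{K}$, into a uniformly positive bi-invariant curvature term. However, your power counting in $\lambda $ is off, and the final inference fails as written. Since $\hat{V}=\left( \kappa _{V}/\lambda ^{2},V\right) $ with $\left\vert \kappa _{V}\right\vert _{g_{\mathrm{bi}}}$ bounded above and below on $\mathcal{K}$ (Proposition \ref{bound on kappa}), you correctly get $\left\vert \hat{V}\right\vert _{\lambda ^{2}g_{\mathrm{bi}}+g}^{2}\sim \lambda ^{-2};$ but then a bound of the form $Ric^{\mathrm{Horiz}}\left( \hat{V},\hat{V}\right) \geq c\lambda ^{-2}-O\left( 1\right) $ with a \emph{fixed} $c$ only yields $Ric^{\mathrm{Horiz}}\left( \hat{V},\hat{V}\right) /\left\vert \hat{V}\right\vert ^{2}\geq c/C_{1}^{2}+o\left( 1\right) ,$ a bounded quantity. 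It does not exceed an arbitrary prescribed $C$ for small $\lambda ,$ so the step ``the factor $C$ is then absorbed by shrinking $\lambda $'' does not follow from the estimates you state.

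The missing point is that the bi-invariant contribution is of order $\lambda ^{-4},$ not $\lambda ^{-2}.$ For an orbit direction $W_{i}$ with lift $\hat{W}_{i}=\left( \kappa _{W_{i}}/\lambda ^{2},W_{i}\right) $ one has $\mathrm{curv}_{\lambda ^{2}g_{\mathrm{bi}}+g}\left( \hat{V},\hat{W}_{i}\right) =\lambda ^{-6}\mathrm{curv}_{g_{\mathrm{bi}}}\left( \kappa _{V},\kappa _{W_{i}}\right) +\mathrm{curv}_{g}\left( V,W_{i}\right) ,$ and the trace divides this by $\left\vert \hat{W}_{i}\right\vert ^{2}\leq \mathrm{const}\cdot \lambda ^{-2},$ leaving a term of size $\lambda ^{-4}\mathrm{curv}_{g_{\mathrm{bi}}}\left( \kappa _{V},\kappa _{W_{i}}\right) .$ Since $\left\{ \kappa _{V},\kappa _{W_{1}},\ldots ,\kappa _{W_{p}}\right\} $ spans $\mathfrak{m}_{x}$ and $C\left( \mathfrak{m}_{x}\right) =0,$ at least one of these curvatures is positive, uniformly over $\mathcal{K}$ by compactness. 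Hence $Ric^{\mathrm{Horiz}}\left( \hat{V},\hat{V}\right) \geq c\lambda ^{-4}-O\left( 1\right) ,$ and dividing by $\left\vert \hat{V}\right\vert ^{2}\leq \mathrm{const}\cdot \lambda ^{-2}$ shows that the normalized quantity blows up like $\lambda ^{-2},$ which is exactly what is needed to beat every $C.$ This is precisely the accounting in the paper's proof; once you correct the exponent, your argument coincides with it.
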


\begin{proof}
Let $\left\{ \hat{V},\hat{W}_{1},\ldots ,\hat{W}_{p},\hat{Y}_{1},\ldots ,%
\hat{Y}_{m}\right\} $ be a $\left( \lambda ^{2}g_{\mathrm{bi}}+g\right) $%
--orthogonal basis for the horizontal space of our Riemannian submersion $%
q_{G\times M}:\left( G\times M,\lambda ^{2}g_{\mathrm{bi}}+g\right)
\longrightarrow M$ with $W_{1},\ldots ,W_{p}\in TG\left( x\right) $ and $%
Y_{1},\ldots ,Y_{m}\in TG\left( x\right) ^{\perp },$ $\left\vert
V\right\vert _{g}=\left\vert W_{i}\right\vert _{g}=\left\vert
Y_{i}\right\vert _{g}=1.$ Since each $Y_{i}$ is in $TG\left( x\right)
^{\perp },$ $\hat{Y}_{i}=\left( 0,Y_{i}\right) .$ Therefore%
\begin{eqnarray}
\sum\limits_{i=1}^{m}\mathrm{curv}_{\lambda ^{2}g_{\mathrm{bi}}+g}\left( 
\hat{V},\hat{Y}_{i}\right) &=&\sum\limits_{i=1}^{m}\mathrm{curv}_{g}\left(
V,Y_{i}\right)  \notag \\
&=&\sum\limits_{i=1}^{m}\mathrm{sec}_{g}\left( V,Y_{i}\right) .
\label{vertizontal}
\end{eqnarray}%
Whereas%
\begin{eqnarray}
\sum\limits_{i=1}^{p}\mathrm{curv}_{\lambda ^{2}g_{\mathrm{bi}}+g}\left( 
\hat{V},\hat{W}_{i}\right) & = \sum\limits_{i=1}^{p}\left[ \mathrm{curv}%
_{\lambda ^{2}g_{\mathrm{bi}}}\left( \frac{\kappa _{V}}{\lambda ^{2}},\frac{%
\kappa _{W_{i}}}{\lambda ^{2}}\right) +\mathrm{curv}_{g}\left(
V,W_{i}\right) \right]  \notag \\
&= \sum\limits_{i=1}^{p}\left[ \frac{1}{\lambda ^{6}}\mathrm{curv}_{g_{%
\mathrm{bi}}}\left( \kappa _{V},\kappa _{W_{i}}\right) +\mathrm{sec}%
_{g}\left( V,W_{i}\right) \right] .  \label{vertical}
\end{eqnarray}

Combining our hypothesis that $\left\vert \pi _{1}\left( \mathrm{princ.}%
\text{ \textrm{orbit}}\right) \right\vert <\infty $ with Proposition \ref%
{Berestovski}, and the fact that for all $i,$ $\kappa _{V},\kappa
_{W_{i}}\in \mathfrak{m}_{x}$, we conclude that for at least one $i,$ $%
\mathrm{curv}_{g_{\mathrm{bi}}}\left( \kappa _{V},\kappa _{W_{i}}\right) >0.$%
\ Moreover, our normalization $\left\vert V\right\vert _{g}=\left\vert
W_{i}\right\vert _{g}=1,$ gives us a constant $C_{1}>0$ so that throughout $%
\mathcal{K}$%
\begin{equation}
\max_{i}\mathrm{curv}_{g_{\mathrm{bi}}}\left( \kappa _{V},\kappa
_{W_{i}}\right) \geq C_{1}.  \label{Berestovski Ineq}
\end{equation}

By Part 1 of Proposition \ref{bound on kappa}, there is another, constant $%
C_{3}>0$ so that throughout $\mathcal{K}$ for $\lambda $ sufficiently small%
\begin{eqnarray}
\left\vert \hat{V}\right\vert _{\lambda ^{2}g_{\mathrm{bi}}+g}^{2} &=&\frac{%
g_{\mathrm{bi}}\left( \kappa _{V},\kappa _{V}\right) }{\lambda ^{2}}+1 
\notag  \label{norm V hat} \\
&\leq &\frac{C_{3}}{\lambda ^{2}}  \notag
\end{eqnarray}%
and similarly 
\begin{equation*}
\left\vert \hat{W}_{i}\right\vert _{\lambda ^{2}g_{\mathrm{bi}}+g}^{2}\leq 
\frac{C_{3}}{\lambda ^{2}}.
\end{equation*}%
Combining Equation \ref{vertizontal} with $\left\vert \hat{V}\right\vert
_{\lambda ^{2}g_{\mathrm{bi}}+g}^{2}\geq 1$ and $\left\vert \hat{W}%
_{i}\right\vert _{\lambda ^{2}g_{\mathrm{bi}}+g}^{2}\geq 1$ we have 
\begin{eqnarray*}
\sum\limits_{i=1}^{m}\mathrm{sec}_{\lambda ^{2}g_{\mathrm{bi}}+g}\left( \hat{%
V},\hat{Y}_{i}\right) &=&\frac{1}{\left\vert \hat{V}\right\vert _{\lambda
^{2}g_{\mathrm{bi}}+g}^{2}}\sum\limits_{i=1}^{m}\mathrm{curv}_{\lambda
^{2}g_{\mathrm{bi}}+g}\left( \hat{V},\hat{Y}_{i}\right) \\
&=&\frac{1}{\left\vert \hat{V}\right\vert _{\lambda ^{2}g_{\mathrm{bi}%
}+g}^{2}}\left( \sum\limits_{i=1}^{m}\mathrm{sec}_{g}\left( V,Y_{i}\right)
\right) \\
&\geq &-m\left\vert \min \sec _{g}\right\vert .
\end{eqnarray*}%
Equation \ref{vertical} and Inequality \ref{Berestovski Ineq} give 
\begin{eqnarray*}
\sum\limits_{i=1}^{p}\mathrm{sec}_{\lambda ^{2}g_{\mathrm{bi}}+g}\left( \hat{%
V},\hat{W}_{i}\right) &=&\frac{1}{\left\vert \hat{V}\right\vert _{\lambda
^{2}g_{\mathrm{bi}}+g}^{2}}\sum\limits_{i=1}^{p}\frac{\mathrm{curv}_{\lambda
^{2}g_{\mathrm{bi}}+g}\left( \hat{V},\hat{W}_{i}\right) }{\left\vert \hat{W}%
_{i}\right\vert _{\lambda ^{2}g_{\mathrm{bi}}+g}^{2}} \\
&\geq &\frac{1}{\left\vert \hat{V}\right\vert _{\lambda ^{2}g_{\mathrm{bi}%
}+g}^{2}}\left( \frac{1}{\lambda ^{6}}C_{1}\frac{1}{\left\vert \hat{W}%
_{j_{0}}\right\vert _{\lambda ^{2}g_{\mathrm{bi}}+g}^{2}}+\sum%
\limits_{i=1}^{p}\frac{1}{\left\vert \hat{W}_{i}\right\vert _{\lambda ^{2}g_{%
\mathrm{bi}}+g}^{2}}\mathrm{sec}_{g}\left( V,W_{i}\right) \right)
\end{eqnarray*}%
for some $j_{0}.$ So using Inequality \ref{norma of hat} and $\left\vert 
\hat{W}_{i}\right\vert _{\lambda ^{2}g_{\mathrm{bi}}+g}^{2}\geq 1,$ 
\begin{eqnarray*}
\sum\limits_{i=1}^{p}\mathrm{sec}_{\lambda ^{2}g_{\mathrm{bi}}+g}\left( \hat{%
V},\hat{W}_{i}\right) &\geq &\frac{\lambda ^{4}}{C_{3}^{2}}\frac{1}{\lambda
^{6}}C_{1}-\sum\limits_{i=1}^{p}\left\vert \min \sec _{g}\right\vert \\
&\geq &\frac{C_{4}}{\lambda ^{2}}-p\left\vert \min \sec _{g}\right\vert ,
\end{eqnarray*}%
for $C_{4}=\frac{C_{1}}{C_{3}^{2}}.$ Since $m+p=n-1,$ combining gives%
\begin{equation*}
\frac{1}{\left\vert \hat{V}\right\vert _{\lambda ^{2}g_{\mathrm{bi}}+g}^{2}}%
Ric_{\lambda ^{2}g_{\mathrm{bi}}+g}^{\mathrm{Horiz}}\left( \hat{V},\hat{V}%
\right) \geq \frac{C_{4}}{\lambda ^{2}}-\left( n-1\right) \left\vert \min
\sec _{\tilde{g}}\right\vert .
\end{equation*}%
Since $C_{4}$ and $\left\vert \min \sec _{g}\right\vert $ are independent of 
$\lambda ,$ the right hand side becomes arbitrarily large as $\lambda
\rightarrow 0.$
\end{proof}

\begin{proposition}
\label{vertizontal Ricci}Let $g$ be any $G$--invariant metric on $M.$ For
any compact subset $\mathcal{K}\subset M^{\text{reg}},$ all $x\in \mathcal{K}%
,$ any unit vector $Z\in T_{x}G\left( x\right) ^{\perp },$ and any $V\in $ $%
T_{x}G\left( x\right) $ 
\begin{equation*}
\frac{1}{\left\vert \hat{V}\right\vert _{\lambda ^{2}g_{\mathrm{bi}}+g}}%
\left\vert Ric_{\lambda ^{2}g_{\mathrm{bi}}+g}^{\mathrm{Horiz}}\left( \hat{Z}%
,\hat{V}\right) \right\vert \longrightarrow 0
\end{equation*}%
uniformly on $\mathcal{K}$ as $\lambda \rightarrow 0.$
\end{proposition}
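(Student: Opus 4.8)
The plan is to exploit that the Cheeger metric $\lambda^{2}g_{\mathrm{bi}}+g$ is a \emph{product} metric on $G\times M$, so that its curvature tensor $R_{\lambda^{2}g_{\mathrm{bi}}+g}$ is the orthogonal direct sum of the curvature of $\lambda^{2}g_{\mathrm{bi}}$ on the $G$--factor and the curvature of $g$ on the $M$--factor. Since $Z\in T_{x}G(x)^{\perp}$ forces $\kappa_{Z}=0$, its horizontal lift is $\hat{Z}=(0,Z)$ with vanishing $G$--component (as in the proof of Proposition \ref{Cheeg A--tensor}). Because the skew action is free, the horizontal space at $(e,x)$ is $n$--dimensional; picking a $(\lambda^{2}g_{\mathrm{bi}}+g)$--orthonormal basis $\{E_{k}\}_{k=1}^{n}$ of it and using the definition of $Ric^{\mathrm{Horiz}}$ from Proposition \ref{Gray-O'Neill},
\[
Ric^{\mathrm{Horiz}}_{\lambda^{2}g_{\mathrm{bi}}+g}(\hat{Z},\hat{V})=\sum_{k=1}^{n}R_{\lambda^{2}g_{\mathrm{bi}}+g}\bigl(E_{k},\hat{Z},\hat{V},E_{k}\bigr).
\]
In each summand one of the four slots is the $G$--component of $\hat{Z}$, which is $0$, so the $G$--factor of the product curvature contributes nothing and each term equals $R^{g}(p_{k},Z,V,p_{k})$, where $p_{k}\in T_{x}M$ is the $M$--component of $E_{k}$.

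Next I would choose the frame $\{E_{k}\}$ adapted to the orbit through $x$. Set $m:=\dim T_{x}G(x)^{\perp}$ and let $\{v_{1},\dots,v_{m}\}$ be a $g$--orthonormal basis of $T_{x}G(x)^{\perp}$; then $\{(0,v_{1}),\dots,(0,v_{m})\}$ is an orthonormal set of horizontal vectors, which I extend to a $(\lambda^{2}g_{\mathrm{bi}}+g)$--orthonormal horizontal frame $\{(0,v_{1}),\dots,(0,v_{m}),\hat{u}_{m+1},\dots,\hat{u}_{n}\}$. From $(\lambda^{2}g_{\mathrm{bi}}+g)(\hat{u}_{k},(0,v_{j}))=g(u_{k},v_{j})=0$ for $j\le m$ we get $u_{k}\in T_{x}G(x)$, and from $1=|\hat{u}_{k}|^{2}_{\lambda^{2}g_{\mathrm{bi}}+g}=|\kappa_{u_{k}}|^{2}_{g_{\mathrm{bi}}}/\lambda^{2}+|u_{k}|^{2}_{g}$ combined with Part 2 of Proposition \ref{bound on kappa} ($|\kappa_{u_{k}}|_{g_{\mathrm{bi}}}\ge C_{2}|u_{k}|_{g}$ on $\mathcal{K}$) we get $|u_{k}|^{2}_{g}\le\lambda^{2}/C_{2}^{2}$. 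Hence
\[
Ric^{\mathrm{Horiz}}_{\lambda^{2}g_{\mathrm{bi}}+g}(\hat{Z},\hat{V})=\sum_{k=1}^{m}R^{g}(v_{k},Z,V,v_{k})+\sum_{k=m+1}^{n}R^{g}(u_{k},Z,V,u_{k}),
\]
and if $c_{0}$ is a bound for the norm of the curvature tensor of $g$ on the compact manifold $M$, then using $|Z|_{g}=|v_{k}|_{g}=1$ and $|u_{k}|^{2}_{g}\le\lambda^{2}/C_{2}^{2}$ the right--hand side is bounded in absolute value by $\bigl(m+(n-m)\lambda^{2}/C_{2}^{2}\bigr)c_{0}|V|_{g}\le C_{3}|V|_{g}$ for $\lambda\le 1$, with $C_{3}$ depending only on $M$, $G$, and $\mathcal{K}$.

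Finally I would divide by $|\hat{V}|_{\lambda^{2}g_{\mathrm{bi}}+g}$. If $V=0$ there is nothing to prove; for $V\ne 0$, Part 2 of Proposition \ref{bound on kappa} gives $|\hat{V}|^{2}_{\lambda^{2}g_{\mathrm{bi}}+g}=|\kappa_{V}|^{2}_{g_{\mathrm{bi}}}/\lambda^{2}+|V|^{2}_{g}\ge C_{2}^{2}|V|^{2}_{g}/\lambda^{2}$, hence $|\hat{V}|_{\lambda^{2}g_{\mathrm{bi}}+g}\ge C_{2}|V|_{g}/\lambda$, and therefore
\[
\frac{\bigl|Ric^{\mathrm{Horiz}}_{\lambda^{2}g_{\mathrm{bi}}+g}(\hat{Z},\hat{V})\bigr|}{|\hat{V}|_{\lambda^{2}g_{\mathrm{bi}}+g}}\le\frac{C_{3}|V|_{g}}{C_{2}|V|_{g}/\lambda}=\frac{C_{3}}{C_{2}}\,\lambda\longrightarrow 0
\]
as $\lambda\rightarrow 0$, uniformly in $x\in\mathcal{K}$ and in the admissible $Z$ and $V$. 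The one delicate point, and the closest thing to an obstacle, is the bookkeeping in the first paragraph: one must verify that the $G$--part of the product curvature genuinely drops out of every summand (this is exactly where $\kappa_{Z}=0$ and the product structure of $\lambda^{2}g_{\mathrm{bi}}+g$ are used), and that the horizontal frame splits into orbit--perpendicular directions, on which the curvature terms are only $O(|V|_{g})$, and orbit--tangent directions, whose $M$--components are $O(\lambda)$ by Part 2 of Proposition \ref{bound on kappa} --- the first being small enough once divided by $|\hat{V}|$, which is at least of order $|V|_{g}/\lambda$.
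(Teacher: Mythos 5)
Your argument is correct and follows essentially the same route as the paper: both exploit that $\hat{Z}=(0,Z)$ together with the product structure of $\lambda^{2}g_{\mathrm{bi}}+g$ to reduce each summand of the horizontal Ricci trace to a curvature term of $g$ bounded by $\left\vert R_{g}\right\vert \left\vert V\right\vert _{g}$, and then divide by $\left\vert \hat{V}\right\vert _{\lambda ^{2}g_{\mathrm{bi}}+g}$, which blows up like $\left\vert \kappa _{V}\right\vert _{g_{\mathrm{bi}}}/\lambda \geq C_{2}\left\vert V\right\vert _{g}/\lambda $ by Part 2 of Proposition \ref{bound on kappa}. Your adapted splitting of the horizontal frame into orbit--perpendicular and orbit--tangent directions (giving the explicit $O(\lambda )$ rate) is a mild refinement of the paper's simpler observation that the $M$--components of unit horizontal vectors satisfy $\left\vert U\right\vert _{g}\leq 1$, but it is not a different method.
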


\begin{proof}
Since $\hat{Z}=\left( 0,Z\right) $ and $\lambda ^{2}g_{\mathrm{bi}}+g$ is a
product metric 
\begin{equation*}
R_{\lambda ^{2}g_{\mathrm{bi}}+g}\left( \hat{U},\hat{V},\hat{Z},\hat{U}%
\right) =R_{g}\left( U,V,Z,U\right) .
\end{equation*}%
If we assume that $\left\vert \hat{U}\right\vert _{\lambda ^{2}g_{\mathrm{bi}%
}+g}^{2}=1,$ it follows that $\left\vert U\right\vert _{g}^{2}\leq
\left\vert \hat{U}\right\vert _{\lambda ^{2}g_{\mathrm{bi}}+g}^{2}=1,$ so
dividing we get 
\begin{eqnarray}
\frac{1}{\left\vert \hat{V}\right\vert _{\lambda ^{2}g_{\mathrm{bi}}+g}}%
\left\vert R_{\lambda ^{2}g_{\mathrm{bi}}+g}\left( \hat{U},\hat{V},\hat{Z},%
\hat{U}\right) \right\vert &\leq &\left\vert R_{g}\right\vert \left\vert
U\right\vert _{g}^{2}\frac{\left\vert V\right\vert _{g}}{\left\vert \hat{V}%
\right\vert _{\lambda ^{2}g_{\mathrm{bi}}+g}}  \label{prod small} \\
&\leq &\left\vert R_{g}\right\vert \frac{\left\vert V\right\vert _{g}}{%
\left\vert \hat{V}\right\vert _{\lambda ^{2}g_{\mathrm{bi}}+g}}.  \notag
\end{eqnarray}

Normalize so that $\left\vert V\right\vert _{g}=1,$ and combine Part 2 of
Proposition \ref{bound on kappa} with 
\begin{equation*}
\left\vert \hat{V}\right\vert _{\lambda ^{2}g_{\mathrm{bi}}+g}^{2}=\frac{g_{%
\mathrm{bi}}\left( \kappa _{V},\kappa _{V}\right) }{\lambda ^{2}}+1,
\end{equation*}
to conclude that $\left\vert \hat{V}\right\vert _{\lambda ^{2}g_{\mathrm{bi}%
}+\tilde{g}_{l}}^{2}\rightarrow \infty $ uniformly on $\mathcal{K}$ as $%
\lambda \rightarrow 0.$

So $\frac{\left\vert V\right\vert _{g}}{\left\vert \hat{V}\right\vert
_{\lambda ^{2}g_{\mathrm{bi}}+g}}\rightarrow 0$ uniformly on $\mathcal{K}$
as $\lambda \rightarrow 0,$ and the result follows.
\end{proof}

In the previous two propositions we estimated $Ric_{\lambda ^{2}g_{\mathrm{bi%
}}+g}^{\mathrm{Horiz}}$ for an \emph{abstract} $G$--invariant metric $g.$ In
part, we did this because it is simpler to drop the terms involving the
Cheeger $A$--tensor, $A^{\mathrm{Ch}}.$ The metrics were abstract because we
will still need the $A^{\mathrm{Ch}}$--terms to control the Ricci curvature
on vectors in $T_{x}G\left( x\right) ^{\perp }.$ This will be achieved with
the next result, Proposition \ref{horizontal cheeger}, where we study the $%
Ric^{\mathrm{Horiz}}$ tensor on vectors in $T_{x}G\left( x\right) ^{\perp }$
for an iterated Cheeger deformation $q_{G\times M}:\left( G\times M,\lambda
_{1}^{2}g_{\mathrm{bi}}+\tilde{g}_{\lambda _{0}}\right) \longrightarrow M.$
It is of course true that $\left( \tilde{g}_{\lambda _{0}}\right) _{\lambda
_{1}}=\tilde{g}_{\lambda }$ for some $\lambda ,$ but the tensors $%
Ric_{\lambda _{1}^{2}g_{\mathrm{bi}}+\tilde{g}_{\lambda _{0}}}^{\mathrm{Horiz%
}}$ and $Ric_{\lambda ^{2}g_{\mathrm{bi}}+\tilde{g}}^{\mathrm{Horiz}}$ can
differ significantly since the corresponding Cheeger $A$--tensors that are
dropped can be quite different.

Since the previous two propositions contain estimates for $Ric_{\lambda
^{2}g_{\mathrm{bi}}+g}^{\mathrm{Horiz}}$ with respect to an abstract $G$%
--invariant metric $g,$ we will then be able to combine the three
Propositions \ref{vertical Ricci}, \ref{vertizontal Ricci}, and \ref%
{horizontal cheeger} to prove Theorem \ref{generic cheeger}.

\begin{proposition}
\label{horizontal cheeger}Let $\left( \tilde{g}_{\lambda _{0}}\right)
_{\lambda _{1}}$ be the metric on $M$ induced by the Riemannian submersion $%
q_{G\times M}:\left( G\times M,\lambda _{1}^{2}g_{\mathrm{bi}}+\tilde{g}%
_{\lambda _{0}}\right) \longrightarrow M.$ Write $Ric_{\lambda _{1}^{2}g_{%
\mathrm{bi}}+\tilde{g}_{\lambda _{0}}}^{\mathrm{Horiz}}$ for the tensor $%
Ric_{g}^{\mathrm{Horiz}}$ of Proposition \ref{Gray-O'Neill} when the
submersion is $q_{G\times M}:\left( G\times M,\lambda _{1}^{2}g_{\mathrm{bi}%
}+\tilde{g}_{\lambda _{0}}\right) \longrightarrow M.$ Let $\mathcal{K}$ be
any compact subset $M^{\text{reg}}.$ If $\lambda _{0}$ is sufficiently
small, then for all $x\in \mathcal{K}$ and for all $Z\in T_{x}G\left(
x\right) ^{\perp }$ 
\begin{equation*}
Ric_{\lambda _{1}^{2}g_{\mathrm{bi}}+\tilde{g}_{\lambda _{0}}}^{\mathrm{Horiz%
}}\left( \hat{Z},\hat{Z}\right) >\left\vert Z\right\vert _{\tilde{g}%
_{\lambda _{0}}}^{2}
\end{equation*}%
for all $\lambda _{1}\in \left( 0,\infty \right) .$
\end{proposition}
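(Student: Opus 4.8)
The plan is to evaluate $Ric^{\mathrm{Horiz}}_{\lambda_1^2 g_{\mathrm{bi}}+\tilde g_{\lambda_0}}(\hat Z,\hat Z)$ directly, as a sum of sectional curvatures over an adapted horizontal basis, in the spirit of the proof of Proposition \ref{vertical Ricci}. Because $Z\in T_xG(x)^\perp$ we have $\hat Z=(0,Z)$, so the $G$--factor of the product metric drops out of every curvature term and only curvatures of the once--deformed metric $\tilde g_{\lambda_0}$ on $M$ survive. Choosing a $(\lambda_1^2 g_{\mathrm{bi}}+\tilde g_{\lambda_0})$--orthogonal horizontal basis $\{\hat Z,\hat W_1,\dots,\hat W_p,\hat Y_1,\dots,\hat Y_m\}$ with $W_i\in T_xG(x)$, $Y_j\in T_xG(x)^\perp$ and $|Z|_{\tilde g_{\lambda_0}}=|W_i|_{\tilde g_{\lambda_0}}=|Y_j|_{\tilde g_{\lambda_0}}=1$, one finds
\[
Ric^{\mathrm{Horiz}}_{\lambda_1^2 g_{\mathrm{bi}}+\tilde g_{\lambda_0}}(\hat Z,\hat Z)=\sum_{i=1}^{p}\frac{1}{|\hat W_i|^{2}}\,\mathrm{sec}_{\tilde g_{\lambda_0}}(W_i,Z)+\sum_{j=1}^{m}\mathrm{sec}_{\tilde g_{\lambda_0}}(Y_j,Z),
\]
with each $\tfrac{1}{|\hat W_i|^{2}}\in(0,1]$ and $|\hat Y_j|=1$; the dependence on $\lambda_1$ sits entirely in the harmless weights $\tfrac{1}{|\hat W_i|^{2}}$.

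For the orbit--tangent terms I would use that $Ch_{\lambda_0}$ is the identity on $T_xG(x)^\perp$ and an isomorphism of $T_xG(x)$, so $\mathrm{span}\{W_i,Z\}$ is the $Ch_{\lambda_0}$--image of a plane spanned by a $\tilde g$--unit orbit vector and $Z$. Proposition \ref{orbital estimate}, applied to the Cheeger deformation $\tilde g\rightsquigarrow\tilde g_{\lambda_0}$, together with the uniform lower bound $|\kappa_V|_{g_{\mathrm{bi}}}\geq C_2>0$ for unit orbit vectors $V$ over the compact set $\mathcal K\subset M^{\mathrm{reg}}$ (Part 2 of Proposition \ref{bound on kappa}), then gives $\mathrm{sec}_{\tilde g_{\lambda_0}}(W_i,Z)\geq-\tau(\lambda_0)$ uniformly on $\mathcal K$; since the weights are $\leq1$, the first sum is $\geq-p\,\tau(\lambda_0)$, a bound uniform in $\lambda_1$ that vanishes as $\lambda_0\to0$.

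For the orbit--perpendicular terms I would expand $\mathrm{sec}_{\tilde g_{\lambda_0}}(Y_j,Z)=\mathrm{sec}_{\tilde g}(Y_j,Z)+3|A^{Ch}_{\hat Y_j}\hat Z|^{2}$ via the Horizontal Curvature Equation for the Cheeger submersion $(G\times M,\lambda_0^2 g_{\mathrm{bi}}+\tilde g)\to(M,\tilde g_{\lambda_0})$, and then invoke Proposition \ref{Cheeg A--tensor} for the submersion $\pi^{\mathrm{reg}}:(M^{\mathrm{reg}},\tilde g)\to M^{\mathrm{reg}}/G$, so that $|A^{Ch}_{\hat Y_j}\hat Z|\to|A^{\mathrm{reg}}_{Y_j}Z|_{\tilde g}$ uniformly on $\mathcal K$ as $\lambda_0\to0$. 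Combined with the Horizontal Curvature Equation for $\pi^{\mathrm{reg}}$ itself, this yields $\mathrm{sec}_{\tilde g_{\lambda_0}}(Y_j,Z)\to\mathrm{sec}_{d\pi^{\mathrm{reg}}(\tilde g)}(d\pi^{\mathrm{reg}}Y_j,d\pi^{\mathrm{reg}}Z)$ uniformly on $\mathcal K$; since $\{d\pi^{\mathrm{reg}}Z,d\pi^{\mathrm{reg}}Y_1,\dots,d\pi^{\mathrm{reg}}Y_m\}$ is orthonormal in $T_{\pi(x)}(M^{\mathrm{reg}}/G)$, the second sum converges, uniformly on $\mathcal K$, to $Ric_{d\pi^{\mathrm{reg}}(\tilde g)}(d\pi^{\mathrm{reg}}Z,d\pi^{\mathrm{reg}}Z)$.

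To close the argument I need $Ric_{d\pi^{\mathrm{reg}}(\tilde g)}>1$ on all of $M^{\mathrm{reg}}/G$. This is the delicate step, and the only one that uses the curvature hypothesis of Theorem \ref{main} (after the rescaling to $Ric_{M^{\mathrm{reg}}/G}\geq2$): a conformal change can in principle destroy a Ricci bound, but the $G$--invariant factor $e^{2f}$ of Theorem \ref{uber conf} descends to a factor $e^{2\bar f}$ on the orbit space built from the same $\rho$ relative to the singular part of $M^{\mathrm{reg}}/G$. Near that part, $\rho''\ll-1$ makes $\mathrm{Hess}_{\bar f}$ very negative in the radial and $\bar{\mathcal V}$ directions and $\Delta\bar f\ll-1$, which only increases $Ric_{d\pi^{\mathrm{reg}}(\tilde g)}$ there (the orbit--space analogue of the Remarks following Theorems \ref{conf-submanf} and \ref{non-compact conf}); away from it, $\rho$ is $C^1$--small with $\mathrm{Hess}_{\bar f}$ nonpositive or of size $O(\delta)$ in every direction, so $Ric_{d\pi^{\mathrm{reg}}(\tilde g)}\geq Ric_{M^{\mathrm{reg}}/G}-O(\delta)\geq\frac{3}{2}$ once $\varepsilon$ is small and $K$ large — the same parameter regime already required in Theorem \ref{Ric_omega >0}. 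Assembling the three estimates gives, for $\lambda_0$ small and uniformly over $\mathcal K$ and over all $\lambda_1\in(0,\infty)$,
\[
Ric^{\mathrm{Horiz}}_{\lambda_1^2 g_{\mathrm{bi}}+\tilde g_{\lambda_0}}(\hat Z,\hat Z)\;\geq\;\frac{3}{2}-p\,\tau(\lambda_0)-o(1)\;>\;1\;=\;|Z|_{\tilde g_{\lambda_0}}^{2},
\]
and the general case follows by scaling $Z$. The main obstacle is exactly this last paragraph: tracking the effect of the conformal change of Theorem \ref{uber conf} on the Ricci curvature of the orbit--distance metric, and confirming that the parameter choices forced on us elsewhere still keep it above $1$.
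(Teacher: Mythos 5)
Your overall strategy coincides with the paper's for most of the argument: the same adapted $(\lambda_1^2 g_{\mathrm{bi}}+\tilde g_{\lambda_0})$--orthogonal horizontal basis, the same observation that the $G$--factor drops out so only $\tilde g_{\lambda_0}$--curvatures survive with weights $1/|\hat W_i|^2\in(0,1]$, the same $-\tau(\lambda_0)$ bound for the orbit--tangent terms (you cite Proposition \ref{orbital estimate} and Part 2 of Proposition \ref{bound on kappa}; the paper derives the same Inequality \ref{vertizontal sec} by hand), and the same use of the Horizontal Curvature Equation plus Proposition \ref{Cheeg A--tensor} for the orbit--perpendicular terms. The divergence, and the one genuine gap, is your final step: you let the orbit--perpendicular sum converge to $Ric_{d\pi^{\mathrm{reg}}(\tilde g)}(d\pi^{\mathrm{reg}}Z,d\pi^{\mathrm{reg}}Z)$ and then need $Ric_{d\pi^{\mathrm{reg}}(\tilde g)}>1$ on all of $M^{\mathrm{reg}}/G$. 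Nothing in the paper supplies this: Theorem \ref{uber conf} and the Key Lemma control curvature components on $M$, not the Ricci curvature of the quotient of the conformally changed metric, and the hypothesis of Theorem \ref{main} concerns the quotient of the \emph{original} metric $g$. Your heuristic (that $f$ descends to $\bar f=\rho\circ\mathrm{dist}(\text{singular set},\cdot)$ on the orbit space, with $\mathrm{Hess}_{\bar f}$ very negative near that set and only $O(\delta)$ elsewhere) is plausibly correct, but to make it rigorous you would need an orbit--space analogue of Lemmas \ref{Peter's lemma} and \ref{Hessian Est} -- for instance via the fact that for a basic function the Hessian evaluated on horizontal lifts descends through the Riemannian submersion, so that Lemma \ref{Hessian Est}, together with $f''\le\delta$ and $f'\le 0$, yields $\mathrm{Hess}_{\bar f}\le O(\delta)$ and $\Delta\bar f\le O(\delta)$ uniformly up to the (metrically incomplete) singular part. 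As written this is asserted, not proved, and you yourself flag it as the main obstacle.

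The paper sidesteps the issue entirely, and that is the device worth absorbing: before projecting, it invokes Part 5 of Theorem \ref{uber conf}, $\widetilde{\mathrm{sec}}(Y,Z)\ge\mathrm{sec}_g(Y,Z)-\varepsilon$, so that the A--tensor convergence compares $\mathrm{sec}_{\tilde g_{\lambda_0}}(Y_j,Z)$ with $\mathrm{sec}_{d\pi^{\mathrm{reg}}(g)}$ of the original quotient metric up to $2\varepsilon$ (Inequality \ref{horiz Ricci 4}). The hypothesis $Ric_{M^{\text{reg}}/G}\ge 2$ then applies verbatim, and no statement about the Ricci curvature of $e^{2\bar f}\,d\pi^{\mathrm{reg}}(g)$ is ever needed. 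If you replace your last paragraph by this one comparison your proof becomes the paper's; if you prefer your route, you must actually supply the descended Hessian estimates sketched above.
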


\begin{proof}
We start by studying the sectional curvature of $\tilde{g}_{\lambda _{0}}.$
For $\varepsilon >0$ as in Theorem \ref{uber conf}, $Y,Z\in TM$ we have 
\begin{equation*}
\mathrm{sec}_{\tilde{g}}\left( Y,Z\right) \geq \mathrm{sec}%
_{g}(Y,Z)-\varepsilon .
\end{equation*}

Combining this with Proposition \ref{Cheeg A--tensor} we have that for $x\in 
\mathcal{K}$ and $Y,Z\in T_{x}G\left( x\right) ^{\perp }$ 
\begin{equation}
\mathrm{sec}_{\tilde{g}_{\lambda _{0}}}\left( Y,Z\right) \geq \mathrm{sec}%
_{d\pi \left( g\right) }(d\pi ^{\mathrm{reg}}\left( Y\right) ,d\pi ^{\mathrm{%
reg}}\left( Z\right) )-2\varepsilon ,  \label{horiz Ricci 4}
\end{equation}%
provided $\lambda _{0}$ is sufficiently small.

For $V\in T_{x}G\left( x\right) $ with $\left\vert V\right\vert _{\tilde{g}%
}=1$ and $Z\in T_{x}G\left( x\right) ^{\perp }$ with $\left\vert
Z\right\vert _{\tilde{g}_{\lambda _{0}}}=\left\vert Z\right\vert _{\tilde{g}%
}=1$ 
\begin{eqnarray*}
\mathrm{sec}_{\tilde{g}_{\lambda _{0}}}\left( Ch_{\lambda _{0}}\left(
Z\right) ,Ch_{\lambda _{0}}\left( V\right) \right) &\geq &\mathrm{sec}%
_{\lambda _{0}^{2}g_{\mathrm{bi}}+\tilde{g}}\left( \hat{Z},\hat{V}\right) \\
&=&\frac{1}{\left\vert \hat{V}\right\vert _{\lambda _{0}^{2}g_{\mathrm{bi}}+%
\tilde{g}}^{2}}\mathrm{curv}_{\lambda _{0}^{2}g_{\mathrm{bi}}+\tilde{g}%
}\left( \left( 0,Z\right) ,\left( \frac{k_{V}}{\lambda _{0}^{2}},V\right)
\right) \\
&=&\frac{\mathrm{sec}_{\tilde{g}}\left( Z,V\right) }{\left\vert \hat{V}%
\right\vert _{\lambda _{0}^{2}g_{\mathrm{bi}}+\tilde{g}}^{2}}.
\end{eqnarray*}

Combining Part 2 of Proposition \ref{bound on kappa} with $\left\vert \hat{V}%
\right\vert _{\lambda _{0}^{2}g_{\mathrm{bi}}+\tilde{g}}^{2}=\frac{g_{%
\mathrm{bi}}\left( \kappa _{V},\kappa _{V}\right) }{\lambda _{0}^{2}}+1,$ we
have $\left\vert \hat{V}\right\vert _{\lambda _{0}^{2}g_{\mathrm{bi}}+\tilde{%
g}}^{2}\rightarrow \infty $ as $\lambda _{0}\rightarrow 0.$ So $\frac{%
\mathrm{sec}_{\tilde{g}}\left( Z,V\right) }{\left\vert \hat{V}\right\vert
_{\lambda _{0}^{2}g_{\mathrm{bi}}+\tilde{g}}^{2}}$ goes to $0$ uniformly on $%
M\setminus \Omega _{1}$ as $\lambda _{0}\rightarrow 0,$ since $\left\vert 
\mathrm{sec}_{\tilde{g}}\left( Z,V\right) \right\vert $ is bounded from
above by a bound that is independent of $\lambda _{0}.$

It follows that%
\begin{equation*}
\mathrm{sec}_{\tilde{g}_{\lambda _{0}}}\left( Ch_{\lambda _{0}}\left(
Z\right) ,Ch_{\lambda _{0}}\left( V\right) \right) \geq -\tau \left( \lambda
_{0}\right) ,
\end{equation*}%
where $\tau $ is as in Equation \ref{dfn of tau eqn}. Since $Ch_{\lambda
_{0}}:T_{x}M\longrightarrow T_{x}M$ is an isomorphism that preserves the
splitting $T_{x}M=T_{x}G\left( x\right) \oplus T_{x}G\left( x\right) ^{\perp
},$ we conclude that for any $Z\in T_{x}G\left( x\right) ^{\perp }$ and any $%
W\in T_{x}G\left( x\right) $ 
\begin{equation}
\mathrm{sec}_{\tilde{g}_{\lambda _{0}}}\left( Z,W\right) \geq -\tau \left(
\lambda _{0}\right) .  \label{vertizontal sec}
\end{equation}

Now let $\left\{ \hat{Z},\hat{W}_{1},\ldots ,\hat{W}_{p},\hat{Y}_{1},\ldots ,%
\hat{Y}_{m}\right\} $ be a $\left( \lambda _{1}^{2}g_{\mathrm{bi}}+\tilde{g}%
_{\lambda _{0}}\right) $--orthogonal basis for the horizontal space of our
Riemannian submersion $q_{G\times M}:\left( G\times M,\lambda _{1}^{2}g_{%
\mathrm{bi}}+\tilde{g}_{\lambda _{0}}\right) \longrightarrow M$ with $%
W_{1},\ldots ,W_{p}\in TG\left( x\right) $ and $Y_{1},\ldots ,Y_{m}\in
TG\left( x\right) ^{\perp },$ $\left\vert Z\right\vert _{\tilde{g}_{\lambda
_{0}}}=\left\vert W_{i}\right\vert _{\tilde{g}_{\lambda _{0}}}=\left\vert
Y_{i}\right\vert _{\tilde{g}_{\lambda _{0}}}=1.$ Then%
\begin{eqnarray*}
Ric_{\lambda _{1}^{2}g_{\mathrm{bi}}+\tilde{g}_{\lambda _{0}}}^{\mathrm{Horiz%
}}\left( \hat{Z},\hat{Z}\right) &=&\Sigma _{i=1}^{p}\mathrm{sec}_{\lambda
_{1}^{2}g_{\mathrm{bi}}+\tilde{g}_{\lambda _{0}}}\left( \hat{W}_{i},\hat{Z}%
\right) +\Sigma _{j=1}^{m}\mathrm{sec}_{\lambda _{1}^{2}g_{\mathrm{bi}}+%
\tilde{g}_{\lambda _{0}}}\left( \hat{Y}_{i},\hat{Z}\right) \\
&=&\Sigma _{i=1}^{p}\frac{1}{\left\vert \hat{W}_{i}\right\vert _{\lambda
_{1}^{2}g_{\mathrm{bi}}+\tilde{g}_{\lambda _{0}}}^{2}}\mathrm{curv}_{\lambda
_{1}^{2}g_{\mathrm{bi}}+\tilde{g}_{\lambda _{0}}}\left( \hat{W}_{i},\hat{Z}%
\right) +\Sigma _{j=1}^{m}\mathrm{sec}_{\lambda _{1}^{2}g_{\mathrm{bi}}+%
\tilde{g}_{\lambda _{0}}}\left( \hat{Y}_{i},\hat{Z}\right) \\
&=&\Sigma _{i=1}^{p}\frac{1}{\left\vert \hat{W}_{i}\right\vert _{\lambda
_{1}^{2}g_{\mathrm{bi}}+\tilde{g}_{\lambda _{0}}}^{2}}\mathrm{sec}_{\tilde{g}%
_{\lambda _{0}}}\left( W_{i},Z\right) +\Sigma _{j=1}^{m}\mathrm{sec}_{\tilde{%
g}_{\lambda _{0}}}\left( Y_{i},Z\right) \\
&\geq &-\Sigma _{i=1}^{p}\frac{\tau \left( \lambda _{0}\right) }{\left\vert 
\hat{W}_{i}\right\vert _{\lambda _{1}^{2}g_{\mathrm{bi}}+\tilde{g}_{\lambda
_{0}}}^{2}}+\Sigma _{j=1}^{m}\mathrm{sec}_{\tilde{g}_{\lambda _{0}}}\left(
Y_{i},Z\right) ,\text{ by \ref{vertizontal sec}} \\
&=&-\tau \left( \lambda _{0}\right) +\Sigma _{j=1}^{m}\mathrm{sec}_{\tilde{g}%
_{\lambda _{0}}}\left( Y_{i},Z\right) .
\end{eqnarray*}%
Combining this with our hypothesis that $Ric_{\left( M^{\text{reg}}/G\right)
}\geq 2$ and with Inequality \ref{horiz Ricci 4} gives 
\begin{eqnarray*}
Ric_{\lambda _{1}^{2}g_{\mathrm{bi}}+\tilde{g}_{\lambda _{0}}}^{\mathrm{Horiz%
}}\left( \hat{Z},\hat{Z}\right) &\geq &\left( -\tau \left( \lambda
_{0}\right) +\frac{3}{2}\right) \\
&>&1 \\
&=&\left\vert Z\right\vert _{\tilde{g}_{\lambda _{0}}}^{2}
\end{eqnarray*}%
as claimed.
\end{proof}

\begin{proof}[Proof of Theorem \protect\ref{generic cheeger}]
Let $\lambda _{0}$ be small enough so that the conclusion of Proposition \ref%
{horizontal cheeger} holds. An arbitrary unit vector that is horizontal for $%
q_{G\times M}:\left( G\times M,\lambda _{1}^{2}g_{\mathrm{bi}}+\tilde{g}%
_{\lambda _{0}}\right) \longrightarrow M$ has the form $\cos \sigma \hat{V}%
+\sin \sigma \hat{Z}$ where $V\in TG\left( x\right) $ and $Z\in TG\left(
x\right) ^{\perp },$ $\left\vert \hat{V}\right\vert _{\lambda _{1}^{2}g_{%
\mathrm{bi}}+\tilde{g}_{\lambda _{0}}}=\left\vert \hat{Z}\right\vert
_{\lambda _{1}^{2}g_{\mathrm{bi}}+\tilde{g}_{\lambda _{0}}}=1.$ So%
\begin{eqnarray*}
Ric_{\left( \tilde{g}_{\lambda _{0}}\right) _{\lambda _{1}}}|_{M\setminus
\Omega _{1}} &\geq &Ric_{\lambda _{1}^{2}g_{\mathrm{bi}}+\tilde{g}_{\lambda
_{0}}}^{\mathrm{Horiz}}\left( \cos \sigma \hat{V}+\sin \sigma \hat{Z},\cos
\sigma \hat{V}+\sin \sigma \hat{Z}\right) \\
&=&\cos ^{2}\sigma Ric_{\lambda _{1}^{2}g_{\mathrm{bi}}+\tilde{g}_{\lambda
_{0}}}^{\mathrm{Horiz}}\left( \hat{V},\hat{V}\right) +\sin 2\sigma
Ric_{\lambda _{1}^{2}g_{\mathrm{bi}}+\tilde{g}_{\lambda _{0}}}^{\mathrm{Horiz%
}}\left( \hat{V},\hat{Z}\right) +\sin ^{2}\sigma Ric_{\lambda _{1}^{2}g_{%
\mathrm{bi}}+\tilde{g}_{\lambda _{0}}}^{\mathrm{Horiz}}\left( \hat{Z},\hat{Z}%
\right)
\end{eqnarray*}

By Proposition \ref{vertizontal Ricci} we have $\left\vert Ric_{\lambda
_{1}^{2}g_{\mathrm{bi}}+\tilde{g}_{\lambda _{0}}}^{\mathrm{Horiz}}\left( 
\hat{V},\hat{Z}\right) \right\vert <\frac{1}{100},$ provided $\lambda _{1}$
is sufficiently small. If we choose the constant $C$ in Proposition \ref%
{vertical Ricci} to be $100,$ and apply Proposition \ref{horizontal cheeger}
we then get 
\begin{eqnarray*}
Ric_{\left( \tilde{g}_{\lambda _{0}}\right) _{\lambda _{1}}}|_{M\setminus
\Omega _{1}} &\geq &100\cos ^{2}\sigma +\frac{1}{100}\sin 2\sigma +\sin
^{2}\sigma \\
&>&\frac{99}{100},
\end{eqnarray*}%
proving Theorem \ref{generic cheeger} and Theorem \ref{main}.
\end{proof}

\section{Lifting Almost Non-negative Curvature\label{alm nonneg lift}}

Throughout this section we assume that $G$ is a compact, connected Lie group
acting isometrically and effectively on a family of compact Riemannian
manifolds $\left( M,g_{\alpha }\right) $. We further assume that the
quotient, $\left( M/G,\mathrm{dist}_{\alpha }\right) $, is an almost
non-negatively curved family of metric spaces.

We will obtain the almost non-negatively curved family of metrics on $M$ via
a sequence of three deformations, as follows.

\noindent \textbf{Step 1:} Apply Theorem \ref{Step 1} to $\left( M,g_{\alpha
}\right) ,$ yielding the Cheeger deformed metrics $\left( M,\left( g_{\alpha
}\right) _{l}\right) .$

\noindent \textbf{Step 2:} Apply Theorem \ref{Step 2} to obtain a family of $%
G$--invariant metrics $\widetilde{\left( g_{\alpha }\right) _{l}}$ of the
form $\widetilde{\left( g_{\alpha }\right) _{l}}=e^{2f_{\alpha }}\left(
g_{\alpha }\right) _{l},$ for appropriate smooth functions $f_{\alpha
}:M\longrightarrow \mathbb{R}.$

\noindent \textbf{Step 3:} Apply a further Cheeger deformation to $\left( M,%
\widetilde{\left( g_{\alpha }\right) _{l}}\right) $ to obtain $\left(
M,\left( \widetilde{\left( g_{\alpha }\right) _{l}}\right) _{\lambda
}\right) ,$ which we will show is an almost non-negatively curved family in
Theorem \ref{step 3} below.

\begin{remark}
It seems possible that Theorem \ref{alm nonneg thm} could be proven
performing these deformations in another order. The order we have chosen
allows the argument to be broken into several smaller, separately verifiable
pieces.
\end{remark}

As the diameter bound is much easier to establish we discuss it first. Since 
$\left\{ \left( M/G,\mathrm{dist}_{\alpha }\right) \right\} _{\alpha
=1}^{\infty }$ is an almost non-negatively curved family, 
\begin{equation*}
\mathrm{Diam}\left( M/G,\mathrm{dist}_{\alpha }\right) \leq D
\end{equation*}%
for some $D>0.$

Let $\left( \mathrm{dist}_{\alpha }\right) _{l}$ be the orbital metric on $%
M/G$ induced by $\left( g_{\alpha }\right) _{l}.$ Since a Cheeger
deformation does not change the metric on the distribution that is
orthogonal to the orbits, 
\begin{equation*}
\mathrm{Diam}\left( M/G,\left( \mathrm{dist}_{\alpha }\right) _{l}\right)
\leq D.
\end{equation*}

Let $\widetilde{\left( \mathrm{dist}_{\alpha }\right) _{l}}$ be the orbital
metric on $M/G$ induced by $\widetilde{\left( g_{\alpha }\right) _{l}}.$ By
Remark \ref{uber c-1 close}, our conformal factor, $e^{2f},$ is as close as
we please in the $C^{0}$--topology to $1$. In particular, we can easily
arrange that 
\begin{equation*}
\mathrm{Diam}\left( M/G,\widetilde{\left( \mathrm{dist}_{\alpha }\right) _{l}%
}\right) \leq 2D.
\end{equation*}

Finally, $\left( M,\left( \widetilde{\left( g_{\alpha }\right) _{l}}\right)
_{\lambda }\right) $ converges to $\left( M/G,\widetilde{\left( \mathrm{dist}%
_{\alpha }\right) _{l}}\right) $ in the Gromov--Hausdorff topology as $%
\lambda \rightarrow 0$, so 
\begin{equation*}
\mathrm{Diam}\left( M,\left( \widetilde{\left( g_{\alpha }\right) _{l}}%
\right) _{\lambda }\right) \leq 3D,
\end{equation*}%
provided $\lambda $ is sufficiently small.

Thus, to prove Theorem \ref{alm nonneg thm} it suffices to show that there
is a sequence of positive numbers, $\left\{ \varepsilon _{\alpha }\right\}
_{\alpha =1}^{\infty },$ Cheeger parameters $l,\lambda ,$ and $G$--invariant
conformal factors $e^{2f_{\alpha }}$ so that%
\begin{equation*}
\varepsilon _{\alpha }\rightarrow 0\text{ as }\alpha \rightarrow \infty ,
\end{equation*}%
\begin{equation*}
\mathrm{sec}\left( M,\left( \widetilde{\left( g_{\alpha }\right) _{l}}%
\right) _{\lambda }\right) \geq -\varepsilon _{\alpha },\text{ and}
\end{equation*}%
$e^{2f_{\alpha }}$ is $C^{0}$--close to $1.$ This in turn follows from the
next three results.

Applying Theorem \ref{Step 1} gives us

\begin{corollary}
\label{Step 1 cor}Let $M$ and $g_{\alpha }$ be as in Theorem \ref{alm nonneg
thm}. For any $\varepsilon >0$ there is an $\alpha _{0}\in \mathbb{N}$ so
that for all $\alpha \geq \alpha _{0},$ there is a neighborhood $\Omega
^{\prime }\left( \alpha \right) $ of $S_{1}\cup S_{2}\cup \cdots \cup S_{p},$
and a Cheeger parameter $l_{1}\left( \alpha \right) $ such that for all $%
l\in \left( 0,l_{1}\left( \alpha \right) \right) $ 
\begin{equation}
\mathrm{sec}_{\left( g_{\alpha }\right) _{l}}(Y,Z)\geq -\frac{\varepsilon }{4%
}\text{ \label{Alm on H--B}}
\end{equation}%
if either $Y,Z\in TG(x)^{\perp }|_{M\setminus \Omega ^{\prime }\left( \alpha
\right) }$ or $Y,Z\in \left\{ TG(x)^{\perp }\cap \overline{\mathcal{H}}%
^{i}\right\} |_{\Omega ^{\prime }\left( \alpha \right) \setminus S_{1}\cup
S_{2}\cup \cdots \cup S_{p}}$ for some $i\in \left\{ 1,\ldots ,p\right\} $.

Here $\left( g_{\alpha }\right) _{l}$ is the metric on $M$ induced by the
Riemannian submersion 
\begin{equation*}
q_{G\times M}:\left( G\times M,l^{2}g_{\mathrm{bi}}+g_{\alpha }\right)
\longrightarrow M.
\end{equation*}
\end{corollary}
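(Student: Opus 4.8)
The plan is to apply Theorem~\ref{Step 1} verbatim to each member $(M,g_\alpha)$ of the family and then absorb the resulting slack into $\varepsilon$ once $\alpha$ is large. The only point needing comment is that the almost non-negative curvature hypothesis on the quotient is, on the regular part, simply a pointwise sectional curvature bound.

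First I would record this translation. The space $\left((M/G)^{\mathrm{reg}},d\pi^{\mathrm{reg}}(g_\alpha)\right)$ is a Riemannian manifold, and within a normal neighborhood of any of its points the orbital distance metric $\mathrm{dist}_\alpha$ agrees with the Riemannian distance of $d\pi^{\mathrm{reg}}(g_\alpha)$. Since the Alexandrov curvature bound $\mathrm{curv}(M/G,\mathrm{dist}_\alpha)\ge -\frac{1}{\alpha}$ is a local condition, it is equivalent, on $(M/G)^{\mathrm{reg}}$, to
\[
\mathrm{sec}_{d\pi^{\mathrm{reg}}(g_\alpha)}\ \ge\ -\frac{1}{\alpha}.
\]
(Alternatively, one may simply take "$(M/G)^{\mathrm{reg}}$ has sectional curvature $\ge -\frac{1}{\alpha}$" as the operative hypothesis, exactly as in the Remark following Theorem~\ref{main}.)

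Next, given $\varepsilon>0$, I would apply Theorem~\ref{Step 1} to each Riemannian manifold $(M,g_\alpha)$ with parameter $\varepsilon/4$ in place of its "$\varepsilon$." This produces, for each $\alpha$, a neighborhood $\Omega'(\alpha)$ of $S_1\cup\cdots\cup S_p$ as in Proposition~\ref{Omega_i Dfn} and a Cheeger parameter $l_1(\alpha)$ so that, for every $l\in(0,l_1(\alpha))$ and every pair $Y,Z$ lying either in $TG(x)^{\perp}|_{M\setminus\Omega'(\alpha)}$ or in $\{TG(x)^{\perp}\cap\overline{\mathcal{H}}^{i}\}|_{\Omega'(\alpha)\setminus(S_1\cup\cdots\cup S_p)}$,
\[
\mathrm{sec}_{(g_\alpha)_l}(Y,Z)\ >\ \mathrm{sec}_{d\pi^{\mathrm{reg}}(g_\alpha)}\left(d\pi^{\mathrm{reg}}(Y),d\pi^{\mathrm{reg}}(Z)\right)-\frac{\varepsilon}{8}.
\]
Here I use that $d\pi^{\mathrm{reg}}$ restricts to a linear isometry of $TG(x)^{\perp}$ onto the tangent space of $(M/G)^{\mathrm{reg}}$, so the right-hand side is a genuine sectional curvature of $d\pi^{\mathrm{reg}}(g_\alpha)$ (which, as Theorem~\ref{Step 1} also records, does not depend on $l$). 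Combining the last two displays gives $\mathrm{sec}_{(g_\alpha)_l}(Y,Z) > -\frac{1}{\alpha}-\frac{\varepsilon}{8}$, and choosing $\alpha_0\in\mathbb{N}$ with $1/\alpha_0\le \varepsilon/8$ yields $\mathrm{sec}_{(g_\alpha)_l}(Y,Z)\ge -\varepsilon/4$ for all $\alpha\ge\alpha_0$, which is Inequality~\ref{Alm on H--B}.

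There is no real obstacle here: the argument is a direct transcription of Theorem~\ref{Step 1}, and the one substantive step — identifying the Alexandrov lower bound on $M/G$ with a pointwise sectional curvature bound on $(M/G)^{\mathrm{reg}}$ — is essentially built into the hypotheses. The only thing to keep in mind is that $\Omega'(\alpha)$ and $l_1(\alpha)$ genuinely depend on $\alpha$; this is harmless because the estimate $-\varepsilon/4$ is uniform, and when the corollary is later fed into the three-step deformation one takes $\varepsilon=\varepsilon_\alpha\to 0$ and re-indexes accordingly.
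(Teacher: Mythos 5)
Your argument is correct and is exactly what the paper intends: the corollary is stated there as an immediate application of Theorem \ref{Step 1} (with a smaller parameter), and your two supplementary observations --- that the Alexandrov bound $\mathrm{curv}\geq -\frac{1}{\alpha}$ on $M/G$ is, on the regular part, a sectional curvature bound for $d\pi^{\mathrm{reg}}(g_{\alpha})$, and that one then picks $\alpha_{0}$ with $\frac{1}{\alpha_{0}}\leq\frac{\varepsilon}{8}$ to absorb the slack --- are precisely the routine details the paper leaves implicit. Nothing further is needed.
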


Applying Theorem \ref{Step 2} gives us

\begin{corollary}
\label{Step 2 Cor} Let $\left( g_{\alpha }\right) _{l}$ be a metric that
satisfies the conclusion of Corollary \ref{Step 1 cor}$.$ For any $%
K,\varepsilon >0$ there is a neighborhood $\Omega _{1}\left( \alpha \right) $
of $S_{1}\cup S_{2}\cup \cdots \cup S_{p}$ and a metric $\widetilde{\left(
g_{\alpha }\right) _{l}}=e^{2f_{\alpha }}\left( g_{\alpha }\right) _{l}$ so
that if $V\in \mathrm{span}\left\{ \mathcal{V}^{i},X^{i}\right\} |_{\Omega
_{1}\left( \alpha \right) }$ for some $i\in \left\{ 1,\ldots ,p\right\} ,$
then 
\begin{equation*}
\mathrm{sec}_{\widetilde{\left( g_{\alpha }\right) _{l}}}(V,W)\geq K\text{ }
\end{equation*}%
for all $W\in T\Omega _{1}\left( \alpha \right) ,$ and 
\begin{equation*}
\mathrm{sec}_{\widetilde{\left( g_{\alpha }\right) _{l}}}(V,W)\geq \mathrm{%
sec}_{\left( g_{\alpha }\right) _{l}}(V,W)-\frac{\varepsilon }{4}.\text{ %
\label{Alm on Omega'' 2}}
\end{equation*}%
for all $V,W\in TM.$
\end{corollary}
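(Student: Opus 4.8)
The plan is to obtain Corollary \ref{Step 2 Cor} as an immediate consequence of Theorem \ref{Step 2}, applied to each metric $\left( g_{\alpha }\right) _{l}$ of the family separately, with the curvature gap $\varepsilon /2$ playing the role of the parameter $\varepsilon $ in Theorem \ref{Step 2} (so that the resulting two--sided estimate becomes $\mathrm{sec}_{\widetilde{\left( g_{\alpha }\right) _{l}}}(V,W)\geq \mathrm{sec}_{\left( g_{\alpha }\right) _{l}}(V,W)-\varepsilon /4$) and with the given $K$.

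First I would verify that $\left( g_{\alpha }\right) _{l}$ meets the hypotheses of Theorem \ref{Step 2}. It is a $G$--invariant metric on $M$, being a Cheeger deformation of $g_{\alpha }$; and it is precisely the metric produced by applying Theorem \ref{Step 1} to $\left( M,g_{\alpha }\right) $ in the course of deriving Corollary \ref{Step 1 cor}. Since Corollary \ref{Step 1 cor}, and hence Theorem \ref{Step 1}, holds for every $l\in \left( 0,l_{1}\left( \alpha \right) \right) $, I may, after possibly shrinking $l$, assume $\left( g_{\alpha }\right) _{l}$ satisfies the full conclusion of Theorem \ref{Step 1}, as Theorem \ref{Step 2} requires. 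In particular the splittings $T\Omega ^{i}=\mathcal{H}^{i}\oplus \mathcal{V}^{i}\oplus \mathrm{span}\left\{ X^{i}\right\} $ of Proposition \ref{Omega_i Dfn}, which enter the statement of Theorem \ref{Step 2}, are available for $\left( g_{\alpha }\right) _{l}$.

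Then I would invoke Theorem \ref{Step 2}: it furnishes a neighborhood $\Omega _{1}\left( \alpha \right) $ of $S_{1}\cup \cdots \cup S_{p}$ and a smooth $G$--invariant $f_{\alpha }:M\longrightarrow \mathbb{R}$ such that $\widetilde{\left( g_{\alpha }\right) _{l}}:=e^{2f_{\alpha }}\left( g_{\alpha }\right) _{l}$ satisfies $\mathrm{sec}_{\widetilde{\left( g_{\alpha }\right) _{l}}}(V,W)\geq K$ for all $V\in \mathrm{span}\left\{ \mathcal{V}^{i},X^{i}\right\} |_{\Omega _{1}\left( \alpha \right) }$ and all $W\in T\Omega _{1}\left( \alpha \right) $, and $\mathrm{sec}_{\widetilde{\left( g_{\alpha }\right) _{l}}}(V,W)\geq \mathrm{sec}_{\left( g_{\alpha }\right) _{l}}(V,W)-\varepsilon /4$ for all $V,W\in TM$; this is precisely the assertion of Corollary \ref{Step 2 Cor}. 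I would also record, for use in the diameter part of the proof of Theorem \ref{alm nonneg thm}, that since the conformal factor of Theorem \ref{Step 2} comes from iterated applications of Theorem \ref{uber conf}, Remark \ref{uber c-1 close} permits $e^{2f_{\alpha }}$ to be taken $C^{0}$--close to $1$.

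I do not expect a genuine obstacle: all of the analysis is carried out already in Theorem \ref{Step 2} (itself a restatement of Theorem \ref{uber conf} for the Cheeger--deformed metric). The only points needing care are bookkeeping: that a metric satisfying the weaker, one--sided conclusion of Corollary \ref{Step 1 cor} can also be taken to satisfy the full conclusion of Theorem \ref{Step 1}, which holds because $\left( g_{\alpha }\right) _{l}$ is the very Cheeger deformation to which Theorem \ref{Step 1} was applied; and that the $\alpha $--dependence of $\Omega _{1}\left( \alpha \right) $ and $f_{\alpha }$ is harmless, since the corollary is stated for each fixed $\alpha $ separately and no uniformity in $\alpha $ is claimed.
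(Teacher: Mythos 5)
Your proposal is correct and coincides with the paper's own treatment: the paper derives Corollary \ref{Step 2 Cor} simply by applying Theorem \ref{Step 2} to $\left( g_{\alpha }\right) _{l}$ for each fixed $\alpha$ (with the parameter adjusted so that the $\varepsilon /2$ of Theorem \ref{Step 2} becomes the stated $\varepsilon /4$), exactly as you do. Your extra bookkeeping remarks, that $\left( g_{\alpha }\right) _{l}$ is the very Cheeger deformation to which Theorem \ref{Step 1} applies and that no uniformity in $\alpha$ is claimed, are consistent with the paper's intent and harmless.
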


Theorem \ref{alm nonneg thm} follows from the next result.

\begin{theorem}
\label{step 3}For any $\varepsilon >0$, let $\widetilde{\left( g_{\alpha
}\right) _{l}}$ be a metric that satisfies the conclusion of Corollary \ref%
{Step 2 Cor}$.$ There is an $l_{2}>0$ so that for all $\lambda \in \left(
0,l_{2}\right) $ 
\begin{equation*}
\mathrm{sec}_{\left( \widetilde{\left( g_{\alpha }\right) _{l}}\right)
_{\lambda }}\geq -\varepsilon ,
\end{equation*}%
where $\left( \widetilde{\left( g_{\alpha }\right) _{l}}\right) _{\lambda }$
is the metric on $M$ induced by the Riemannian submersion 
\begin{equation*}
q_{G\times M}:\left( G\times M,\lambda ^{2}g_{\mathrm{bi}}+\widetilde{\left(
g_{\alpha }\right) _{l}}\right) \longrightarrow M.
\end{equation*}
\end{theorem}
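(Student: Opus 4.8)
The plan is to control the sectional curvature of an arbitrary plane $\mathrm{span}\{Ch_{\lambda}(V),Ch_{\lambda}(W)\}$, with footpoint $x$ and $\{V,W\}$ a $\widetilde{(g_{\alpha})_{l}}$--orthonormal pair, by playing the Cheeger/Gray--O'Neill estimate of Proposition \ref{orbital estimate} against the curvature control furnished by Corollaries \ref{Step 1 cor} and \ref{Step 2 Cor}. Throughout, $\widetilde{(g_{\alpha})_{l}}$ is a fixed metric on the compact manifold $M$, so $|\mathrm{sec}_{\widetilde{(g_{\alpha})_{l}}}|\leq\Lambda$ and, by Part~1 of Proposition \ref{bound on kappa}, $|\kappa_{V}|_{g_{\mathrm{bi}}}\leq C_{1}$ for unit $V$; both constants are independent of $\lambda$. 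From the proof of Proposition \ref{orbital estimate} (see Equation \ref{cheeg's idea eqn}) one has, for our orthonormal pair, both $\mathrm{curv}_{(\widetilde{(g_{\alpha})_{l}})_{\lambda}}(Ch_{\lambda}(V),Ch_{\lambda}(W))\geq\mathrm{sec}_{\widetilde{(g_{\alpha})_{l}}}(V,W)$ and $|Ch_{\lambda}(V)\wedge Ch_{\lambda}(W)|^{2}\geq 1$. Hence, whenever $\mathrm{sec}_{\widetilde{(g_{\alpha})_{l}}}(V,W)\geq-\varepsilon$, dividing by the wedge norm gives $\mathrm{sec}_{(\widetilde{(g_{\alpha})_{l}})_{\lambda}}(Ch_{\lambda}(V),Ch_{\lambda}(W))\geq-\varepsilon$ for that plane, for every $\lambda>0$.

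It remains to handle planes with $\mathrm{sec}_{\widetilde{(g_{\alpha})_{l}}}(V,W)<-\varepsilon$. Here the plan is to use the full strength of Proposition \ref{orbital estimate}, which gives
\[
\mathrm{sec}_{(\widetilde{(g_{\alpha})_{l}})_{\lambda}}\bigl(Ch_{\lambda}(W),Ch_{\lambda}(V)\bigr)\ \geq\ \max\Bigl\{-1,\ -\tfrac{\lambda^{2}}{|\kappa_{V}|_{g_{\mathrm{bi}}}^{2}},\ -\tfrac{\lambda^{2}}{|\kappa_{W}|_{g_{\mathrm{bi}}}^{2}}\Bigr\}\,\Lambda .
\]
Consequently it is enough to exhibit a constant $C>0$, independent of $\lambda$, with $\max\{|\kappa_{V}|_{g_{\mathrm{bi}}},|\kappa_{W}|_{g_{\mathrm{bi}}}\}\geq C$ for every such plane; then the displayed bound is $\geq-\lambda^{2}\Lambda/C^{2}$, which is $\geq-\varepsilon$ once $\lambda<l_{2}:=C\sqrt{\varepsilon/\Lambda}$, and the theorem follows.

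The heart of the matter is therefore the contrapositive claim: there is a uniform $C>0$ so that $\max\{|\kappa_{V}|,|\kappa_{W}|\}<C$ forces $\mathrm{sec}_{\widetilde{(g_{\alpha})_{l}}}(V,W)\geq-\varepsilon$. I would prove this by localizing the hypothesis that $|\kappa_{\cdot}|$ is small, using a cover of $M$ by the tubular neighborhoods $\Omega^{i}$ of Proposition \ref{Omega_i Dfn} together with a compact $\mathcal{K}\subset M^{\mathrm{reg}}$. On $\mathcal{K}$, Part~2 of Proposition \ref{bound on kappa} together with the linearity in Part~3 yields $|\kappa_{V}|\geq C_{2}|V^{TG(x)}|$, so $|\kappa_{V}|<C$ forces $V$ to be $C^{0}$--close to $TG(x)^{\perp}$; on the $\Omega^{i}$, Corollary \ref{kappa not 0} forces $V$ instead to be close to $\mathrm{span}\{\overline{\mathcal{H}}^{i}_{x}\cap TG(x)^{\perp},X^{i}_{x},\mathcal{V}^{i}_{x}\}$. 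In either case $\mathrm{span}\{V,W\}$ is uniformly close to a plane lying in a distribution on which Corollaries \ref{Step 1 cor} and \ref{Step 2 Cor} provide a lower curvature bound: a $\perp$--orbit plane away from $\cup S_{i}$ (where $\mathrm{sec}_{\widetilde{(g_{\alpha})_{l}}}\geq-\varepsilon/2$, combining the two corollaries), a plane containing an $X^{i}$-- or $\mathcal{V}^{i}$--vector inside $\Omega_{1}$ (where $\mathrm{sec}_{\widetilde{(g_{\alpha})_{l}}}\geq K$), or a plane in $\overline{\mathcal{H}}^{i}\cap TG(x)^{\perp}$ (where again $\mathrm{sec}_{\widetilde{(g_{\alpha})_{l}}}\geq-\varepsilon/2$). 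A multilinear perturbation estimate for the curvature tensor --- bounding the cross terms by $\Lambda$ times the small components and using $K\geq 0$ on the $\mathrm{span}\{X^{i}\}\oplus\mathcal{V}^{i}$ self-terms, exactly in the spirit of the expansion carried out in the proof of Theorem \ref{Ric_omega >0} --- then converts these bounds into $\mathrm{sec}_{\widetilde{(g_{\alpha})_{l}}}(V,W)\geq-\varepsilon$, and the uniform $C$ is extracted from the compactness of $M$ and of the relevant frame bundle.

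The main obstacle I anticipate is bookkeeping rather than a new idea: one must nest the neighborhood $\Omega^{\prime}$ from Corollary \ref{Step 1 cor}, the neighborhood $\Omega_{1}$ from Corollary \ref{Step 2 Cor}, and the $\Omega^{i}$ of Proposition \ref{Omega_i Dfn} so that at every footpoint where some $|\kappa_{\cdot}|$ can be small the curvature estimates of \emph{both} Corollaries \ref{Step 1 cor} and \ref{Step 2 Cor} are simultaneously in force on the distributions that $V$ and $W$ are close to, and one must run the multilinear perturbation estimate uniformly across the transition region between $\Omega_{1}$ and the regular part. Once the neighborhoods are arranged and these uniform constants are in hand, the further Cheeger deformation does the rest automatically through Proposition \ref{orbital estimate}.
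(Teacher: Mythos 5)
Your proposal is correct and essentially reproduces the paper's own argument: both treat planes whose curvature for $\widetilde{\left( g_{\alpha }\right) _{l}}$ is already $\geq -\varepsilon$ via the monotonicity and wedge-norm estimates inside Proposition \ref{orbital estimate}, and for the remaining planes extract a uniform lower bound on $\max \left\{ \left\vert \kappa _{V}\right\vert ,\left\vert \kappa _{W}\right\vert \right\} $ from Corollary \ref{kappa not 0} near the singular strata and Part 2 of Proposition \ref{bound on kappa} on the compact regular part, after which small $\lambda $ gives $\mathrm{sec}\geq -\lambda ^{2}K_{1}/c\geq -\varepsilon .$ The paper merely draws the same dichotomy the other way around, fixing neighborhoods $U_{\mathrm{Sing}}$ and $U_{\mathrm{Gen}}$ of the plane sets covered by Corollaries \ref{Step 1 cor} and \ref{Step 2 Cor} and applying the $\kappa $--bounds directly to planes outside them, rather than partitioning by curvature value and contraposing as you do.
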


\begin{proof}
Given $\varepsilon >0,$ from Corollaries \ref{Step 1 cor} and \ref{Step 2
Cor} we have that there is a metric $\widetilde{\left( g_{\alpha }\right)
_{l}}$ so that 
\begin{equation}
\mathrm{sec}_{\widetilde{\left( g_{\alpha }\right) _{l}}}\left( V,W\right) >-%
\frac{\varepsilon }{2}  \label{step 2 curv}
\end{equation}%
if $V,W\in \mathrm{span}\left\{ TG(x)^{\perp }\cap \overline{\mathcal{H}}%
^{i},\mathcal{V}^{i},X^{i}\right\} |_{\Omega _{1}\left( \alpha \right) }$
for some $i\in \left\{ 1,\ldots ,p\right\} $ or $V,W\in TG\left( x\right)
^{\perp }$ and $x\in $ $M\setminus \Omega _{1}\left( \alpha \right) .$

By continuity, Inequality \ref{step 2 curv} continues to hold on some
neighborhood $U_{\mathrm{Sing}}$ of the set of planes spanned by vectors in%
\begin{equation*}
\bigcup\limits_{i=1}^{p}\cup _{x\in \Omega ^{i}}\mathrm{span}\left\{ 
\overline{\mathcal{H}}^{i}\cap TG\left( x\right) ^{\perp },\mathcal{V}%
^{i},X^{i}\right\} \bigcup \cup _{x\in \mathcal{C}_{i}}\mathrm{span}\left\{
T_{x}\left( S_{i}\right) \cap TG\left( x\right) ^{\perp },\nu _{x}\left(
S_{i}\right) \right\} .
\end{equation*}%
Continuity also gives Inequality \ref{step 2 curv} on some neighborhood $U_{%
\mathrm{Gen}}$ of the set of planes,%
\begin{equation*}
\left\{ \left. P\right\vert \text{ }P=\mathrm{span}\left\{ V,W\right\} ,%
\text{ }V,W\in TG\left( x\right) ^{\perp }|_{M\setminus \Omega _{1}\left(
\alpha \right) }\right\} .
\end{equation*}%
For simplicity from this point forward we set 
\begin{equation*}
g\equiv \widetilde{\left( g_{\alpha }\right) _{l}}.
\end{equation*}%
Then by Proposition \ref{orbital estimate}, 
\begin{equation*}
\mathrm{sec}_{g_{\lambda }}\left( Ch_{\lambda }\left( P\right) \right)
>-\varepsilon
\end{equation*}%
for all planes $P\in U_{\mathrm{Sing}}\cup U_{\mathrm{Gen}}.$ So we only
have to verify the same inequality for planes in the complement of $U_{%
\mathrm{Sing}}\cup U_{\mathrm{Gen}}.$

Let $P$ be any plane in the complement of $U_{\mathrm{Sing}}$ with footpoint
in $\Omega _{1}\left( \alpha \right) $ of the form $P=\mathrm{span}\left\{
V,W\right\} $ with $V$ and $W$ orthonormal with respect to $g$. By Corollary %
\ref{kappa not 0} there is a $c>0$ so that%
\begin{equation*}
\max \left\{ \left\vert \kappa _{W}\right\vert _{g_{\mathrm{bi}%
}}^{2},\left\vert \kappa _{V}\right\vert _{g_{\mathrm{bi}}}^{2}\right\} \geq
c.
\end{equation*}%
Similarly, let $P$ be any plane in the complement of $U_{\mathrm{Gen}}$ with
footpoint in $M\setminus \Omega _{1}\left( \alpha \right) $ of the form $P=%
\mathrm{span}\left\{ V,W\right\} $ with $V$ and $W$ orthonormal with respect
to $g$. By Part 2 of Proposition \ref{bound on kappa}, there is a (perhaps
different) constant $c>0$ so that%
\begin{equation*}
\max \left\{ \left\vert \kappa _{W}\right\vert _{g_{\mathrm{bi}%
}}^{2},\left\vert \kappa _{V}\right\vert _{g_{\mathrm{bi}}}^{2}\right\} \geq
c.
\end{equation*}%
So, for any plane in the complement of $U_{\mathrm{Sing}}\cup U_{\mathrm{Gen}%
},$ 
\begin{equation}
\text{\textrm{max}}\left\{ -\frac{1}{\left\vert \kappa _{W}\right\vert _{g_{%
\mathrm{bi}}}^{2}},-\frac{1}{\left\vert \kappa _{V}\right\vert _{g_{\mathrm{%
bi}}}^{2}}\right\} \geq -\frac{1}{c}.  \label{BIG GROUP}
\end{equation}

On the other hand, since $M$ is compact, 
\begin{equation}
\left\vert \mathrm{sec}_{g}\right\vert \leq K_{1}  \label{BOUNDED}
\end{equation}%
for some $K_{1}\in \mathbb{R}.$

Now consider a plane $P=\mathrm{span}\left\{ V,W\right\} $ in the complement
of $U_{\mathrm{Sing}}\cup U_{\mathrm{Gen}}.$ Combining Inequalities \ref%
{BOUNDED} and \ref{BIG GROUP} with the estimate%
\begin{equation*}
\mathrm{sec}_{g_{l}}\left( Ch_{l}\left( W\right) ,Ch_{l}\left( V\right)
\right) \geq \max \left\{ -\frac{\lambda ^{2}}{\left\vert \kappa
_{V}\right\vert _{g_{\mathrm{bi}}}^{2}},-\frac{\lambda ^{2}}{\left\vert
\kappa _{W}\right\vert _{g_{\mathrm{bi}}}^{2}}\right\} \left\vert \mathrm{sec%
}_{g_{M}}\left( V,W\right) \right\vert
\end{equation*}%
from Proposition \ref{orbital estimate} gives us 
\begin{equation*}
\mathrm{sec}_{g_{\lambda }}\left( Ch_{\lambda }\left( W\right) ,Ch_{\lambda
}\left( V\right) \right) >-\varepsilon ,
\end{equation*}%
provided $\lambda $ is sufficiently small, and hence proves Theorem \ref%
{step 3}.
\end{proof}

Finally, since the deformations used to prove Theorems \ref{main} and \ref%
{alm nonneg thm} are the same, Theorem \ref{supplement} follows by combining
the proofs of Theorems \ref{main} and \ref{alm nonneg thm}.

\section{\label{examples section}Examples}

The proof of Theorem \ref{examples} is based on Davis' $SO\left( 3\right) $%
--actions on the class $\Sigma ^{7}$, his $G_{2}$--actions on the class $%
\Sigma _{BP}^{15}$ \cite{Dav}, and the following proposition.

\begin{proposition}
\label{Lifting Prop}Let $\left( M_{1},G\right) $ and $\left( M_{2},G\right) $
be smooth $n$--dimensional $G$--manifolds with $G$ a compact Lie group, and $%
M_{1}/G=M_{2}/G=X.$ In addition, suppose that the group diagrams and
isotropy representations for $\left( M_{1},G\right) $ and $\left(
M_{2},G\right) $ are the same when parameterized by $X.$

Let $\left( X,\mathrm{dist}_{1}\right) $ be the quotient of a $G$--invariant
Riemannian metric, $g_{1},$ on $M_{1}.$ Then $\left( X,\mathrm{dist}%
_{1}\right) $ is also the quotient of a $G$--invariant Riemannian metric, $%
g_{2},$ on $M_{2}.$
\end{proposition}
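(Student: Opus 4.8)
The plan is to transplant $g_{1}$ onto a neighborhood of the singular orbits of $M_{2}$ using the hypothesis, to build independently a Riemannian submersion metric over $(X^{\mathrm{reg}},\mathrm{dist}_{1})$ on the regular part of $M_{2}$, and then to glue the two descriptions together in a way that does not disturb the quotient. For the transplant, recall that by the Slice Theorem a $G$--invariant tubular neighborhood of an orbit $G(\tilde y)\subset M_{i}$ is $G$--equivariantly diffeomorphic to the twisted product $G\times_{G_{\tilde y}}\nu_{\tilde y}$ determined by the isotropy group $G_{\tilde y}$ and its slice representation $\nu_{\tilde y}$. Since $M$ is compact there are only finitely many orbit types, and since the group diagrams and slice representations of $(M_{1},G)$ and $(M_{2},G)$ agree when parameterized by $X$, these identifications can be chosen, for corresponding orbits, compatibly with a single identification of the model neighborhood $\nu_{\tilde y}/G_{\tilde y}\subset X$. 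Patching finitely many of them produces a neighborhood $W\subset X$ of $X\setminus X^{\mathrm{reg}}$, a smaller neighborhood $W'$ of $X\setminus X^{\mathrm{reg}}$ with $\overline{W'}\subset W$, and a $G$--invariant Riemannian metric $\gamma$ on $\pi_{2}^{-1}(W)$ that is carried to $g_{1}|_{\pi_{1}^{-1}(W)}$ by these equivariant identifications; because the identifications fix $W$, the orbital distance metric of $\gamma$ on $W$ equals $\mathrm{dist}_{1}|_{W}$. Write $\mathcal{H}_{\gamma}$ for the $\gamma$--orthogonal complement of the distribution $\mathcal{V}$ tangent to the orbits, over the regular part of $\pi_{2}^{-1}(W)$.

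Next I treat the regular part. Fix any $G$--invariant metric $h$ on $M_{2}$. Over $M_{2}^{\mathrm{reg}}$ the distribution $\mathcal{V}$ has constant rank, and the complements to $\mathcal{V}$ in $TM_{2}^{\mathrm{reg}}$ form an affine bundle, so using a partition of unity on $X^{\mathrm{reg}}$ I can choose a smooth $G$--invariant complement $\mathcal{H}$ to $\mathcal{V}$ over $M_{2}^{\mathrm{reg}}$ that coincides with $\mathcal{H}_{\gamma}$ over $\pi_{2}^{-1}(W'\cap X^{\mathrm{reg}})$ and with the $h$--orthogonal complement of $\mathcal{V}$ outside $\pi_{2}^{-1}(W)$. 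Define $g_{2}^{\circ}$ on $M_{2}^{\mathrm{reg}}$ by declaring $\mathcal{H}\perp\mathcal{V}$, $g_{2}^{\circ}|_{\mathcal{V}}=h|_{\mathcal{V}}$, and $g_{2}^{\circ}|_{\mathcal{H}}=(\pi_{2}|_{M_{2}^{\mathrm{reg}}})^{*}$ of the Riemannian metric tensor of $(X^{\mathrm{reg}},\mathrm{dist}_{1})$. Then $d\pi_{2}$ restricts to a linear isometry on $\mathcal{H}=\ker(d\pi_{2})^{\perp}$, so $\pi_{2}\colon(M_{2}^{\mathrm{reg}},g_{2}^{\circ})\to(X^{\mathrm{reg}},\mathrm{dist}_{1})$ is a Riemannian submersion.

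Now glue. Over $\pi_{2}^{-1}(W'\cap X^{\mathrm{reg}})$ both $\gamma$ and $g_{2}^{\circ}$ have $\mathcal{H}_{\gamma}$ as horizontal distribution, and both are Riemannian submersion metrics over $(X,\mathrm{dist}_{1})$ there, so both make $\mathcal{H}_{\gamma}\perp\mathcal{V}$ and restrict to $(\pi_{2})^{*}\mathrm{dist}_{1}$ on $\mathcal{H}_{\gamma}$; consequently every convex combination of $\gamma$ and $g_{2}^{\circ}$ over that region is again a Riemannian submersion metric over $(X,\mathrm{dist}_{1})$ with horizontal distribution $\mathcal{H}_{\gamma}$. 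Choose $\psi\colon X\to[0,1]$ with $\psi\equiv 1$ on a neighborhood of $X\setminus X^{\mathrm{reg}}$ and $\mathrm{supp}\,\psi\subset W'$, and set $g_{2}=(\psi\circ\pi_{2})\,\gamma+(1-\psi\circ\pi_{2})\,g_{2}^{\circ}$, where each term is read as the zero tensor where its coefficient vanishes; near the singular orbits this reduces to $\gamma$ (which is smooth there), and elsewhere it is a convex combination of smooth metrics on $M_{2}^{\mathrm{reg}}$, so $g_{2}$ is a smooth, $G$--invariant Riemannian metric on all of $M_{2}$. Over $X^{\mathrm{reg}}$ the map $\pi_{2}$ is, for $g_{2}$, everywhere a Riemannian submersion onto $(X^{\mathrm{reg}},\mathrm{dist}_{1})$; since orbital distance metrics are length metrics, the fact that the orbital distance metric of $g_{2}$ and $\mathrm{dist}_{1}$ agree near the diagonal of $X\times X$ (where both are computed from the submersion structure over $X^{\mathrm{reg}}$) forces them to agree on all of $X$.

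The step I expect to be the crux is the gluing: a convex combination of two $G$--invariant metrics with the same orbital quotient in general has a strictly larger quotient, so it is essential to arrange that the horizontal distributions of the transplanted model $\gamma$ and of the regular--part metric $g_{2}^{\circ}$ coincide on the overlap before combining — this is exactly what the affine structure on the space of complements to $\mathcal{V}$ provides — and, secondarily, to check that the glued tensor is genuinely smooth across the singular strata, which holds because there it equals the transplanted metric $\gamma$.
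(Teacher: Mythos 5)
Your regular-part construction and your two-piece gluing are sound, and the subtlety you isolate is real: a convex combination of two $G$--invariant metrics with the same orbital quotient can have a strictly larger quotient, so before combining one must arrange that the pieces share the horizontal distribution and the metric restricted to it. This makes your route genuinely different from the paper's, which never builds a submersion metric on the regular part at all: for every $x\in X$ it takes the slice-theorem identification $\Phi_{x}\colon\pi_{2}^{-1}(N_{x})\to\pi_{1}^{-1}(N_{x})$ with $\pi_{1}\circ\Phi_{x}=\pi_{2}$, pulls back $g_{1}$ over each $N_{x}$, and glues all of these local pullbacks simultaneously by a $G$--invariant partition of unity subordinate to $\{N_{x}\}_{x\in X}$, asserting that the quotient is $(X,\mathrm{dist}_{1})$ because each local piece has that quotient. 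Your matched-horizontal-distribution gluing on the regular part, and the closing length-metric argument, are more explicit than the paper on exactly the point you flag.

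However, as written your argument has a genuine gap precisely where you concentrate the remaining difficulty: the transplant near the singular set. You assert that finitely many slice-theorem identifications can be ``patched'' to yield a single $G$--invariant metric $\gamma$ on $\pi_{2}^{-1}(W)$, with $W$ a neighborhood of all of $X\setminus X^{\mathrm{reg}}$, which is carried to $g_{1}$ by ``these equivariant identifications'' and whose orbital metric is $\mathrm{dist}_{1}|_{W}$. The Slice Theorem only provides an identification over a tube about a single orbit; tubes about different singular orbits, and about orbits in different strata whose closures meet, overlap, and on an overlap two such identifications differ by a $G$--equivariant gauge transformation covering $\mathrm{id}_{X}$ that need not preserve $g_{1}$, its normal-to-orbit distribution, or the metric on that distribution. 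So $\gamma$ cannot simultaneously be the pullback under both identifications; and if ``patching'' means a partition-of-unity combination of the local pullbacks, then on the overlaps you are performing exactly the convex combination you identified as dangerous, without having arranged the matching of normal data there. (A single equivariant identification of neighborhoods of the full singular sets covering $\mathrm{id}_{X}$ would suffice, but it does not follow from the hypotheses without further argument --- note that in the paper's application no such identification can exist over all of $X$, since $M_{1}$ and $M_{2}$ are not diffeomorphic.) To repair this you would need either an honest equivariant patching argument near the singular strata (for instance an induction over strata in the spirit of Proposition \ref{Omega_i Dfn}, using uniqueness of invariant tubular neighborhoods), or a gluing criterion verified on each overlap --- agreement of the orthogonal complement to the orbits and of the metric restricted to it --- which is the singular-strata analogue of the matching you carried out on the regular part; as it stands, the difficulty has been relocated into the word ``patching'' rather than resolved.
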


\begin{proof}
Let $\pi _{1}:M_{1}\longrightarrow X$ and $\pi _{2}:M_{2}\longrightarrow X$
be the quotient maps. By the Slice Theorem for each $x\in X,$ there is a
neighborhood $N_{x}$ and a $G$--equivariant diffeomorphism%
\begin{equation*}
\Phi _{x}:\pi _{2}^{-1}\left( N_{x}\right) \longrightarrow \pi
_{1}^{-1}\left( N_{x}\right)
\end{equation*}%
so that 
\begin{equation*}
\pi _{1}\circ \Phi _{x}=\pi _{2}.
\end{equation*}%
$\Phi _{x}^{\ast }\left( g_{1}\right) $ is a $G$--invariant metric on $\pi
_{2}^{-1}\left( N_{x}\right) $ whose orbital distance metric is $\mathrm{dist%
}_{1}|_{N_{x}}.$

We glue the metrics $\Phi _{x_{i}}^{\ast }\left( g_{1}\right) $ together
with a $G$--invariant partition of unity subordinate to$\left\{
N_{x}\right\} _{x\in X}$, yielding a $G$--invariant metric $g_{2}.$ The
quotient $\left( M_{2},g_{2}\right) /G$ is $\left( X,\mathrm{dist}%
_{1}\right) $ since for all $i,$ $\left( \pi _{2}^{-1}\left(
N_{x_{i}}\right) ,\Phi _{x_{i}}^{\ast }\left( g_{1}\right) \right) /G$ is $%
\left( N_{x_{i}},\mathrm{dist}_{1}\right) .$
\end{proof}

The key point for Davis' actions is that $SO\left( 3\right) $ and $G_{2}$
are the group of automorphisms of the quaternion and octonion division
algebras, respectively. Davis starts by defining the actions on the subsets
of $\Sigma ^{7}$ and $\Sigma _{BP}^{15}$ that are $S^{3}$--bundles over $%
S^{4}$ and $S^{7}$--bundles over $S^{8},$ respectively.

Writing $\mathbb{F}$ for either $\mathbb{H}$ or $\mathbb{O},$ and $b$ for
the real dimension of $\mathbb{F},$ recall that the $S^{b-1}$--bundles over $%
S^{b}$ with structure group $SO\left( b\right) $ are classified by $\mathbb{Z%
}\oplus \mathbb{Z}$ as follows.

The total space of the bundle $p_{m,n}:E_{m,n}\longrightarrow S^{b}$ is
obtained by gluing together two copies of $\mathbb{F}\times S^{b-1}$ via%
\begin{equation*}
\Phi _{m,n}:\left( u,v\right) \longmapsto \left( \frac{u}{\left\vert
u\right\vert ^{2}},\frac{u^{m}}{\left\vert u\right\vert ^{m}}v\frac{u^{n}}{%
\left\vert u\right\vert ^{n}}\right) =\left( u^{\prime },v^{\prime }\right) .
\end{equation*}%
To describe the map $p_{m,n}:E_{m,n}\longrightarrow S^{b},$ we view $S^{b}$
as the disjoint union of two copies of $\mathbb{F}$ that are glued together
along $\mathbb{F}\setminus \left\{ 0\right\} $ via $\phi :\mathbb{F}%
\setminus \left\{ 0\right\} \longrightarrow \mathbb{F}\setminus \left\{
0\right\} ,$ $\phi \left( u\right) =\frac{u}{\left\vert u\right\vert ^{2}}.$
The map $p_{m,n}:E_{m,n}\longrightarrow S^{b}$ is then given by projecting
onto the first factor of either copy of $\mathbb{F}\times S^{b-1}.$

Let $G$ stand for either $SO\left( 3\right) $ or $G_{2}$ and observe that $G$
acts by automorphisms of $\mathbb{F}.$ So by letting $G$ act diagonally on
both copies of $\mathbb{F}\times S^{b-1}$ 
\begin{equation}
g\left( u,v\right) =\left( g\left( u\right) ,g\left( v\right) \right)
\label{Davis}
\end{equation}%
we get a well defined $G$--action on $E_{m,n}.$ In the quaternionic case,
when $m+n=\pm 1,$ Milnor constructed a Morse function on $E_{m,n}$ with only
two critical points and concluded that $E_{m,n}$ is homeomorphic to $%
S^{2b-1} $ \cite{Mil}, and Shimada carried out the analogous program in the
octonionic case, also when $m+n=\pm 1$ \cite{Shim}. Davis observed that the
Morse functions constructed by Milnor and Shimada are invariant under the $G$%
--action, and concluded that $E_{m,n}$ is $G$--equivariantly homeomorphic to 
$S^{2b-1}.$ In particular, $E_{m,n}/G$ is homeomorphic to $S^{2b-1}/G.$

It is easy to see the following.

\begin{proposition}
The Action \ref{Davis} is by symmetries of $p_{m,n}:E_{m,n}\longrightarrow
S^{b},$ and has three orbit types. In the quaternion case the isotropies are$%
\vspace*{0.1in}$:

\noindent 1. Trivial when $uv-vu\neq 0\vspace*{0.1in},$

\noindent 2. $SO\left( 2\right) $ when $uv-vu=0,$ but either $Im\left(
v\right) \neq 0$ or $Im\left( u\right) \neq 0\vspace*{0.1in},$

\noindent 3. $SO\left( 3\right) $ when $Im\left( v\right) =Im\left( u\right)
=0.\vspace*{0.1in}$

In the octonion case the isotropies are$\vspace*{0.1in}$

\noindent 1. $SU\left( 2\right) $ when $uv-vu\neq 0,\vspace*{0.1in}$

\noindent 2. $SU\left( 3\right) $ when $uv-vu=0,$ but either $Im\left(
v\right) \neq 0$ or $Im\left( u\right) \neq 0,\vspace*{0.1in}$

\noindent 3. $G_{2}$ when $Im\left( v\right) =Im\left( u\right) =0.$
\end{proposition}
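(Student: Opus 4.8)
The plan is to prove the two assertions in turn: first that the diagonal $G$--action respects the bundle $p_{m,n}$, and then to read off the isotropy groups after reducing to pointwise stabilizers of subspaces of $\mathrm{Im}(\mathbb{F})$.

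\textbf{The action is by bundle symmetries.} Each $g\in G$, regarded as an $\mathbb{R}$--algebra automorphism of $\mathbb{F}$, fixes $1$ and hence preserves the decomposition $\mathbb{F}=\mathbb{R}\cdot 1\oplus \mathrm{Im}(\mathbb{F})$. Since $\mathrm{Im}(\mathbb{F})\setminus\{0\}$ is exactly the set of $x$ with $x^{2}$ a negative real scalar (true in both $\mathbb{H}$ and $\mathbb{O}$), $g$ maps $\mathrm{Im}(\mathbb{F})$ to itself, and $g(p)^{2}=g(p^{2})=-|p|^{2}\cdot 1$ for $p\in\mathrm{Im}(\mathbb{F})$ gives $|g(p)|=|p|$; thus $g$ is a linear isometry of $\mathbb{F}$ that commutes with conjugation. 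Consequently $g$ commutes with the gluing map $\phi(u)=u/|u|^{2}$ and, since $\Phi_{m,n}$ is assembled from multiplication, norms and powers of $\mathbb{F}$ and $g(ab)=g(a)g(b)$ holds for any parenthesization in the octonion case, $g$ commutes with $\Phi_{m,n}$ as well. Hence the diagonal action is well defined on $E_{m,n}$, it covers the action induced by $g$ on each copy of $\mathbb{F}$ in $S^{b}$, and $p_{m,n}$ is $G$--equivariant.

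\textbf{Reduction of the isotropy computation.} For $g\in G$ and $x\in\mathbb{F}$ we have $g(x)=x$ if and only if $g$ fixes $\mathrm{Im}(x)$, because $g$ already fixes $\mathbb{R}\cdot 1$ pointwise. Therefore, for $(u,v)\in\mathbb{F}\times S^{b-1}$,
\[
G_{(u,v)}=\bigl\{\,g\in G\ :\ g|_{W}=\mathrm{id}\,\bigr\},\qquad W:=\mathrm{span}_{\mathbb{R}}\{\mathrm{Im}(u),\mathrm{Im}(v)\}\subseteq\mathrm{Im}(\mathbb{F}).
\]
Writing $u=a+p$, $v=b+q$ with $a,b\in\mathbb{R}$ and $p,q\in\mathrm{Im}(\mathbb{F})$, a direct expansion gives $uv-vu=pq-qp=2\,(p\times q)$, where $\times$ is the cross product on $\mathrm{Im}(\mathbb{F})$ (on $\mathbb{R}^{3}$ for $\mathbb{H}$, on $\mathbb{R}^{7}$ for $\mathbb{O}$); since $p\times q=0$ precisely when $p,q$ are linearly dependent, the three conditions in the statement correspond exactly to $\dim W=2$, $\dim W=1$, and $\dim W=0$. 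So it remains to identify the pointwise stabilizer in $G$ of a subspace $W\subseteq\mathrm{Im}(\mathbb{F})$ of each dimension $0,1,2$.

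\textbf{Identifying the stabilizers.} If $\dim W=0$ the stabilizer is all of $G$, i.e.\ $SO(3)$ or $G_{2}$. For $\mathbb{F}=\mathbb{H}$, $\mathrm{Aut}(\mathbb{H})$ acts on $\mathrm{Im}(\mathbb{H})=\mathbb{R}^{3}$ as $SO(3)$: the pointwise stabilizer of a line is the rotation group about that axis, $SO(2)$, and the pointwise stabilizer of a $2$--plane is trivial, since a rotation of $\mathbb{R}^{3}$ fixing two independent vectors is the identity. For $\mathbb{F}=\mathbb{O}$, $G_{2}=\mathrm{Aut}(\mathbb{O})$ acts on $\mathrm{Im}(\mathbb{O})=\mathbb{R}^{7}$ with $G_{2}/SU(3)=S^{6}$, so the pointwise stabilizer of a line is $SU(3)$. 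If $\dim W=2$, choose orthonormal $e_{1},e_{2}$ spanning $W$; then $e_{1},e_{2}$ anticommute, $e_{1}e_{2}\in\mathrm{Im}(\mathbb{O})$ is orthogonal to both, and $\mathbb{H}':=\mathrm{span}\{1,e_{1},e_{2},e_{1}e_{2}\}$ is a quaternion subalgebra; an automorphism fixes $W$ pointwise iff it fixes $e_{1}$ and $e_{2}$, which (being multiplicative) forces it to fix $e_{1}e_{2}$ and hence all of $\mathbb{H}'$, and the subgroup of $G_{2}$ fixing a quaternion subalgebra pointwise is the classical $SU(2)=Sp(1)$. This yields $SU(2)$ for $\dim W=2$. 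Finally, each value of $\dim W$ is realized (take $u=v=1$; $u=v=i$; $u=i$, $v=j$), so the action has precisely these three orbit types. The only non-bookkeeping inputs are the two standard structural facts about $G_{2}=\mathrm{Aut}(\mathbb{O})$ — that a unit imaginary octonion has stabilizer $SU(3)$ and that a quaternion subalgebra has pointwise stabilizer $SU(2)$ — together with the identity $uv-vu=2\,(p\times q)$ organizing the case division; establishing those cleanly is the main obstacle, and everything else is routine.
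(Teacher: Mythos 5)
Your argument is correct. Note that the paper itself offers no proof of this proposition (it is introduced with ``It is easy to see the following''), so there is no argument of the authors to compare against; your write-up simply supplies the missing details. The two reductions you make are the right ones: equivariance of $\Phi_{m,n}$ holds because automorphisms of $\mathbb{F}$ are norm-preserving and (by Artin's theorem in the octonion case) the gluing formula lives in the associative subalgebra generated by $u$ and $v$; and the isotropy at $(u,v)$ is exactly the pointwise stabilizer of $W=\mathrm{span}\{\mathrm{Im}(u),\mathrm{Im}(v)\}$, with the identity $uv-vu=2\,\mathrm{Im}(u)\times\mathrm{Im}(v)$ translating the stated conditions into $\dim W=2,1,0$. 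The only external inputs are the standard facts $\mathrm{Aut}(\mathbb{H})\cong SO(3)$ acting on $\mathrm{Im}(\mathbb{H})$ as the full rotation group, $G_{2}/SU(3)\cong S^{6}$, and that the pointwise stabilizer in $G_{2}$ of a quaternion subalgebra is $SU(2)$; citing these is reasonable for a statement the authors regard as routine. If you prefer to avoid quoting the quaternion-subalgebra fact, the $\dim W=2$ case follows directly from the $\dim W=1$ case: the stabilizer $SU(3)$ of $e_{1}$ acts on $e_{1}^{\perp}\cap\mathrm{Im}(\mathbb{O})\cong\mathbb{C}^{3}$ by the standard representation, and the stabilizer there of the nonzero vector $e_{2}$ is $SU(2)$, which also makes the conjugacy of all the isotropy groups within each stratum (hence the count of exactly three orbit types) transparent.
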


\begin{proposition}
\label{model action}The $G$--action on $E_{1,0}=S^{2b-1}$ is $G$%
--equivariantly diffeomorphic to an orthogonal action. It induces a $G$%
--action on $\mathbb{F}P^{2}\#-\mathbb{F}P^{2}.$
\end{proposition}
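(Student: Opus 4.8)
The plan is to write down the standard linear model explicitly and match it with Davis' clutching description, then produce the action on $\mathbb{F}P^{2}\#-\mathbb{F}P^{2}$ by the double mapping cylinder construction of the introduction. First, realize $S^{2b-1}=\{(x,y)\in\mathbb{F}\oplus\mathbb{F}:|x|^{2}+|y|^{2}=1\}$ with $G$ acting diagonally, $g\cdot(x,y)=(g(x),g(y))$. Since $G$ is the automorphism group of the normed division algebra $\mathbb{F}$, every $g\in G$ fixes the real line and commutes with conjugation, hence preserves the norm; so $g$ acts on $\mathbb{F}\oplus\mathbb{F}\cong\mathbb{R}^{2b}$ by an element of $O(2b)$ and the diagonal action preserves $S^{2b-1}$. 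This is the orthogonal model; it remains to identify it $G$-equivariantly with Davis' action on $E_{1,0}$.

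For the identification I would use the two standard charts of the Hopf bundle $S^{b-1}\hookrightarrow S^{2b-1}\to S^{b}$. Over $\{y\neq0\}$ send $(x,y)\mapsto(xy^{-1},\,y/|y|)\in\mathbb{F}\times S^{b-1}$, and over $\{x\neq0\}$ send $(x,y)\mapsto(yx^{-1},\,x/|x|)$; these make sense because any two elements of $\mathbb{F}$ generate an associative subalgebra. A direct computation of the transition map on $\{xy\neq0\}$, writing $x=uy$, gives $(u,v)\mapsto(u^{-1},\,(u/|u|)v)$, and $u^{-1}=\bar u/|u|^{2}$ is carried to Davis' base transition $u\mapsto u/|u|^{2}$ by the ($G$-equivariant, orthogonal) reflection $u\mapsto\bar u$ of $S^{b}$, while the fibre transition $(u/|u|)v$ already agrees with the fibre part of $\Phi_{1,0}(u,v)=(u/|u|^{2},(u/|u|)v)$. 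Hence these charts identify $(S^{2b-1},\text{round})$ with $(E_{1,0},\Phi_{1,0})$. In these chart coordinates the diagonal action becomes $(u,v)\mapsto(g(u),g(v))$ in each chart — using $g(xy^{-1})=g(x)g(y)^{-1}$ and $|g(y)|=|y|$ — which is precisely Davis' Action \ref{Davis}. I expect the main obstacle to be exactly this clutching bookkeeping: it is the place where conjugation and left/right-multiplication conventions must be tracked carefully; the rest is formal once the charts are matched.

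For the second sentence I would apply the double mapping cylinder construction of the introduction to $p_{1,0}\colon E_{1,0}=S^{2b-1}\to S^{b}$. By the previous paragraph $p_{1,0}$ is $G$-equivariant for the diagonal action on $E_{1,0}$ and the induced action on $S^{b}=\mathbb{F}P^{1}$. Letting $G$ act on $B_{-}\amalg E_{1,0}\times[-1,1]\amalg B_{+}$ by $g\cdot(e,t)=(g(e),t)$ and by the base action on $B_{\pm}$, the identifications $(e,\mp1)\sim p_{1,0}(e)$ are $G$-equivariant, so $G$ acts on the double mapping cylinder $\mathfrak{C}(p_{1,0})$. Finally one identifies $\mathfrak{C}(p_{1,0})$ with $\mathbb{F}P^{2}\#-\mathbb{F}P^{2}$: the mapping cylinder of the fibre $S^{b-1}\to\mathrm{pt}$ is $D^{b}$, so each half of $\mathfrak{C}(p_{1,0})$ is the Hopf disk bundle $D(\gamma)\to S^{b}$ with $\partial D(\gamma)=S^{2b-1}$, whence $\mathfrak{C}(p_{1,0})=D(\gamma)\cup_{\mathrm{id}_{S^{2b-1}}}D(\gamma)$ is the double of $D(\gamma)$; since a tubular neighbourhood of $\mathbb{F}P^{1}\subset\mathbb{F}P^{2}$ is $D(\gamma)$ with complementary a $2b$-disk, this double is exactly $\mathbb{F}P^{2}\#-\mathbb{F}P^{2}$. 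The only extra ingredient here beyond the clutching computation is this standard smooth handle/disk-bundle description of $\mathbb{F}P^{2}$, which I would simply cite.
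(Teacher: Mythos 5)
Your proposal is correct and follows essentially the same route as the paper: explicit Hopf-type charts identifying $S^{2b-1}$ with $E_{1,0}$ so that the diagonal automorphism action matches Davis' action \ref{Davis}, followed by the double mapping cylinder construction. The only difference is bookkeeping: the paper builds the conjugation into its second chart, $h_{2}(v,r)=\left( r,\bar{v}r\right) \phi (v)$, so the transition comes out exactly as $\Phi _{1,0}$ with no correction, whereas you obtain the transition with a conjugated base coordinate and repair it afterwards by composing one chart with conjugation, which is legitimate since conjugation commutes with $G$.
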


\begin{proof}
We prove the first statement by constructing explicit coordinate charts that
identify $S^{2b-1}$ with $E_{1,0}$ and for which the corresponding action on 
$S^{2b-1}$ is 
\begin{equation}
\left( g,\left( 
\begin{array}{c}
a \\ 
b%
\end{array}%
\right) \right) \longmapsto \left( 
\begin{array}{c}
g\left( a\right) \\ 
g\left( b\right)%
\end{array}%
\right) ,  \label{conj}
\end{equation}%
where we view $S^{2b-1}$ as the unit sphere in $\mathbb{F}\oplus \mathbb{F}$%
, and $G$ is acting by automorphisms of $\mathbb{F}.$

The coordinate charts are constructed as in \cite{GromMey} or \cite{Wilh}.
Let $\phi :\mathbb{F}\longrightarrow \mathbb{R}$ be 
\begin{equation*}
\phi (u)=\frac{1}{\sqrt{1+|u|^{2}}}.
\end{equation*}%
The charts $h_{1},h_{2}:\mathbb{F}\times S^{b-1}\longrightarrow S^{2b-1}$
are defined by 
\begin{equation*}
h_{1}(u,q)=\left( 
\begin{array}{c}
uq \\ 
q%
\end{array}%
\right) \phi (u)
\end{equation*}%
and 
\begin{equation*}
h_{2}(v,r)=\left( 
\begin{array}{c}
r \\ 
\bar{v}r%
\end{array}%
\right) \phi (v).
\end{equation*}%
The charts $h_{1}$ and $h_{2}$ are embeddings onto the open dense sets 
\begin{equation*}
U_{1}=\left\{ \left. \left( 
\begin{array}{c}
a \\ 
c%
\end{array}%
\right) \right\vert ,\ \ c\not=0\right\} \;\;
\end{equation*}%
and%
\begin{equation*}
U_{2}=\left\{ \left. \left( 
\begin{array}{c}
a \\ 
c%
\end{array}%
\right) \right\vert \ a\not=0\right\}
\end{equation*}%
respectively. In fact, the formulas for the inverses are given by 
\begin{equation*}
h_{1}^{-1}\left( 
\begin{array}{c}
a \\ 
c%
\end{array}%
\right) =(\frac{a\bar{c}}{|c|^{2}},\frac{c}{\left\vert c\right\vert }),
\end{equation*}%
\begin{equation*}
h_{2}^{-1}\left( 
\begin{array}{c}
a \\ 
c%
\end{array}%
\right) =(\frac{a\bar{c}}{\left\vert a\right\vert ^{2}},\frac{a}{\left\vert
a\right\vert }).
\end{equation*}%
It follows that 
\begin{eqnarray*}
h_{2}^{-1}\circ h_{1}(u,q) &=&h_{2}^{-1}\left( \left( 
\begin{array}{c}
uq \\ 
q%
\end{array}%
\right) \phi (u)\right) \\
&=&\left( \frac{uq\bar{q}\phi (u)^{2}}{\left\vert u\right\vert ^{2}\phi
(u)^{2}},\frac{uq\phi (u)}{\left\vert u\right\vert \phi (u)}\right) \\
&=&\left( \frac{u}{\left\vert u\right\vert ^{2}},\frac{u}{\left\vert
u\right\vert }q\right) .
\end{eqnarray*}%
In the case of $E_{1,0},$ it follows that the action in \ref{Davis} is $G$%
--equivariantly diffeomorphic to the isometric action on $S^{2b-1}$ given by %
\ref{conj}.

Since $G$ acts by symmetries of the Hopf fibration, $h:S^{2b-1}%
\longrightarrow S^{b},$ we get a well defined $G$--action on the double
mapping cylinder of the Hopf fibration, 
\begin{equation}
\mathbb{F}P^{1}\cup _{h}\left\{ \left( 0,\frac{\pi }{2}\right) \times
S^{2b-1}\right\} \cup _{h}\mathbb{F}P^{1},  \label{mapping cylinder}
\end{equation}%
that is, on $\mathbb{F}P^{2}\#-\mathbb{F}P^{2}.$
\end{proof}

To get $G$--actions on the other elements of $\Sigma ^{7}$ and $\Sigma
_{BP}^{15}$ we note that, as observed by Kervaire and Milnor, $\Sigma ^{7}$
is a cyclic group of order $28,$ and $\Sigma _{BP}^{15}$ is a cyclic group
of order $8,128.$ In both cases $E_{2,-1}$ generates the cyclic group (see
page 69 of \cite{Dav} and pages 101 and 106 of \cite{Ell-Kup}). As observed
by Davis, the fixed point set of the $G$--action is a circle. At a fixed
point, we take the equivariant connected sum of $E_{2,-1}$ with itself. This
produces a $G$--action on $2E_{2,-1}\equiv E_{2,-1}\#E_{2,-1},$ which is
equivariantly homeomorphic to the standard $G$--action given in Proposition %
\ref{model action}. Since $E_{2,-1}$ generates the cyclic groups, $\Sigma
^{7}$ and $\Sigma _{BP}^{15},$ we iterate this construction to obtain a $G$%
--action on each member of $\Sigma ^{7}$ and $\Sigma _{BP}^{15}$ that is
equivariantly homeomorphic to the standard $G$--action. In particular, each $%
G$--action has the same orbit space, group diagram and isotropy
representation as the standard model.

We can therefore apply Proposition \ref{Lifting Prop} with $%
M_{1}=E_{1,0}=S^{2b-1}$ and the standard $G$--action and $M_{2}$ an
arbitrary element of $\Sigma ^{7}$ or $\Sigma _{BP}^{15}$ with the $G$%
--action from above. This yields a $G$--invariant metric on $M_{2}$ whose
quotient is positively curved.

We now apply Theorem \ref{supplement} to obtain a family of $G$--invariant
metrics with positive Ricci curvature that are also almost non-negatively
curved on each element of $\Sigma ^{7}$ and $\Sigma _{BP}^{15}.$

\subsection{Fake $\mathbb{F}\mathbf{P}^{\mathbf{2}}\mathbf{\#-}\mathbb{F}%
\mathbf{P}^{\mathbf{2}}$s}

Let $M^{2b}$ be the double mapping cylinder on 
\begin{equation*}
p_{m,n}:E_{m,n}\longrightarrow S^{b}
\end{equation*}%
where $m+n=\pm 1.$ Since $G$ acts by symmetries of $p_{m,n}$ we get a smooth 
$G$--action on $M^{2b}.$

In the case when $\left( m,n\right) =\left( 1,0\right) $ we get the
connected sum of the standard projective plane with its negative, $\mathbb{F}%
P^{2}\#-\mathbb{F}P^{2},$ with a $G$--action.

As before, the two $G$--spaces $\mathbb{F}P^{2}\#-\mathbb{F}P^{2}$ and $%
M^{2b}$ have the same orbit space, group diagrams and isotropy
representations. So, as before, we will apply Proposition \ref{Lifting Prop}
and Theorem \ref{supplement} to obtain a family of $G$--invariant metrics
with positive Ricci curvature that are also almost non-negatively curved. To
do this we need a $G$--invariant metric on $\mathbb{F}P^{2}\#-\mathbb{F}%
P^{2} $ for which, $\left( \mathbb{F}P^{2}\#-\mathbb{F}P^{2}\right) /G$ has
both almost non-negative and positive Ricci curvature.

Cheeger constructed non-negatively curved metrics on $\mathbb{F}P^{2}\#-%
\mathbb{F}P^{2}$ by gluing together two copies of the Hopf disk bundles that
correspond to the Hopf fibration $h=p_{1,0}:E_{1,0}\longrightarrow S^{b}$, 
\cite{Cheeg}. To do the gluing, Cheeger constructed metrics that are
products near the boundaries. Consequently, his metrics on $\mathbb{F}%
P^{2}\#-\mathbb{F}P^{2}$ and $\left( \mathbb{F}P^{2}\#-\mathbb{F}%
P^{2}\right) /G$ have $Ric\geq 0$, but not $Ric>0.$ The zero Ricci
curvatures occur for the field $X$ that is the gradient of the distance from
the boundary of either disk bundle. Moreover, lower Ricci curvature bounds
need not be preserved by Riemannian submersions \cite{ProWilh2}, so the
verification of positive Ricci curvature on $\left( \mathbb{F}P^{2}\#-%
\mathbb{F}P^{2}\right) /G$ requires additional calculation and a minor
modification of Cheeger's metric.

Since the case of $\mathbb{H}P^{2}\#-\mathbb{H}P^{2}$ is essentially known
by combining the results of \cite{Guij} and \cite{GrovZil1}, we will only
discuss the case of $\mathbb{O}P^{2}\#-\mathbb{O}P^{2}$ explicitly, noting
that similar methods will also apply to the fake $\mathbb{H}P^{2}\#-\mathbb{H%
}P^{2}$s.

Fortunately, it is straightforward to modify Cheeger's construction to
obtain $G$-invariant metrics on $\mathbb{O}P^{2}\#-\mathbb{O}P^{2}$ for
which the quotient metrics on $\left( \mathbb{O}P^{2}\#-\mathbb{O}%
P^{2}\right) /G$ are $Ric>0$ with non-negative curvature.

The bi-quotient approach indicated by Totaro, \cite{Tata}, provides the
means to achieve this with minimal calculations. Totaro observed that $%
\mathbb{O}P^{2}\#-\mathbb{O}P^{2}$ is the quotient of a $Spin\left( 8\right) 
$ action on $Spin\left( 9\right) \times S^{8}.$ Give $Spin\left( 9\right)
\times S^{8}$ the product metric. Let $Spin\left( 8\right) $ act on $%
Spin\left( 9\right) $ on the right. Let $S^{7}\subset S^{8},$ where we view $%
S^{7}$ and $S^{8}$ as the unit spheres in $\mathbb{R}^{8}\subset \mathbb{R}%
^{9},$ respectively. We suspend the standard $Spin(8)$ action on the $S^{7}$
to get a $Spin(8)$ action on $S^{8},$ and denote the fixed points by $\pm
e_{9}.$ We set 
\begin{equation*}
t\equiv \mathrm{dist}_{S^{8}}\left( e^{9},\cdot \right) ,
\end{equation*}%
and 
\begin{equation*}
X\equiv \mathrm{grad}\left( \mathrm{dist}_{S^{8}}\left( e^{9},\cdot \right)
\right) .
\end{equation*}%
We write points in $S^{8}\setminus \left\{ \pm e_{9}\right\} $ as $\left(
x,t\right) \in S^{7}\times \left( 0,\pi \right) .$

We then get a free $Spin(8)$--action on $Spin\left( 9\right) \times S^{8},$
and call the quotient map 
\begin{equation*}
q:Spin\left( 9\right) \times S^{8}\longrightarrow \left( Spin\left( 9\right)
\times S^{8}\right) /Spin\left( 8\right) .
\end{equation*}%
As observed in \cite{Tata}, $\left( Spin\left( 9\right) \times S^{8}\right)
/Spin\left( 8\right) $ is diffeomorphic to $\mathbb{O}P^{2}\#-\mathbb{O}%
P^{2}.$ To see this we first point out:

\begin{proposition}
\label{S^15}$\left( Spin\left( 9\right) \times S^{7}\right) /Spin\left(
8\right) $ is diffeomorphic to $Spin\left( 9\right) /Spin\left( 7\right) $,
which, in turn, is diffeomorphic to $S^{15}.$
\end{proposition}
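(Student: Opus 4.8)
The plan is to deduce both asserted diffeomorphisms from a single, standard ``cancellation'' computation together with the octonionic Hopf fibration. First note that the statement only involves a transitive $\text{Spin}(8)$-action on $S^{7}$ (the one used in the Davis--Totaro construction). For \emph{any} transitive $\text{Spin}(8)$-action on $S^{7}$, let $K$ be the isotropy group of a point $y_{0}$; then $K$ is a copy of $\text{Spin}(7)$, and I claim $\left(\text{Spin}(9)\times S^{7}\right)/\text{Spin}(8)\cong\text{Spin}(9)/K$. To see this, let $\text{Spin}(9)$ act on $\left(\text{Spin}(9)\times S^{7}\right)/\text{Spin}(8)$ by left translation in the first factor; this commutes with the diagonal $\text{Spin}(8)$-action, hence is well defined. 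Using transitivity of $\text{Spin}(8)$ on $S^{7}$, the $\text{Spin}(9)$-action is transitive, and its isotropy group at $[e,y_{0}]$ is exactly $K$. Since the diagonal $\text{Spin}(8)$-action is free (the right action on $\text{Spin}(9)$ already is), the quotient is a smooth manifold, so the orbit map is a $\text{Spin}(9)$-equivariant bijection of smooth homogeneous spaces $\text{Spin}(9)/K\to\left(\text{Spin}(9)\times S^{7}\right)/\text{Spin}(8)$, hence a diffeomorphism. This proves the first diffeomorphism, with $\text{Spin}(7)$ denoting this $K$.

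For the identification with $S^{15}$, I would realize $S^{15}$ as the total space of the octonionic Hopf fibration $h\colon S^{15}\to S^{8}$, on which $\text{Spin}(9)$ acts by symmetries of $h$ covering its transitive linear action on $S^{8}=\mathbb{O}P^{1}=\text{Spin}(9)/\text{Spin}(8)$ (equivalently, $\text{Spin}(9)$ acts transitively on $S^{15}$ via its $16$-dimensional real spinor representation $\Delta_{9}$, with $\Delta_{9}|_{\text{Spin}(8)}=\Delta_{8}^{+}\oplus\Delta_{8}^{-}$). Let $p_{0}\in S^{8}$ be the point stabilized by $\text{Spin}(8)$ and $F=h^{-1}(p_{0})\cong S^{7}$. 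Then $\text{Spin}(8)$ preserves $F$; the $\text{Spin}(9)$-stabilizer of any $y\in F$ lies in $\text{Spin}(8)$ (it stabilizes $h(y)=p_{0}$), is $21$-dimensional, and so the $\text{Spin}(8)$-orbit of $y$ is open, hence all, of $F$. Thus $\text{Spin}(8)$ acts transitively on $F$ with isotropy a copy of $\text{Spin}(7)$. The evaluation map $\text{Spin}(9)\times F\to S^{15}$, $(g,y)\mapsto g\cdot y$, is invariant under the diagonal $\text{Spin}(8)$-action, is onto, and descends to a bijection on the quotient: if $g\cdot y=g'\cdot y'$ then applying $h$ gives $g\cdot p_{0}=g'\cdot p_{0}$, so $g^{-1}g'\in\text{Spin}(8)$ and $[g,y]=[g',y']$. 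As above the quotient is a smooth manifold, so this equivariant bijection is a diffeomorphism $\left(\text{Spin}(9)\times F\right)/\text{Spin}(8)\cong S^{15}$. Combined with the first paragraph (applied to $F$), this gives $\text{Spin}(9)/\text{Spin}(7)\cong S^{15}$.

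The main obstacle is the remaining compatibility check: that the $\text{Spin}(8)$-space $S^{7}$ appearing in the construction is equivariantly diffeomorphic to the Hopf fiber $F$. This is pure triality bookkeeping---$\text{Spin}(8)$ has (up to equivalence) three transitive actions on $S^{7}$, the vector action and the two half-spin actions, and one must confirm that the construction's action is a half-spin one, e.g. by unwinding the octonionic formulas defining it (equivalently, identifying $F$ with the unit sphere of a half-spin summand $\mathbb{O}\subset\Delta_{9}$). A consistency check in the other direction: it cannot be the vector action, since that has isotropy the preimage $\text{Spin}(7)_{\mathrm{vec}}$ of $SO(7)\subset SO(9)$, and $\text{Spin}(9)/\text{Spin}(7)_{\mathrm{vec}}=SO(9)/SO(7)$ is the unit tangent bundle of $S^{8}$, which has $\pi_{7}\cong\mathbb{Z}/2$ and hence is not a sphere. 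One can sidestep the representation theory entirely by instead writing explicit octonionic coordinate charts for $S^{15}$ in the spirit of Proposition \ref{model action} and checking the orbit map directly, or simply cite the classical identification of $\text{Spin}(9)/\text{Spin}(7)$ with the octonionic Hopf sphere.
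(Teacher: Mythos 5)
Your proposal is correct and follows essentially the same two-step route as the paper: first the cancellation $\left( Spin\left( 9\right) \times S^{7}\right) /Spin\left( 8\right) \cong Spin\left( 9\right) /Spin\left( 7\right) $, which the paper implements by writing the explicit map $\left( A,\sigma Spin\left( 7\right) \right) \cdot Spin\left( 8\right) \longmapsto \left( A\sigma \right) Spin\left( 7\right) $ (the same content as your transitive-left-action argument), and then the identification $Spin\left( 9\right) /Spin\left( 7\right) \cong S^{15}$, for which the paper simply cites \cite{GluWarZil} ($Spin\left( 9\right) $ is the symmetry group of the octonionic Hopf fibration with isotropy $Spin\left( 7\right) $) rather than re-deriving it via the spinor representation and the evaluation map as you do. The ``main obstacle'' you flag --- that the $Spin\left( 8\right) $--action on $S^{7}$ in the construction must be a half-spin action, so that its isotropy is the Hopf-fiber copy of $Spin\left( 7\right) $ rather than the vector copy --- is a genuine point (your $T^{1}S^{8}$ computation correctly shows the statement would fail for the vector action), but it is not something the paper's proof verifies either: it is settled implicitly by the choice of $Spin\left( 8\right) $--action in the setup and by the appeal to \cite{GluWarZil}, which is precisely the remedy you propose, so your extra care is warranted but does not amount to a divergence from, or a defect relative to, the paper's argument.
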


\begin{proof}
We identify $S^{7}$ with $Spin\left( 8\right) /Spin\left( 7\right) $ and
write elements of $S^{7}$ as $\sigma Spin\left( 7\right) $ with $\sigma \in
Spin\left( 8\right) .$ This gives us a diffeomorphism $\Phi :\left(
Spin\left( 9\right) \times S^{7}\right) /Spin\left( 8\right) \longrightarrow
Spin\left( 9\right) /Spin\left( 7\right) ,$ 
\begin{equation*}
\Phi :\left( A,\sigma Spin\left( 7\right) \right) \cdot Spin\left( 8\right)
\longmapsto \left( A\sigma \right) Spin\left( 7\right) .
\end{equation*}

Finally, we identity $Spin\left( 9\right) /Spin\left( 7\right) $ with $%
S^{15},$ since, as was shown in \cite{GluWarZil}, $Spin\left( 9\right) $ is
the symmetry group of the octonionic Hopf fibration, $S^{15}\longrightarrow
S^{8},$ and the isotropy is $Spin\left( 7\right) .$
\end{proof}

In terms of $Spin\left( 8\right) $ cosets, the octonionic Hopf fibration is 
\begin{eqnarray*}
\left( Spin\left( 9\right) \times S^{7}\right) /Spin\left( 8\right)
&\longrightarrow &Spin\left( 9\right) /Spin\left( 8\right) \\
\left( A,v\right) \cdot Spin\left( 8\right) &\longmapsto &A\cdot Spin\left(
8\right) .
\end{eqnarray*}%
This yields

\begin{proposition}
\label{Bi-quot-Op2+ OP2}$\left( Spin\left( 9\right) \times S^{8}\right)
/Spin\left( 8\right) $ is diffeomorphic to the double mapping cylinder of
the octonionic Hopf fibration, which in turn is diffeomorphic to $\mathbb{O}%
P^{2}\#-\mathbb{O}P^{2}$.
\end{proposition}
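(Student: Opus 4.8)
The plan is to slice $S^{8}$ by the distance function $t=\dist_{S^{8}}(e^{9},\cdot)$ and track the induced slicing of the quotient, thereby filling in Totaro's observation \cite{Tata}. First I would write $S^{8}=D^{8}_{+}\cup_{S^{7}}D^{8}_{-}$, where $D^{8}_{\pm}=\{t\le\pi/2\}$ resp.\ $\{t\ge\pi/2\}$ are the closed hemispheres, each the cone on the equator $S^{7}=\{t=\pi/2\}$ with cone point $e^{9}$ resp.\ $-e^{9}$. The suspended $Spin(8)$--action fixes $\pm e^{9}$, acts on each level set $\{t=\mathrm{const}\}\cong S^{7}$ ($0<t<\pi$) as the standard action, and is free (because it is already free on the $Spin(9)$ factor by right translation). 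Hence the free quotient inherits a decomposition
\[
\bigl(Spin(9)\times S^{8}\bigr)/Spin(8)=\mathcal{N}_{+}\cup_{\Sigma}\mathcal{N}_{-},
\]
with $\mathcal{N}_{\pm}=\bigl(Spin(9)\times D^{8}_{\pm}\bigr)/Spin(8)$ and $\Sigma=\bigl(Spin(9)\times S^{7}\bigr)/Spin(8)$, the two pieces glued along the $t=\pi/2$ slice.

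Next I would identify $\mathcal{N}_{+}$ with the mapping cylinder of the octonionic Hopf fibration. Over $\{0<t\le\pi/2\}$ each slice quotient is $\bigl(Spin(9)\times S^{7}\bigr)/Spin(8)\cong S^{15}$ by Proposition \ref{S^15}, so this part of $\mathcal{N}_{+}$ is diffeomorphic to $S^{15}\times(0,\pi/2]$. The slice at $t=0$ is $Spin(9)\times\{e^{9}\}$, on which $Spin(8)$ acts only by right translation on $Spin(9)$; its quotient is $Spin(9)/Spin(8)\cong S^{8}$. Letting $t\to0$ collapses each $S^{7}$ factor, and under the coset identifications of Proposition \ref{S^15} the resulting map $S^{15}\to S^{8}$ is $B\cdot Spin(7)\mapsto B\cdot Spin(8)$ — precisely the octonionic Hopf fibration $h$ displayed just before the statement (identified with $S^{15}\to S^{8}$ via \cite{GluWarZil}). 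Thus $\mathcal{N}_{+}$, with its smooth structure across the collapsed slice, is the mapping cylinder of $h$; the identical argument applies to $\mathcal{N}_{-}$, and the two cylinders are glued by the identity map of $\Sigma\cong S^{15}$. This exhibits $\bigl(Spin(9)\times S^{8}\bigr)/Spin(8)$ as the double mapping cylinder of $h\colon S^{15}\to S^{8}$.

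Finally, I would invoke the computation already carried out in the proof of Proposition \ref{model action}: taking $\mathbb{F}=\mathbb{O}$ and $b=8$, the double mapping cylinder of the Hopf fibration $S^{2b-1}\to S^{b}$ is $\mathbb{O}P^{2}\#-\mathbb{O}P^{2}$, since the mapping cylinder of $h$ is $\mathbb{O}P^{2}$ with its top cell removed, glued to its mirror image along $S^{15}$. I expect the one genuine subtlety to be the identification in the previous paragraph across the cone points: one must check that the quotient is \emph{diffeomorphic}, not merely homeomorphic, to the mapping cylinder near $t=0$ (and near $t=\pi$) — that is, that the equivariant identification $S^{7}\cong Spin(8)/Spin(7)$ used in Proposition \ref{S^15} matches the suspension coordinates smoothly at the fixed points. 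Granting this, everything else is bookkeeping with homogeneous coordinates.
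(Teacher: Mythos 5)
Your proposal is correct and is essentially the argument the paper intends: the paper offers no proof beyond "This yields," expecting the reader to slice the suspended $Spin(8)$--action exactly as you do, using Proposition \ref{S^15} to identify the slices with $S^{15}$ and the displayed coset form of the Hopf map at the poles, and then quoting the double-mapping-cylinder description \ref{mapping cylinder} of $\mathbb{F}P^{2}\#-\mathbb{F}P^{2}$. The subtlety you flag at $t=0,\pi$ is closed by noting that near a pole the quotient is the associated disk bundle $Spin\left( 9\right) \times _{Spin\left( 8\right) }D^{8}$ (the $Spin(8)$--action on a polar cap being the linear spin representation), which is precisely the smooth disk-bundle model of the mapping cylinder of $h$, so your identification is indeed a diffeomorphism and not merely a homeomorphism.
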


We complete the proof of Theorem \ref{examples} by showing the following
result, whose proof occupies the rest of the paper.

\begin{theorem}
\label{Ricc on quotient}Give $\mathbb{O}P^{2}\#-\mathbb{O}P^{2}$ the
quotient metric, $g_{q},$ induced from the Riemannian submersion%
\begin{equation*}
q:Spin\left( 9\right) \times S^{8}\longrightarrow \left( Spin\left( 9\right)
\times S^{8}\right) /Spin\left( 8\right) =\mathbb{O}P^{2}\#-\mathbb{O}P^{2}.
\end{equation*}%
Then the regular part of the quotient of the $G_{2}$--action on $\mathbb{O}%
P^{2}\#-\mathbb{O}P^{2}$ has uniformly positive Ricci curvature.
\end{theorem}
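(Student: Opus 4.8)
The plan is to pass to Totaro's bi-quotient model and apply the trace of the Gray--O'Neill horizontal curvature equation from Proposition \ref{Gray-O'Neill}. Since $q\colon Spin(9)\times S^{8}\to \mathbb{O}P^{2}\#-\mathbb{O}P^{2}$ is a Riemannian submersion and the $G_{2}$--action on $\mathbb{O}P^{2}\#-\mathbb{O}P^{2}$ is the one induced by the left translation action of $G_{2}\subset Spin(9)$ on the first factor --- which commutes with the right $Spin(8)$ and with the suspended $Spin(8)$--action on $S^{8}$ --- the composite $\pi\circ q$ exhibits
\[
\left(\mathbb{O}P^{2}\#-\mathbb{O}P^{2},\,g_{q}\right)/G_{2}\;=\;\left(Spin(9)\times S^{8}\right)\big/\left(G_{2}\times Spin(8)\right),
\]
with matching orbital distance metrics, where $G_{2}\times Spin(8)$ acts by $(g,h)\cdot(A,v)=(gAh^{-1},hv)$. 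On regular parts this is a Riemannian submersion
\[
Q\colon\left(Spin(9)\times S^{8}\right)^{\mathrm{reg}}\longrightarrow\left(\mathbb{O}P^{2}\#-\mathbb{O}P^{2}\right)^{\mathrm{reg}}/G_{2},
\]
whose total space carries the product of the bi-invariant metric on $Spin(9)$ and the round metric on $S^{8}$, hence has $\mathrm{sec}\geq 0$. By Proposition \ref{Gray-O'Neill} and the Remark following it,
\[
Q^{\ast}\big(Ric_{(\mathbb{O}P^{2}\#-\mathbb{O}P^{2})^{\mathrm{reg}}/G_{2}}\big)=Ric^{\mathrm{Horiz}}+3\,Ric^{A}\;\geq\;Ric^{\mathrm{Horiz}}\;\geq\;0,
\]
so it remains to produce a \emph{uniform} positive lower bound for $Ric^{\mathrm{Horiz}}+3\,Ric^{A}$ on $Q$--horizontal unit vectors.

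There is exactly one direction along which $Ric^{\mathrm{Horiz}}$ is forced to be merely nonnegative, and it is handled separately. Let $X$ be the radial field of the cohomogeneity one $Spin(9)$--structure on $\mathbb{O}P^{2}\#-\mathbb{O}P^{2}$ (the gradient of the orbit-space coordinate of the double disk bundle); it is $Spin(9)$--, hence $G_{2}$--invariant, its $Q$--horizontal lift is $(0,X_{S^{8}})$, and it descends to a unit field $\bar X$ on the base, defined on the $Spin(9)$--regular part, which contains the $G_{2}$--regular part. Write a $Q$--horizontal unit vector orthogonally as $x=x^{\mathrm{span}\{\bar X\}}+x'$. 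For the part $x'$ transverse to $\bar X$ I would show $Ric^{\mathrm{Horiz}}(x',x')\geq c\,|x'|^{2}$ with $c>0$ universal. A plane in $Spin(9)\times S^{8}$ is flat exactly when its $\mathfrak{spin}(9)$--shadow is an abelian plane and its $S^{8}$--shadow is at most a line; unwinding the horizontal space of $Q$ --- its $\mathfrak{spin}(9)$--shadow is $\mathcal{W}=(\mathfrak{g}_{2}+\mathrm{Ad}_{A}\mathfrak{spin}(7))^{\perp}$, four--dimensional on the regular part, and its $S^{8}$--shadow is $\Phi(\mathcal{W})\oplus\mathrm{span}\{X_{S^{8}}\}$ for a correction map $\Phi\colon\mathcal{W}\to TS^{7}$ --- together with the non-degeneracy of the bracket on the isotropy complements of $S^{8}=Spin(9)/Spin(8)$ and $S^{7}=Spin(8)/Spin(7)$, one produces for every nonzero $x'$ a second horizontal direction spanning a non-flat plane with it. Proposition \ref{Berestovski}, applied to the principal orbits (which have finite $\pi_{1}$: they are quotients of $Spin(9)/Spin(7)\cong S^{15}$, and the ambient $G_{2}$--orbits are $G_{2}/SU(2)$), excludes the single residual way a ``central'' horizontal direction could make $Ric^{\mathrm{Horiz}}$ vanish, and since the resulting lower bound is a continuous function of the point and the direction on the compact space $Spin(9)\times S^{8}$, it has a positive infimum.

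The main obstacle is the $\bar X$--direction, where $Ric^{\mathrm{Horiz}}(\bar X,\bar X)$ genuinely degenerates: it equals $\sum_{i}|\Phi(e_{i})|^{2}$ over a horizontal frame, which vanishes precisely where $\mathcal{W}\perp \mathrm{Ad}_{A}\mathfrak{spin}(8)$, a locus that does meet the regular set. This is where the bi-quotient picture earns its keep. Using $A_{U}V=\tfrac12[U,V]^{\mathrm{ver}}$ for horizontal vector fields together with the fact that the flow of $(0,X_{S^{8}})$ is constant in the $Spin(9)$--factor, the term $Ric^{A}(\bar X,\bar X)=3\sum_{i}\big|A^{Q}_{(0,X_{S^{8}})}e_{i}\big|^{2}$ is reduced to a statement in linear algebra about the chain $\mathfrak{g}_{2}\subset\mathfrak{spin}(7)\subset\mathfrak{spin}(8)\subset\mathfrak{spin}(9)$ and its $\mathrm{Ad}_{A}$--translates, exploiting that $G_{2}$ is the automorphism group of $\mathbb{O}$; the upshot is $Ric^{\mathrm{Horiz}}(\bar X,\bar X)+3\,Ric^{A}(\bar X,\bar X)\geq c>0$, uniformly, with the $A$--term carrying the positivity exactly where the horizontal term drops out. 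Finally, I would combine the two estimates as in the proof of Theorem \ref{generic cheeger}: the Ricci form of the base is nonnegative, uniformly positive on $\bar X$ and on $\bar X^{\perp}$, and the cross term $Ric(\bar X,x')$ is controlled along the flow of $X$, so writing $x=\cos\sigma\,\bar X+\sin\sigma\,x'$ one gets $Ric(x,x)>0$ after absorbing the cross term into the diagonal terms. Homogeneity of the $Spin(9)$-- and $G_{2}$--actions, together with compactness of $\mathbb{O}P^{2}\#-\mathbb{O}P^{2}$, then promotes this to a bound independent of the point, which is the assertion of Theorem \ref{Ricc on quotient}.
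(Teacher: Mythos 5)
There is a genuine gap, and it sits exactly at the heart of the theorem. Your argument funnels everything into the claim that along the radial direction $\bar X$ the horizontal trace $Ric^{\mathrm{Horiz}}(\bar X,\bar X)$ for your one-step submersion $Q$ degenerates on part of the regular set, and that the deficit is recovered by the term $3\,Ric^{A}(\bar X,\bar X)$, which you assert ``reduces to a statement in linear algebra'' about the chain $\mathfrak{g}_{2}\subset \mathfrak{spin}(7)\subset \mathfrak{spin}(8)\subset \mathfrak{spin}(9)$ and is uniformly positive --- but you never carry out this computation, and it is precisely the content of the theorem. Worse, the premise is doubtful: since the $Q$--horizontal lifts with vanishing $S^{8}$--component are exactly the lifts of Hopf-horizontal vectors, $Ric^{\mathrm{Horiz}}_{Q}(\bar X,\bar X)$ vanishes at a regular point only if \emph{every} $G_{2}$--horizontal vector tangent to the level set there is Hopf horizontal, and Proposition \ref{Davis-Hopf} (the octonionic non-associativity argument) shows this never happens: at every point there is a vector orthogonal to the $G_{2}$--orbit that is not Hopf horizontal. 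Combined with Part 3 of Corollary \ref{OP2-Op2 zeros final} (the pairing of $g_{q}|_{\{t\}\times S^{15}}$ against Hopf-horizontal vectors is $t$--independent) and the classification of the zero-curvature planes in Proposition \ref{zero curvs}, this makes the sectional term $\mathrm{sec}_{g_{q}}(X,Y)$ itself uniformly positive for a suitable $G_{2}$--horizontal $Y$, with no need for the $A$--tensor of the $G_{2}$--submersion. So your proposal both omits the decisive computation and rests on a vanishing claim that the analysis in the paper contradicts; if you want to run your version, you must actually exhibit the locus where the horizontal term drops and prove the uniform lower bound on $Ric^{A}$ there.

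The uniformity argument is also flawed as stated: you appeal to continuity of your lower bound ``on the compact space $Spin(9)\times S^{8}$,'' but the $Q$--horizontal distribution, and hence the quantity you are bounding, is defined only on the regular part, which is open; near the singular orbits the horizontal space changes dimension, so a continuous nonnegative bound on the regular part can perfectly well have infimum zero. The paper obtains uniformity by a different compactness: the planes $\mathrm{span}\{X,Y\}$ with $\sphericalangle\left( Y,\{\text{Hopf horizontal vectors}\}\right) >\alpha$ form a compact family in the Grassmann bundle over all of $\mathbb{O}P^{2}\#-\mathbb{O}P^{2}$ that avoids a neighborhood of the zero-curvature planes, whence $\mathrm{sec}_{g_{q}}\geq \beta >0$ on it. In addition, your transverse estimate $Ric^{\mathrm{Horiz}}(x',x')\geq c\,|x'|^{2}$ is only sketched, and the appeal to Proposition \ref{Berestovski} is off-label: that result concerns the center of $\mathfrak{m}$ for a homogeneous space with finite fundamental group and is used in this paper for the Cheeger-deformation estimates (Proposition \ref{vertical Ricci}); you do not explain what it is supposed to exclude in your flat-plane analysis. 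Finally, the identification of the lifted $G_{2}$--action with left translation on the $Spin(9)$--factor, so that the orbit space is the bi-quotient you write down, deserves justification, though that is a comparatively minor point.
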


To describe the horizontal space of $q$ at points of the form $\left(
A,\left( x,t\right) \right) \in Spin\left( 9\right) \times \left\{
S^{8}\setminus \left\{ \pm e_{9}\right\} \right\} $ we note that at $x\in
S^{7}$ the isotropy, $Spin\left( 8\right) _{x},$ of the $Spin\left( 8\right) 
$-action on $S^{7}$ is isomorphic to $Spin\left( 7\right) .$ For simplicity,
we denote $Spin\left( 8\right) _{x}$ by $Spin\left( 7\right) .$ Let $%
\mathfrak{spin}\left( 7\right) \subset \mathfrak{spin}\left( 8\right)
\subset \mathfrak{spin}\left( 9\right) $ be the Lie algebras of $Spin\left(
7\right) \subset Spin\left( 8\right) \subset Spin\left( 9\right) $. Let $%
\mathfrak{m}_{\mathfrak{spin}\left( 8\right) }$ and $\mathfrak{m}_{\mathfrak{%
spin}\left( 9\right) }$ be the vector subspaces so that the splitting 
\begin{equation*}
\mathfrak{spin}\left( 9\right) =\mathfrak{spin}\left( 7\right) \oplus 
\mathfrak{m}_{\mathfrak{spin}\left( 8\right) }\oplus \mathfrak{m}_{\mathfrak{%
spin}\left( 9\right) }
\end{equation*}%
is orthogonal and 
\begin{equation*}
\mathfrak{spin}\left( 8\right) =\mathfrak{spin}\left( 7\right) \oplus 
\mathfrak{m}_{\mathfrak{spin}\left( 8\right) }.
\end{equation*}

\begin{proposition}
\label{horiz S8 X spiin9}At any point of the form $\left( A,\left(
x,t\right) \right) \in Spin\left( 9\right) \times \left\{ S^{8}\setminus
\left\{ \pm e_{9}\right\} \right\} $ the horizontal space of $q$ is spanned
by vectors of the form 
\begin{equation}
\left\{ \left( 0,X\right) ,\left( \left( L_{A}\right) _{\ast }k^{9},0\right)
,\left( \sin ^{2}t\left( L_{A}\right) _{\ast }\left( k^{8}\right)
,k_{S^{8}}^{8}\right) \right\} ,  \label{OP^2 hoiz}
\end{equation}%
where $X\equiv \mathrm{grad}\left( \mathrm{dist}_{S^{8}}\left( e^{9},\cdot
\right) \right) $, $k^{9}\in \mathfrak{m}_{\mathfrak{spin}\left( 9\right) }$%
, $k^{8}\in \mathfrak{m}_{\mathfrak{spin}\left( 8\right) }$, and $t=\mathrm{%
dist}_{S^{8}}\left( e_{9},\cdot \right) .$

At a point of the form $\left( A,\pm e_{9}\right) \in Spin\left( 9\right)
\times \left\{ \pm e_{9}\right\} $ the horizontal space of $q$ is spanned by
vectors of the form 
\begin{equation}
\left\{ \left( 0,X\right) ,\left( \left( L_{A}\right) _{\ast }k^{9},0\right)
\right\}
\end{equation}%
where $X\in T_{\pm e_{9}}S^{8}$, $k^{9}\in \mathfrak{m}_{\mathfrak{spin}%
\left( 9\right) }$.
\end{proposition}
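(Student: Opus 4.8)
The plan is to compute the vertical distribution of $q$ and read off its orthogonal complement. Since $q$ is the quotient map of the free $Spin(8)$--action $g\cdot(A,p)=(Ag^{-1},g\cdot p)$ on the Riemannian product $(Spin(9)\times S^{8},\,g_{\mathrm{bi}}+g_{S^{8}})$, the vertical space at $(A,p)$ is the tangent space to the orbit, namely
\[
\mathcal{V}_{(A,p)}=\{\,(-(L_{A})_{\ast}k,\;k_{S^{8}}(p))\;:\;k\in\mathfrak{spin}(8)\,\},
\]
where, following the paper's convention that $Spin(8)$ acts on $Spin(9)$ on the right, the $Spin(9)$--component of the Killing field generated by $k\in\mathfrak{spin}(8)$ is the left translate $-(L_{A})_{\ast}k$.

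Next I would record the two features of the suspended $Spin(8)$--action on $S^{8}$ that produce the $\sin^{2}t$ coefficient. Writing points of $S^{8}\setminus\{\pm e_{9}\}$ as $(x,t)\in S^{7}\times(0,\pi)$, so that $g_{S^{8}}=dt^{2}+\sin^{2}t\,g_{S^{7}}$: (i) the suspension preserves $t$, hence $k_{S^{8}}(x,t)$ is tangent to the slice $\{t\}\times S^{7}$, is orthogonal to $X=\partial_{t}$, and $g_{S^{8}}(k_{S^{8}}(x,t),\bar{k}_{S^{8}}(x,t))=\sin^{2}t\,g_{S^{7}}(k_{S^{7}}(x),\bar{k}_{S^{7}}(x))$ for $k,\bar{k}\in\mathfrak{spin}(8)$; and (ii) $k_{S^{8}}(x,t)=0$ exactly when $k$ lies in the isotropy algebra $\mathfrak{spin}(8)_{x}=\mathfrak{spin}(7)$, while on the complementary summand $\mathfrak{m}_{\mathfrak{spin}(8)}$ the orbit map $k\mapsto k_{S^{7}}(x)$ is a linear isometry onto $T_{x}S^{7}$ for the normalization of $g_{\mathrm{bi}}$ used to set up the octonionic Hopf fibrations --- this is just the statement that $S^{7}=Spin(8)/Spin(7)$ is normal homogeneous with the round metric.

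For a point $(A,(x,t))$ with $t\in(0,\pi)$ I would then verify that each of the three vectors in the statement is $(g_{\mathrm{bi}}+g_{S^{8}})$--orthogonal to every vertical vector $(-(L_{A})_{\ast}k,k_{S^{8}}(x,t))$, decomposing the parameter as $k=\ell^{7}+\ell^{8}$ with $\ell^{7}\in\mathfrak{spin}(7)$, $\ell^{8}\in\mathfrak{m}_{\mathfrak{spin}(8)}$ (so $k_{S^{8}}(x,t)=\ell^{8}_{S^{8}}(x,t)$ by (ii)): the vector $(0,X)$ pairs to $g_{S^{8}}(X,\ell^{8}_{S^{8}})=0$ by (i); the vector $((L_{A})_{\ast}k^{9},0)$ with $k^{9}\in\mathfrak{m}_{\mathfrak{spin}(9)}$ pairs to $-\langle k^{9},k\rangle_{\mathrm{bi}}=0$ since $\mathfrak{m}_{\mathfrak{spin}(9)}\perp\mathfrak{spin}(8)$; and the vector $(\sin^{2}t\,(L_{A})_{\ast}k^{8},\,k^{8}_{S^{8}})$ with $k^{8}\in\mathfrak{m}_{\mathfrak{spin}(8)}$ pairs to $-\sin^{2}t\,\langle k^{8},\ell^{8}\rangle_{\mathrm{bi}}+\sin^{2}t\,g_{S^{7}}(k^{8}_{S^{7}},\ell^{8}_{S^{7}})=0$ by (i), (ii) and $\mathfrak{m}_{\mathfrak{spin}(8)}\perp\mathfrak{spin}(7)$ --- this is precisely the computation that forces the coefficient $\sin^{2}t$. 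A dimension count then closes this case: the listed vectors are linearly independent (their $S^{8}$--components $X$ and the $k^{8}_{S^{8}}$ span $T_{(x,t)}S^{8}$, and their $Spin(9)$--components lie in the orthogonal subspaces $\mathfrak{m}_{\mathfrak{spin}(9)}$ and $\mathfrak{m}_{\mathfrak{spin}(8)}$, with $\sin^{2}t\neq0$), so they span a subspace of dimension $1+\dim\mathfrak{m}_{\mathfrak{spin}(9)}+\dim\mathfrak{m}_{\mathfrak{spin}(8)}=1+8+7=16=\dim(Spin(9)\times S^{8})-\dim Spin(8)$, which is the dimension of the horizontal space of $q$ (the $Spin(8)$--action being free).

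Finally, at a point $(A,\pm e_{9})$ the suspension fixes $\pm e_{9}$, so $k_{S^{8}}(\pm e_{9})=0$ for every $k\in\mathfrak{spin}(8)$; hence $\mathcal{V}_{(A,\pm e_{9})}$ is the image of $\mathfrak{spin}(8)$ under $k\mapsto(-(L_{A})_{\ast}k,0)$, and its orthogonal complement inside $T_{(A,\pm e_{9})}(Spin(9)\times S^{8})=(L_{A})_{\ast}\mathfrak{spin}(9)\oplus T_{\pm e_{9}}S^{8}$ is $((L_{A})_{\ast}\mathfrak{m}_{\mathfrak{spin}(9)},0)\oplus(0,T_{\pm e_{9}}S^{8})$, which is exactly the asserted span (here $X$ denotes an arbitrary element of $T_{\pm e_{9}}S^{8}$, the distance function not being smooth there). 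I expect the only real obstacle to be getting the normalization in (ii) right and handling the suspension metric carefully near $t=0,\pi$; the remainder is the routine orthogonality bookkeeping indicated above.
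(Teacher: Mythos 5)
Your proposal is correct and follows essentially the same route as the paper: one checks $(g_{\mathrm{bi}}+g_{S^{8}})$--orthogonality of the listed vectors to the vertical space of the free $Spin(8)$--action, using the warped metric $dt^{2}+\sin^{2}t\,g_{S^{7}}$ together with the normal homogeneous presentation $S^{7}=Spin(8)/Spin(7)$ to produce the $\sin^{2}t$ coefficient, and at $\pm e_{9}$ one uses that these are fixed points so the vertical space sits entirely in the $Spin(9)$--factor. The only difference is that you make the dimension count $1+8+7=16=\dim\bigl(Spin(9)\times S^{8}\bigr)-\dim Spin(8)$ explicit, which the paper leaves implicit.
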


\begin{remark}
Recall our convention that for an abstract $G$--manifold $M$ and an element $%
k$ of the Lie algebra $\mathfrak{g}$, $k_{M}$ denotes the Killing field on $%
M $ generated by $k.$ Thus, $k_{S^{8}}^{8}$ is the Killing field on $S^{8}$
generated by $k^{8}\in \mathfrak{m}_{\mathfrak{spin}\left( 8\right) }\subset 
\mathfrak{spin}\left( 8\right) $, and $\left( L_{A}\right) _{\ast }k^{9}$
would be written as $k_{Spin\left( 9\right) }^{9}.$ However, we write $%
\left( L_{A}\right) _{\ast }k^{9},$ since the notation is standard.
\end{remark}

\begin{proof}
The definitions of $\left( 0,X\right) ,\left( \left( L_{A}\right) _{\ast
}k^{9},0\right) $ and the $Spin\left( 8\right) $-action give us that $\left(
0,X\right) $ and $\left( \left( L_{A}\right) _{\ast }k^{9},0\right) $ are $q$%
--horizontal at points of $Spin\left( 9\right) \times \left\{ S^{8}\setminus
\left\{ \pm e_{9}\right\} \right\} .$

For any $k\in \mathfrak{m}_{\mathfrak{spin}\left( 8\right) }$ we have%
\begin{eqnarray}
&&\left( g_{\mathrm{bi}}+g_{S^{8}}\right) \left( \left( \sin ^{2}t\left(
L_{A}\right) _{\ast }\left( k^{8}\right) ,\text{ }k_{S^{8}}^{8}\right)
,\left( -\left( L_{A}\right) _{\ast }k,\text{ }k_{S^{8}}\right) \right) 
\notag \\
&=&-\sin ^{2}tg_{\mathrm{bi}}\left( k^{8},k\right) +g_{S^{8}}\left(
k_{S^{8}}^{8},k_{S^{8}}\right)  \label{taut}
\end{eqnarray}

Since $S^{7}=Spin\left( 8\right) /Spin\left( 7\right) ,$ we have the
Riemannian submersion 
\begin{equation*}
Spin\left( 8\right) \longrightarrow S^{7}=S^{7}\times \left\{ \frac{\pi }{2}%
\right\} \subset S^{8}.
\end{equation*}

Recall that we have used $\mathfrak{m}_{\mathfrak{spin}\left( 8\right) }$ to
denote the horizontal space at $x,$ and in our notation, the differential is 
\begin{eqnarray*}
\mathfrak{m}_{\mathfrak{spin}\left( 8\right) } &\mapsto &TS^{7} \\
k &\mapsto &k_{S^{8}}.
\end{eqnarray*}%
So $g_{S^{8}}\left( k_{S^{8}}^{8},k_{S^{8}}\right) |_{\left( x,\frac{\pi }{2}%
\right) }=g_{S^{7}}\left( k_{S^{8}}^{8},k_{S^{8}}\right) |_{x}=g_{\mathrm{bi}%
}\left( k^{8},k\right) ,$ and $g_{S^{8}}\left(
k_{S^{8}}^{8},k_{S^{8}}\right) |_{\left( x,t\right) }=\sin ^{2}tg_{\mathrm{bi%
}}\left( k^{8},k\right) .$ So the right hand side of Equation \ref{taut} is $%
0.$

On the other hand, for $k\in \mathfrak{spin}\left( 7\right) $ we also have 
\begin{eqnarray*}
&&\left( g_{\mathrm{bi}}+g_{S^{8}}\right) \left( \left( \sin ^{2}t\left(
L_{A}\right) _{\ast }\left( k^{8}\right) ,\text{ }k_{S^{8}}^{8}\right)
,\left( -\left( L_{A}\right) _{\ast }k,\text{ }k_{S^{8}}\right) \right) \\
&=&-\sin ^{2}tg_{\mathrm{bi}}\left( k^{8},k\right) +g_{S^{8}}\left(
k_{S^{8}}^{8},k_{S^{8}}\right)
\end{eqnarray*}%
The first term is $0$ since $k^{8}\in \mathfrak{m}_{\mathfrak{spin}\left(
8\right) }$ and $k\in \mathfrak{spin}\left( 7\right) .$ Further, $%
k_{S^{8}}=0,$ since $k\in \mathfrak{spin}\left( 7\right) $ and $Spin\left(
7\right) $ is the isotropy at $\left( x,t\right) .$ So the second term is $0$%
, and it follows that $\left( \sin ^{2}t\left( L_{A}\right) _{\ast }\left(
k^{8}\right) ,\text{ }k_{S^{8}}^{8}\right) $ is in the horizontal space of $%
q,$ proving the first statement.

To prove the second statement, notice that $\pm e_{9}$ are the fixed points
of the $Spin\left( 8\right) $-action on $S^{8},$ so, at a point of the form $%
\left( A,\pm e_{9}\right) \in Spin\left( 9\right) \times \left\{ \pm
e_{9}\right\} $, the vectors $\left( 0,X\right) ,$ $X\in T_{\pm e_{9}}S^{8}$
are horizontal for $q.$ Then observe that $\left( L_{A}\right) _{\ast
}\left( \mathfrak{m}_{\mathfrak{spin}\left( 9\right) }\right) $ is the
horizontal distribution for the right $Spin\left( 8\right) $ action on $%
Spin\left( 9\right) .$
\end{proof}

Combining this with the Horizontal Curvature Equation and a linear algebra
argument we will show the following.

\begin{proposition}
\label{zero curvs}$\left( \mathbb{O}P^{2}\#-\mathbb{O}P^{2},g_{q}\right) $
is non-negatively curved.

\noindent 1. All of the zero curvature planes in $q\left( Spin\left(
9\right) \times \left\{ S^{8}\setminus \pm e_{9}\right\} \right) $ have
horizontal lifts to $Spin\left( 9\right) \times S^{8}$ of the form 
\begin{equation}
\mathrm{span}\left\{ \left( 0,X\right) ,\left( \left( L_{A}\right) _{\ast
}k^{9},0\right) \right\} ,  \label{zeros in Op2 -Op2}
\end{equation}%
where $k^{9}\in \mathfrak{m}_{\mathfrak{spin}\left( 9\right) }.$

\noindent 2. All of the zero curvature planes in $q\left( Spin\left(
9\right) \times \left\{ \pm e_{9}\right\} \right) $ have horizontal lifts to 
$Spin\left( 9\right) \times S^{8}$ of the form%
\begin{equation}
\mathrm{span}\left\{ \left( 0,X\right) ,\left( \left( L_{A}\right) _{\ast
}k^{9},0\right) \right\}  \label{singular zeros 1}
\end{equation}%
where $X\in T_{\pm e_{9}}S^{8}$ and $k^{9}\in \mathfrak{m}_{\mathfrak{spin}%
\left( 9\right) }.$
\end{proposition}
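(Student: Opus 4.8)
The plan is to run the Gray--O'Neill horizontal curvature equation for the Riemannian submersion $q\colon Spin(9)\times S^{8}\longrightarrow\mathbb{O}P^{2}\#-\mathbb{O}P^{2}$ against the explicit horizontal distribution of Proposition~\ref{horiz S8 X spiin9}, and then to do linear algebra in $\mathfrak{spin}(9)$. First, the product metric $g_{\mathrm{bi}}+g_{S^{8}}$ is non-negatively curved, being a product of a bi-invariant metric on a compact Lie group with a round sphere; hence the horizontal curvature equation forces $g_{q}$ to be non-negatively curved, which gives the first assertion. To describe the zero planes, fix a plane $P$ at a point of $q(Spin(9)\times\{S^{8}\setminus\pm e_{9}\})$ with horizontal lift $\widetilde P=\mathrm{span}\{H_{1},H_{2}\}$. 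By the horizontal curvature equation, $\widetilde{\mathrm{sec}}(P)=0$ forces both $\mathrm{sec}_{g_{\mathrm{bi}}+g_{S^{8}}}(H_{1},H_{2})=0$ and $A_{H_{1}}H_{2}=0$. Since the curvature of a product metric splits as a sum of two non-negative terms --- proportional to $\lvert[\,\cdot\,,\,\cdot\,]\rvert^{2}_{g_{\mathrm{bi}}}$ on the $Spin(9)$-factor and to the squared area of the $S^{8}$-parallelogram --- the first condition forces each term to vanish: the $S^{8}$-components of $H_{1}$ and $H_{2}$ are linearly dependent, and their $\mathfrak{spin}(9)$-components commute.

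Writing $H_{i}=\bigl((L_{A})_{\ast}(k_{i}^{9}+\sin^{2}t\,k_{i}^{8}),\;a_{i}X+(k_{i}^{8})_{S^{8}}\bigr)$ as in Proposition~\ref{horiz S8 X spiin9}, I would next exploit that $X$ is orthogonal to every spherical Killing field $(k^{8})_{S^{8}}$ and that $k^{8}\mapsto(k^{8})_{S^{8}}$ is injective on $\mathfrak{m}_{\mathfrak{spin}(8)}$ away from $\pm e_{9}$. Linear dependence of the $S^{8}$-components then lets me change basis so that one of the two vectors, say $H_{2}$, equals $((L_{A})_{\ast}k_{2}^{9},0)$ with $k_{2}^{9}\in\mathfrak{m}_{\mathfrak{spin}(9)}$, and $k_{2}^{9}\neq 0$ since $\widetilde P$ is a plane (the degenerate case where both vectors acquire this form is excluded because two commuting elements of $\mathfrak{m}_{\mathfrak{spin}(9)}$ are proportional). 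The commuting relation now reads $[\,k_{1}^{9}+\sin^{2}t\,k_{1}^{8},\;k_{2}^{9}\,]=0$, and the grading $[\mathfrak{m}_{\mathfrak{spin}(9)},\mathfrak{m}_{\mathfrak{spin}(9)}]\subseteq\mathfrak{spin}(8)$, $[\mathfrak{m}_{\mathfrak{spin}(8)},\mathfrak{m}_{\mathfrak{spin}(9)}]\subseteq\mathfrak{m}_{\mathfrak{spin}(9)}$ coming from the symmetric pair $S^{8}=Spin(9)/Spin(8)$ puts the two summands into complementary subspaces, so $[k_{1}^{9},k_{2}^{9}]=0$ and $[k_{1}^{8},k_{2}^{9}]=0$; the first forces $k_{1}^{9}$ to be a scalar multiple of $k_{2}^{9}$.

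The crux --- and the step I expect to be the real work --- is to upgrade $[k_{1}^{8},k_{2}^{9}]=0$ to $k_{1}^{8}=0$: one feeds the bracket relation, together with the vanishing of $A_{H_{1}}H_{2}$ computed against the vertical fields $(-(L_{A})_{\ast}k,\,k_{S^{8}})$, $k\in\mathfrak{spin}(8)$, into an analysis of the $\mathfrak{spin}(8)$-action on $\mathfrak{m}_{\mathfrak{spin}(9)}$ for the particular embedding $\mathfrak{spin}(8)\subset\mathfrak{spin}(9)$ arising in Totaro's description. Granting $k_{1}^{8}=0$, we get $H_{1}=((L_{A})_{\ast}k_{1}^{9},a_{1}X)$ with $k_{1}^{9}$ proportional to $k_{2}^{9}$; two-dimensionality of $\widetilde P$ forces $a_{1}\neq 0$, and subtracting the appropriate multiple of $H_{2}$ identifies $\widetilde P$ with $\mathrm{span}\{(0,X),((L_{A})_{\ast}k_{2}^{9},0)\}$, proving Part~1.

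For Part~2, at a point $(A,\pm e_{9})$ the horizontal space of $q$ is $\{((L_{A})_{\ast}k^{9},v):k^{9}\in\mathfrak{m}_{\mathfrak{spin}(9)},\ v\in T_{\pm e_{9}}S^{8}\}$ by Proposition~\ref{horiz S8 X spiin9}, and the same curvature splitting shows a zero plane $\widetilde P=\mathrm{span}\{H_{1},H_{2}\}$ has proportional $S^{8}$-components and commuting (hence proportional) $\mathfrak{spin}(9)$-components; an elementary change of basis then writes $\widetilde P$ in the form $\mathrm{span}\{(0,X),((L_{A})_{\ast}k^{9},0)\}$, and here no $A$-tensor input is needed. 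Thus the only genuinely delicate point is the $\mathfrak{spin}(8)$--$\mathfrak{spin}(9)$ representation-theoretic argument that eliminates $k_{1}^{8}$ in Part~1; everything else is bookkeeping with Proposition~\ref{horiz S8 X spiin9} and the product curvature formula.
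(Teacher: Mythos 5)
Your overall route is the paper's: non-negativity via Gray--O'Neill and the product metric, then a classification of the flat horizontal planes by splitting the curvature of $g_{\mathrm{bi}}+g_{S^{8}}$ into a bi-invariant term and a round term, followed by linear algebra with the decomposition $\mathfrak{spin}\left( 9\right) =\mathfrak{spin}\left( 7\right) \oplus \mathfrak{m}_{\mathfrak{spin}\left( 8\right) }\oplus \mathfrak{m}_{\mathfrak{spin}\left( 9\right) }$. Your reductions are sound and match the paper's bookkeeping: normalizing $H_{2}=\left( \left( L_{A}\right) _{\ast }k_{2}^{9},0\right) $ using orthogonality to $X$ and injectivity of $k^{8}\mapsto k_{S^{8}}^{8}$ away from $\pm e_{9}$, splitting the bracket condition by the symmetric-pair grading, the rank-one argument in $\mathfrak{m}_{\mathfrak{spin}\left( 9\right) }$, and the argument for Part 2.

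However, the step you explicitly ``grant'' --- that $\left[ k_{1}^{8},k_{2}^{9}\right] =0$ with $k_{2}^{9}\neq 0$ forces $k_{1}^{8}=0$ --- is not a technical refinement but the entire content of Part 1, so leaving it as a promised ``analysis of the $\mathfrak{spin}\left( 8\right) $-action'' is a genuine gap. Indeed, if a nonzero commuting pair $k^{8}\in \mathfrak{m}_{\mathfrak{spin}\left( 8\right) }$, $k^{9}\in \mathfrak{m}_{\mathfrak{spin}\left( 9\right) }$ existed, then $\mathrm{span}\left\{ \left( \left( L_{A}\right) _{\ast }k^{9},0\right) ,\left( \sin ^{2}t\left( L_{A}\right) _{\ast }k^{8},k_{S^{8}}^{8}\right) \right\} $ would be a flat horizontal plane not of the form \ref{zeros in Op2 -Op2}, so nothing short of this fact yields the proposition; note also that the extra condition $A_{H_{1}}H_{2}=0$ you propose to feed in plays no role and is not what closes the gap. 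The paper closes it with one observation at the level of the product curvature alone: any $2$--plane inside $\mathcal{P}$ (the span of the horizontal generators of Proposition \ref{horiz S8 X spiin9} other than $\left( 0,X\right) $) projects two-dimensionally to the $Spin\left( 9\right) $--factor, landing in $\left( L_{A}\right) _{\ast }\left( \mathfrak{m}_{\mathfrak{spin}\left( 8\right) }\oplus \mathfrak{m}_{\mathfrak{spin}\left( 9\right) }\right) $, which is the horizontal space of $Spin\left( 7\right) \rightarrow Spin\left( 9\right) \rightarrow S^{15}$; linearly independent vectors there have non-vanishing bracket (equivalently, the normal homogeneous metric on $Spin\left( 9\right) /Spin\left( 7\right) =S^{15}$ is positively curved, ultimately because the relevant bracket is Clifford/octonionic multiplication, which has no zero divisors), so every plane in $\mathcal{P}$ is positively curved. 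This simultaneously supplies exactly the injectivity of $\mathrm{ad}\left( k^{8}\right) $ on $\mathfrak{m}_{\mathfrak{spin}\left( 9\right) }$ that you need; with it, the rest of your argument goes through, but as written the proposal does not prove it.
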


\begin{proof}
$\left( \mathbb{O}P^{2}\#-\mathbb{O}P^{2},g_{q}\right) $ is non-negatively
curved since $\left( \mathbb{O}P^{2}\#-\mathbb{O}P^{2},g_{q}\right) =\left(
Spin\left( 9\right) \times S^{8}\right) /Spin\left( 8\right) .$

If a plane, $P,$ tangent to $\left( \mathbb{O}P^{2}\#-\mathbb{O}%
P^{2},g_{q}\right) $ has zero curvature, then its horizontal lift to $%
Spin\left( 9\right) \times S^{8}$ also has zero curvature, so to prove Part
1, it suffices to show that the planes of the form \ref{zeros in Op2 -Op2}
are the only zero curvature planes in the distribution in \ref{OP^2 hoiz}.

To prove this we set 
\begin{equation*}
\mathcal{P}\equiv \left\{ \left( \left( L_{A}\right) _{\ast }k^{9},0\right)
,\left( \sin ^{2}t\left( L_{A}\right) _{\ast }\left( k^{8}\right)
,k_{S^{8}}^{8}\right) \right\} .
\end{equation*}%
Notice that the structure of $\mathcal{P}$ gives us bases $\left\{
a_{i}\right\} $ for $\mathfrak{m}_{\mathfrak{spin}\left( 9\right) }$ and $%
\left\{ b_{i}\right\} $ for $\mathfrak{m}_{\mathfrak{spin}\left( 8\right) }$
for which 
\begin{equation}
\mathcal{P}\equiv \mathrm{span}\left\{ \left( a_{i},0\right) ,\left(
b_{i},\iota \left( b_{i}\right) \right) \right\} ,  \label{structure}
\end{equation}%
where $\iota :\mathfrak{m}_{\mathfrak{spin}\left( 8\right) }\longrightarrow
TS_{\left( x,t\right) }^{7}\subset TS_{\left( x,t\right) }^{8}$ is the
isomorphism that maps $\sin ^{2}t\left( L_{A}\right) _{\ast }\left(
k^{8}\right) \mapsto k_{S^{8}}^{8}.$

Let $\pi _{1}:Spin\left( 9\right) \times S^{8}\longrightarrow Spin\left(
9\right) $ and $\pi _{2}:Spin\left( 9\right) \times S^{8}\longrightarrow
S^{8}$ be the respective projections.

From the structure of $\mathcal{P}$ in \ref{structure} it follows that for $%
P,$ a $2$--plane in $\mathcal{P},$ $d\pi _{1}\left( P\right) $ is also $2$%
--dimensional. Combining this with the fact that $\mathfrak{m}_{\mathfrak{%
spin}\left( 9\right) }\oplus \mathfrak{m}_{\mathfrak{spin}\left( 8\right) }$
is the horizontal space of $Spin\left( 7\right) \longrightarrow Spin\left(
9\right) \longrightarrow S^{15}$, it follows that 
\begin{equation}
\mathrm{sec}\left( P\right) >0\text{ for all planes }P\text{ in }\mathcal{P}%
\text{.\label{positive curv}}
\end{equation}

On the other hand, the horizontal distribution is 
\begin{equation*}
\mathrm{span}\left\{ \left( 0,X\right) ,\mathcal{P}\right\} ,
\end{equation*}%
so, in general, we can write a horizontal plane as 
\begin{equation*}
P=\mathrm{span}\left\{ \left( 0,\sigma X\right) +V,W\right\} ,
\end{equation*}%
where $V,W\in \mathcal{P},$ $V\perp W$ and $\sigma \in \mathbb{R}.$ Using
the superscripts $^{1}$ and $^{2}$ for the projections to the first and
second factors of $T\left( Spin\left( 9\right) \times S^{8}\right) $ and the
fact that $Spin\left( 9\right) \times S^{8}$ has a product metric, we see
that 
\begin{eqnarray*}
\mathrm{curv}_{g_{\mathrm{bi}}+g_{S^{8}}}\left( \left( 0,\sigma X\right)
+V,W\right) &=&\mathrm{curv}_{g_{\mathrm{bi}}+g_{S^{8}}}\left( \left(
0,\sigma X\right) ,W\right) +2R_{g_{\mathrm{bi}}+g_{S^{8}}}\left( \left(
0,\sigma X\right) ,W,W,V\right) \\
&&+\,\mathrm{curv}_{g_{\mathrm{bi}}+g_{S^{8}}}\left( V,W\right) \\
&=&\mathrm{curv}_{g_{S^{8}}}\left( \sigma X,W^{2}\right)
+2R_{g_{S^{8}}}\left( \sigma X,W^{2},W^{2},V^{2}\right) \\
&&+\,\mathrm{curv}_{g_{\mathrm{bi}}}\left( V^{1},W^{1}\right) +\mathrm{curv}%
_{g_{S^{8}}}\left( V^{2},W^{2}\right) \\
&=&\mathrm{curv}_{g_{S^{8}}}\left( \sigma X+V^{2},W^{2}\right) +\mathrm{curv}%
_{g_{\mathrm{bi}}}\left( V^{1},W^{1}\right) .
\end{eqnarray*}

Since $\mathrm{curv}_{g_{S^{8}}}\left( \sigma X+V^{2},W^{2}\right) $ is a
curvature of $S^{8}$ and $\mathrm{curv}_{g_{\mathrm{bi}}}\left(
V^{1},W^{1}\right) $ is the horizontal lift of a curvature of $S^{15}$ to $%
Spin\left( 9\right) ,$ both terms are non-negative. Since $X\perp W^{2}$ and 
$X\perp V^{2},$ the first term is positive if both $\sigma $ and $W^{2}$ are
not zero. If $\sigma =0,$ then our plane is in $\mathcal{P},$ and has
positive curvature. If $W^{2}=0,$ then 
\begin{equation*}
\mathrm{curv}_{g_{\mathrm{bi}}+g_{S^{8}}}\left( P\right) =\mathrm{curv}_{g_{%
\mathrm{bi}}}\left( V^{1},W^{1}\right) >0,
\end{equation*}%
unless $V^{1}$ is proportional to $W^{1}.$ Since $V\perp W,$ and $W^{2}=0,$
this would give $V^{1}=0.$ However, from the structure of $\mathcal{P}$ in %
\ref{structure}, we see that $V^{1}=0$ implies $V=0.$

So the planes $P=\mathrm{span}\left\{ \left( 0,\sigma X\right) +V,W\right\} $
that have zero curvature are those with $W^{2}=0$ and $V=0.$ It follows that
all horizontal zero curvature planes tangent to $Spin\left( 9\right) \times
S^{8}$ have the desired form 
\begin{equation*}
\mathrm{span}\left\{ \left( 0,X\right) ,\left( \left( L_{A}\right) _{\ast
}k^{9},0\right) \right\} .
\end{equation*}%
Since the curvature of all these planes is zero, the proof of Part 1 is
complete.

Part 2 follows by combining the second statement of Proposition \ref{horiz
S8 X spiin9} and the following facts:

\noindent 1. $Spin\left( 9\right) \times S^{8}$ has the product metric.

\noindent 2. Any plane tangent plane to $Spin\left( 9\right) \times S^{8}$
with a $2$--dimensional projection to $TS^{8}$ is positively curved.

\noindent 3. Any plane tangent plane to $Spin\left( 9\right) \times S^{8}$
with a $2$--dimensional projection to $\left( \left( L_{A}\right) _{\ast }%
\mathfrak{m}_{\mathfrak{spin}\left( 9\right) },0\right) $ is positively
curved.
\end{proof}

\begin{remark}
From Corollary 1 of \cite{ProWilh1} it also follows that all planes of the
form \ref{zeros in Op2 -Op2} or \ref{singular zeros 1} project to zero
curvature planes in $\left( \mathbb{O}P^{2}\#-\mathbb{O}P^{2},g_{q}\right) .$
\end{remark}

View the double mapping cylinder of the octonionic Hopf fibration as%
\begin{equation*}
(\left[ 0,\pi \right] \times S^{15})/\sim ,
\end{equation*}%
where $\left( 0\times S^{15}\right) /\sim $ and $\left( \pi \times
S^{15}\right) /\sim $ are diffeomorphic to $S^{8}.$ We write $\left( 0\times
S^{15}\right) /\sim $ and $\left( \pi \times S^{15}\right) /\sim $ as $%
0\times S^{8}$ and $\pi \times S^{8},$ respectively, and we let $t\equiv 
\mathrm{dist}\left( 0\times S^{8},\cdot \right) ,$ where the distance is
determined by $g_{q}.$

Under the diffeomorphism between $\left( Spin\left( 9\right) \times
S^{8}\right) /Spin\left( 8\right) $ and the double mapping cylinder of the
octonionic Hopf fibration, the equivalence classes of the sets $Spin\left(
9\right) \times \left\{ \pm e_{9}\right\} $ map to $0\times S^{8}$ and $\pi
\times S^{8},$ which are the distinguished $\mathbb{O}P^{1}$s of $\mathbb{O}%
P^{2}\#-\mathbb{O}P^{2}.$ The octonionic Hopf fibration $S^{15}%
\longrightarrow S^{8}$, written in terms of $Spin\left( 8\right) $ cosets is 
\begin{eqnarray}
\left( Spin\left( 9\right) \times S^{7}\right) /Spin\left( 8\right)
&\longrightarrow &Spin\left( 9\right) /Spin\left( 8\right)  \notag \\
\left( A,v\right) \cdot Spin\left( 8\right) &\longmapsto &A\cdot Spin\left(
8\right) .  \label{62}
\end{eqnarray}%
The field $\left( 0,X\right) $ on $Spin\left( 9\right) \times S^{8}$ is the
gradient of the distance from $Spin\left( 9\right) \times \left\{
e_{9}\right\} .$ The vectors $\left( \left( L_{A}\right) _{\ast
}k^{9},0\right) $ are horizontal for the Hopf fibration \ref{62}, so
Proposition \ref{zero curvs} gives us Part 1 of the following.

\begin{corollary}
\label{OP2-Op2 zeros final}View $\left( \mathbb{O}P^{2}\#-\mathbb{O}%
P^{2},g_{q}\right) $ as the double mapping cylinder of the octonionic Hopf
fibration.

\noindent 1. The zero curvature planes in $\mathbb{O}P^{2}\#-\mathbb{O}%
P^{2}\setminus \left\{ \mathbb{O}P^{1}\cup \mathbb{O}P^{1}\right\} $ are
precisely those of the form 
\begin{equation}
\mathrm{span}\left\{ X,Z\right\} ,  \label{zeros in Op2 -Op2 abs final}
\end{equation}%
where $X$ is the gradient of the distance from an $\mathbb{O}P^{1}\subset 
\mathbb{O}P^{2}$ and $Z$ is tangent to the levels of the same distance
function and, in addition, is horizontal for the Hopf fibration $%
S^{15}\longrightarrow S^{8}.$

\noindent 2. The zero curvature planes in $\left\{ \mathbb{O}P^{1}\cup 
\mathbb{O}P^{1}\right\} \subset \mathbb{O}P^{2}\#-\mathbb{O}P^{2}$ are
precisely those of the form%
\begin{equation}
\mathrm{span}\left\{ X,Z\right\}  \label{singular zeros}
\end{equation}%
where $X$ is normal to one of the $\mathbb{O}P^{1}$s and $Z$ is tangent to
the same $\mathbb{O}P^{1}.$

\noindent 3. The one parameter family of Berger metrics $\left\{
g_{q}|_{\left\{ t\right\} \times S^{15}}\right\} _{t\in \left( 0,\pi \right)
}$ have the following property. For any $Z\in TS^{15}$ that is horizontal
for the Hopf fibration $S^{15}\longrightarrow S^{8},$ $g_{q}|_{\left\{
t\right\} \times S^{15}}\left( Z,\cdot \right) $ is independent of $t.$
\end{corollary}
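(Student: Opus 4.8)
The plan is to translate the bi-quotient description of the zero curvature planes in Proposition~\ref{zero curvs} into the double mapping cylinder picture, and then to compute $g_{q}$ on the distance spheres $\{t\}\times S^{15}$ directly from Proposition~\ref{horiz S8 X spiin9}.

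For Part~1 I would start from the diffeomorphism of Proposition~\ref{Bi-quot-Op2+ OP2}: under it $q\left(Spin(9)\times\{\pm e_{9}\}\right)$ is the disjoint union of the two $\mathbb{O}P^{1}$s, so that $q\left(Spin(9)\times(S^{8}\setminus\{\pm e_{9}\})\right)$ is $\mathbb{O}P^{2}\#-\mathbb{O}P^{2}\setminus\{\mathbb{O}P^{1}\cup\mathbb{O}P^{1}\}$. By Part~1 of Proposition~\ref{zero curvs} together with the remark following it, the zero curvature planes there are exactly the images under $dq$ of the planes $\mathrm{span}\left\{(0,X),\left(\left(L_{A}\right)_{\ast}k^{9},0\right)\right\}$ with $k^{9}\in\mathfrak{m}_{\mathfrak{spin}(9)}$. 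It then remains to recognize the two legs: the discussion preceding the corollary identifies $dq(0,X)$ with the gradient of $\mathrm{dist}(0\times S^{8},\cdot)$, where $0\times S^{8}$ is an $\mathbb{O}P^{1}$, while $\left(\left(L_{A}\right)_{\ast}k^{9},0\right)$ is $(g_{\mathrm{bi}}+g_{S^{8}})$-orthogonal to $(0,X)$ — hence $dq\left(\left(L_{A}\right)_{\ast}k^{9},0\right)$ is tangent to the level $\{t\}\times S^{15}$ — and is horizontal for the Hopf fibration~(\ref{62}), i.e., for $S^{15}\to S^{8}$. Conversely every $\mathrm{span}\{X,Z\}$ of the stated kind lifts to a plane of the above form, so the characterization is sharp.

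Part~2 is the same argument with Part~2 of Proposition~\ref{zero curvs} (and again the remark for the converse). At a point $q(A,\pm e_{9})$ of an $\mathbb{O}P^{1}$ the second statement of Proposition~\ref{horiz S8 X spiin9} splits the $q$-horizontal space as $dq\left(\left\{\left(\left(L_{A}\right)_{\ast}k^{9},0\right)\right\}\right)\oplus dq\left(\left\{(0,X)\right\}\right)$ with $k^{9}\in\mathfrak{m}_{\mathfrak{spin}(9)}$ and $X\in T_{\pm e_{9}}S^{8}$; since these two summands come from orthogonal factors of the product $Spin(9)\times S^{8}$, the first equals $T_{q(A,\pm e_{9})}\mathbb{O}P^{1}$ and the second equals $\nu_{q(A,\pm e_{9})}\mathbb{O}P^{1}$, each $8$-dimensional. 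Hence the zero curvature planes at points of $\mathbb{O}P^{1}$, being the $dq$-images of $\mathrm{span}\left\{(0,X),\left(\left(L_{A}\right)_{\ast}k^{9},0\right)\right\}$, are precisely the planes $\mathrm{span}\{X,Z\}$ with $X$ normal to and $Z$ tangent to that $\mathbb{O}P^{1}$.

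For Part~3 I would note that $\{t\}\times S^{15}=q\left(Spin(9)\times(S^{7}\times\{t\})\right)$, and that since $Spin(8)$ fixes the $t$-coordinate these quotients are canonically identified with one fixed copy of $\left(Spin(9)\times S^{7}\right)/Spin(8)\cong S^{15}$ as $t$ varies; under this identification the vector $dq\left(\left(L_{A}\right)_{\ast}k^{9},0\right)$ determined by a fixed triple $(k^{9},A,x)$ is the same $Z\in TS^{15}$ for all $t$. By the first statement of Proposition~\ref{horiz S8 X spiin9}, the $q$-horizontal lift of a Hopf-horizontal tangent vector of $\{t\}\times S^{15}$ has the form $\left(\left(L_{A}\right)_{\ast}k^{9},0\right)$ with $k^{9}\in\mathfrak{m}_{\mathfrak{spin}(9)}$, while a Hopf-vertical one lifts to a multiple of $\left(\sin^{2}t\,\left(L_{A}\right)_{\ast}k^{8},k_{S^{8}}^{8}\right)$ with $k^{8}\in\mathfrak{m}_{\mathfrak{spin}(8)}$. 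Since $q$ is a Riemannian submersion and $\mathfrak{m}_{\mathfrak{spin}(9)}\perp\mathfrak{m}_{\mathfrak{spin}(8)}$ with respect to $g_{\mathrm{bi}}$, one computes $g_{q}\left(dq\left(\left(L_{A}\right)_{\ast}k^{9}_{1},0\right),dq\left(\left(L_{A}\right)_{\ast}k^{9}_{2},0\right)\right)=g_{\mathrm{bi}}\left(k^{9}_{1},k^{9}_{2}\right)$ and $g_{q}\left(dq\left(\left(L_{A}\right)_{\ast}k^{9},0\right),dq\left(\sin^{2}t\,\left(L_{A}\right)_{\ast}k^{8},k_{S^{8}}^{8}\right)\right)=0$, both independent of $t$. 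Thus for a Hopf-horizontal $Z$ the form $g_{q}|_{\{t\}\times S^{15}}(Z,\cdot)$ equals $g_{\mathrm{bi}}$ on the Hopf-horizontal distribution and annihilates the Hopf-vertical distribution, independently of $t$, which is the assertion. The main obstacle is Part~3: pinning down the $t$-independent identification of the distance spheres cleanly enough that the statement is unambiguous, and checking that the Hopf-horizontal distribution of the Berger metric really is the span of the $\left(\left(L_{A}\right)_{\ast}k^{9},0\right)$-lifts (equivalently, that shrinking the Hopf fibers does not move the horizontal distribution); once that is in place the two inner-product computations are routine.
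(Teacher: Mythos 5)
Your proposal is correct and follows essentially the same route as the paper: Parts 1 and 2 are obtained by transporting Proposition \ref{zero curvs} (together with the remark after it for the converse inclusion) through the identification of Proposition \ref{Bi-quot-Op2+ OP2}, and Part 3 is exactly the paper's computation — lift a Hopf--horizontal $Z$ to $\left( \left( L_{A}\right) _{\ast }k_{Z}^{9},0\right) $ and an arbitrary $W$ to $\left( \left( L_{A}\right) _{\ast }k_{W}^{9},0\right) +\left( \sin ^{2}t\left( L_{A}\right) _{\ast }k_{W}^{8},k_{W,S^{8}}^{8}\right) $ via Proposition \ref{horiz S8 X spiin9}, and use orthogonality of $\mathfrak{m}_{\mathfrak{spin}\left( 9\right) }$ and $\mathfrak{m}_{\mathfrak{spin}\left( 8\right) }$ to get $g_{q}\left( Z,W\right) =g_{\mathrm{bi}}\left( k_{Z}^{9},k_{W}^{9}\right) $, which is independent of $t$. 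Your extra care in identifying the distance spheres for varying $t$ and in matching the two summands at the singular fibers with $T\mathbb{O}P^{1}$ and $\nu \mathbb{O}P^{1}$ only makes explicit what the paper leaves to the reader.
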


\begin{proof}
For Part 2, just observe that the planes in \ref{singular zeros 1} are
precisely the planes in \ref{singular zeros}.

For Part 3, notice that by Proposition \ref{horiz S8 X spiin9} the
horizontal lift to $Spin\left( 9\right) \times S^{8}$ of a Hopf--horizontal $%
Z\in T\left( \left\{ t\right\} \times S^{15}\right) $ has the form 
\begin{equation*}
\left( \left( L_{A}\right) _{\ast }k_{Z}^{9},0\right)
\end{equation*}%
for a fixed $k_{Z}^{9}\in \mathfrak{m}_{\mathfrak{spin}\left( 9\right) }.$
On the other hand, if $W\in T\left( \left\{ t\right\} \times S^{15}\right) $
is any vector, then its horizontal lift to $Spin\left( 9\right) \times S^{8}$
has the form 
\begin{equation*}
\left( \left( L_{A}\right) _{\ast }k_{W}^{9},0\right) +\left( \sin
^{2}t\left( L_{A}\right) _{\ast }\left( k_{W}^{8}\right)
,k_{W,S^{8}}^{8}\right) ,
\end{equation*}%
for some $k_{W}^{9}\in \mathfrak{m}_{\mathfrak{spin}\left( 9\right) }$ and
some $k_{W}^{8}\in \mathfrak{m}_{\mathfrak{spin}\left( 8\right) }.$ Thus%
\begin{eqnarray*}
g_{q}|_{\left\{ t\right\} \times S^{15}}\left( Z,W\right) &=&\left( g_{%
\mathrm{bi}}+g_{S^{8}}\right) \left( \left( \left( L_{A}\right) _{\ast
}k_{Z}^{9},0\right) ,\text{ }\left( \left( L_{A}\right) _{\ast
}k_{W}^{9},0\right) +\left( \sin ^{2}t\left( L_{A}\right) _{\ast }\left(
k_{W}^{8}\right) ,k_{W,S^{8}}^{8}\right) \right) \\
&=&g_{\mathrm{bi}}\left( k_{Z}^{9},k_{W}^{9}\right) ,\text{ since }\mathfrak{%
m}_{\mathfrak{spin}\left( 9\right) }\text{ and }\mathfrak{m}_{\mathfrak{spin}%
\left( 8\right) }\text{ are orthogonal.}
\end{eqnarray*}%
Since the right hand side is independent of $t,$ the result follows.
\end{proof}

Next, we relate the horizontal spaces of the $G_{2}$ action on $S^{15}$ and
the horizontal spaces of the Hopf fibration $h:S^{15}\longrightarrow S^{8}.$

Adopting the point of view of \cite{Wilh}, an explicit formula for the Hopf
fibration $h:S^{15}\longrightarrow S^{8}$ is given as follows. View $S^{15}$
as the unit sphere in $\mathbb{O}\oplus \mathbb{O\cong R}^{16}$, and view $%
S^{8}$ as the unit sphere in $\mathbb{O}\oplus \mathbb{R\cong R}^{9}.$ Then 
\begin{equation*}
h:\left( 
\begin{array}{c}
a \\ 
c%
\end{array}%
\right) \mapsto (a\bar{c},\frac{1}{2}(|a|^{2}-|c|^{2})).
\end{equation*}

The last ingredient in our proof of Theorem \ref{Ricc on quotient} is the
following.

\begin{proposition}
\label{Davis-Hopf}For all $\left( 
\begin{array}{c}
a \\ 
c%
\end{array}%
\right) \in S^{15},$ there is a vector in $\left\{ T_{\left( 
\begin{array}{c}
a \\ 
c%
\end{array}%
\right) }G_{2}\left( 
\begin{array}{c}
a \\ 
c%
\end{array}%
\right) \right\} ^{\perp }$ that is not Hopf horizontal, that is, it is not
in $\left\{ T_{\left( 
\begin{array}{c}
a \\ 
c%
\end{array}%
\right) }h^{-1}\left( h\left( 
\begin{array}{c}
a \\ 
c%
\end{array}%
\right) \right) \right\} ^{\perp }.$
\end{proposition}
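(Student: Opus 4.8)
The plan is to dualize the statement and then exploit the single fact that $G_{2}=\mathrm{Aut}(\mathbb{O})$ fixes $1\in\mathbb{O}$ pointwise. Write $p=\left(\begin{array}{c}a\\ c\end{array}\right)\in S^{15}\subset\mathbb{O}\oplus\mathbb{O}$. Taking orthogonal complements inside $T_{p}S^{15}$ converts the assertion of the proposition into the equivalent statement
\begin{equation*}
T_{p}\,h^{-1}(h(p))\ \not\subseteq\ T_{p}\,G_{2}(p),
\end{equation*}
i.e. the tangent space of the Hopf fiber through $p$ is not contained in the tangent space of the $G_{2}$--orbit through $p$. Since $G_{2}$ acts orthogonally on $\mathbb{O}=\mathbb{R}\cdot1\oplus\mathrm{Im}(\mathbb{O})$, fixing the first summand and preserving the second, the diagonal orbit of $p$ lies in the affine subspace through $p$ parallel to $\mathrm{Im}(\mathbb{O})\oplus\mathrm{Im}(\mathbb{O})$, whence
\begin{equation*}
T_{p}\,G_{2}(p)\ \subseteq\ \mathrm{Im}(\mathbb{O})\oplus\mathrm{Im}(\mathbb{O}).
\end{equation*}
Thus it suffices to produce, at each $p\in S^{15}$, a Hopf--vertical vector having nonzero component along $\mathbb{R}\cdot1\oplus\mathbb{R}\cdot1$.

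Next I would read off the Hopf--vertical space from $h\left(\begin{array}{c}a\\ c\end{array}\right)=(a\bar c,\frac{1}{2}(|a|^{2}-|c|^{2}))$: differentiating, $(u,v)$ is Hopf--vertical exactly when $u\bar c+a\bar v=0$ and $\langle u,a\rangle=\langle v,c\rangle=0$, and this space is $7$--dimensional because the Hopf fiber is a $7$--sphere. Then I would use the symmetry $(a,c,u,v)\mapsto(c,a,v,u)$, which is $G_{2}$--equivariant, sends Hopf fibers to Hopf fibers (since $h(c,a)$ and $h(a,c)$ differ by an isometry of $S^{8}$), and preserves $\mathrm{Im}(\mathbb{O})\oplus\mathrm{Im}(\mathbb{O})$, to reduce to two cases. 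In the case where not both $a,c$ lie in $\mathbb{R}\cdot1$, after applying the symmetry if necessary assume $c\notin\mathbb{R}\cdot1$; then $c\neq0$ and $c^{\perp}\neq\mathrm{Im}(\mathbb{O})$, so we may choose $v\in c^{\perp}$ with $\mathrm{Re}(v)\neq0$ and set $u=-|c|^{-2}(a\bar v)c$, the unique solution of $u\bar c=-a\bar v$. A short computation with $\langle xc,y\rangle=\langle x,y\bar c\rangle$, $\langle ax,ay\rangle=|a|^{2}\langle x,y\rangle$, and the isometry property of conjugation gives $\langle u,a\rangle=\langle v,c\rangle=0$, so $(u,v)$ is Hopf--vertical; since $\mathrm{Re}(v)\neq0$ it is not in $\mathrm{Im}(\mathbb{O})\oplus\mathrm{Im}(\mathbb{O})$, hence not in $T_{p}G_{2}(p)$. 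In the remaining case $a,c\in\mathbb{R}\cdot1$, the point $p$ is fixed by $G_{2}$, so $T_{p}G_{2}(p)=0$, while the Hopf fiber is a $7$--sphere with nonzero tangent space, again giving the containment failure.

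The only step I expect to require real work is checking, in the first case, that the explicit vector $(u,v)$ actually lies in the Hopf--vertical space: the computation of $\langle u,a\rangle$ reduces, using $u\bar c=-a\bar v$, to $\langle a\bar c,a\bar v\rangle=|a|^{2}\langle c,v\rangle=0$, and the solvability of $u\bar c=-a\bar v$ for $u$ uses only the alternative identity $(wc)\bar c=w|c|^{2}$, not associativity. Beyond that, the argument needs nothing more than $\overline{xy}=\bar y\bar x$, $\langle xy,z\rangle=\langle y,\bar x z\rangle$, and $x\bar x=|x|^{2}$; in particular no Moufang identities are required.
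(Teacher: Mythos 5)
Your argument is correct, but it is genuinely different from the paper's. The paper works directly with the original statement: for $\mathrm{Im}\left( a\right) \neq 0$ it exhibits the explicit orbit--normal vector $\left( a\alpha ,0\right) $, $\alpha =\mathrm{Im}\left( a\right) /\left\vert \mathrm{Im}\left( a\right) \right\vert $, using that $t\mapsto \left\vert a\right\vert e^{\alpha t}$ is a normal geodesic for the cohomogeneity--one $G_{2}$--action on $S^{7}\left( \left\vert a\right\vert \right) $, and then shows it is not Hopf horizontal by pairing it against the explicit Hopf--vertical vector $\left( a\alpha ,c\alpha \right) $, whose verticality rests on the associativity of the subalgebra generated by $\alpha $ and $c$. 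You instead dualize: the statement is equivalent to $T_{p}h^{-1}\left( h\left( p\right) \right) \not\subseteq T_{p}G_{2}\left( p\right) $, since for subspaces of a fixed inner product space $A^{\perp }\subseteq B^{\perp }$ holds exactly when $B\subseteq A$; then the only input about the group action you need is the crude containment $T_{p}G_{2}\left( p\right) \subseteq \mathrm{Im}\left( \mathbb{O}\right) \oplus \mathrm{Im}\left( \mathbb{O}\right) $ coming from $G_{2}$ fixing $\mathbb{R}\cdot 1$ pointwise, and it suffices to produce a Hopf--vertical vector with nonzero real component. Your explicit choice $v\in c^{\perp }$ with $\mathrm{Re}\left( v\right) \neq 0$ and $u=-\left\vert c\right\vert ^{-2}\left( a\bar{v}\right) c$ checks out: $u\bar{c}=-a\bar{v}$ via $\left( wc\right) \bar{c}=\left\vert c\right\vert ^{2}w$, and $\left\langle u,a\right\rangle =0$ via $\left\langle xc,y\right\rangle =\left\langle x,y\bar{c}\right\rangle $ and $\left\langle ax,ay\right\rangle =\left\vert a\right\vert ^{2}\left\langle x,y\right\rangle $, so no cohomogeneity--one geometry and no Artin--type associativity argument is needed; the swap symmetry and the $G_{2}$--fixed--point case are also handled correctly. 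The one step worth stating explicitly is that your three linear conditions cut out exactly $T_{p}h^{-1}\left( h\left( p\right) \right) $ because $h|_{S^{15}}$ is a submersion with $S^{7}$ fibers, so the kernel of its differential has dimension $7$ and coincides with the fiber tangent space; you gesture at this and it is standard. The trade--off is that the paper's route names a concrete orbit--normal direction (in the spirit of how the proposition is later used, though the uniform angle bound comes from compactness anyway), while your dual route is more linear--algebraic and arguably more elementary.
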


\begin{proof}
If $Im\left( a\right) \neq 0$, set $\frac{Im\left( a\right) }{\left\vert
Im\left( a\right) \right\vert }=\alpha .$ We claim that at $\left( 
\begin{array}{c}
a \\ 
c%
\end{array}%
\right) $ the vector $\left( 
\begin{array}{c}
a\alpha \\ 
0%
\end{array}%
\right) $ is in $\left\{ T_{\left( 
\begin{array}{c}
a \\ 
c%
\end{array}%
\right) }G_{2}\left( 
\begin{array}{c}
a \\ 
c%
\end{array}%
\right) \right\} ^{\perp }.$ Indeed let $S^{7}\left( \left\vert a\right\vert
\right) $ be the octonions with norm equal to $\left\vert a\right\vert .$
The curve, 
\begin{eqnarray*}
\gamma _{\alpha } &:&\left[ 0,2\pi \right] \longrightarrow S^{7}\left(
\left\vert a\right\vert \right) \text{ } \\
\gamma _{\alpha } &:&t\mapsto \left\vert a\right\vert e^{\alpha t},
\end{eqnarray*}%
is the geodesic in $S^{7}\left( \left\vert a\right\vert \right) $ that
passes through $\pm \left\vert a\right\vert $ and $a.$ The $G_{2}$--action
on $S^{7}\left( \left\vert a\right\vert \right) $ is by cohomogeneity one
with singular orbits $\pm \left\vert a\right\vert .$ Thus $\gamma _{\alpha }$
is normal to the orbits of $G_{2}.$ On the other hand, if $\gamma _{\alpha
}\left( t_{0}\right) =a,$ then $\gamma _{\alpha }^{\prime }\left(
t_{0}\right) =a\alpha ,$ so at $\left( 
\begin{array}{c}
a \\ 
c%
\end{array}%
\right) ,$ 
\begin{equation*}
\left( 
\begin{array}{c}
a\alpha \\ 
0%
\end{array}%
\right) \in \left\{ T_{\left( 
\begin{array}{c}
a \\ 
c%
\end{array}%
\right) }G_{2}\left( 
\begin{array}{c}
a \\ 
c%
\end{array}%
\right) \right\} ^{\perp },
\end{equation*}%
as claimed.

To see that this vector is not Hopf horizontal, notice that since $\left[
a,\alpha \right] =0,$ $a,\alpha ,$ and $c$ are contained in a subalgebra
that is isomorphic to $\mathbb{H}.$ In particular, for all $t\in \mathbb{R}$
the three octonions $a,$ $c,$ and $e^{\alpha t}$ associate. So 
\begin{equation*}
\left( 
\begin{array}{c}
ae^{\alpha t} \\ 
ce^{\alpha t}%
\end{array}%
\right) \in h^{-1}\left( h\left( 
\begin{array}{c}
a \\ 
c%
\end{array}%
\right) \right) ,
\end{equation*}%
and it follows that 
\begin{equation*}
\left( 
\begin{array}{c}
a\alpha \\ 
c\alpha%
\end{array}%
\right) \in T_{\left( 
\begin{array}{c}
a \\ 
c%
\end{array}%
\right) }h^{-1}\left( h\left( 
\begin{array}{c}
a \\ 
c%
\end{array}%
\right) \right) .
\end{equation*}%
So 
\begin{equation*}
\left( 
\begin{array}{c}
a\alpha \\ 
0%
\end{array}%
\right) \notin \left\{ T_{\left( 
\begin{array}{c}
a \\ 
c%
\end{array}%
\right) }h^{-1}\left( h\left( 
\begin{array}{c}
a \\ 
c%
\end{array}%
\right) \right) \right\} ^{\perp }.
\end{equation*}

A similar argument covers points for which $Im\left( c\right) \neq 0.$

Finally, if $Im\left( a\right) =Im\left( c\right) =0,$ then $\left( 
\begin{array}{c}
a \\ 
c%
\end{array}%
\right) $ is a fixed point of $G_{2}$ and all vectors are in $\left\{
T_{\left( 
\begin{array}{c}
a \\ 
c%
\end{array}%
\right) }G_{2}\left( 
\begin{array}{c}
a \\ 
c%
\end{array}%
\right) \right\} ^{\perp }.$
\end{proof}

\begin{proof}[Proof of Theorem \protect\ref{Ricc on quotient}]
Combining Proposition \ref{zero curvs} and Corollary \ref{OP2-Op2 zeros
final} we see that $\mathbb{O}P^{2}\#-\mathbb{O}P^{2}$ is non-negatively
curved and every zero plane in $\left( \mathbb{O}P^{2}\#-\mathbb{O}%
P^{2}\right) \setminus \left( \mathbb{O}P^{1}\cup \mathbb{O}P^{1}\right) $
contains $X$ and a Hopf horizontal vector. Similarly, every zero plane in $%
\left( \mathbb{O}P^{1}\cup \mathbb{O}P^{1}\right) \subset \mathbb{O}P^{2}\#-%
\mathbb{O}P^{2}$ is spanned by a vector tangent to an $\mathbb{O}P^{1}$ and
a vector normal to the same $\mathbb{O}P^{1}.$

So $\left( \mathbb{O}P^{2}\#-\mathbb{O}P^{2}\right) ^{\text{reg}}/G_{2}$ at
least has nonnegative Ricci curvature, and the only possible direction with
zero Ricci curvature is $X.$

From Part 3 of Proposition \ref{OP2-Op2 zeros final} and Proposition \ref%
{Davis-Hopf} we have an $\alpha >0$ so that at all points of $x\in \left( 
\mathbb{O}P^{2}\#-\mathbb{O}P^{2}\right) ^{\text{reg}}$ there is a vector $%
Y\in TG_{2}\left( x\right) ^{\perp }$ with 
\begin{equation}
\sphericalangle \left( Y,\left\{ \text{Hopf horizontal vectors}\right\}
\right) >\alpha >0.
\end{equation}

Combining this with Corollary \ref{OP2-Op2 zeros final} we see that the
planes, 
\begin{equation*}
\mathrm{span}\left\{ X,Y\right\} ,
\end{equation*}%
are in the complement of a neighborhood $U$ of the zero planes of $\mathbb{O}%
P^{2}\#-\mathbb{O}P^{2}.$ Hence, by compactness of the complement of $U$,
there is a $\beta >0$ so that $sec\left( X,Y\right) >\beta >0.$ Since all
other sectional curvatures are at least nonnegative$,$ we have 
\begin{equation*}
Ric_{\left( \mathbb{O}P^{2}\#-\mathbb{O}P^{2}\right) ^{\text{reg}%
}/G_{2}}\left( X,X\right) >\beta >0.
\end{equation*}
\end{proof}

\end{document}